\documentclass[11pt,reqno]{amsart}

\usepackage[text={155mm,230mm},centering]{geometry}

\geometry{a4paper}

\usepackage{graphicx}

\usepackage{epstopdf}
\usepackage{subfigure}
\usepackage{latexsym,bm}

\usepackage{amsmath,amsthm,amssymb,amsfonts}

\usepackage[mathscr]{eucal}

\usepackage[all]{xy}

\usepackage{mathrsfs}

\usepackage{color}

\usepackage{hyperref}

\usepackage{marginnote}
\setlength{\marginparwidth}{2cm} 
\let\oldmarginpar\marginpar
\renewcommand\marginpar[1]{\-\oldmarginpar[\raggedleft\footnotesize\color{red} #1]%
{\raggedright\footnotesize\color{red} #1}} 
\marginparsep = 10pt


\newtheorem{theorem}{Theorem}[section]
\newtheorem{lemma}[theorem]{Lemma}

\newtheorem{corollary}[theorem]{Corollary}
\newtheorem{proposition}[theorem]{Proposition}
\newtheorem{proposition-definition}[theorem]{Proposition-Definition}

\theoremstyle{definition}
\newtheorem{example}[theorem]{Example}
\newtheorem{definition}[theorem]{Definition}
\newtheorem{definition-lemma}[theorem]{Definition-Lemma}
\newtheorem{definition-theorem}[theorem]{Definition-Theorem}

\newtheorem*{convention}{Convention}

\newtheorem*{ack}{Acknowledgements}

\allowdisplaybreaks

\linespread{1.1}

\title[On the graded singularity 
category, I]{On the graded singularity category of
Abelian quotient singularities, I. 
Smooth categorical compactification}

\author{Xiaojun Chen}
\author{Jieheng Zeng}

\address{Chen: School of Mathematics,
Sichuan University, Chengdu,
P.R. China
and Department of Mathematics, New Uzbekistan University, Tashkent, Uzbekistan}
\email{xjchen@scu.edu.cn}

\address{Zeng: School of Mathematics and Statistics,
Peking University, Beijing, P.R. China}
\email{zengjh662@163.com}

\date{}


\begin{document}

\begin{abstract}
Given a singularity which is the quotient of an
affine space $V$ by a finite Abelian group
$G \subseteq \mathrm{SL}(V)$, we study
the DG enhancement $\mathcal{D}^{b}
(\mathrm{tails}(k[V]^G))$ of the
bounded derived category
of the non-commutative projective space $\mathrm{tails}(k[V]^G)$
and the
DG enhancement
$\mathcal{D}_{sg}^{\mathbb{Z}}(k[V]^G)$ of its graded singularity category. 
In this paper, we construct smooth categorical 
compactifications, in the sense of Efimov, 
of $\mathcal{D}^b(\mathrm{tails}(k[V]^G))$ and
$\mathcal{D}_{sg}^{\mathbb{Z}}(k[V]^G)$ respectively, 
via the non-commutative crepant resolution of $k[V]^G$.
We give
explicit constructions of canonical classical generators in the kernels of such compactifications.
\end{abstract}

\maketitle

\tableofcontents

\section{Introduction}\label{In}

\subsection{Backgrounds and motivations}
In algebraic geometry, the Abelian category of
coherent sheaves on a scheme captures a lot of information about the scheme itself. For example, the famous result of
Gabriel says that the
category of coherent sheaves 
on a scheme completely determines
the scheme itself. 
Even more strongly,
the result
of Bondal and Orlov says that a smooth Fano or anti-Fano variety is completely
determined by its bounded derived category of coherent sheaves (\cite{BO}).

In non-commutative algebraic geometry, differential graded algebras or
their module categories are often viewed by mathematicians
as non-commutative schemes
(see, for example, \cite{BVdB}). 
Given a non-negatively graded algebra $A$, 
viewed as a non-commutative scheme,
its {\it non-commutative projective space} is the 
Serre quotient category 
$ \mathrm{tails}(A) := \mathrm{grmod}(A)/ \mathrm{tors}(A)$,
where $\mathrm{tors}(A)$ consists of 
finitely generated graded
torsion modules over $A$. 
Denote by
$\pi: \mathrm{grmod}(A) \rightarrow  \mathrm{tails}(A)$
the natural projection, then
$\pi(A)$ is usually viewed as
the {\it non-commutative structure sheaf} of $\mathrm{tails}(A)$. 
As an analogue to 
the result of Gabriel, 
in \cite{AZ} Artin and Zhang showed
that some right Noetherian algebras
are determined by their non-commutative projective spaces together with some extra data.

Another subject in non-commutative geometry that has been
widely studied is the {\it singularity category} of associative algebras.
It was introduced by Buchweitz in \cite{RB} and later was
rediscovered by Orlov in \cite{DR,DR1,DR2}.
Given an associative algebra $S$, 
its {\it  singularity category}
$D_{sg}(S)$ 
is the 
triangulated quotient category 
$D^{b}(S)/\mathrm{Perf}(S)$
of the bounded derived category of $S$-modules by its subcategory
of perfect complexes. When $S$ is a Gorenstein ring, this category is 
triangle equivalent to 
the stable category of Cohen-Macaulay $S$-modules.  
It is straightforward to see that
$S$ is homologically smooth if and only if $D_{sg}(S)$ is trivial.

In the past two decades, singularity categories 
have found many important applications
in singularity theory, (non-commutative) algebraic geometry, 
representation theory and mathematical physics.
For example,
if $S$ is a
hypersurface defined by
a nontrivial function $f$, then its singularity category is triangle
equivalent to the category of
matrix factorizations
with potential $f$. Another example is that, 
the {\it graded} singularity category 
$D_{sg}^{\mathbb Z}(S) 
:= D^{b}(\mathrm{grmod}(S))/\mathrm{Perf}(\mathrm{grmod}(S))$
of $S$,
by Orlov's semi-orthogonal decomposition theorem,
is equivalent to the Kuznetsov component of $\mathrm{Proj}(S)$, if 
$S$ is generated by graded one elements with non-negative Gorenstein parameter.

Now let us assume
$S$ is a Noetherian commutative graded Gorenstein normal 
ring with isolated singularities.
For example, $S$ is the invariant subring of $k[x_1, \cdots, x_n]$
with canonical grading  
under the action of a finite group $G \subseteq \mathrm{SL}(n, k)$ (see \cite{IT}).  
The canonical DG enhancements of
the derived categories
$D^b(\mathrm{tails}(S))$
and $D_{sg}^{\mathbb Z}(S)$, just to list a few,
\begin{itemize}
\item[$-$] are both proper;
\item[$-$] both admit a Serre functor, and
\item[$-$] both admit tilting objects.
\end{itemize}
A natural question is
whether these 
nice properties can
be generalized to the {\it non-isolated} case.

It turns out that describing these two derived categories for
a non-isolated singularity is a rather challenging problem.
For example, the corresponding DG enhancements of 
$D^{b}(\mathrm{tails}(S))$ and $D_{sg}^{\mathbb Z}(S)$ 
are not proper 
when $S$ is a graded non-isolated singularity, which 
makes its homological properties very bad. Another difficulty
is that
we cannot find a homologically
smooth graded algebra 
whose non-commutative projective space is equivalent to the one of $S$. 
This poses significant challenges in investigating the two   
triangulated categories through homologically smooth algebras.
In a word, at present there is so far no  
effective way to study the general graded non-isolated singularities; see, however,
\cite{IW} and \cite{BYu}.


\subsection{Main results of the paper}

In this paper, we concentrate on a special class of
non-isolated singularities.
Let $k$ be a field of characteristic zero. Let $V=k^n$ 
for some $n \in \mathbb{N}$, and $R=k[V]$.
Let $G \subseteq \mathrm{SL}(V)$ be a 
finite Abelian group, which acts on $V$ naturally. Our primary object 
is the singularity given by $S:= R^G$, which in the literature
is called an {\it Abelian
quotient singularity}.

Let $\Lambda := G \sharp R$
be the skew group algebra, which is an Artin-Schelter regular algebra.
According to Van den Bergh (\cite{V2}), $\Lambda$ is a 
{\it non-commutative crepant resolution} of $S$. 
Both $ D^{b}(\mathrm{tails}(\Lambda))$ and 
$D^{b}(\mathrm{tails}(S))$
have a unique DG enhancement, denoted by 
$\mathcal{D}^{b}(\mathrm{tails}(\Lambda))$
and $\mathcal{D}^{b}(\mathrm{tails}(S))$ respectively 
(see \cite[Theorem A]{CNS}). 
In the meantime, $D_{sg}^{\mathbb Z}(S)$ also
admits a canonical 
DG enhancement, denoted by $\mathcal{D}_{sg}^{\mathbb Z}(S)$,  
which is given by the DG quotient 
$\mathcal{D}^{b}(\mathrm{grmod}(S))/ \mathcal{P}\mathrm{erf}(\mathrm{grmod}(S))$. 
Here $\mathcal{D}^{b}(\mathrm{grmod}(S))$ is the natural DG enhancement of 
$D^{b}(\mathrm{grmod}(S))$ and $\mathcal{P}\mathrm{erf}(\mathrm{grmod}(S))$ 
is the DG enhancement of $\mathrm{Perf}(\mathrm{grmod}(S))$. 

Now by Auslander's theorem,
there exists an $S$-module $N$ such that
$\Lambda \cong \mathrm{End}_{S}(S \oplus N)$ and $S \oplus N \cong R$ as $S$-modules.
Let $e \in \Lambda$ be the 
idempotent 
corresponding to the summand $S$.
Then we have the following functor, 
called the {\it localization functor},   
$$
(-)\otimes_{\Lambda}^{\mathbb{L}} 
\Lambda e: D^{b}(\mathrm{tails}(\Lambda)) 
\rightarrow D^{b}(\mathrm{tails}(S)),
$$
which lifts to a DG functor 
$$
\mathcal{D}^{b}(\mathrm{tails}(\Lambda)) 
\rightarrow \mathcal{D}^{b}(\mathrm{tails}(S)), 
$$
which we also denote by $(-)\otimes_{\Lambda}^{\mathbb{L}} 
\Lambda e$.  

Our strategy to study 
$\mathcal{D}^{b}(\mathrm{tails}(S))$ and 
$\mathcal{D}^{\mathbb{Z}}_{sg}(S)$ 
is via 
$\mathcal{D}^{b}(\mathrm{tails}(\Lambda))$ and 
$\mathrm{Ker}((-)\otimes_{\Lambda}^{\mathbb{L}} \Lambda e )$. 
The first result of this paper is:

\begin{theorem}[Theorem \ref{thm:smoothcatcpt1equiv}]
\label{thm:smoothcatcpt1}
The DG functor
$$
 (-) \otimes^{\mathbb{L}}_{\Lambda} 
 \Lambda e: \mathcal{D}^{b}(\mathrm{tails}(\Lambda)) 
\rightarrow \mathcal{D}^{b}(\mathrm{tails}(S))
$$
is a smooth categorical compactification, 
with the kernel given by the fully faithful DG functor
$$
F: \mathcal{D}^{b}_{\mathrm{tails}(\Lambda/\Lambda e \Lambda)}
(\mathrm{tails}(\Lambda)) 
\rightarrow \mathcal{D}^{b}(\mathrm{tails}(\Lambda)),
$$
where 
$\mathcal{D}^{b}_{\mathrm{tails}(\Lambda/\Lambda e \Lambda)}(\mathrm{tails}
(\Lambda))$ is the 
DG enhancement of 
$D^{b}_{\mathrm{tails}(\Lambda/\Lambda e \Lambda)}(\mathrm{tails}
(\Lambda))$ (see \S \ref{sjcbnhu} below
for the definition of the notations).
\end{theorem}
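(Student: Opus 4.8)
The plan is to verify the three defining properties of a smooth categorical compactification in the sense of Efimov for the DG functor $q := (-) \otimes^{\mathbb{L}}_{\Lambda} \Lambda e$: (i) $q$ is a quotient functor, i.e.\ it admits a right adjoint (or the induced functor on Verdier quotients by the kernel is an equivalence) and the target is generated by the image; (ii) the source $\mathcal{D}^{b}(\mathrm{tails}(\Lambda))$ is a smooth DG category; and (iii) the kernel of $q$ is generated by a single object (so that the compactification is ``smooth'' in the strong sense), with that kernel being identified with $\mathcal{D}^{b}_{\mathrm{tails}(\Lambda/\Lambda e\Lambda)}(\mathrm{tails}(\Lambda))$ via the fully faithful functor $F$. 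I would organize the argument around these three points in that order.

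First, for smoothness of the source: since $\Lambda = G\sharp R$ is Artin--Schelter regular, it is homologically smooth as a DG (indeed ordinary) algebra, and hence $\mathrm{grmod}(\Lambda)$ and its Serre quotient $\mathrm{tails}(\Lambda)$ inherit smoothness of their natural DG enhancements; this should follow from the general principle that the noncommutative projective scheme of an AS-regular algebra has a smooth and proper DG enhancement (one can even exhibit a tilting object giving an equivalence with $\mathrm{Perf}$ of a finite-dimensional algebra of finite global dimension). I would cite the AS-regularity of $\Lambda$ from Van den Bergh together with the standard fact that $\mathrm{tails}$ of such an algebra is smooth. Second, for the quotient property: the functor $-\otimes^{\mathbb{L}}_{\Lambda}\Lambda e$ at the level of module categories is, up to the tails quotient, the restriction-of-scalars-type localization along the idempotent $e$, whose right adjoint is $\mathrm{RHom}_{eSe}(e\Lambda,-) = (-)\otimes^{\mathbb{L}}_{S}e\Lambda$ (using $e\Lambda e \cong S$); essential surjectivity onto $D^b(\mathrm{tails}(S))$ holds because $\pi(\Lambda e)\otimes^{\mathbb{L}}_{\Lambda}\Lambda e \simeq \pi(S)$ generates $\mathcal{D}^b(\mathrm{tails}(S))$. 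This realizes $q$ as a Verdier localization, so $\mathcal{D}^b(\mathrm{tails}(S))$ is the quotient of $\mathcal{D}^b(\mathrm{tails}(\Lambda))$ by $\ker q$.

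Third — and this is where the main work lies — I must identify $\ker q$ and show it is generated by one object. The kernel consists of those complexes $M$ in $\mathcal{D}^b(\mathrm{tails}(\Lambda))$ with $M\otimes^{\mathbb{L}}_{\Lambda}\Lambda e \simeq 0$, equivalently (after passing back to $\mathrm{grmod}$) those supported on the two-sided ideal $\Lambda e\Lambda$, i.e.\ on $\Lambda/\Lambda e\Lambda$. Concretely one has a recollement relating $D^b(\mathrm{tails}(\Lambda))$, $D^b(\mathrm{tails}(S))$, and the category $D^b_{\mathrm{tails}(\Lambda/\Lambda e\Lambda)}(\mathrm{tails}(\Lambda))$ of objects with cohomology in the Serre subcategory $\mathrm{tails}(\Lambda/\Lambda e\Lambda)$; the fully faithful functor $F$ is the inclusion of this subcategory, and it lifts to DG enhancements by the uniqueness of enhancements cited in \cite{CNS}. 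The delicate part is showing this DG category is \emph{smooth} and \emph{proper} — equivalently, that it has a classical generator with a finite-dimensional (over $k$) endomorphism DG algebra of finite global dimension. Since $\Lambda/\Lambda e\Lambda$ is a finite-dimensional algebra supported on the singular locus of $S$, and tails kills the finite-dimensional (torsion) part, one expects $\mathrm{tails}(\Lambda/\Lambda e\Lambda)$ to be equivalent to $\mathrm{tails}$ of a skew group algebra over a \emph{smaller} polynomial ring (the one attached to the singular stratum), hence again AS-regular and smooth; the classical generator in $\ker q$ is then the image of its structure sheaf under $F$. I expect the principal obstacle to be pinning down this identification of $\mathrm{tails}(\Lambda/\Lambda e\Lambda)$ precisely and checking the resulting enhancement is smooth and proper, which requires a careful analysis of how $\Lambda e\Lambda$ sits inside $\Lambda = G\sharp R$ in terms of the isotropy subgroups of the $G$-action on $V$ — this is the technical heart, and presumably why the paper devotes a separate section (referenced as \S\ref{sjcbnhu}) to setting up the notation for it.
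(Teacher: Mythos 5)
Your overall three-step outline (source is smooth and proper; the functor is a localization; the kernel has a classical generator) does match the paper's structure, and your handling of the first two steps is correct in spirit: smoothness and properness of $\mathcal{D}^b(\mathrm{tails}(\Lambda))$ come from the tilting object giving $\mathcal{D}^b(\mathrm{tails}(G\sharp R))\cong\mathcal{P}\mathrm{erf}(\nabla(G\sharp R))$ with $\nabla(G\sharp R)$ finite-dimensional of finite global dimension, and the localization property uses $e\Lambda e\cong S$ together with the equivalence $\mathrm{grmod}(\Lambda)/\mathrm{grmod}(\Lambda/\Lambda e\Lambda)\cong\mathrm{grmod}(S)$ and Miyachi's theorem relating Serre and Verdier quotients.

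However, your treatment of the third step contains genuine errors. First, you misread what must be proved: Definition \ref{def:smoothcatcomp} (and its restatement \ref{Csc}) requires that the \emph{source} $\mathcal{C}=\mathcal{D}^b(\mathrm{tails}(\Lambda))$ be smooth and proper, and that $\mathrm{Ker}(F)$ be classically generated by a \emph{single object}. You instead set out to prove $\mathrm{Ker}(F)$ is smooth and proper, which is not required and is not what the paper shows. Second, your claim that ``$\Lambda/\Lambda e\Lambda$ is a finite-dimensional algebra'' is false in exactly the setting the paper cares about: for a non-isolated quotient singularity, $\Lambda/\Lambda e\Lambda$ is infinite-dimensional and its $\mathrm{tails}$ is nontrivial. (If it were finite-dimensional, the kernel would collapse to zero in $\mathrm{tails}$ and the whole theorem would be vacuous.) Third, the resulting heuristic that $\mathrm{tails}(\Lambda/\Lambda e\Lambda)$ is $\mathrm{tails}$ of a single skew group algebra over one smaller polynomial ring is too naive: $\Lambda/\Lambda e\Lambda$ maps (after passing to the reduction $\Lambda/\sqrt{\Lambda e\Lambda}$, which you do not consider) into a \emph{direct sum} of skew group algebras $G/G^i\sharp k[V^i]$ indexed by a nontrivial combinatorial set $\tilde G_0$ of (character, isotropy subgroup) pairs, and the classical generator is the explicit object $X_F=\bigoplus_{(\chi,G^i)\in\tilde G_0}\bigoplus_{j=0}^{\dim V^i-1}G/G^i\sharp k[V^i](j)$ produced by Theorem \ref{thm:smoothcatcpt2equiv}. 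Producing this generator — via the reduced ideal $\sqrt{\Lambda e\Lambda}$, the injectivity of the map $\Lambda/\sqrt{\Lambda e\Lambda}\hookrightarrow\bigoplus_i G/G^i\sharp k[V^i]$, the \v{C}ech-type bimodule resolution $W_G^\bullet$, and the filtration argument of Lemma \ref{nribe} — is the actual content of the step you flagged as ``the technical heart,'' and your proposal does not contain any of these ingredients.
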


In the above theorem, the notion of {\it smooth categorical compactification}
of a DG category is introduced by Efimov in \cite{Efi1}, which will be recalled in
Definition \ref{def:smoothcatcomp} below. 
In \cite{Efi1}, Efimov proved that for a separable scheme $Y$ 
of finite type over a field
of characteristic zero,
its bounded derived category of coherent sheaves 
$D^b_{coh}(Y)$
admits a smooth categorical compactification, 
which verifies a conjecture of 
Kontsevich.
Theorem \ref{thm:smoothcatcpt1} generalizes his result to 
certain class of non-commutative projective spaces. 

The significance of Efimov's theorem, as shown in \cite{Efi1},
is that if a small DG category
admits a smooth categorical compactification, then it 
is smooth and homotopically 
finitely presented (hfp),
in the sense of To\"en and Vaqui\'e
(\cite{TV}); and more importantly,
there exists a concrete and computable DG algebra
such that the DG category of its perfect complexes is quasi-equivalent 
to the original DG category. 

Going back to Theorem \ref{thm:smoothcatcpt1},
the key point in its proof is to show that
$D^{b}_{\mathrm{tails}(\Lambda/\Lambda e 
\Lambda)}(\mathrm{tails}(\Lambda))$ 
is classically generated by a single object. 
The theorem is as follows.

\begin{theorem}[Theorem \ref{thm:smoothcatcpt2equiv}]\label{thm:smoothcatcpt2}
$D^{b}_{\mathrm{tails}(\Lambda/\Lambda e \Lambda)}(\mathrm{tails}(\Lambda))$ 
has a classical generator.
\end{theorem}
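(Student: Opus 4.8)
The plan is to produce an explicit object of $D^{b}_{\mathrm{tails}(\Lambda/\Lambda e \Lambda)}(\mathrm{tails}(\Lambda))$ and show it classically generates. The starting point is the standard dévissage philosophy: the category $D^{b}_{\mathrm{tails}(\Lambda/\Lambda e\Lambda)}(\mathrm{tails}(\Lambda))$ of complexes supported on the closed subscheme cut out by the two-sided ideal $\Lambda e \Lambda$ should be generated by the structure sheaves of its irreducible components (with their reduced structure), together with suitable twists. Concretely, since $\Lambda = G\sharp R$ and $S = R^{G}$, the ideal $\Lambda e\Lambda$ corresponds to the locus in $\mathrm{Proj}$ where the resolution $\Lambda$ fails to be an isomorphism over $S$, i.e.\ the non-free locus of $R$ over $S$, which is a union of linear subspaces $V^{i}\subseteq V$ — the fixed loci $W^l$ of the subgroups $K^l$ listed in the (currently commented-out) combinatorial setup. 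So the first step is to identify $\Lambda/\Lambda e\Lambda$ as a direct sum, indexed by the relevant pairs $(\chi, G^i)$, of (twisted) coordinate rings $k[V^{i}]$ of these fixed subspaces, each appearing with a character-multiplicity coming from $\widehat{G}$.

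Second, I would pass to $\mathrm{tails}$ and observe that each $\mathrm{tails}(k[V^{i}])$ is the (commutative!) projective space $\mathbb{P}(V^{i})$, whose bounded derived category of coherent sheaves is classically generated by a single tilting object, namely a Beilinson-type direct sum $\bigoplus_{j} \mathcal{O}(j)$ of line bundles. Pulling these generators back along the closed embedding $\mathrm{tails}(\Lambda/\Lambda e\Lambda)\hookrightarrow \mathrm{tails}(\Lambda)$ — equivalently, regarding the corresponding graded $\Lambda/\Lambda e\Lambda$-modules as graded $\Lambda$-modules and applying $\pi$ — yields a finite list of objects in $D^{b}_{\mathrm{tails}(\Lambda/\Lambda e\Lambda)}(\mathrm{tails}(\Lambda))$. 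The direct sum $T$ of this finite list is the candidate classical generator. To see it generates, one runs the dévissage: any object $M$ of $D^{b}_{\mathrm{tails}(\Lambda/\Lambda e\Lambda)}(\mathrm{tails}(\Lambda))$ is, by definition, built from graded $\Lambda$-modules annihilated by a power of $\Lambda e\Lambda$; filtering by the powers of this ideal reduces to modules over $\Lambda/\Lambda e\Lambda$, hence to objects on the $\mathbb{P}(V^{i})$, and there the twists $\mathcal{O}(j)$ suffice. One must keep track of the boundedness of $\mathrm{tails}$ (killing the torsion subcategory), but this only discards finite-dimensional pieces and does not affect generation.

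The main obstacle I anticipate is twofold. First, $\Lambda$ is non-commutative, so $\Lambda e\Lambda$ need not be generated in a single degree and $\Lambda/\Lambda e\Lambda$ need not be a finite direct sum of honest commutative polynomial rings on the nose; one has to match it carefully with the skew-group-algebra structure, tracking how $G$ acts on the fixed subspaces $V^i$ and how the characters $\chi\in\widehat{G}$ index the summands — this is exactly what the equivalence relation "$\sim$" in the deleted passage is designed to bookkeep, and getting that bijection right (the sets $\widetilde{G}$, $\widetilde{G}_0$) is the combinatorial heart of the argument. Second, the dévissage must be carried out at the level of DG enhancements, not just triangulated categories, so that one genuinely produces a classical generator of the small DG category; this requires knowing that $\mathcal{D}^{b}_{\mathrm{tails}(\Lambda/\Lambda e\Lambda)}(\mathrm{tails}(\Lambda))$ is the correct DG enhancement and that the filtration argument lifts — but since all categories in sight are known (by the cited results of \cite{CNS}) to have unique DG enhancements, this should be a formal, if careful, step. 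Once $T$ is in hand, Theorem~\ref{thm:smoothcatcpt1} follows by feeding it into Efimov's criterion as indicated in the text.
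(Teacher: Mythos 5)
Your broad dévissage philosophy matches the paper's, but there is a genuine structural gap at the pivotal step. You claim that $\Lambda/\Lambda e\Lambda$ can be identified with a direct sum, over the pairs $(\chi,G^i)$, of twisted coordinate rings of the fixed subspaces $V^i$; this is false, and the paper explicitly builds its argument around the failure. What is true is that there is a natural graded algebra map $\varpi_0 : \Lambda/\Lambda e\Lambda \to \bigoplus_{(\chi,G^i)\in \tilde G_0} G/G^i\sharp k[V^i]$, but it is neither surjective nor injective: its kernel consists of nilpotent elements. To handle this the paper introduces the reduced ideal $\sqrt{\Lambda e\Lambda}$ and proves (Lemma~\ref{NNned1}) that the induced map $\bar\varpi_0:\Lambda/\sqrt{\Lambda e\Lambda}\to\bigoplus\Lambda_i$ is at least injective. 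Even then one does \emph{not} get a direct sum decomposition; one only gets an embedding whose cokernel is controlled by a \v{C}ech/Mayer--Vietoris-type complex $W_G^\bullet$ built from the intersections $V^{i_1}\cap V^{i_2}\cap\cdots$ and their stabilizers (Lemmas~\ref{Firstgj} and~\ref{qjsgkn}). The key generation statement, Lemma~\ref{hidjks}, is then proved by replacing an arbitrary module with its derived tensor against $W_G^\bullet$ and running an induction on the pieces of this resolution. Without $\sqrt{\Lambda e\Lambda}$ and without $W_G^\bullet$, your phrase ``hence to objects on the $\mathbb{P}(V^i)$'' has no bridge to cross: graded $\Lambda/\Lambda e\Lambda$-modules are not modules over $\bigoplus \Lambda_i$.

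Two further points are less serious but worth flagging. First, the relevant summands are the noncommutative projective spaces $\mathrm{tails}(G/G^i\sharp k[V^i])$, i.e.\ skew group algebras, not the commutative $\mathbb{P}(V^i)$; the tilting object exists because each $\Lambda_i$ is AS regular (Proposition~\ref{ASstru} and Corollary~\ref{ydbvwjv}), not by an appeal to Beilinson's theorem for ordinary projective space. Second, your single phrase ``filtering by powers of the ideal'' must in the paper do two separate jobs: Lemma~\ref{compgene} passes from $D^b(\mathrm{tails}(B))$ to $D^b_{\mathrm{tails}(B)}(\mathrm{tails}(\Lambda))$ by filtering along powers of $I$, while Lemma~\ref{nribe} is a second, independent filtration along powers of $\sqrt{\Lambda e\Lambda}/\Lambda e\Lambda$ needed to return from the reduced ideal back to $\Lambda e\Lambda$. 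Your proposal conflates these. The net effect is that while you have correctly guessed the shape of the answer (an explicit direct sum of twists of $G/G^i\sharp k[V^i]$, indexed by $\tilde G_0$), the proposal as stated would stall at the moment one tries to reduce a $\Lambda/\Lambda e\Lambda$-module to modules over the components.
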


In Theorem
\ref{thm:smoothcatcpt2equiv}
below, we shall give
an explicit construction of
this generator.
We next move to the singularity category $D_{sg}^{\mathbb{Z}}(k[V]^G)$. 
By applying Orlov's semi-orthogonal theorem
(see Theorem \ref{orlov} below) to
$D^{b}(\mathrm{tails}(S))$, there is a localization functor
$$
\Xi: D^{b}(\mathrm{tails}(S)) \rightarrow D^{\mathbb{Z}}_{sg}(S)
$$
and hence a localization functor
$$
\Theta :=\Xi \circ ((-) \otimes^{\mathbb{L}}_{\Lambda} 
\Lambda e ): D^{b}(\mathrm{tails}
(\Lambda)) \rightarrow D^{\mathbb{Z}}_{sg}(S).
$$
Now by the proof of \cite[Theorem 16]{DR2}, 
$\Xi$ lifts to a DG functor
$\tilde{\Xi}: \mathcal{D}^{b}(\mathrm{tails}(S)) 
\rightarrow \mathcal{D}_{sg}^{\mathbb{Z}}(S)$,
and hence we obtain a localization DG functor
 $\tilde{\Theta} : \mathcal{D}^{b}(\mathrm{tails}
(\Lambda)) \rightarrow \mathcal{D}^{\mathbb{Z}}_{sg}(S)$.

\begin{theorem}[Theorem \ref{thm:catcptofsingequiv}]\label{thm:catcptofsing}
The above functor
$$
\tilde{\Theta}: \mathcal{D}^{b}(\mathrm{tails}(\Lambda)) 
\rightarrow \mathcal{D}_{sg}^{\mathbb{Z}}(S)
$$
is a smooth categorical compactification of $\mathcal{D}_{sg}^{\mathbb{Z}}(S)$. 
\end{theorem}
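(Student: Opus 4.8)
The plan is to verify the three conditions in Efimov's Definition \ref{def:smoothcatcomp}: that the source $\mathcal{D}^{b}(\mathrm{tails}(\Lambda))$ satisfies the required smoothness and finiteness hypotheses, that $H^{0}(\tilde{\Theta})$ is a Verdier quotient functor up to direct summands, and that $\mathrm{Ker}(\tilde{\Theta})$ is classically generated by one object. The first condition is free of charge: $\mathcal{D}^{b}(\mathrm{tails}(\Lambda))$ is literally the source of the smooth categorical compactification built in Theorem \ref{thm:smoothcatcpt1}, so whatever Definition \ref{def:smoothcatcomp} demands of the source is already established there (concretely, $\mathrm{tails}(\Lambda)$ behaves like coherent sheaves on the smooth proper Deligne--Mumford stack $[\mathbb{P}(V)/G]$, so $\mathcal{D}^{b}(\mathrm{tails}(\Lambda))$ is smooth and proper, hence homotopically finitely presented). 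The second condition is formal: $\tilde{\Theta} = \tilde{\Xi}\circ((-)\otimes^{\mathbb{L}}_{\Lambda}\Lambda e)$ is a composite in which $(-)\otimes^{\mathbb{L}}_{\Lambda}\Lambda e$ induces a Verdier quotient up to direct summands (Theorem \ref{thm:smoothcatcpt1}) and $\tilde{\Xi}$ induces a Verdier quotient (Orlov's theorem, Theorem \ref{orlov}), and such functors compose to give a Verdier quotient up to direct summands. The content is therefore entirely in the third condition.

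For that, write $\mathcal{K}_{0} := \mathrm{Ker}((-)\otimes^{\mathbb{L}}_{\Lambda}\Lambda e) = \mathcal{D}^{b}_{\mathrm{tails}(\Lambda/\Lambda e \Lambda)}(\mathrm{tails}(\Lambda))$, which by Theorem \ref{thm:smoothcatcpt2} --- more precisely by the explicit object exhibited in Theorem \ref{thm:smoothcatcpt2equiv} --- is classically generated by a single object $P$. Since $\tilde{\Theta} = \tilde{\Xi}\circ((-)\otimes^{\mathbb{L}}_{\Lambda}\Lambda e)$, at the level of homotopy categories $\mathrm{Ker}(H^{0}\tilde{\Theta})$ is the preimage under $(-)\otimes^{\mathbb{L}}_{\Lambda}\Lambda e$ of $\mathrm{Ker}(H^{0}\tilde{\Xi})$; as $(-)\otimes^{\mathbb{L}}_{\Lambda}\Lambda e$ is a Verdier localization with kernel $\mathcal{K}_{0}$ (up to summands), the preimage of a thick subcategory is thick and, passing to quotients, one obtains a localization sequence, up to idempotent completion,
\[
\mathcal{K}_{0}\ \longrightarrow\ \mathrm{Ker}(H^{0}\tilde{\Theta})\ \xrightarrow{\ (-)\otimes^{\mathbb{L}}_{\Lambda}\Lambda e\ }\ \mathrm{Ker}(H^{0}\tilde{\Xi}).
\]
Moreover, $S = R^{G}$ with its canonical grading is Gorenstein with $\omega_{S}\cong S$ of Gorenstein parameter $n = \dim_{k} V$ (here one uses $G\subseteq\mathrm{SL}(V)$), so Orlov's theorem identifies $\mathrm{Ker}(H^{0}\tilde{\Xi})$ with the triangulated subcategory generated by the $n$ structure-sheaf twists $\pi(S(i))$, $i\in I$, with $I=\{0,\dots,n-1\}$ up to an overall shift; in particular it is classically generated by $\bigoplus_{i\in I}\pi(S(i))$.

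It then remains to lift this generator. As $e\Lambda$ is a projective right $\Lambda$-module (a direct summand of $\Lambda$), there are isomorphisms $(e\Lambda)(i)\otimes^{\mathbb{L}}_{\Lambda}\Lambda e\cong e\Lambda e(i)\cong S(i)$ with vanishing higher $\mathrm{Tor}$; hence $\pi(e\Lambda(i))\in\mathcal{D}^{b}(\mathrm{tails}(\Lambda))$ has image $\pi(S(i))\in\mathrm{Ker}(H^{0}\tilde{\Xi})$ and so lies in $\mathrm{Ker}(\tilde{\Theta})$. Set $Q := \bigoplus_{i\in I}\pi(e\Lambda(i))$, a direct summand of $\bigoplus_{i\in I}\pi(\Lambda(i))$. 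Then the thick subcategory $\langle P\oplus Q\rangle\subseteq\mathrm{Ker}(H^{0}\tilde{\Theta})$ contains $\mathcal{K}_{0}=\langle P\rangle$, and its image in $\mathrm{Ker}(H^{0}\tilde{\Xi})$ is a thick subcategory containing $Q\otimes^{\mathbb{L}}_{\Lambda}\Lambda e\cong\bigoplus_{i\in I}\pi(S(i))$, hence all of $\mathrm{Ker}(H^{0}\tilde{\Xi})$; the standard two-out-of-three argument for classical generation along the localization sequence above then gives $\langle P\oplus Q\rangle=\mathrm{Ker}(H^{0}\tilde{\Theta})$, so $P\oplus Q$ is the required single classical generator. (Equivalently, one could isolate a lemma saying that the composite of a smooth categorical compactification with a Verdier localization whose kernel has a single classical generator is again a smooth categorical compactification, and then deduce Theorem \ref{thm:catcptofsing} from Theorem \ref{thm:smoothcatcpt1} and Orlov's theorem.)

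The step I expect to be the main obstacle is the bookkeeping with idempotent completions in the previous paragraph: since the first localization is only a quotient up to direct summands, one has to track direct summands carefully both when asserting that the displayed sequence is a localization sequence and when concluding that the image of $\langle P\oplus Q\rangle$ exhausts $\mathrm{Ker}(H^{0}\tilde{\Xi})$. A minor further point is nailing down the sign conventions and the set $I$ in Orlov's theorem, i.e. checking that $G\subseteq\mathrm{SL}(V)$ forces $\omega_{R^{G}}\cong R^{G}$, so that the Gorenstein parameter of $R^{G}$ is exactly $n$ and the deleted twists are precisely $\{\pi(S(i))\}_{i=0}^{n-1}$ up to shift. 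The genuinely hard inputs --- smoothness, properness and hfp of the source, and the single classical generator of $\mathcal{K}_{0}$ --- are already provided by Theorems \ref{thm:smoothcatcpt1} and \ref{thm:smoothcatcpt2}.
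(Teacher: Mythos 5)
Your strategy and the final classical generator agree with the paper's: the proof proceeds by restricting $(-)\otimes^{\mathbb L}_{\Lambda}\Lambda e$ to kernels, using the generator of $\mathcal K_0=D^{b}_{\mathrm{tails}(\Lambda/\Lambda e\Lambda)}(\mathrm{tails}(\Lambda))$ from Theorem~\ref{thm:smoothcatcpt2}, and lifting the generator $\bigoplus_{i=0}^{n-1}\pi(S(i))$ of $\mathrm{Ker}(\Xi)$ to $\bigoplus_{i=0}^{n-1}\pi(e\Lambda(i))$. The paper's $\mathcal P\oplus\bigoplus_{i=0}^{n-1}e\Lambda(i)$ is exactly your $P\oplus Q$.

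However, the step you flag as ``the main obstacle'' is in fact a genuine gap, and it is precisely the point the paper spends the bulk of the proof on. You invoke an abstract two-out-of-three principle for classical generation along the sequence $\mathcal K_0\to\mathrm{Ker}(\Theta)\to\mathrm{Ker}(\Xi)$, but this does not follow formally: given $Z\in\mathrm{Ker}(\Theta)$ with $Z\otimes_\Lambda\Lambda e\in\langle\bigoplus\pi(S(i))\rangle_\infty$, the obstruction is that direct summands appearing in the inductive construction of $Z\otimes_\Lambda\Lambda e$ from $\bigoplus\pi(S(i))$ need not be images of objects of $\mathrm{Ker}(\Theta)$ under the quotient, so one cannot naively lift a Postnikov tower from the quotient back up. The paper resolves this by a concrete descent argument rather than an abstract one: it uses the tilting-type isomorphism $\mathrm{Hom}_{D^b(\mathrm{tails}(S))}(S(i),S(j)[q])\cong\mathrm{Hom}_{D^b(\mathrm{grmod}(S))}(S(i),S(j)[q])$ (\cite[Theorem 4.14]{MM}) to realize $Z\otimes_\Lambda\Lambda e$ as an honest bounded cochain complex $Y^\bullet$ of graded $S$-modules with each $Y^j\in\mathrm{add}(\bigoplus_{i=0}^{n-1}S(i))$, connected to $Z\otimes_\Lambda\Lambda e$ through a roof $\varphi_Z,\phi_Z$. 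It then applies the underived functor $(-)\otimes_S e\Lambda$ termwise --- which takes $\mathrm{add}(\bigoplus S(i))$ to $\mathrm{add}(\bigoplus e\Lambda(i))$ on the nose, avoiding any idempotent-splitting issue --- and pairs this with the comparison map $i_Z\colon Z\otimes_\Lambda\Lambda e\otimes_S e\Lambda\to Z$ whose cone lies in $\mathcal K_0$. All three cones $\mathrm{Cone}(i_Z)$, $\mathrm{Cone}(\varphi_Z\otimes_S e\Lambda)$, $\mathrm{Cone}(\phi_Z\otimes_S e\Lambda)$ land in $\mathcal K_0=\langle\mathcal P\rangle_\infty$, and $Y^\bullet\otimes_S e\Lambda\in\langle\bigoplus e\Lambda(i)\rangle_\infty$; that is the explicit chain of distinguished triangles that puts $Z$ in $\langle\mathcal P\oplus\bigoplus e\Lambda(i)\rangle_{r_Z}$. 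So the missing ingredient in your argument is not a minor bookkeeping point: it is the observation that $\mathrm{Ker}(\Xi)$ is realized on the nose by complexes of modules in $\mathrm{add}(\bigoplus S(i))$ (via the derived-category tilting isomorphism), which makes the lift along $(-)\otimes_S e\Lambda$ available at the level of actual complexes rather than only up to summand. One pleasant side remark: since $(-)\otimes^{\mathbb L}_\Lambda\Lambda e$ is genuinely essentially surjective (not merely up to summands) because $(X\otimes_S e\Lambda)\otimes^{\mathbb L}_\Lambda\Lambda e\cong X$, your worry about the outer localization being ``up to direct summands'' is actually unnecessary; the real issue is the inner one about descending/lifting thick closures, which the paper addresses as above.
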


Thus as a corollary to Efimov's theorem,
the two DG categories $\mathcal{D}^{b}(\mathrm{tails}(S))$ and 
$\mathcal{D}_{sg}^{\mathbb{Z}}(S)$ are both homotopically finitely presented (see Corollary
\ref{cor:main1and2}).

\subsection{Structure of the paper}
The rest of the paper is devoted to the proofs of the above theorems. 
It is organized as follows:

In \S\ref{AN}, we recall some necessary 
definitions and properties
of Artin-Schelter regular algebras, skew group algebras
and their derived categories.

In \S\ref{CT}, we first 
review some basic notions
and properties of classical generators of a DG category,
and describe the classical generators of 
$D^{b}(\mathrm{tails}(\Lambda))$ and the related triangulated categories. In
the second part, we recall 
the definition of a smooth categorical compactification
and discuss some of its properties.
 
In \S\ref{subsect:DGcat}, we first describe the relations 
among $\mathcal{D}^{b}(\mathrm{tails}(\Lambda))$, 
$\mathcal{D}^{b}(\mathrm{tails}(S))$, and $\mathcal{D}_{sg}^{\mathbb{Z}}(S)$. 
After that, we give a precise description of 
the kernel of the functor $(-) \otimes_{\Lambda}^{\mathbb{L}}\Lambda e$
and then prove
Theorem \ref{thm:smoothcatcpt1}.

In \S\ref{Co}, we show Theorem \ref{thm:smoothcatcpt2},
and in \S\ref{Fina}, we show Theorem \ref{thm:catcptofsing}. 

In \S\ref{sect:HFP},
we show the above two categories
are homotopically finitely presented.

In \S\ref{EA}, we give three examples for our results. 

\begin{convention}
Throughout this paper,
$k$ denotes the base field of characteristic $0$.
All tensors and Homs are over $k$ unless otherwise specified. 
All representations and actions are faithful. Unless otherwise 
specified, all modules are right modules.  
\end{convention}

\begin{ack}We would like to 
thank 
Farkhod Eshmatov, Leilei Liu,
Song Yang 
and Shizhuo Zhang for some helpful communications.
This work is supported by NSFC Nos. 12271377 and 12261131498.
\end{ack}

\section{Artin-Schelter regular algebras and skew group algebras}\label{AN}

In this section, we recall some notions and their properties
that will be frequently used in later sections.
We first recall the definitions of Artin-Schelter regular algebras and
skew group algebras, which are highly related to each 
other and are the two main notions in this paper. After that, 
we introduce the notions of non-commutative 
projective space and 
the singularity category
of a graded algebra. 
We discuss in some detail
these categories in the case of quotient singularities.

\subsection{Artin-Schelter regular algebras}\label{subsect:ASregularalg}

Suppose $A$ is a graded associative $k$-algebra.
For $M \in \mathrm{Ob}(\mathrm{grmod}(A))$
a finitely generated graded $A$-module, we write 
$M = \bigoplus_{i} M_{i}$ as a graded vector space, 
where $M_{i}$ is the degree $i$ component of $M$; also, we denote by $M(j)$ 
the graded $A$-module such that $M(j)_{i} = M_{i+j}$. 
For any $M, N \in \mathrm{Ob}(\mathrm{grmod}(A))$, let
$$
\mathrm{\underline{Ext}}_{A}^{i}(M, N) : = \bigoplus_{j} 
\mathrm{Ext}_{\mathrm{grmod}(A)}^{i}(M, N(j)),
$$
for $i \in \mathbb{Z}$.

\begin{definition}[\cite{AS}]
Let $A$ be a locally finite 
non-negatively graded algebra
with
$A_0$ being semi-simple over $k$.	
$A$ is called an {\it Artin-Schelter (AS) regular algebra} of 
dimension $d$ with
Gorenstein parameter $a$ 
if
its global dimension is $d$,  
$$
\mathrm{\underline{Ext}}^{i}_{A}(A_{0}, A) = 0, \quad i\neq d,
$$	
and
$$
\mathrm{Ext}^{d}_{\mathrm{grmod}(A)}(A_{0}, A(j)) 
\cong
\left\{
\begin{array}{cl}0,&j \neq -a,\\
A_{0},&j=-a.
\end{array}
\right.
$$
as $A_0$-modules. 
\end{definition}

In the above definition,
instead of the global dimension, 
if the injective dimensions
of $A$ as both left and right $A$-modules are $d$,
then we call $A$ a {\it Gorenstein algebra} of  
dimension $d$ with
Gorenstein parameter $a$. 

\begin{example}
By using Koszul resolution, it is quite straightforward to check that 
the polynomial ring
$R:=k[x_1,\cdots, x_n]$ is an AS regular algebra of dimension $n$
with Gorenstein parameter $n$. 
\end{example}

Let $S$ be a Noetherian local ring, $\mathfrak{m}$ be the unique maximal 
ideal of $S$
and $M$ be a finitely generated $S$-module. Recall that the 
{\it depth} of $M$, denoted by $\mathrm{depth}(M)$,
is defined as follows:
\begin{enumerate}
\item If $\mathfrak{m}M = M$, then $\mathrm{depth}(M) = \infty$;

\item If $\mathfrak{m}M \neq M$, then $\mathrm{depth}(M)$ is the 
supremum in $\mathbb{Z}$ of the lengths of
$M$-regular sequences in $\mathfrak{m}$.
\end{enumerate}
Here, an $M$-regular sequence means a sequence elements 
$\{f_r \}_{1 \leq r \leq m}$ of $S$ such that for any 
element $f_r$ in this sequence, 
\begin{enumerate}
\item[(1)] $f_r$ is not a zero divisor on 
$M/ M(f_{1}, \cdots, f_{r-1})$, and 
\item[(2)] $M/ M(f_{1}, \cdots, f_{m})$
is not trivial as an $S$-module. 
\end{enumerate}

\begin{definition}
Let $S$ be a commutative Noetherian ring with Krull 
dimension $d$ and $M$ be a finitely generated $S$-module.
$M$ is called {\it Maximal Cohen-Macaulay} if 
$\mathrm{depth}(M_{\mathfrak{m}}) = d$
for any maximal ideal $\mathfrak{m}$ of $S$, 
or $M \cong 0$. 
In what follows we simply call such
a module Cohen-Macaulay. 

If $S$ is a graded commutative Noetherian ring, 
then a {\it graded} Cohen-Macaulay $S$-module is a 
Cohen-Macaulay $S$-module endowing a grading 
compatible with the grading of $S$. 
\end{definition}

\begin{proposition}
Let $S$ be a commutative Gorenstein ring over $k$. A finitely 
generated $S$-module $M$
is Cohen-Macaulay if and only if
$
\mathrm{Hom}_{S}(M, R[i]) = 0,
$
for any $i > 0$.
\end{proposition}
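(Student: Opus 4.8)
\emph{Proof proposal.} We read the displayed condition as the vanishing of $\mathrm{Ext}^i_S(M,S)$ (equivalently, of $\mathrm{Hom}(M,S[i])$ computed in the derived category of $S$) for all $i>0$. The plan is to reduce to the case of a Gorenstein \emph{local} ring and then to read off both directions from the standard dictionary, available over Gorenstein rings, between the depth of $M$ and the range of non-vanishing of $\mathrm{Ext}^\bullet_S(M,S)$.

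\emph{Step 1: localization.} By definition $M$ is Cohen--Macaulay iff $M\cong 0$ or $\mathrm{depth}(M_{\mathfrak m})=d$ for every maximal ideal $\mathfrak m$ of $S$. Since $S$ is Noetherian and $M$ is finitely generated, $\mathrm{Ext}^i_S(M,S)_{\mathfrak m}\cong\mathrm{Ext}^i_{S_{\mathfrak m}}(M_{\mathfrak m},S_{\mathfrak m})$, and a finitely generated $S$-module is zero iff all of its localizations at maximal ideals vanish; moreover each $S_{\mathfrak m}$ is Gorenstein of Krull dimension $d$. So it suffices to prove: for $M$ finitely generated over a Gorenstein local ring $(S,\mathfrak m)$ of dimension $d$, one has $M=0$ or $\mathrm{depth}_S M=d$ if and only if $\mathrm{Ext}^i_S(M,S)=0$ for all $i>0$.

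\emph{Step 2: the depth--$\mathrm{Ext}$ formula.} The key input is the identity
\[
\sup\{\, i\ge 0 : \mathrm{Ext}^i_S(M,S)\neq 0\,\}\;=\;d-\mathrm{depth}_S M\qquad(0\neq M\ \text{finitely generated}),
\]
valid over any $d$-dimensional Gorenstein local ring. I would obtain it from Grothendieck's local duality: a Gorenstein local ring is, up to a shift, its own dualizing complex and has $\mathrm{injdim}_S S=d$, so $\mathrm{Ext}^j_S(M,S)$ is Matlis dual to the local cohomology module $H^{d-j}_{\mathfrak m}(M)$; combining this with $\mathrm{depth}_S M=\inf\{\, j:H^j_{\mathfrak m}(M)\neq 0\,\}$ yields the displayed identity. (A local-duality-free alternative is induction on $d$: the base case $d=0$ is the self-injectivity of a Gorenstein Artinian local ring, so that $\mathrm{Ext}^{>0}_S(-,S)=0$ and every module is Cohen--Macaulay; the inductive step cuts $S$ and $M$ down by a common nonzerodivisor in $\mathfrak m$ and applies the change-of-rings isomorphisms of Rees.)

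\emph{Step 3: conclusion.} Over a Noetherian local ring one always has $\mathrm{depth}_S M\le\dim_S M\le d$. Hence, for $M\neq 0$, the equality $\mathrm{depth}_S M=d$ holds exactly when the right-hand side of the Step 2 formula is $\ge d$, i.e. exactly when $\sup\{\, i\ge 0:\mathrm{Ext}^i_S(M,S)\neq 0\,\}\le 0$, i.e. exactly when $\mathrm{Ext}^i_S(M,S)=0$ for all $i>0$; for $M=0$ both conditions hold trivially. Transporting this equivalence back through Step 1 proves the proposition. The main obstacle is Step 2 --- Steps 1 and 3 are bookkeeping once it is available; in the induction variant, the one delicate point is that a common nonzerodivisor for $S$ and $M$ in $\mathfrak m$ exists precisely because $\mathrm{depth}_S S=d\ge 1$ and $\mathrm{depth}_S M\ge 1$ keep $\mathfrak m$ out of $\mathrm{Ass}(S)\cup\mathrm{Ass}(M)$.
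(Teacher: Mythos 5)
The paper records this proposition without proof, treating it as a standard fact of commutative algebra; so there is no in-paper argument to compare against, and your task here is really to supply the missing proof. Your route through localization and Grothendieck local duality is the standard one, and the depth--$\mathrm{Ext}$ identity in Step~2 together with the bookkeeping in Step~3 is exactly the right mechanism.

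One point deserves attention. In Step~1 you assert that ``each $S_{\mathfrak m}$ is Gorenstein of Krull dimension $d$.'' The Gorenstein property does localize, but the dimension claim requires $S$ to be \emph{equidimensional}, which a general commutative Gorenstein ring need not be (e.g.\ $S = k[x]\times k[y,z]$ is Gorenstein of dimension $2$, yet $S_{\mathfrak m}$ has dimension $1$ for $\mathfrak m$ lying on the first factor). This is not merely a gap in the proof: under the paper's definition of maximal Cohen--Macaulay, which fixes the target depth at the global Krull dimension $d$ rather than at $\dim S_{\mathfrak m}$, the proposition is actually \emph{false} for a non-equidimensional $S$ --- taking $M=S$ in the example above gives $\operatorname{Ext}^i_S(S,S)=0$ for $i>0$ while $\operatorname{depth}(S_{\mathfrak m})=1\neq 2$. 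So you should either add the hypothesis that $S$ is equidimensional (or local/connected graded, which is the case the paper actually uses, with $S=R^G$ a finitely generated domain), or reformulate the target depth locally as $\dim S_{\mathfrak m}$. With that caveat made explicit, Step~2 then gives $\sup\{i:\operatorname{Ext}^i_{S_{\mathfrak m}}(M_{\mathfrak m},S_{\mathfrak m})\neq 0\}=\dim S_{\mathfrak m}-\operatorname{depth} M_{\mathfrak m}$ at each $\mathfrak m$, and the rest of the argument goes through unchanged. A further small remark on the induction variant you sketch: when $\operatorname{depth}_S M=0$ and $d\geq 1$ there is no common nonzerodivisor to cut by, so that branch still needs a separate argument that $\operatorname{Ext}^d_S(M,S)\neq 0$ in that case; the local-duality route avoids this edge case entirely, which is a good reason to prefer it.
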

If $M$ is a Cohen-Macaulay $S$-module, then $M$ is also reflexive, i.e., 
$$
\mathrm{Hom}_{S}(\mathrm{Hom}_{S}(M, S), S) \cong M
$$ 
as $S$-modules.

By the above proposition, it is easy to see that any finitely generated
projective $S$-module is Cohen-Macaulay.

\begin{example}[\cite{IT}]
Let $G$ be a finite subgroup in $\mathrm{SL}(V)$ and
$W$ be an irreducible representation of $G$ over $k$. 
Then $(W \otimes k[V])^{G}$ is a Cohen-Macaulay 
$k[V]^{G}$-module, where $G$ acts on $W \otimes k[V]$ diagonally. 
\end{example}

\subsection{Skew group algebras}\label{NCSC}

Now let $G$ be a finite subgroup in $\mathrm{GL}(V)$ and let $R:=k[V]$. 
Recall that the {\it skew group algebra}, denoted by $G\sharp R$, is given
as follows:
\begin{enumerate}
\item[(1)] $G\sharp R$ is isomorphic to $kG \otimes R$ as $k$-vector spaces, 
where $kG$ is the group algebra of $G$; 

\item[(2)] For any $(g_{1}, f_1), (g_{2}, f_2) \in G\sharp R$,
their product
$
(g_{1}, f_1)\cdot (g_{2}, f_2) = (g_{1}g_{2}, 
\, g_{2}^{-1} (f_1) f_{2}) \in G\sharp R
$. 
\end{enumerate}
The following result is nowadays standard.

\begin{proposition}[\cite{MR}] \label{ASstru}
Let $G$ be a finite subgroup in $\mathrm{GL}(V)$,
and $R:=k[V]=k[x_1,\cdots, x_n]$
Then the skew group algebra $G \sharp R$ is an AS regular algebra of
dimension $n$ with Gorenstein parameter $n$.  
\end{proposition}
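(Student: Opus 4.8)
The plan is to verify the three defining conditions of an AS regular algebra for $\Lambda := G\sharp R$, namely that it has finite global dimension $n$, that $\mathrm{\underline{Ext}}^i_\Lambda((\Lambda)_0,\Lambda)$ vanishes for $i\neq n$, and that in degree $n$ it recovers $(\Lambda)_0$ concentrated in internal degree $-n$. The key structural observation to exploit is that $\Lambda \cong kG\otimes R$ as a graded vector space with $kG$ placed in degree $0$, so $(\Lambda)_0 = kG$, which is semisimple over $k$ precisely because $\mathrm{char}\,k = 0$ and $G$ is finite; this is where the characteristic-zero convention enters. The main tool is that the forgetful functor and the induction functor $-\otimes_R \Lambda$ relate $\mathrm{grmod}(R)$ and $\mathrm{grmod}(\Lambda)$, and that $\Lambda$ is free (hence flat) of finite rank $|G|$ as a graded right (and left) $R$-module, so that homological dimensions transport cleanly between the two sides.

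The steps I would carry out, in order. First, establish that $\mathrm{gldim}\,\Lambda = n$: since $R = k[x_1,\dots,x_n]$ has global dimension $n$ by the Koszul/polynomial-ring example quoted above, and since $\Lambda$ is a free $R$-module of finite rank, one gets $\mathrm{gldim}\,\Lambda \le \mathrm{gldim}\,R + \mathrm{gldim}\,(\Lambda\text{-over-}R\text{-projectivity term}) = n$ via the standard change-of-rings inequality (using that $kG$ is semisimple so contributes nothing), and the lower bound $\ge n$ follows because $R$ is a subquotient situation, or more directly because the simple $\Lambda$-module(s) have projective dimension exactly $n$ as one computes below. Second, compute $\mathrm{\underline{Ext}}^\bullet_\Lambda(kG,\Lambda)$. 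The clean route is to take the Koszul resolution of $k$ over $R$, namely $k\otimes \Lambda^\bullet(V^*)\otimes R \to k$ (with $V$ in internal degree $1$), induce it up along $R\hookrightarrow \Lambda$ to get a finite free resolution of $kG = kG\otimes_R R \otimes_R \cdots$ — more precisely resolve the $\Lambda$-module $(\Lambda)_0$ by the complex $(\Lambda)_0 \otimes \Lambda^\bullet(V^*)\otimes_{?}\Lambda$ suitably interpreted as a $G$-equivariant Koszul complex — and then apply $\mathrm{Hom}_\Lambda(-,\Lambda)$. Because $\Lambda$ is free over $R$, this $\mathrm{Hom}$ computation reduces to the corresponding one over $R$ tensored up with $kG$-data, yielding that the only nonvanishing cohomology is in degree $n$, where it is $kG$ shifted by the top exterior power $\Lambda^n(V^*)$. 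Third, identify the internal degree: $\Lambda^n(V^*)$ sits in internal degree $-n$, and — crucially — since $G\subseteq \mathrm{SL}(V)$ the action of $G$ on $\Lambda^n(V^*)$ is trivial (determinant one), so as a $kG$-bimodule the top Ext is just $kG$ with no twist by a character, placed in internal degree $-n$. This gives Gorenstein parameter $a = n$.

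The step I expect to be the main obstacle is the second one: setting up the $G$-equivariant Koszul resolution of $(\Lambda)_0$ as a bona fide complex of finitely generated graded \emph{right} $\Lambda$-modules and tracking the $kG$-module structure through $\mathrm{Hom}_\Lambda(-,\Lambda)$ with the correct sides, since $\Lambda$ is noncommutative and the skew multiplication $(g_1,f_1)(g_2,f_2) = (g_1g_2,\,g_2^{-1}(f_1)f_2)$ twists the $R$-action by $G$; one must be careful that "free over $R$ of rank $|G|$" holds on the correct side and that the Gorenstein symmetry (left and right injective dimensions both $n$) is genuinely symmetric here, which it is because $\Lambda \cong \Lambda^{\mathrm{op}}$ via $g\mapsto g^{-1}$. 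Everything else — semisimplicity of $A_0$, local finiteness, the degree bookkeeping — is routine. In the write-up I would likely just cite \cite{MR} for the bulk of this, as the excerpt does, and only indicate that the Gorenstein parameter equals $n$ because $\det g = 1$ forces the top exterior power to be an untwisted $kG$-module in internal degree $-n$, matching the $n$ of the polynomial ring $R$.
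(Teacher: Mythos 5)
The paper does not actually prove this proposition; it is quoted from \cite{MR} (Mart\'inez-Villa) without argument. So there is no in-text proof to compare against, and your sketch has to be judged on its own.

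Your overall strategy (Koszul resolution of $k$ over $R$, induce up to $\Lambda = G\sharp R$ using that $\Lambda$ is free of rank $|G|$ over $R$ on both sides, apply $\mathrm{Hom}_\Lambda(-,\Lambda)$ and transport the computation back to $R$, and use Maschke for the global-dimension upper bound) is the standard and correct one. However, there is a genuine gap in your Step 3: you write ``crucially --- since $G\subseteq \mathrm{SL}(V)$ the action of $G$ on $\Lambda^n(V^*)$ is trivial,'' but the proposition is stated for an arbitrary finite $G\subseteq \mathrm{GL}(V)$, and the $\mathrm{SL}$ hypothesis is simply not available to you. As written, your argument only establishes the result for $G\subseteq\mathrm{SL}(V)$, which is strictly weaker than the claim and also weaker than what the rest of the paper uses implicitly (e.g.\ the constructions in \S\ref{Co} apply the proposition to quotient groups $G/G^i$ acting on $V^i$, which need not be special linear).

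The statement is nevertheless true in the stated generality, and the fix is cheap but not automatic: the top Ext comes out as $kG$ twisted by a one-dimensional character (namely $\det^{\pm 1}$), placed in internal degree $-n$, and the AS-regularity definition in the paper only demands that $\mathrm{Ext}^n_{\mathrm{grmod}(\Lambda)}(\Lambda_0,\Lambda(-n))$ be isomorphic to $\Lambda_0 = kG$ as a \emph{one-sided} $\Lambda_0$-module. For a semisimple $kG$ in characteristic zero, $kG\otimes\chi\cong kG$ as a $kG$-module for any one-dimensional character $\chi$, since tensoring by $\chi$ permutes the simple summands while preserving their dimensions, hence preserves the multiset of composition factors of the regular representation. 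So the determinant twist is invisible at the level required by the definition, and you do not need $G\subseteq\mathrm{SL}(V)$ at all. (If you had wanted the top Ext to be $kG$ as a \emph{bimodule} --- say, to conclude $\Lambda$ is Calabi--Yau rather than merely twisted Calabi--Yau --- then $\mathrm{SL}$ really would be needed, which is perhaps the source of the confusion.) You should delete the appeal to $\mathrm{SL}$ and replace it with the observation that character twists of $kG$ are one-sidedly trivial; with that change the sketch is sound.
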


In what follows, for a finite-dimensional $G$-representation $W$, 
denote by $M^{G}_{R}(W)$ the $R^{G}$-module 
$(W \otimes R)^{G}$. 
The following result is proved by Iyama and Takahashi:

\begin{proposition}[{\cite[Theorem 4.2]{IT}}]
\label{prop:Lambda}
Denote $S = R^G$. 
Let $\hat{G}$ to be the set of irreducible representations of $G$ and 
$\Lambda:= 
\mathrm{End}_{S}(\bigoplus\limits_{W \in \hat{G}} M^{G}_{R}(W)^{\oplus |W|})$, 
where $|W| = \mathrm{dim}(W)$. Suppose that $G \subseteq \mathrm{SL}(V)$. 
Then $\Lambda\cong G\sharp R$ as algebras. 
\end{proposition}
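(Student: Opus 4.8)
The plan is to split the assertion into two essentially independent parts: a representation-theoretic identification of the module $\bigoplus_{W\in\hat G}M^{G}_{R}(W)^{\oplus|W|}$ with $R$ itself, and the classical theorem of Auslander identifying $\mathrm{End}_{S}(R)$ with the skew group algebra $G\sharp R$. The first part is formal. Decompose $R=k[V]$ into $G$-isotypic components, $R\cong\bigoplus_{W\in\hat G}\mathrm{Hom}_{G}(W,R)\otimes_{k}W$ as $G$-modules, where $G$ acts only on the factor $W$ and the multiplicity space $\mathrm{Hom}_{G}(W,R)$ is naturally an $S$-module. Since $\mathrm{Hom}_{G}(W,R)\cong(W^{*}\otimes R)^{G}=M^{G}_{R}(W^{*})$ as $S$-modules, and $W\mapsto W^{*}$ is a bijection of $\hat G$ with $\dim W^{*}=|W|$, forgetting the $G$-action and reindexing gives an isomorphism of $S$-modules
$$
R\;\cong\;\bigoplus_{W\in\hat G}M^{G}_{R}(W)^{\oplus|W|}.
$$
Applying $\mathrm{End}_{S}(-)$ yields $\Lambda\cong\mathrm{End}_{S}(R)$ as algebras, so it remains to prove $\mathrm{End}_{S}(R)\cong G\sharp R$.

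For the second part, the action of $G\sharp R$ on $R$ (by $R$ acting via multiplication and $G$ via its linear action) produces a canonical algebra homomorphism $\Psi\colon G\sharp R\to\mathrm{End}_{S}(R)$ (using, if one wishes, the identification $(G\sharp R)^{\mathrm{op}}\cong G\sharp R$, $g\mapsto g^{-1}$, to fix left/right conventions). The key structural remark is that both source and target of $\Psi$ are reflexive $S$-modules: $S$ is a normal Noetherian domain, $R$ is a module-finite extension that is itself regular, hence a maximal Cohen--Macaulay and therefore reflexive $S$-module, so $\mathrm{End}_{S}(R)$ is reflexive; and $G\sharp R\cong R^{\oplus|G|}$ as an $S$-module is likewise reflexive. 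Consequently $\Psi$ is an isomorphism as soon as it is injective and is an isomorphism after localization at every height-one prime of $S$. Injectivity is generic: passing to fraction fields, $L:=\mathrm{Frac}(R)$ is Galois over $K:=\mathrm{Frac}(S)$ with group $G$ (Artin's theorem), the normal basis theorem gives $L\otimes_{K}L\cong\prod_{G}L$ and hence $G\sharp L\xrightarrow{\ \sim\ }\mathrm{End}_{K}(L)$, which is exactly $\Psi\otimes_{S}K$; since all three modules are torsion-free over the domain $S$, this forces $\Psi$ to be injective.

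The remaining point is the analysis at a height-one prime $\mathfrak p\subset S$. Here the hypothesis $G\subseteq\mathrm{SL}(V)$ enters: a pseudo-reflection has a nontrivial determinant, so $G$ contains none, and therefore $\mathrm{codim}_{V}V^{g}\geq2$ for every $g\neq1$. Thus the non-free locus of $G\curvearrowright V$, and hence the branch locus of $\mathrm{Spec}\,R\to\mathrm{Spec}\,S$, has codimension $\geq2$; in particular no height-one prime of $S$ lies in it. It follows that $T:=R\otimes_{S}S_{\mathfrak p}$ is a module-finite unramified $S_{\mathfrak p}$-algebra of degree $|G|$ on which $G$ acts with $T^{G}=S_{\mathfrak p}$ and with free geometric fibres, i.e.\ $T$ is a $G$-Galois algebra over $S_{\mathfrak p}$ ($T\otimes_{S_{\mathfrak p}}T\cong\prod_{G}T$). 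The classical Galois-algebra fact (Auslander--Goldman / Chase--Harrison--Rosenberg) then gives $G\sharp T\xrightarrow{\ \sim\ }\mathrm{End}_{S_{\mathfrak p}}(T)$, which is $\Psi_{\mathfrak p}$. Combining the three bullet-free steps completes the proof.

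I expect the main obstacle to be precisely this last codimension-one argument: one must reduce the global statement to a local, generically-étale situation and invoke (i) the absence of pseudo-reflections, so that $G$ acts freely in codimension one, and (ii) purity of the branch locus together with the Galois-descent isomorphism $G\sharp T\cong\mathrm{End}(T)$ for $G$-Galois algebras $T$. The representation-theoretic reduction to $\mathrm{End}_{S}(R)$ and the reflexivity bookkeeping (endomorphism rings of reflexive modules over a normal domain are reflexive and detected in codimension one) are routine by comparison.
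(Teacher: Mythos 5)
The paper does not prove this proposition; it is cited verbatim as Theorem~4.2 of Iyama--Takahashi, and later the text simply invokes ``Auslander's theorem'' for the identity $G\sharp R\cong\mathrm{End}_{R^G}(R)$. So there is no internal argument to compare against, but your blind reconstruction is correct and is precisely the classical proof of Auslander's theorem. The two-step reduction is right: first the isotypic decomposition $R\cong\bigoplus_{W\in\hat G}\mathrm{Hom}_G(W,R)\otimes W$, the identification $\mathrm{Hom}_G(W,R)\cong M_R^G(W^*)$, and the involution $W\leftrightarrow W^*$ on $\hat G$ give $R\cong\bigoplus_W M_R^G(W)^{\oplus|W|}$ as $S$-modules and hence $\Lambda\cong\mathrm{End}_S(R)$ as algebras; then the canonical map $\Psi\colon G\sharp R\to\mathrm{End}_S(R)$ is an isomorphism because both sides are reflexive $S$-modules (with $G\sharp R\cong R^{\oplus|G|}$ as an $S$-module), $\Psi$ is an isomorphism over $\mathrm{Frac}(S)$ by the normal basis theorem, and $\Psi$ localizes to an isomorphism at every height-one prime since $G\subseteq\mathrm{SL}(V)$ excludes pseudo-reflections, so $\bigcup_{g\neq1}V^g$, and hence the branch locus of $\mathrm{Spec}\,R\to\mathrm{Spec}\,S$, has codimension at least two. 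One small point: purity of the branch locus, which you mention in your closing paragraph, is not actually needed; the direct codimension bound coming from the absence of pseudo-reflections already places the ramification locus, and its image under the finite quotient map, in codimension $\geq 2$, which is all the height-one localization step requires.
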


In what follows, we usually use $e$ to denote the idempotent of $\Lambda$ 
which corresponds to the summand
$S = R^G \subseteq \bigoplus\limits_{W \in \hat{G}} M^{G}_{R}(W)^{\oplus |W|}$.

\begin{example}
Let $R = k[x_{1}, x_{2}]$ with the gradings of $x_1$ and $x_2$ both being one.
Let
$
\sigma :=\begin{pmatrix}
-1 & 0  \\
0 &-1
\end{pmatrix}
$
and $G := \langle\sigma\rangle\subset\mathrm{SL}(2, k)$. 
Then $S = R^G = k[x^{2}_{1}, x^{2}_{2}, x_{1} x_{2}]
\cong k[A, B, C]/ (AB - C^2)$ with $\mathrm{deg}(A) 
= \mathrm{deg}(B) = \mathrm{deg}(C) = 2$.
Note $G$ only has two irreducible representations. 
One is the one-dimensional trivial representation $k$,
and the other is the one-dimensional representation $W$ 
such that $\forall x \in W$, $\sigma(x) = -x$.
Thus we obtain that
$\Lambda = \mathrm{End}_{S}(S \oplus (M^{G}_{R}(W))\cong G \sharp R$,
which is an AS regular algebra of dimension 2 with Gorenstein parameter 2.
Moreover, $\Lambda$ can be written as a quiver algebra $kQ_{\Lambda}/ I$, where
$Q_{\Lambda}$ is given as follows:
\begin{displaymath}
\xymatrix{
\bullet_{0} \ar@/^0.3cm/[rrr]^{\bar{x}_{1}} \ar@/^0.8cm/[rrr]^{\bar{x}_{2}}
&&& \bullet_{1} \ar@/^0.3cm/[lll]_{\bar{x}_{1}'} \ar@/^0.8cm/[lll]_{\bar{x}_{2}'}
}
\end{displaymath}
where
vertex 0 corresponds to the summand $S$,
vertex 1 corresponds to the summand $M^{G}_{R}(W)$,
the arrows $\bar{x}_{1}$, $\bar{x}_{2}$ correspond to the two nontrivial morphisms
$$
x_{1}, x_{2} \in \mathrm{Hom}_{S}(S, M^{T}_{R}(W))
\cong \mathrm{Hom}_{\mathrm{mod}(k, G)}(k, R \otimes W) \subseteq R
$$
respectively, the arrows $\bar{x}_{1}'$, $\bar{x}_{2}'$ correspond to the nontrivial morphisms
$$
x_{1}, x_{2} \in  \mathrm{Hom}_{S}(M^{T}_{R}(W), S)
\cong \mathrm{Hom}_{\mathrm{mod}(k, G)}(W, R) \subseteq R
$$
respectively, and $I$ is generated by elements 
$\bar{x}_{1}\bar{x}_{2}'- \bar{x}_{2}\bar{x}_{1}'$ 
and 
$\bar{x}_{1}'\bar{x}_{2} - \bar{x}_{2}'\bar{x}_{1}.$
\end{example}

\subsection{Non-commutative projective space}
We next study
the non-commutative projective space
and the singularity category
with some detail.

\subsubsection{Quotient of a triangulated category}
We start with the Serre sub- and quotient category of  
a triangulated category.

\begin{definition}\textup{(1)}
Let $\mathcal{A}$ be an Abelian category and $\mathcal{B}$ 
be a full Abelian subcategory of $\mathcal{A}$. 
Then $\mathcal{B}$ is called a {\it Serre subcategory} 
of $\mathcal{A}$ if for any short exact sequence
$$
0 \longrightarrow M \longrightarrow N \longrightarrow W \longrightarrow 0,
$$ in $\mathcal{A}$,
$N \in \mathrm{Ob}(\mathcal{B})$ if and only if 
$M, W \in \mathrm{Ob}(\mathcal{B})$.

\textup{(2)}
Let $\mathcal{A}$ be an Abelian category and $\mathcal{B}$ be a 
Serre subcategory of $\mathcal{A}$. 
The {\it Serre quotient category} of $\mathcal{A}$ by $\mathcal{B}$, denoted by
$\mathcal{A}/\mathcal{B}$,
is an Abelian category given as follows:
\begin{enumerate}
\item[\textup{(i)}] The objects of $\mathcal{A}/\mathcal{B}$ are the same with $\mathcal{A}$;

\item[\textup{(ii)}] For any $M, N \in \mathrm{Ob}(\mathcal{A}/\mathcal{B})$,
$$
\mathrm{Hom}_{\mathcal{A}/\mathcal{B}}(M, N)
:= \mathrm{Hom}_{\mathcal{A}}(M, N)/I_{M, N},
$$
where $I_{M, N}$ is $k$-linear subspace of $\mathrm{Hom}_{\mathcal{A}}(M, N)$ 
consisting of morphisms
which factors through some objects in $\mathcal{B}$.
\end{enumerate}
\end{definition}

\begin{definition}
Let $\mathcal{C}$ be a triangulated category and 
$\mathcal{D}$ be a full triangulated subcategory of $\mathcal{C}$.
The {\it triangulated quotient category} (or called {\it Verdier localization}) 
of $\mathcal{C}$ by $\mathcal{D}$, denoted by $\mathcal{C}/\mathcal{D}$,
is a triangulate category given as follows:
\begin{enumerate}
\item The objects of $\mathcal{A}/\mathcal{B}$ are the same with $\mathcal{A}$;

\item For any $M, N \in \mathrm{Ob}(\mathcal{C}/\mathcal{D})$,
$$
\mathrm{Hom}_{\mathcal{C}/\mathcal{D}}(M, N) := \mathrm{Hom}_{\mathcal{C}}(M, N)[\mathcal{S}^{-1}],
$$
where $\mathcal{S}^{-1}$ denotes the inverses of morphisms in 
$\mathcal{S} \subset \mathrm{Hom}_{\mathcal{C}}(M, N)$, 
where
$\mathcal{S} := \{f \in 
\mathrm{Hom}_{\mathcal{C}}(M, N) \,|\, \mathrm{cone}(f) \in \mathrm{Ob}(\mathcal{D}) \}$.
\end{enumerate}
\end{definition}
Note that there is a natural functor $\mathcal{C} \rightarrow 
\mathcal{C}/\mathcal{D}$ called 
the triangulated quotient. 

\begin{definition}\label{SeDef}
Let $\mathcal{A}$ be an Abelian category and $\mathcal{B}$ be a 
Serre subcategory of $\mathcal{A}$,
and let $D^{\ast}(\mathcal A)$ be 
the bounded, or bounded from
above, or bounded from below, or unbounded,
derived category of $\mathcal A$.
Then
$D^{\ast}_{[\mathcal{B}]}(\mathcal{A})$ 
is defined to be  
the full triangulated subcategory of $D^{\ast}(\mathcal{A})$
consisting of objects whose cohomologies are all in 
$\mathrm{Ob}(\mathcal{B})$. 
\end{definition}

It is obvious that there is a natural functor $\mathcal{A} \rightarrow \mathcal{A}/\mathcal{B}$,
given by the projection.
Moreover, it induces a
functor $D^{\ast}(\mathcal{A}) \rightarrow D^{\ast}(\mathcal{A}/\mathcal{B})$. 
The following proposition describes the relationship between the above 
two types of quotient
categories.

\begin{proposition}[{\cite[Theorem 3.2]{MJI}}]\label{serre}
With the above notions,
the above functor $D^{\ast}(\mathcal{A}) \rightarrow D^{\ast}(\mathcal{A}/\mathcal{B})$
induces an equivalence of categories
$$
 D^{\ast}(\mathcal{A})/D^{\ast}_{[\mathcal{B}]}(\mathcal{A}) \cong D^{\ast}
 (\mathcal{A}/\mathcal{B}).
$$  
\end{proposition}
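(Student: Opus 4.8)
The plan is the classical one: build the comparison functor out of the Verdier quotient and then check that it is essentially surjective and fully faithful. Throughout I use three standard facts about the quotient functor $\pi\colon\mathcal A\to\mathcal A/\mathcal B$ --- it is exact, it is the identity on objects, and for $C\in\mathrm{Ob}(\mathcal A)$ one has $\pi(C)\cong 0$ if and only if $C\in\mathrm{Ob}(\mathcal B)$ --- together with the fact that every morphism of $\mathcal A/\mathcal B$ has the form $\pi(t)\circ\pi(s)^{-1}$ for morphisms $s,t$ of $\mathcal A$ with $\ker s$ and $\mathrm{coker}\,s$ in $\mathcal B$. Since $\pi$ is exact, $H^n(\pi(X^\bullet))\cong\pi(H^n(X^\bullet))$ for every complex $X^\bullet$ over $\mathcal A$; hence $D^{\ast}(\pi)$ carries every object of $D^{\ast}_{[\mathcal B]}(\mathcal A)$ to an acyclic complex, and, more precisely, a morphism $f$ of $D^{\ast}(\mathcal A)$ has $\mathrm{cone}(f)\in D^{\ast}_{[\mathcal B]}(\mathcal A)$ if and only if $D^{\ast}(\pi)(f)$ is an isomorphism. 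As $\mathcal B$ is closed under direct summands, $D^{\ast}_{[\mathcal B]}(\mathcal A)$ is thick, so the Verdier quotient admits the usual calculus of fractions, and by its universal property $D^{\ast}(\pi)$ factors through a triangulated functor $\bar Q\colon D^{\ast}(\mathcal A)/D^{\ast}_{[\mathcal B]}(\mathcal A)\to D^{\ast}(\mathcal A/\mathcal B)$; the class of morphisms inverted in the source of $\bar Q$ is exactly the class inverted by $D^{\ast}(\pi)$.

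Next I would prove essential surjectivity, whose core is a lifting lemma: every object $\bar X^\bullet$ of $D^{\ast}(\mathcal A/\mathcal B)$ is isomorphic in $D^{\ast}(\mathcal A/\mathcal B)$ to $\pi(X^\bullet)$ for some $X^\bullet\in D^{\ast}(\mathcal A)$. The terms $\bar X^i$ are literally objects of $\mathcal A$, so only the differentials need adjusting: writing each $\bar d^{\,i}$ as a fraction $\pi(t^i)\circ\pi(s^i)^{-1}$ with $\ker s^i$ and $\mathrm{coker}\,s^i$ in $\mathcal B$, one clears denominators one degree at a time, replacing the $\bar X^i$ by suitable subobjects in $\mathcal A$ and, upon passing to the derived category, absorbing the resulting error terms, all of which lie in $\mathcal B$. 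This produces a genuine complex $X^\bullet$ over $\mathcal A$, bounded in the same range, with $\pi(X^\bullet)\cong\bar X^\bullet$ in $D^{\ast}(\mathcal A/\mathcal B)$. (When $\ast=+$ and $\mathcal A$ has enough injectives that stay injective in $\mathcal A/\mathcal B$, one may instead resolve $\bar X^\bullet$ by such objects.)

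For full faithfulness, using the calculus of fractions in $D^{\ast}(\mathcal A)/D^{\ast}_{[\mathcal B]}(\mathcal A)$ it suffices to check two things for $X^\bullet,Y^\bullet\in D^{\ast}(\mathcal A)$. First, every morphism $\pi(X^\bullet)\to\pi(Y^\bullet)$ in $D^{\ast}(\mathcal A/\mathcal B)$ is the image of a roof $X^\bullet\xleftarrow{s}Z^\bullet\xrightarrow{f}Y^\bullet$ with $\mathrm{cone}(s)\in D^{\ast}_{[\mathcal B]}(\mathcal A)$: one represents the given morphism by a roof over $\mathcal A/\mathcal B$, lifts its apex by the lifting lemma, lifts the two legs to honest morphisms over $\mathcal A$ after a further round of denominator-clearing, and observes that the lift of the quasi-isomorphism leg has cone in $D^{\ast}_{[\mathcal B]}(\mathcal A)$ by the criterion above. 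Second, if $\phi\colon X^\bullet\to Y^\bullet$ in $D^{\ast}(\mathcal A)$ satisfies $D^{\ast}(\pi)(\phi)=0$, then $\phi$ must factor through an object of $D^{\ast}_{[\mathcal B]}(\mathcal A)$, hence vanish in the quotient: one uses the fraction calculus in $D^{\ast}(\mathcal A/\mathcal B)$ to find a morphism through which $\pi(\phi)$ factors as zero, lifts it, and converts this into a factorization of $\phi$, again via the exactness of $\pi$ and the description of morphisms in a Serre quotient. These two points, together with the first paragraph, make $\bar Q$ fully faithful, hence an equivalence.

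I expect the main obstacle to be precisely the lifting lemmas in the last two steps. The delicacy is that a morphism of $\mathcal A/\mathcal B$ is determined only after passing to subobjects with kernel and cokernel in $\mathcal B$, so clearing denominators for an entire complex, or for an entire roof of complexes, at once --- while keeping everything a bona fide complex over $\mathcal A$ and not leaving the prescribed boundedness range --- requires a careful simultaneous choice of those auxiliary subobjects. What ultimately makes the bookkeeping close up is that every error term produced along the way lies in $\mathcal B$, hence already belongs to $D^{\ast}_{[\mathcal B]}(\mathcal A)$ and dies under $\pi$, so it is harmless both in $D^{\ast}(\mathcal A/\mathcal B)$ and in the Verdier quotient.
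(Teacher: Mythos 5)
The paper does not prove this proposition; it is cited verbatim as \cite[Theorem 3.2]{MJI} (Miyachi), so there is no proof in the text to compare against. Your sketch does follow the standard line of argument that Miyachi and others take: factor $D^{\ast}(\pi)$ through the Verdier quotient via the universal property, characterize the inverted morphisms by the exactness of $\pi$ applied to cones, then prove essential surjectivity and fullness by lifting complexes and roofs from $\mathcal A/\mathcal B$ back to $\mathcal A$, and finally prove faithfulness by showing that $\phi$ with $\pi(\phi)=0$ factors through $D^{\ast}_{[\mathcal B]}(\mathcal A)$. You correctly identify the lifting lemma as the crux.

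Two cautionary remarks. First, your lifting argument by ``clearing denominators one degree at a time'' implicitly needs a place to start the induction, so it genuinely depends on the boundedness condition: for $\ast = +$ (resp.\ $-$) one inducts from the bottom (resp.\ top), and for $\ast = b$ either works; but for the unbounded case the degree-by-degree induction has no anchor, and indeed the equivalence is known to be subtle or can fail without extra hypotheses in that case. Your parenthetical about injectives only covers $\ast=+$. Since the paper actually only uses the bounded version (for $D^b$), this is not fatal to the application, but you should not assert the lemma uniformly for ``$\ast$'' without flagging this. Second, when clearing denominators you must make the replacement subobjects compatible across degrees so that $d^{i+1}\circ d^{i}=0$ holds on the nose in $\mathcal A$, not merely in $\mathcal A/\mathcal B$; this requires pulling back along the next denominator before fixing the current degree and is exactly where the ``careful simultaneous choice'' you allude to has to be spelled out to make the proof rigorous. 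As a blind reconstruction of the cited theorem your outline is on the right track, but the gap between ``clears denominators one degree at a time'' and a verified complex over $\mathcal A$ is precisely where Miyachi's actual argument does all the work.
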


\subsubsection{Non-commutative projective space and singularity category}

We next recall the non-commutative projective space and the 
singularity category of a non-negatively graded algebra.

\begin{definition}
Suppose $A$ is a non-negatively graded algebra.
\begin{enumerate}

\item The {\it non-commutative projective space} of $A$, denoted by 
$\mathrm{tails}(A)$,
is the Serre quotient category $\mathrm{grmod}(A) / \mathrm{tors}(A)$,
where $\mathrm{grmod}(A)$ is the Abelian category of finitely 
generated graded $A$-modules, 
and $\mathrm{tors}(A)$ is the full subcategory of $\mathrm{grmod}(A)$ 
consists of finitely generated graded torsion $A$-modules, i.e.,
$$
\mathrm{tors}(A) := \big\{M \in \mathrm{Ob}(\mathrm{grmod}(A))
 \mid M A_{\geq i} = 0, \,\, \mbox{for some} \,\, i   \big\}. 
$$

\item The {\it graded  singularity category}
of $A$, denoted by 
$D_{sg}^{\mathbb{Z}}(A)$,
is the triangulated quotient category $D^{b}( \mathrm{grmod}(A) 
) / \mathrm{Perf}( \mathrm{grmod}(A) )$,
where $\mathrm{Perf}(\mathrm{grmod}(A))$ is the homotopy category of 
bounded complexes of 
finitely generated graded projective $A$-modules.
\end{enumerate}
\end{definition}

\subsubsection{Application to $\Lambda$}\label{sjcbnhu}

Now, for any non-negatively graded algebra $A$ and its graded quotient 
algebra $A' = A/I$, 
let $\overline{\mathrm{grmod}}(A') \subseteq \mathrm{grmod}(A)$ 
be the full subcategory such that 
$$
\mathrm{Ob}( \overline{\mathrm{grmod}}(A') ) 
= \big\{ M \in \mathrm{Ob}(\mathrm{grmod}(A) ) \mid M I^{i} = 0, \,\,
\mbox{for some} \,\, i \in \mathbb{N}  \big\}, 
$$
and $\overline{\mathrm{tails}}(A') \subseteq \mathrm{tails}(A)$ be the 
full subcategory such that 
$$
\mathrm{Ob}( \overline{\mathrm{tails}}(A') ) 
= \big\{ M \in  \mathrm{Ob}(\mathrm{tails}(A) ) \mid M I^{i} = 0, \,\,
\mbox{for some}\,\, i \in \mathbb{N}  \big\}. 
$$ 

\begin{lemma}\label{ToAbel}Retain the above notations.
\begin{enumerate}\item[$(1)$]
$\overline{\mathrm{grmod}}(A')$ is a Serre subcategory of 
$\mathrm{grmod}(A)$, and

\item[$(2)$]
$\overline{\mathrm{tails}}(A')$ is a Serre subcategory of 
$\mathrm{tails}(A)$.
\end{enumerate}
\end{lemma}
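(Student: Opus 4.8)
The plan is to prove (1) by a direct manipulation of powers of $I$, and then deduce (2) formally from (1) together with the standard Serre‑quotient correspondence, the only non‑formal ingredient being a finiteness statement that uses Noetherianity of $A$.

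For (1): $\overline{\mathrm{grmod}}(A')$ is a full subcategory of $\mathrm{grmod}(A)$ containing $0$, so it is enough to verify the short‑exact‑sequence condition in the definition of a Serre subcategory; this also exhibits it as closed under kernels and cokernels inside $\mathrm{grmod}(A)$, hence as an abelian subcategory. So let $0\to M\to N\to W\to 0$ be exact in $\mathrm{grmod}(A)$ and identify $M\subseteq N$, $W=N/M$. If $NI^{c}=0$ then $MI^{c}\subseteq NI^{c}=0$ and $WI^{c}=(NI^{c}+M)/M=0$. Conversely, if $MI^{a}=0$ and $WI^{b}=0$, the second equality forces $NI^{b}\subseteq M$, hence $NI^{a+b}=(NI^{b})I^{a}\subseteq MI^{a}=0$. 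Thus $N\in\overline{\mathrm{grmod}}(A')\iff M,W\in\overline{\mathrm{grmod}}(A')$, which is what was needed.

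For (2) I would introduce $\mathcal{T}:=\{\,M\in\mathrm{Ob}(\mathrm{grmod}(A))\mid MI^{i}\in\mathrm{Ob}(\mathrm{tors}(A))\text{ for some }i\,\}$, which is precisely the preimage of $\overline{\mathrm{tails}}(A')$ under the quotient functor $\pi\colon\mathrm{grmod}(A)\to\mathrm{tails}(A)$ (and which also satisfies $\pi(\mathcal{T})=\overline{\mathrm{tails}}(A')$). It contains $\mathrm{tors}(A)=\ker\pi$, and it is closed under subobjects and quotients because sub‑ and quotient modules of torsion modules are torsion. Once $\mathcal{T}$ is shown to be closed under extensions it is a Serre subcategory of $\mathrm{grmod}(A)$ containing $\ker\pi$, and then $\overline{\mathrm{tails}}(A')=\pi(\mathcal{T})$ is a Serre subcategory of $\mathrm{tails}(A)$ by the usual bijection between Serre subcategories of an abelian category containing a fixed Serre subcategory and Serre subcategories of the quotient (this also makes $\overline{\mathrm{tails}}(A')$ replete, so the definition is independent of chosen representatives).

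The hard part is therefore closure of $\mathcal{T}$ under extensions, and this is where I would invoke that $A$ is Noetherian (true for $A=\Lambda$, since $\Lambda$ is module‑finite over the commutative Noetherian ring $S$); note that one cannot simply copy the computation of (1) inside $\mathrm{tails}(A)$, because multiplication by $I$ need not raise degrees when $I$ has a degree‑zero component. The key auxiliary claim is: if $P$ is a finitely generated graded $A$‑module with $PA_{\ge s}I^{a}=0$, then $PI^{a}\in\mathrm{tors}(A)$. To see this, write $P$ as generated in degrees $\le g$ and $I^{a}$ as generated as a \emph{left} $A$‑module in degrees $\le d$; then $PI^{a}=\sum_{m',z}(m'A)z$, with $m'$ ranging over a finite homogeneous basis of $P_{\le g}$ and $z$ over one of $(I^{a})_{\le d}$, and since $m'A_{\ge s}z\subseteq m'(A_{\ge s}I^{a})=0$ each summand equals $(m'A_{<s})z$, which lives in a bounded range of degrees; hence $PI^{a}$ is finite dimensional, so torsion. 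Granting this, let $0\to M\to N\to W\to 0$ be exact with $MI^{a},WI^{b}\in\mathrm{tors}(A)$; both $MI^{a}$ and $WI^{b}$ are finitely generated, hence finite dimensional, so they vanish in degrees $\ge p$ for some $p$. Choosing $s$ large enough (larger than $p$ plus the absolute value of the bottom degree of $N$) one checks that $NI^{b}A_{\ge s}\subseteq M$, and then that $NI^{b}A_{\ge s}I^{a}\subseteq MI^{a}$ is supported in degrees $\ge p$, hence is zero; so $NI^{b}$ is annihilated by $A_{\ge s}I^{a}$, and the key claim yields $NI^{a+b}=(NI^{b})I^{a}\in\mathrm{tors}(A)$, i.e. $N\in\mathcal{T}$. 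The finiteness argument of this last paragraph is the only real obstacle; everything else is bookkeeping.
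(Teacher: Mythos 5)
Your proof of part (1) is essentially identical to the paper's, up to relabelling the three terms of the short exact sequence. The substantive content is in part (2), where the paper says only that the proof is ``completely analogous, and is left to the interested reader,'' and where you supply a full argument. You are right to flag that a naive transcription of (1) into $\mathrm{tails}(A)$ (replacing ``$=0$'' by ``torsion'') breaks at the extension step: from $WI^{b}\in\mathrm{tors}(A)$ one gets only $NI^{b}A_{\geq t}\subseteq M$ for $t\gg 0$, not $NI^{b}\subseteq M$, and since $I^{a}$ has a degree-zero part (in the paper's application $I=\Lambda e\Lambda$ contains $e$, so $I^{a}\not\subseteq A_{\geq t}$ for any $t\geq 1$) the resulting inclusion $NI^{b}A_{\geq t}I^{a}\subseteq MI^{a}$ does not by itself control $NI^{b}I^{a}=NI^{a+b}$. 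Your remedy --- working with the preimage $\mathcal{T}=\pi^{-1}(\overline{\mathrm{tails}}(A'))$ in $\mathrm{grmod}(A)$, proving $\mathcal{T}$ is Serre and contains $\mathrm{tors}(A)$, and passing through the bijection between Serre subcategories of the quotient and Serre subcategories of $\mathrm{grmod}(A)$ containing $\mathrm{tors}(A)$ --- is the standard and correct device; it also makes $\overline{\mathrm{tails}}(A')$ closed under isomorphism in $\mathrm{tails}(A)$, which the paper's object-level definition, read literally, is not. Your auxiliary claim (if $P$ is a finitely generated graded $A$-module with $PA_{\geq s}I^{a}=0$, then $PI^{a}$ is finite-dimensional, hence torsion) is exactly what is needed to close the extension step, and I have checked the surrounding bookkeeping: it is sound. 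Do make explicit that you use both Noetherianity of $A$ (so that $NI^{b}$ and $I^{a}$ are finitely generated) and local finiteness of $A$ (so that $m'A_{<s}z$ is finite-dimensional); the lemma is stated in the paper for an arbitrary non-negatively graded $A$, but every application is to $A=\Lambda=G\sharp R$, where both hypotheses hold, so the extra assumptions are harmless in context.
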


\begin{proof}
(1) Let 
$$
0 \longrightarrow N \stackrel{\iota}
\longrightarrow M \stackrel{p}
\longrightarrow W \longrightarrow 0 
$$ 
be a short exact sequence in $\mathrm{grmod}(A)$.  
 If $M$ is an object of $\overline{\mathrm{grmod}}(A')$, then it satisfies that 
 $M I^i = 0$ 
 for some $i \in \mathbb{N}$, 
 which is equivalent to that it is an object 
 of $\mathrm{grmod}(A / I^i )$. On one hand, 
 it follows that $N I^i = 0$ 
 since $N \subseteq M$ implies that $ N I^i \subseteq M I^i$. 
 Hence, $N$ is an object of $\overline{\mathrm{grmod}}(A')$. 
On the other hand, $M I^i = 0$ implies that 
\begin{align*}
W I^i  = (M / N) I^i 
 = M I^i /  (M I^i \cap N ) 
 = 0. 
\end{align*}
It follows that $W$ is also an object of $\overline{\mathrm{grmod}}(A')$. 

If $N$ and $W$ are both objects of $\overline{\mathrm{grmod}}(A')$, 
we assume that they are both objects of some 
$\mathrm{grmod}(A / I^j )$. Now, we have $p(M)I^j = 0$. 
Hence, $p(M I^j) = 0$.
It follows that $M I^j$ is in the image of $\iota$. Since $N \cong \iota(N)$ 
as graded $A$-modules, 
we have $\iota(N) I^j = 0$, and then obtain that $M I^{2j} =  (M I^{j})I^{j}= 0$. 
It implies that $M$
is an object of $\overline{\mathrm{grmod}}(A')$. In summary, 
$\overline{\mathrm{grmod}}(A')$ is a Serre categories of
$\mathrm{grmod}(A)$. 

(2) The proof is completely analogous, and is left to the interested reader.
\end{proof}

From Definition \ref{SeDef}, we introduce the following triangulated categories:
$$
D^{b}_{\mathrm{grmod}(A')}(\mathrm{grmod}(A)) 
: = D^{b}_{[\overline{\mathrm{grmod}}(A')]}(\mathrm{grmod}(A)) 
$$
and 
$$
D^{b}_{\mathrm{tails}(A')}(\mathrm{tails}(A)) 
: = D^{b}_{[\overline{\mathrm{tails}}(A')]}(\mathrm{tails}(A)).
$$

In what follows, we apply the above
notions to the case where $A$
is $\Lambda\cong G\sharp R$
(see Proposition \ref{ASstru}).
Also let $I$ in the above be $\Lambda e\Lambda$, where 
$I^i = I$ for any $i$. It is direct to see that 
$$
\mathrm{grmod}(\Lambda / \Lambda e \Lambda) 
\cong \overline{\mathrm{grmod}}(\Lambda / \Lambda e \Lambda), \,\, \quad 
\mathrm{tails}(\Lambda / \Lambda e \Lambda) 
\cong  \overline{\mathrm{tails}}(\Lambda / \Lambda e \Lambda).
$$ 
We then
have the following. 

\begin{proposition}\label{Ker1}
\textup{(1)}
For the functor 
$(-) \otimes^{\mathbb{L}}_{\Lambda} \Lambda e: 
D^{b}(\mathrm{grmod}(\Lambda)) \rightarrow D^{b}(\mathrm{grmod}(S))
$, 
$$
\mathrm{Ker}((-) \otimes^{\mathbb{L}}_{\Lambda} \Lambda e) 
\cong D^{b}_{\mathrm{grmod}(\Lambda / \Lambda e \Lambda)}(\mathrm{grmod}(\Lambda)).
$$

\textup{(2)}
For the functor 
$
(-) \otimes^{\mathbb{L}}_{\Lambda} \Lambda e: D^{b}(\mathrm{tails}
(\Lambda)) \rightarrow D^{b}(\mathrm{tails}(\Lambda / \Lambda e \Lambda)) 
$, 
$$
\mathrm{Ker}((-) \otimes^{\mathbb{L}}_{\Lambda} \Lambda e )
 \cong D^{b}_{\mathrm{tails}(\Lambda / \Lambda e \Lambda)}(\mathrm{tails}(\Lambda)).
$$
\end{proposition}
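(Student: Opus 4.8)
The plan is to prove the two statements in parallel, since the arguments are formally identical once we replace the Serre-quotient category $\mathrm{grmod}(\Lambda)$ by $\mathrm{tails}(\Lambda)$ and invoke Proposition \ref{serre} for the interaction of derived categories with Serre quotients. Fix $T := \Lambda e$, a graded $(\Lambda, S)$-bimodule, and consider the functor $(-)\otimes^{\mathbb L}_\Lambda T$. The first observation is that $T$ is a projective right $\Lambda$-module summand situation: more precisely, the Auslander-type identification $\Lambda \cong \mathrm{End}_S(S\oplus N)$ and $e$ the idempotent onto $S$ give $e\Lambda e \cong S$ and $\Lambda e$ a finitely generated projective left $\Lambda$-module, so the derived tensor is computed by an honest (finite projective) resolution and $(-)\otimes^{\mathbb L}_\Lambda \Lambda e$ restricts to a functor on bounded derived categories; this is what makes the statement well-posed. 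I would record this at the outset.

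\emph{Step 1 (module-level vanishing).} For a finitely generated graded $\Lambda$-module $M$, one has $M\otimes_\Lambda \Lambda e = Me$, and more generally $H^i(M\otimes^{\mathbb L}_\Lambda \Lambda e)$ is computed via $\mathrm{Tor}^\Lambda_i(M,\Lambda e)$. The key elementary fact is: $Me = 0 \iff M\cdot \Lambda e\Lambda = 0 \iff M \in \overline{\mathrm{grmod}}(\Lambda/\Lambda e\Lambda)$. Indeed $Me=0$ forces $M\Lambda e\Lambda = (Me)\Lambda = 0$; conversely $M\Lambda e\Lambda=0$ gives $Me = Me\cdot(e\Lambda e)\subseteq M\Lambda e\Lambda \cdot(\ldots)=0$ after tensoring appropriately — here I use that $\Lambda e\Lambda \cdot e = \Lambda e$ since $e$ is idempotent. (Since $(\Lambda e\Lambda)^i = \Lambda e\Lambda$, the condition "$MI^i=0$ for some $i$" collapses to "$MI=0$", as already noted in the excerpt just before the Proposition.) I would then upgrade this from $H^0$ to all cohomology: if $M\Lambda e\Lambda = 0$, then $M$ is a module over $\Lambda/\Lambda e\Lambda$, and $(\Lambda/\Lambda e\Lambda)\otimes^{\mathbb L}_\Lambda \Lambda e$ is acyclic — this is the standard recollement fact that $\Lambda e$ is "left orthogonal" to the quotient category, equivalently that $\mathrm{Tor}^\Lambda_*(\Lambda/\Lambda e\Lambda, \Lambda e) = 0$, which follows because $\Lambda e\Lambda$ is generated by $e$ and $e\cdot\Lambda e = e\Lambda e$. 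Hence every bounded complex with cohomology in $\overline{\mathrm{grmod}}(\Lambda/\Lambda e\Lambda)$ lies in the kernel, giving one inclusion $D^b_{\mathrm{grmod}(\Lambda/\Lambda e\Lambda)}(\mathrm{grmod}(\Lambda)) \subseteq \mathrm{Ker}$.

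\emph{Step 2 (the reverse inclusion).} Conversely, suppose a bounded complex $M^\bullet$ satisfies $M^\bullet\otimes^{\mathbb L}_\Lambda\Lambda e \simeq 0$. Since $\Lambda e$ is a finite projective left $\Lambda$-module, the functor $(-)\otimes^{\mathbb L}_\Lambda\Lambda e = (-)\otimes_\Lambda \Lambda e = (-)e$ is exact, so it commutes with taking cohomology: $H^i(M^\bullet\otimes^{\mathbb L}_\Lambda\Lambda e) \cong H^i(M^\bullet)e$. Therefore $H^i(M^\bullet)e = 0$ for all $i$, which by Step 1 means each $H^i(M^\bullet)$ lies in $\overline{\mathrm{grmod}}(\Lambda/\Lambda e\Lambda)$, i.e. $M^\bullet \in D^b_{\mathrm{grmod}(\Lambda/\Lambda e\Lambda)}(\mathrm{grmod}(\Lambda))$. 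This proves (1). For (2): exactly the same argument runs in $\mathrm{tails}(\Lambda)$, because tensoring with the projective $\Lambda e$ descends to an exact functor $\mathrm{tails}(\Lambda)\to\mathrm{tails}(S)$ (torsion is preserved, as $Se$-torsion corresponds to $\Lambda$-torsion), and the image of $\mathrm{tors}(\Lambda)$ under $(-)e$ is $\mathrm{tors}(\Lambda e\Lambda$-quotient$)$-related; using Lemma \ref{ToAbel}(2) that $\overline{\mathrm{tails}}(\Lambda/\Lambda e\Lambda)$ is Serre, the characterization "$\overline M e = 0 \iff \overline M\in\overline{\mathrm{tails}}(\Lambda/\Lambda e\Lambda)$" holds by the same idempotent manipulation, and one concludes identically. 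The target is written as $D^b(\mathrm{tails}(\Lambda/\Lambda e\Lambda))$ rather than $D^b(\mathrm{tails}(S))$ for (2) precisely because the localization lands, on the nose, in that quotient.

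\emph{Main obstacle.} The one genuinely delicate point is verifying that $(-)\otimes^{\mathbb L}_\Lambda\Lambda e$ is \emph{exact} on finitely generated (tails of) modules and carries bounded complexes to bounded complexes — i.e. that $\Lambda e$ is a perfect left $\Lambda$-module with $e\Lambda e\cong S$ Noetherian — so that derived tensor equals naive tensor and commutes with cohomology. Granting that, everything reduces to the idempotent identity $\Lambda e\Lambda\cdot e = \Lambda e$ and the bookkeeping with the Serre subcategories from Lemma \ref{ToAbel}. I would therefore spend most of the write-up on Step 1's cohomological vanishing $\mathrm{Tor}^\Lambda_{>0}(\Lambda/\Lambda e\Lambda,\Lambda e)=0$ and on the compatibility of the functor with the two Serre quotients, and treat Step 2 as a short formal consequence.
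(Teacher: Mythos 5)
Your argument is correct, but it takes a genuinely different route from the paper's. The paper proves Proposition \ref{Ker1} by citing two black boxes: Miyachi's \cite[Proposition~5.9]{MJI}, which says the idempotent $e$ induces an equivalence of Abelian categories $\mathrm{grmod}(\Lambda)/\mathrm{grmod}(\Lambda/\Lambda e\Lambda)\cong\mathrm{grmod}(S)$, and Proposition~\ref{serre} (Miyachi's Theorem~3.2), which then upgrades this to the identification $D^b(\mathrm{grmod}(\Lambda))/D^b_{\mathrm{grmod}(\Lambda/\Lambda e\Lambda)}(\mathrm{grmod}(\Lambda))\cong D^b(\mathrm{grmod}(S))$; the kernel statement falls out of that, and part (2) is declared analogous. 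You instead reprove the kernel identification from scratch: $\Lambda e$ is a projective left $\Lambda$-module, so $(-)\otimes^{\mathbb L}_\Lambda\Lambda e$ is the exact functor $(-)e$ and commutes with cohomology, reducing everything to the elementary $H^0$-level equivalence $Me=0\iff M\Lambda e\Lambda=0$ plus the observation that $\Lambda e\Lambda$ is idempotent (so Definition~\ref{SeDef} collapses to "$MI=0$"). That is a legitimate and more self-contained argument. What the paper's route buys, and yours does not, is the additional statement that the functor is a Verdier localization onto $D^b(\mathrm{grmod}(S))$ — which the paper later needs in the proof of Theorem~\ref{thm:smoothcatcpt1equiv} and re-derives there by the same citation to Proposition~\ref{serre}. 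Your route buys transparency: no appeal to Miyachi's recollement machinery, and it isolates exactly where exactness is used. Two small caveats worth tightening if you write this up: the paper's stated target $D^b(\mathrm{tails}(\Lambda/\Lambda e\Lambda))$ in part (2) is evidently a typo for $D^b(\mathrm{tails}(S))$ (see the statement of Theorem~\ref{thm:smoothcatcpt1} and the rest of \S\ref{subsect:DGcat}), so your parenthetical justification of it is chasing a phantom; and in the tails case you should check explicitly that $Me$ being $S$-torsion is equivalent to $M\Lambda e\Lambda$ being $\Lambda$-torsion, which needs a line using $S_{\geq j}=e\Lambda_{\geq j}e$ and $Me\Lambda_{\geq j}\subseteq M\Lambda_{\geq j}$ rather than the bare idempotent identity.
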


\begin{proof}
(1) By \cite[Proposition 5.9]{MJI}, 
the functor $(-) \otimes_{\Lambda} \Lambda e$
induces an equivalence
$$
\mathrm{grmod}(\Lambda ) / \mathrm{grmod}(\Lambda / \Lambda e \Lambda) \cong \mathrm{grmod}(S)
$$
of Abelian categories. 
By Proposition \ref{serre}, 
it then induces an equivalence
$$
D^{b}(\mathrm{grmod}(\Lambda)) / D^{b}_{\mathrm{grmod}(\Lambda / \Lambda e \Lambda)}
(\mathrm{grmod}(\Lambda)) \cong D^{b}(\mathrm{grmod}(S))
$$
as triangulated categories.
It follows that 
$$
\mathrm{Ker}((-) \otimes^{\mathbb{L}}_{\Lambda} \Lambda e) 
\cong D^{b}_{\mathrm{grmod}(\Lambda / \Lambda e \Lambda)}(\mathrm{grmod}(\Lambda))
$$
as triangulated categories.

(2) The proof is the same as (1)
and is therefore omitted.
\end{proof}

\section{Smooth categorical compactification}
\label{CT}

The goal of this section is to introduce Efimov's
notion of smooth categorical compactification of 
DG categories (see Definition \ref{def:smoothcatcomp}). 
We shall also
recall some necessary
concepts and properties of triangulated categories,
which will be used in later sections.

\subsection{Classical generators of a triangulated category}

Let $\mathcal{I}_1$ and $\mathcal{I}_2$ be two full triangulated subcategories of $\mathcal{T}$.
Denote by $\mathcal{I}_1 \ast \mathcal{I}_2 $ the full subcategory of $\mathcal{T}$,
whose objects consist of objects, say $M$, such that there exists a distinguished triangle 
$$
M_{1} \longrightarrow M \longrightarrow M_2
$$
in $\mathcal{T}$ with $M_{i} \in \mathrm{Ob}(\mathcal{I}_i)$. 
Let $\mathcal{E}$ be a set of objects in $\mathcal{T}$;
denote by $\langle\mathcal{E}\rangle :=\langle\mathcal{E}\rangle_1$ 
the smallest full subcategory of 
$\mathcal{T}$ containing the objects in $\mathcal{E}$ and closed under 
direct summands, finite direct sums and shifts. 
Let $\langle\mathcal{E}\rangle_0$ be the trivial subcategory of $\mathcal{T}$,
 $\langle\mathcal{E}\rangle_{i} := \big<\langle\mathcal{E}\rangle_{i-1} 
 \ast\langle\mathcal{E}\rangle\big>$, and $\langle
 \mathcal{E}\rangle_{\infty} := \bigcup_{i \geq 0}\langle\mathcal{E}\rangle_{i}$ 
 as full subcategory.

\begin{definition}
Let $\mathcal{T}$ be a triangulated category.
A set $\mathcal{E}$ of objects 
in $\mathcal{T}$ is said to {\it classically generate} $\mathcal{T}$ if
$$
\mathcal{T} = \langle\mathcal{E}\rangle_{\infty}.
$$
$\mathcal{E}$ is called a {\it classical generator} of $\mathcal{T}$. 
\end{definition}

Equivalently, a set $\mathcal{E}$ of objects in $\mathcal{T}$  classically generates
$\mathcal{T}$ if for any object $M$ in $\mathcal{T}$, $M \in \langle 
\mathcal{E}\rangle_{n}$ for some $n \in \mathbb{N}$. We say that $M$
is {\it classically generated} by 
$\mathcal{E}$ in $\mathcal{T}$ if $M \in \langle\mathcal{E}\rangle_{n}$ 
for some $n\in \mathbb{N}$.

\subsection{Orlov's semi-orthogonal decomposition}\label{SG}

Let $\mathcal{T}$ be a triangulated category and $\mathcal{A}$ be 
a full triangulated subcategory of $\mathcal{T}$. 
The full triangulated subcategory $\mathcal{A}^{\perp}$
(resp. $^{\perp}\mathcal{A}$) of $\mathcal{T}$ 
consists of objects, say $M$, of $\mathcal T$ such that 
$
\mathrm{Hom}_{\mathcal{T}}(N, M) = 0
$,
for any $N \in \mathrm{Ob}(\mathcal{A})$
(resp. $\mathrm{Hom}_{\mathcal{T}}(M, N)$ for any $N \in \mathrm{Ob}(\mathcal{A})$). 

\begin{definition}
A sequence of 
full triangulated subcategories $\mathcal{A}_{1}, \cdots, \mathcal{A}_m$ of 
$\mathcal{T}$ is said to be a
{\it semi-orthogonal collection} if  
$$
\mathrm{Ob}(\mathcal{A}_j) \subseteq \mathrm{Ob}(\mathcal{A}^{\perp}_i)
$$
for any $i > j$. 
A semi-orthogonal collection $\mathcal{A}_{1}, \cdots, \mathcal{A}_m$ 
of $\mathcal{T}$ is said to be a 
{\it semi-orthogonal decomposition} of $\mathcal{T}$ 
if for any object $X$ of $\mathcal{T}$ there are morphisms:
$$
0 = X_m \rightarrow X_{m-1} \rightarrow \cdots\rightarrow X_{1} \rightarrow X_0 = X
$$
in $\mathcal{T}$ such that for any $k \in [1, m]$,
$$
\mathrm{Cone}(X_k \rightarrow X_{k-1}) \in \mathrm{Ob}(\mathcal{A}_k).
$$	
A semi-orthogonal decomposition is usually 
written as
$$
\mathcal{T} = \big<\mathcal{A}_{1}, \mathcal{A}_{2}, \cdots, \mathcal{A}_m \big>.
$$
\end{definition}

For a non-negatively graded Gorenstein algebra,
the following theorem gives a relationship between
the bounded derived category of its
non-commutative projective space and its graded singularity category,
which is usually called {\it Orlov's semi-orthogonal decomposition theorem}:

\begin{theorem}[{Orlov \cite[Theorem 16]{DR2}}]\label{orlov}
Suppose $A$ is a non-negatively graded Gorenstein algebra of global dimension 
$d$ with Gorenstein parameter $a > 0$ 
such that $A_0$ is semi-simple. 
If $A$ is Noetherian, 
then there is a fully faithful functor 
$\Phi: D^{\mathbb{Z}}_{sg}(A) \rightarrow D^{b}(\mathrm{tails}\, A)$
and the following semi-orthogonal decomposition
\begin{align}\label{siucno}
D^{b}(\mathrm{tails}(A)) = \big< \pi(A), \pi (A(1)), \cdots, 
\pi (A(a-1)), \Phi (D^{\mathbb{Z}}_{sg}(A)) \big>,
\end{align}
where $\pi: D^b(\mathrm{grmod}(A))
\to D^b(\mathrm{tails}(A))$
is the natural quotient functor.
\end{theorem}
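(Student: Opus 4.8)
The plan is to follow Orlov's original argument, working throughout inside $\mathcal{D}:=D^{b}(\mathrm{grmod}(A))$ and its Verdier quotients. First I would reduce everything to graded modules: Proposition~\ref{serre}, applied with $\mathcal{A}=\mathrm{grmod}(A)$ and $\mathcal{B}=\mathrm{tors}(A)$, gives $D^{b}(\mathrm{tails}(A))\simeq\mathcal{D}/\mathcal{D}_{\mathrm{tors}}$ with $\mathcal{D}_{\mathrm{tors}}:=D^{b}_{[\mathrm{tors}(A)]}(\mathrm{grmod}(A))$, while by definition $D^{\mathbb{Z}}_{sg}(A)=\mathcal{D}/\mathrm{Perf}(\mathrm{grmod}(A))$. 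Thus both categories appearing in the statement are localizations of the single category $\mathcal{D}$, and the theorem becomes a comparison of the two thick subcategories $\mathcal{D}_{\mathrm{tors}}$ and $\mathrm{Perf}(\mathrm{grmod}(A))$ inside it, organised by the grading-shift filtration.

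The engine is truncation. Since $A$ is non-negatively graded, $M\mapsto M_{\geq i}$ (the degree-$\geq i$ submodule) is an exact endofunctor of $\mathrm{grmod}(A)$; for finitely generated $M$, which is automatically bounded below, the cokernel $M_{<i}$ is finite-dimensional, hence torsion. So on $\mathcal{D}$ there is a functorial triangle $X_{\geq i}\to X\to X_{<i}\to X_{\geq i}[1]$ with $X_{<i}\in\mathcal{D}_{\mathrm{tors}}$, whence $\pi(X)\cong\pi(X_{\geq i})$. Writing $\mathcal{T}_{\geq i}\subseteq\mathcal{D}$ for the complexes with cohomology concentrated in degrees $\geq i$, one checks by a calculus-of-fractions argument (replace the apex of a roof by its $\geq i$-truncation; the cone stays in $\mathcal{T}_{\geq i}\cap\mathcal{D}_{\mathrm{tors}}$) that $\pi$ induces an equivalence $\mathcal{T}_{\geq i}/(\mathcal{T}_{\geq i}\cap\mathcal{D}_{\mathrm{tors}})\xrightarrow{\ \sim\ }D^{b}(\mathrm{tails}(A))$ for every $i$. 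Using $\mathrm{Hom}_{\mathcal{D}}(A(-l),Y[m])\cong(H^{m}Y)_{l}$ (valid since $A(-l)$ is projective) one then verifies that the objects $\pi(A(j))$, with $j$ ranging over a window of length $a$, form a semiorthogonal collection --- this is exactly where the Gorenstein hypothesis enters, the vanishing $\mathrm{\underline{Ext}}^{i}_{A}(A_{0},A)=0$ for $i\neq d$ with parameter $a$ forcing $\mathrm{Ext}^{\bullet}_{\mathrm{tails}(A)}(\pi(A(j')),\pi(A(j)))=0$ whenever $0<j'-j<a$.

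It remains to build the fully faithful $\Phi$ and to show its essential image is precisely the orthogonal complement of the exceptional block $\langle\pi(A),\dots,\pi(A(a-1))\rangle$; together with a generation statement this gives \eqref{siucno}. For generation, the truncation shows $D^{b}(\mathrm{tails}(A))$ is generated by $\{\pi(A(j)):j\leq 0\}$ together with the images of the high syzygies of finitely generated modules; for Gorenstein $A$ of dimension $d$ the $d$-th syzygy of any module is (maximal) Cohen--Macaulay, so by the graded Buchweitz equivalence $D^{\mathbb{Z}}_{sg}(A)\simeq\mathrm{\underline{CM}}^{\mathbb{Z}}(A)$ those syzygies account for a copy of $D^{\mathbb{Z}}_{sg}(A)$, while the AS--Gorenstein relation $R\,\mathrm{\underline{Hom}}_{A}(A_{0},A)\simeq A_{0}(a)[-d]$, applied to the resolutions of the graded simple modules and read modulo torsion, collapses the tail $\{\pi(A(j)):j\ll 0\}$ onto a window of length $a$ (up to applying the grading-shift autoequivalence one may take this window to be $[0,a-1]$, as in the statement). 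Then $\Phi$ is to be obtained by sending a graded Cohen--Macaulay module $M$, taken generated in non-negative degrees, to $\pi(M)$ projected off the exceptional block; one must show the block is admissible (so the projection exists), that the assignment descends to $\mathrm{\underline{CM}}^{\mathbb{Z}}(A)$, and that it is fully faithful with image the whole orthogonal complement. I expect this last step to be the main obstacle: it is the technical heart of Orlov's argument, it uses the Gorenstein parameter $a>0$ in an essential way (to pin the length-$a$ window and to obtain admissibility) together with Cohen--Macaulay approximations to match $\mathrm{Hom}$'s in $D^{b}(\mathrm{tails}(A))$ with stable $\mathrm{Hom}$'s of Cohen--Macaulay modules, and it is precisely there that the interplay between $\mathcal{D}_{\mathrm{tors}}$ and $\mathrm{Perf}(\mathrm{grmod}(A))$ is controlled. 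The (possibly non-commutative) case costs nothing extra, since truncation, AS--Gorenstein duality, and Buchweitz's equivalence all hold verbatim for graded Iwanaga--Gorenstein algebras.
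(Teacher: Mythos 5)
The paper does not prove this theorem: it is quoted verbatim (up to notation) from Orlov \cite[Theorem 16]{DR2}, and the paper only cites the reference. So there is no ``paper's own proof'' to compare against; what you have written is a reconstruction of Orlov's argument.

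As such a reconstruction, your outline is broadly faithful to Orlov's strategy: work inside $D^{b}(\mathrm{grmod}(A))$, use the degree-$\geq i$ truncation to pass between the Serre quotient $D^{b}(\mathrm{tails}(A))$ and the Verdier quotient $D^{\mathbb{Z}}_{sg}(A)$, exploit the AS--Gorenstein duality $R\,\underline{\mathrm{Hom}}_{A}(A_{0},A)\simeq A_{0}(a)[-d]$ to obtain the length-$a$ exceptional block, and get the complementary piece from graded Cohen--Macaulay modules via the Buchweitz equivalence. These are the right ingredients and the right order. However, your proposal stops short of a proof at exactly the point you flag: the construction of $\Phi$ as a functor (not merely an assignment on objects), the admissibility of $\langle\pi(A),\dots,\pi(A(a-1))\rangle$ --- which is what makes ``project off the exceptional block'' meaningful --- the verification that this descends to $\underline{\mathrm{CM}}^{\mathbb{Z}}(A)$, and the full faithfulness and essential surjectivity onto the orthogonal. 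Orlov's actual argument handles this by constructing a chain of intermediate subcategories $\mathcal{D}_{i}\subseteq\mathcal{D}$ interpolating between the torsion and perfect subcategories and analysing them degree by degree; calling it ``the main obstacle'' is correct, but a proof must supply it. You should also be a bit more careful with the direction of the semiorthogonality convention: the vanishing you need is $\mathrm{Hom}_{\mathrm{tails}}(\pi(A(j')),\pi(A(j))[m])=0$ for $0<j'-j\leq a-1$, which via twisting is $\mathrm{Ext}^{m}_{\mathrm{tails}}(\pi(A),\pi(A(l)))=0$ for $-a<l<0$; the $m=0$ case is positivity of the grading and the $m>0$ cases use the Gorenstein parameter --- worth spelling out, since it is where $a>0$ is used. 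In short: a correct high-level sketch, but not a proof, and since the paper itself supplies none you cannot lean on it to fill the gap.
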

In the decomposition (\ref{siucno}), we usually use 
$\pi (A(i)$ to represent $\langle\pi (A(1)\rangle$ 
for simplicity.

Now let $A$ be an AS regular algebra with Gorenstein parameter $a$. 
Then $D^{\mathbb{Z}}_{sg}(A)$ is a trivial category since $A$ is homologically smooth. 
Hence, the above theorem implies that in this case
$$
D^{b}(\mathrm{tails}(A)) = \big< \pi(A), \pi (A(1)), \cdots, 
\pi (A(a-1))\big>.
$$
In summary, we have the
following.

\begin{corollary}\label{ydbvwjv}
Suppose $A$ is an AS regular algebra with positive Gorenstein parameter $a$. Then
$D^{b}(\mathrm{tails}(A))$ has a classical generator 
$
\bigoplus\limits^{a-1}_{i = 0}\pi(A(i)).
$
\end{corollary}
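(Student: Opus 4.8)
The plan is to deduce Corollary \ref{ydbvwjv} directly from Orlov's semi-orthogonal decomposition theorem (Theorem \ref{orlov}) together with the fact that an AS regular algebra is homologically smooth. First I would recall that if $A$ is AS regular of dimension $d$ with Gorenstein parameter $a$, then in particular $A$ is a Noetherian non-negatively graded Gorenstein algebra of finite global dimension $d$ with $A_0$ semisimple, so all hypotheses of Theorem \ref{orlov} are satisfied provided $a>0$, which is our standing assumption. The key additional input is that an algebra of finite global dimension is homologically smooth, hence its singularity category $D^{\mathbb Z}_{sg}(A) = D^b(\mathrm{grmod}(A))/\mathrm{Perf}(\mathrm{grmod}(A))$ is trivial, because every bounded complex of finitely generated graded modules is already perfect when the global dimension is finite.

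With that in hand, the semi-orthogonal decomposition \eqref{siucno} collapses: the last component $\Phi(D^{\mathbb Z}_{sg}(A))$ is the zero subcategory, so Theorem \ref{orlov} gives
$$
D^b(\mathrm{tails}(A)) = \big\langle \pi(A), \pi(A(1)), \cdots, \pi(A(a-1)) \big\rangle.
$$
The remaining step is to pass from this semi-orthogonal decomposition into finitely many ``one-object'' pieces to the statement that the single object $\bigoplus_{i=0}^{a-1}\pi(A(i))$ is a classical generator. For this I would argue that a semi-orthogonal decomposition $\mathcal T = \langle \mathcal A_1, \dots, \mathcal A_m\rangle$ implies that $\mathcal T$ is classically generated by (the union of) classical generators of the $\mathcal A_i$: unwinding the defining filtration $0 = X_m \to \cdots \to X_0 = X$, any object $X$ is built from the cones $\mathrm{Cone}(X_k \to X_{k-1}) \in \mathcal A_k$ by finitely many cones and shifts, so $X$ lies in $\langle \bigcup_i \mathcal A_i\rangle_\infty$; and since each $\mathcal A_i = \langle \pi(A(i-1))\rangle_\infty$ (it is the triangulated subcategory generated by the single object $\pi(A(i-1))$), one concludes that $\bigcup_i \{\pi(A(i))\}$ classically generates, hence so does its direct sum $\bigoplus_{i=0}^{a-1}\pi(A(i))$, since finite direct sums and direct summands are allowed in forming $\langle-\rangle$.

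I expect the only genuine subtlety — more a matter of bookkeeping than of real difficulty — to be the last step: confirming that in the decomposition \eqref{siucno} each subcategory $\langle\pi(A(i))\rangle$ really is the \emph{thick} (classically generated) subcategory generated by $\pi(A(i))$, as the notational remark after Theorem \ref{orlov} suggests, rather than some larger subcategory, and then tracking the number of cone-steps needed to reassemble an arbitrary object. Everything else (Gorenstein-ness and Noetherianity of $A$, triviality of $D^{\mathbb Z}_{sg}(A)$) is immediate from the definitions already recalled in \S\ref{subsect:ASregularalg} and the hypothesis $a>0$. No delicate estimates are involved, so the corollary follows essentially formally from Theorem \ref{orlov}.
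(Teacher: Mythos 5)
Your argument is correct and is precisely the paper's argument: apply Theorem \ref{orlov}, observe that $D^{\mathbb Z}_{sg}(A)$ vanishes because an AS regular algebra is homologically smooth, and read off the classical generator from the resulting semi-orthogonal decomposition $\langle \pi(A),\dots,\pi(A(a-1))\rangle$. The extra care you take in spelling out the hypotheses of Theorem \ref{orlov} and in unwinding the filtration to conclude that a direct sum of generators of the semi-orthogonal pieces classically generates the whole category is left implicit in the paper but is exactly the intended reasoning.
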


\subsection{Smooth categorical compactification}

We now recall the definition and some properties of DG categories. 
For more details, we refer the reader to \cite{KB0}.

\begin{definition}
A {\it DG category} is a $k$-linear category $\mathcal{D}$ 
such that the Hom-set $\mathrm{Hom}_{\mathcal{D}}(X, Y)$ 
consists of complexes of vector spaces, the  composition  
$$
\mathrm{Hom}_{\mathcal{D}}(Y, Z) \otimes 
\mathrm{Hom}_{\mathcal{D}}(X, Y) \rightarrow \mathrm{Hom}_{\mathcal{D}}(X, Z),
$$ 
is a morphism of complexes and the identity morphisms are 
closed in degree zero.

A {\it DG functor} $F: \mathcal{D} \rightarrow \mathcal{D}'$ between DG categories is a 
functor such that the map 
$$
F_{X, Y}: \mathrm{Hom}_{\mathcal{D}}(X, Y) \rightarrow \mathrm{Hom}_{\mathcal{D}'}(F(X), F(Y))
$$
is a morphism of complexes. 
\end{definition}

\begin{definition}
A DG functor $F:D\to D'$ is called a {\it quasi-equivalence} if 
for any $X, Y \in \mathrm{Ob}(H^{0}(\mathcal{A}) )$, the morphism 
$$
F_{X, Y}: \mathrm{Hom}_{\mathcal{D}}(X, Y) \rightarrow \mathrm{Hom}_{\mathcal{D}'}(F(X), F(Y))
$$
of complexes is a quasi-isomorphism and $F$ is essentially  surjective. 
\end{definition}

Denote by $\mathrm{DGCat}$ the category whose objects are small 
DG categories and whose morphisms are DG functors. 
Consider the localization, denoted by $\mathrm{Hqe}$, of  $\mathrm{DGCat}$ 
with respect to quasi-equivalences. 
A morphism in $\mathrm{Hqe}$ will be called a {\it DG quasi-functor}. 
We next introduce the definition of
DG enhancement. 

For any DG category $\mathcal{A}$, let $\mathrm{Mod}(\mathcal{A})$ be the DG category of DG modules over
$\mathcal{A}$.
Denote by
$l_{\mathcal{A}}: \mathcal{A} \hookrightarrow \mathrm{Mod}(\mathcal{A})$ the Yoneda embedding
(see \cite[\S1.2]{KB0}).

\begin{definition}[\cite{BKKK}]
Let $\mathcal{A}$ be a DG category. A pre-triangulated structure on $\mathcal{A}$ is given as follows: 

\begin{enumerate}
\item[(1)] For any morphism $f \in Z^0(\mathrm{Hom}_{\mathcal{A}}(X, Y) )$,
the {\it cone} of $f$, denoted by $
\mathrm{Cone}(f) \in \mathrm{Ob}(\mathcal{A})$, is the
$\mathcal{A}$ DG-module 
$$
\mathrm{Cone}( \mathrm{Hom}_{\mathcal{A}}(-, X) 
\xrightarrow{f_\ast} \mathrm{Hom}_{\mathcal{A}}(-, Y)  )
$$ under the functor $l_{\mathcal{A}}$; 

\item[(2)] For any $i \in \mathbb{Z}$ and $X \in \mathrm{Ob}(\mathcal{A})$, 
the {\it $i$-translation} of $X$, denoted by $X[i]$, is the 
$\mathcal{A}$ DG-module 
$$
\mathrm{Hom}_{\mathcal{A}}(-, X)[i]
$$
under the functor $l_{\mathcal{A}}$. 
\end{enumerate}	
\end{definition}

\begin{definition}
Let $\mathcal{A}$ be a DG category. 
Its homotopy category 
$H^{0}(\mathcal{A})$ is defined as follows: 
\begin{enumerate}
\item [(1)] $\mathrm{Ob}(H^{0}(\mathcal{A}))$ is same with 
$\mathrm{Ob}(\mathcal{A})$;

\item [(2)] For any two $X, Y \in \mathrm{Ob}(H^{0}(\mathcal{A}) )$, 
$\mathrm{Hom}_{H^{0}(\mathcal{A})}(X, Y) 
:= H^0 (\mathrm{Hom}_{\mathcal{A}}(X, Y) )$.
\end{enumerate}
\end{definition}
Note that when $\mathcal{A}$ admits a pre-triangulated structure, 
this structure gives a triangulated structure 
for $H^{0}(\mathcal{A})$ (see \cite[\S 2.1 ]{KB1}). 
Moreover, 
we view $H^{0}(-)$ as a functor from $\mathrm{DGCat}$ 
to the category consisting of all triangulated categories.  
Note that $H^{0}(-)$ takes quasi-equivalences to triangle equivalences. 

\begin{definition}
Let $\mathcal{T}$ be a triangulated category. A DG category 
$\mathcal{A}$ is called a {\it DG enhancement} of 	
$\mathcal{T}$ if 
\begin{enumerate}
\item [(1)] $\mathcal{A}$ admits a pre-triangulated structure;

\item [(2)] There is a triangle equivalent functor $\Phi: H^{0}(\mathcal{A}) \rightarrow \mathcal{T}$. 
\end{enumerate}
\end{definition}

\begin{definition}[{\cite[1.2]{DRI}}]\label{sfkd}
Let $\mathcal{A}$ be a DG category with a pre-triangulated structure, $\mathcal{B} \subseteq \mathcal{A}$ be a 
full DG  subcategory with a pre-triangulated structure compatible with the one of $\mathcal{A}$. 
A {\it DG quotient} (or simply a quotient) of $\mathcal{A}$
by
$\mathcal{B}$ is a diagram of DG 
categories and DG functors:
$$
\mathcal{A} \xleftarrow{\sim} \tilde{\mathcal{A}} \xrightarrow{\xi} \mathcal{C}
$$
such that
\begin{enumerate}
\item[(1)] $\tilde{\mathcal{A}}$ has a pre-triangulated structure 
and the DG functor $\tilde{\mathcal{A}} \xrightarrow{\sim} \mathcal{A}$ 
is a quasi-equivalence which is compatible with pre-triangulated structures; 

\item[(2)] $H^0(\xi): H^0(\tilde{\mathcal{A}}) 
\rightarrow H^0(\tilde{\mathcal{C}})$ is essentially surjective; 

\item[(3)] $\tilde{\mathcal{A}}$ has a pre-triangulated structure 
such that $H^{0}(\xi)$ induces a triangle equivalence 
$H^0(\mathcal{A}) / H^0(\mathcal{B}) \xrightarrow{\sim} H^0(\tilde{\mathcal{C}})$. 
\end{enumerate}
\end{definition}

For simplicity, we call $\mathcal{C}$ the DG quotient instead of the above diagram.  
 
\begin{proposition}[{\cite[1.6.2]{DRI}}]\label{nskkx}
For any small DG category $\mathcal{A}$ with a pre-triangulated structure, and 
a full DG subcategory $\mathcal{B} \subseteq \mathcal{A}$ with 
a pre-triangulated structure which is compatible with the one of $\mathcal{A}$, the DG quotient 
of the pair $(\mathcal{A}, \mathcal{B})$ (meaning $\mathcal{A}$
modulo $\mathcal{B}$) exists and is unique in the 
sense of $\mathrm{Hqe}$. Moreover, the DG quotient, denoted by $\mathcal{A}/\mathcal{B}$, 
is a DG enhancement of the 
triangulated category $H^{0}(\mathcal{A})/H^{0}(\mathcal{B})$. 
\end{proposition}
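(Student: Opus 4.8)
The plan is to establish this proposition — which is Drinfeld's theorem on the existence and uniqueness of the DG quotient — by reproducing Drinfeld's explicit construction, verifying for that construction the three conditions of Definition \ref{sfkd}, and then extracting from it a universal property that forces uniqueness in $\mathrm{Hqe}$.

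\textbf{Existence.} First I would pass to a semifree (cofibrant) replacement $\tilde{\mathcal{A}} \xrightarrow{\sim} \mathcal{A}$ compatible with pre-triangulated structures — concretely, one may take the pre-triangulated hull of $\mathcal{A}$ inside $\mathrm{Mod}(\mathcal{A})$ under the Yoneda embedding $l_{\mathcal{A}}$, so that all required cones and shifts are literally present as objects — and let $\tilde{\mathcal{B}} \subseteq \tilde{\mathcal{A}}$ be the corresponding full pre-triangulated subcategory. I then define $\mathcal{C}$ by adjoining freely to $\tilde{\mathcal{A}}$, for each object $U \in \mathrm{Ob}(\tilde{\mathcal{B}})$, a new endomorphism $\varepsilon_U$ of degree $-1$ subject only to $d\varepsilon_U = \mathrm{id}_U$; concretely $\mathrm{Hom}_{\mathcal{C}}(X, Y)$ is spanned by alternating words $a_0 \varepsilon_{U_1} a_1 \cdots \varepsilon_{U_m} a_m$ with the $a_i$ morphisms of $\tilde{\mathcal{A}}$, with differential given by the Leibniz rule together with $d\varepsilon_U = \mathrm{id}_U$. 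Let $\xi \colon \tilde{\mathcal{A}} \to \mathcal{C}$ be the evident inclusion. Since $\mathcal{C}$ has the same objects as $\tilde{\mathcal{A}}$ and the adjoined morphisms do not alter the representability computations defining cones, $\mathcal{C}$ inherits a pre-triangulated structure from $\tilde{\mathcal{A}}$, and $H^0(\xi)$ is essentially surjective; this gives conditions (1)--(2) of Definition \ref{sfkd}. Moreover $\varepsilon_U$ is a contracting homotopy for $\mathrm{id}_U$, so each $U \in \mathrm{Ob}(\tilde{\mathcal{B}})$ becomes a zero object in $H^0(\mathcal{C})$; hence $H^0(\xi)$ annihilates $H^0(\mathcal{B})$ and factors through a functor $\Psi \colon H^0(\mathcal{A})/H^0(\mathcal{B}) \to H^0(\mathcal{C})$.

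\textbf{The comparison functor is an equivalence.} This is the technical core. Essential surjectivity of $\Psi$ is clear, so I must prove it is fully faithful. For $X, Y \in \mathrm{Ob}(\tilde{\mathcal{A}})$ I would compute $H^{\ast}\mathrm{Hom}_{\mathcal{C}}(X, Y)$ by recognising $\mathrm{Hom}_{\mathcal{C}}(X, Y)$ as a mapping telescope over the filtered system of objects obtained from $X$ by successively coning off objects of $\tilde{\mathcal{B}}$; this telescope computes exactly the filtered colimit, over roofs $X \leftarrow X' \to Y$ with $\mathrm{Cone}(X' \to X) \in \mathrm{Ob}(H^0(\mathcal{B}))$, that defines morphisms in the Verdier quotient $H^0(\mathcal{A})/H^0(\mathcal{B})$. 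Equivalently, one may pass to DG modules and invoke Keller's identification of the Drinfeld quotient with the appropriate full DG subcategory of cofibrant modules in the localization of $\mathrm{Mod}(\tilde{\mathcal{A}})$ by the thick subcategory generated by the representables of $\tilde{\mathcal{B}}$, together with the fact that morphisms in that localized category are again computed by the same colimit of fractions. The bookkeeping — checking that the telescope is cofinal in and compatible with the system of roofs, and that freely adjoining the $\varepsilon_U$ introduces no spurious cohomology classes — is, I expect, the main obstacle; it is precisely the content of Drinfeld's argument (and of Keller's), which I would reproduce.

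\textbf{Uniqueness.} The construction above enjoys a universal property: any DG functor $\tilde{\mathcal{A}} \to \mathcal{D}$ sending every object of $\tilde{\mathcal{B}}$ to a contractible object factors through $\xi$, essentially uniquely, by sending each $\varepsilon_U$ to a chosen contracting homotopy. Now suppose $\bigl(\tilde{\mathcal{A}}' \xrightarrow{\sim} \mathcal{A},\ \xi' \colon \tilde{\mathcal{A}}' \to \mathcal{C}'\bigr)$ is any diagram satisfying (1)--(3) of Definition \ref{sfkd}. After replacing both quotients by a common semifree model of $\mathcal{A}$, condition (3) forces $\xi'$ to kill the objects of the subcategory, so the universal property yields DG functors $\mathcal{C} \to \mathcal{C}'$ and $\mathcal{C}' \to \mathcal{C}$ over $\mathcal{A}$; both composites induce the identity on $H^0$ — namely the identity of the Verdier quotient — and both source and target are pre-triangulated enhancements of that quotient, so by the standard criterion (a DG functor between pre-triangulated DG categories that is a quasi-isomorphism on the $\mathrm{Hom}$-complexes of a generating family and an equivalence on $H^0$ is a quasi-equivalence) the composites are quasi-equivalences. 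Hence the DG quotient exists and is unique in $\mathrm{Hqe}$, and by construction $H^0(\mathcal{C}) \simeq H^0(\mathcal{A})/H^0(\mathcal{B})$, so $\mathcal{A}/\mathcal{B} := \mathcal{C}$ is a DG enhancement of the latter.
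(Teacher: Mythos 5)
The paper offers no proof of this statement: it is a verbatim citation of Drinfeld's theorem (\cite[1.6.2]{DRI}), and your proposal is attempting to reproduce the cited proof rather than compare against an argument in the text. That said, your outline does follow Drinfeld's route — adjoin a degree $-1$ endomorphism $\varepsilon_U$ with $d\varepsilon_U = \mathrm{id}_U$ for each $U$ in the subcategory, show this kills $H^0(\mathcal{B})$, compute the resulting Hom-complexes, and extract uniqueness from a universal property in $\mathrm{Hqe}$ — so the overall shape is right and the acknowledged hard step (identifying $H^{\ast}\mathrm{Hom}_{\mathcal{C}}(X,Y)$ with morphisms in the Verdier quotient) is indeed the technical content of Drinfeld's and Keller's arguments.

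There is, however, a concrete conceptual error in your ``semifree replacement'' step. You propose to realize the cofibrant replacement $\tilde{\mathcal{A}} \xrightarrow{\sim} \mathcal{A}$ as ``the pre-triangulated hull of $\mathcal{A}$ inside $\mathrm{Mod}(\mathcal{A})$.'' These are different constructions serving different purposes. The flatness that Drinfeld needs is homotopical flatness of the Hom-complexes of $\tilde{\mathcal{A}}$ over the ground ring $k$ (so that tensoring with the freely adjoined $\varepsilon$'s preserves quasi-isomorphisms); taking the pretriangulated hull enlarges the object set but leaves the Hom-complexes between the original objects unchanged, and in any case does nothing to make them semifree or $k$-flat. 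Moreover, since $\mathcal{A}$ is already assumed pre-triangulated, the Yoneda embedding into its own pretriangulated hull is a quasi-equivalence of pre-triangulated DG categories, i.e.\ it is not a ``replacement'' in any homotopical sense. In the situation of this paper $k$ is a field, so every complex of $k$-modules is homotopically flat and no replacement of any kind is needed — one can run the Drinfeld construction on $\mathcal{A}$ directly. But if you want to state the argument in the generality of Drinfeld's theorem you must use an actual semifree (or $k$-flat) resolution, not the pretriangulated hull. Similarly, the universal property you invoke for uniqueness should be phrased as a universal property in $\mathrm{Hqe}$ (initial among quasi-functors from $\mathcal{A}$ sending $\mathcal{B}$ to contractibles), not as a strict factorization of DG functors ``by sending each $\varepsilon_U$ to a chosen contracting homotopy'' — a strict factorization generally does not exist and different choices are only coherently homotopic, which is precisely why the statement is made in $\mathrm{Hqe}$.
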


\begin{example}\label{jsvdic}
(1) Let $A$ be a finitely generated associative $k$-algebra. Both 
$D^{b}(A)$ and the full subcategory $\mathrm{Perf}(A)$ have unique 
DG enhancements, 
denoted by $\mathcal{D}^{b}(A)$ and $\mathcal{P}\mathrm{erf}(A)$ 
respectively (see \cite[Theorem A]{CNS}). Since the two pre-triangulated structures are compatible, 
the pair $(\mathcal{D}^{b}(A), \mathcal{P}\mathrm{erf}(A))$ give a canonical DG enhancement 
$\mathcal{D}_{sg}(A) := \mathcal{D}^{b}(A) / \mathcal{P}\mathrm{erf}(A)$ of 
$D_{sg}(A) = D^{b}(A)/\mathrm{Perf}(A)$ via the DG quotient. 

(2) Let $S$ be a finitely generated 
graded $k$-algebra 
with non-negative grading. 
We have two associated DG categories 
$\mathcal{D}^{b}(\mathrm{grmod}(S))$ and $\mathcal{P}\mathrm{erf}(\mathrm{grmod}(S))$. They also induce
a canonical DG enhancement 
$\mathcal{D}^{b}(\mathrm{grmod}(S)) / \mathcal{P}\mathrm{erf}(\mathrm{grmod}(S))$ of the 
$D^{\mathbb{Z}}_{sg}(S)$ via the DG quotient.

(3) Let $S$ be a finitely generated graded algebra over $k$ with 
non-negative grading. Consider the non-commutative projective space 
$\mathrm{tails}(S)$. Its bounded derived category has
a unique DG enhancement $\mathcal{D}^{b}(\mathrm{tails}(S)) $ 
(see also \cite[Theorem A]{CNS}). Suppose $S$ is a Gorenstein 
algebra with Gorenstein parameter $a > 0$.
Let $\big< \pi(S), \cdots, \pi(S(a-1)) \big>^{\sim} 
\subseteq \mathcal{D}^{b}(\mathrm{tails}(S))$ be the full DG subcategory whose 
objects are generated by 
objects $\{ \pi(S), \cdots, \pi(S(a-1)) \}$. Then the DG quotient given by pair 
$$
(\mathcal{D}^{b}(\mathrm{tails}(S)), \big< \pi(S), \cdots, \pi(S(a-1)) \big>^{\sim} )
$$ 
is exactly $\mathcal{D}^{b}(\mathrm{grmod}(S)) 
/ \mathcal{P}\mathrm{erf}(\mathrm{grmod}(S))$ in
$\mathrm{Hqe}$ (see the proof of \cite[Theorem 16]{DR2}). 	
\end{example}

\begin{definition}[{\cite[\S 3.7]{KL0}}] 
\begin{enumerate}
\item[(1)] A DG category $\mathcal{A}$ is called {\it smooth} 
if the diagonal $\mathcal{A} \otimes \mathcal{A}^{op}$-module $\mathcal A$ 
is perfect, that is,
$
\mathcal{A} \in \mathrm{Ob}( \mathrm{Perf}(\mathcal{A} \otimes \mathcal{A}^{op})).
$

\item[(2)] A DG category $\mathcal{A}$ is called {\it proper} if for any $X, Y \in \mathcal{A}$,
$
\mathrm{Hom}_{\mathcal{A}}(X, Y)
$ has bounded and finite-dimensional cohomology. 

\item[(3)] Let $\mathcal{A}$ be a DG category with a pre-triangulated structure. An 
object $X$ is called a {\it classical generator} 
of $\mathcal{A}$ if $X$ is a classical generator of $H^0(\mathcal{A})$. 
\end{enumerate}
\end{definition}

\begin{definition}[{\cite[Definition 1.7]{Efi1}}]\label{def:smoothcatcomp}
A {\it smooth categorical compactification} of a DG category $\mathcal{A}$ is a
DG quasi-functor $F: \mathcal{C} \rightarrow \mathcal{A}$, where the
DG category $\mathcal{C}$ is smooth and proper, 
such that the extension of scalars
functor $F^{\ast}: \mathrm{perf}(\mathcal{C}) \rightarrow \mathrm{Perf}(\mathcal{A})$ is a
localization (up to direct summands), and its kernel is classically generated by a single object.
\end{definition}

If $\mathcal{A}$ is a pre-triangulated DG quasi-functor, then there is an equivalent
definition for the smooth categorical compactification as follows.

\begin{proposition-definition}[{\cite[Definition 2.2]{Efi}}]\label{Csc}
For a pre-triangulated DG category $\mathcal{A}$, a categorical smooth compactification is a DG functor 
$F: \mathcal{C} \rightarrow \mathcal{A}$, such that: 
\begin{enumerate}
\item $\mathcal{C}$ is smooth and proper pre-triangulated DG category;
\item the induced functor $\mathcal{C} / \mathrm{Ker}(F) \rightarrow \mathcal{A}$ is fully faithful,
and $\mathrm{Ker}(F)$ is classically generated by a single object;
\item every object $X \in \mathcal{A}$ is a direct summand of $F(Y)$, for some $Y \in \mathcal{C}$.
\end{enumerate}
\end{proposition-definition}

The above proposition suggests that
\begin{itemize}
\item
if $\mathcal{C}$ is a 
smooth and proper pre-triangulated DG category and 
\item
if $F$ is a DG quotient such that the homotopy 
category of $\mathrm{Ker}(F)$ has a classical generator, 
\end{itemize}
then $F$ gives a smooth categorical compactification of $\mathcal{A}$.

\section{The non-commutative projective spaces of quotient singularities}\label{subsect:DGcat}

The purpose of this section is to prove Theorem
\ref{thm:smoothcatcpt1}, that is, to show
$
 (-) \otimes^{\mathbb{L}}_{\Lambda} \Lambda
 e : \mathcal{D}^{b}(\mathrm{tails}
 (\Lambda)) 
\rightarrow \mathcal{D}^{b}(\mathrm{tails}
(S))
$
is a categorical smooth compactification. We check that 
$ (-) \otimes^{\mathbb{L}}_{\Lambda} \Lambda e $
satisfies all the conditions given 
Proposition-Definition \ref{Csc}.

\subsection{Finite group actions on affine spaces}\label{fgas}

We first recall some basic properties of finite Abelian  subgroups in $\mathrm{SL}(V)$.
Let $G$ be a finite Abelian subgroup in $\mathrm{SL}(V)$ and $R = k[V] = k[x_1, \cdots, x_n]$ 
with $\mathrm{deg}(x_i) = 1$. 
The following proposition is straightforward. 

\begin{proposition}\label{CanAb}	
Any finite Abelian  subgroup $H$ in $\mathrm{GL}(V)$
is isomorphic to a certain subgroup 
whose elements are all diagonal matrixes.
And any irreducible representation of $H$ is one-dimensional vector space. 		
\end{proposition}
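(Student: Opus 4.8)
The plan is to reduce the statement to the classical fact that a commuting family of diagonalizable operators on a finite-dimensional vector space can be simultaneously diagonalized. First I would observe that every element $h$ of a finite subgroup $H \subseteq \mathrm{GL}(V)$ has finite order, hence satisfies a polynomial of the form $t^m - 1$; since $k$ has characteristic zero (indeed $k \subseteq \bar{k}$ with enough roots of unity after a harmless base change, or one simply works over $k$ if it already contains them), this polynomial has distinct roots, so each $h$ is diagonalizable. Because $H$ is Abelian, the operators $\{h : h \in H\}$ form a commuting family of diagonalizable endomorphisms of $V$, and therefore admit a common eigenbasis $e_1, \dots, e_n$. In this basis every $h \in H$ is represented by a diagonal matrix, which gives the isomorphism of $H$ with a subgroup of the diagonal torus in $\mathrm{GL}(V)$ (the isomorphism being conjugation by the change-of-basis matrix).

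For the second assertion, I would argue as follows. Once $H$ is realized by diagonal matrices, each coordinate line $k e_i \subseteq V$ is $H$-stable, so $V = \bigoplus_{i=1}^n k e_i$ decomposes $V$ into one-dimensional subrepresentations. More intrinsically: let $W$ be any irreducible $kH$-module. Each $h \in H$ acts on $W$ as a $k$-linear automorphism of finite order, hence is diagonalizable on $W$; since $H$ is Abelian these actions commute, so they share a common eigenvector $w \in W$. The line $kw$ is then an $H$-subrepresentation of $W$, and irreducibility of $W$ forces $W = kw$, so $\dim_k W = 1$. (If one prefers a cleaner statement one notes that every irreducible representation of an Abelian group over an algebraically closed field is one-dimensional, and since all the relevant representations here, being restrictions of the faithful diagonal action, are already split over $k$, no extension of scalars is needed; alternatively, the characters appearing are exactly those listed as diagonal entries.)

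I do not expect any serious obstacle here — this is a standard fact recorded for later use, and both halves follow immediately from simultaneous diagonalization of a commuting family of finite-order (hence semisimple) operators in characteristic zero. The only point requiring a word of care is the ground field: one must make sure the characteristic-zero hypothesis from the standing Convention is invoked so that finite-order elements are semisimple, and one should note, as the paper's Convention already does, that all representations are assumed faithful, so the diagonal realization is honest. Accordingly the write-up would be just a few lines invoking these observations.
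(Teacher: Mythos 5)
The paper offers no proof here --- it merely records the proposition as ``straightforward'' --- so there is nothing to compare against. Your argument is the standard and correct one: each element of a finite subgroup is annihilated by a separable polynomial $t^m - 1$ in characteristic zero, hence is semisimple, and a commuting family of semisimple operators admits a simultaneous eigenbasis; the second assertion then follows either directly from the diagonal form of $V$ together with the fact that a faithful diagonal representation exhibits a generating set of characters, or, as you note, from the common-eigenvector argument applied to an arbitrary irreducible $kH$-module.

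You correctly flag the one genuine point of care: diagonalizability \emph{over $k$} needs the relevant roots of unity to lie in $k$. As literally stated under the paper's standing convention ($\mathrm{char}\,k=0$ only), the proposition is in fact false --- e.g.\ $\mathbb{Z}/3$ acting on $\mathbb{Q}^2$ by rotations is not conjugate in $\mathrm{GL}_2(\mathbb{Q})$ to a diagonal subgroup, and has an irreducible $\mathbb{Q}$-representation of dimension $2$. So an implicit hypothesis (that $k$ contains the exponent-of-$G$-th roots of unity, or is algebraically closed, as the paper later assumes in an example) is being used silently, and your proof makes visible exactly where. One small wrinkle: your parenthetical reason that ``the relevant representations, being restrictions of the faithful diagonal action, are already split over $k$'' is a slight conflation --- not every irreducible $kH$-module is a subrepresentation of $V$ --- but the intended point is sound: once $H$ is diagonal and faithful on $V$, the eigencharacters on $V$ generate $\widehat{H}$, so every character of $H$ is $k$-valued and the group algebra $kH$ splits as $k^{|H|}$, giving the second assertion cleanly.
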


Recall from \S\ref{NCSC} that
for any irreducible representation $W$ of $G$, the graded $R^{G}$-module
$M^{G}_R(W): = (W \otimes R)^G$ is an indecomposable Cohen-Macaulay module. 
Hence, by Proposition \ref{CanAb} and Auslander's theorem, we have
$$
R \cong \bigoplus_{W \in \hat{G}} (M^{G}_R(W))^{\oplus |W|} 
\cong \bigoplus_{W \in \hat{G}} M^{G}_R(W)
$$
 as $R^G$-modules, and
$$
G \sharp R \cong \mathrm{End}_{R^{G}}(R)
$$
as algebras. 
In the meantime, $G \sharp R$ is a quiver algebra, called the McKay quiver,  
such that $e'(G \sharp R)e' \cong R^G$ 
for any indecomposable idempotent $e'$ (see \cite[2.2.1]{BFC}). 
Furthermore, for any finite Abelian subgroup $H$ in $\mathrm{GL}(V)$, $H \sharp R$ is also a
quiver algebra whose vertices are given by $kH$.

\subsection{The non-commutative projective space}\label{fgasrr}

We next introduce the following
lemma, which will be used 
in the proof of Theorem \ref{thm:smoothcatcpt1}.

\begin{lemma}\label{compgene}
Suppose 
$A$ is a non-negatively graded algebra and $B := A /I $ 
is a graded quotient algebra of $A$.
Let $\iota: A\to B$ be the projection. 
Suppose $X$ is a classical generator of $D^{b}(\mathrm{tails}(B))$.
Then $\iota_{\ast}(X)$ is a classical generator of 
$D_{\mathrm{tails}(B)}^{b}(\mathrm{tails}(A))$.
\end{lemma}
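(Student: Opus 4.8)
The plan is to show that the whole category $D^{b}_{\mathrm{tails}(B)}(\mathrm{tails}(A))$ sits inside the triangulated subcategory generated by $\iota_{\ast}(X)$ by working one cohomology degree at a time. First I would observe that the functor $\iota_{\ast}$ identifies $\mathrm{tails}(B)$ (or rather $\overline{\mathrm{tails}}(B)$ in the notation of \S\ref{sjcbnhu}) with a Serre subcategory of $\mathrm{tails}(A)$, and that under this identification the subcategory $D^{b}_{\mathrm{tails}(B)}(\mathrm{tails}(A))$ is precisely the full subcategory of $D^{b}(\mathrm{tails}(A))$ of complexes all of whose cohomology sheaves lie in $\overline{\mathrm{tails}}(B)$. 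By Lemma \ref{ToAbel}(2) and Proposition \ref{serre} (applied to the Serre pair $\overline{\mathrm{tails}}(B) \subseteq \mathrm{tails}(A)$), the natural functor $D^{b}(\mathrm{tails}(B)) \to D^{b}_{\overline{\mathrm{tails}}(B)}(\mathrm{tails}(A))$ is fully faithful and essentially surjective onto the objects with cohomology in $\overline{\mathrm{tails}}(B)$; here I also need that $\overline{\mathrm{tails}}(B) \cong \mathrm{tails}(B)$, which is exactly the displayed isomorphism noted after Lemma \ref{ToAbel} in the case $I = \Lambda e\Lambda$, but holds generally for any $B = A/I$ since an object killed by $I$ is automatically a $B$-module. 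So $D^{b}_{\mathrm{tails}(B)}(\mathrm{tails}(A)) \simeq D^{b}(\mathrm{tails}(B))$ as triangulated categories, compatibly with $\iota_{\ast}$.

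Given this equivalence, the statement becomes: if $X$ classically generates $D^{b}(\mathrm{tails}(B))$, then $X$ classically generates $D^{b}(\mathrm{tails}(B))$ — which is a tautology once one checks that the equivalence $D^{b}(\mathrm{tails}(B)) \xrightarrow{\sim} D^{b}_{\mathrm{tails}(B)}(\mathrm{tails}(A))$ sends $X$ to $\iota_{\ast}(X)$. Thus the real content is the identification of categories above, and in particular the verification that the inclusion $D^{b}(\mathrm{tails}(B)) \hookrightarrow D^{b}_{\overline{\mathrm{tails}}(B)}(\mathrm{tails}(A))$ is essentially surjective. For that I would take an object $M^{\bullet} \in D^{b}_{\overline{\mathrm{tails}}(B)}(\mathrm{tails}(A))$ and induct on the length of its cohomology amplitude, using the standard truncation triangle $\tau_{\leq m-1}M^{\bullet} \to M^{\bullet} \to H^{m}(M^{\bullet})[-m]$; each $H^{m}(M^{\bullet})$ lies in $\overline{\mathrm{tails}}(B) \cong \mathrm{tails}(B)$, hence is in the image, and the image is a triangulated subcategory, so $M^{\bullet}$ is too.

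The main obstacle I anticipate is not the generation step (which is formal) but making the Serre-quotient/derived-category bookkeeping airtight: specifically, checking that Miyachi's theorem (Proposition \ref{serre}) applies in the non-commutative projective-space setting with the Serre subcategory $\overline{\mathrm{tails}}(B)$, and that the resulting equivalence $D^{b}(\mathrm{tails}(A))/D^{b}_{\overline{\mathrm{tails}}(B)}(\mathrm{tails}(A)) \simeq D^{b}(\mathrm{tails}(A)/\overline{\mathrm{tails}}(B))$ restricts to give the fully faithful embedding with the claimed essential image. One technical point to be careful about is that $\mathrm{tails}(B)$ may a priori be larger than $\overline{\mathrm{tails}}(B)$ as a subcategory of $\mathrm{tails}(A)$ — but since $B = A/I$ is annihilated by $I$, every finitely generated graded $B$-module is annihilated by $I$ as an $A$-module, so the two coincide, and I would state this explicitly at the start. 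Once these identifications are in place, the conclusion that $\iota_{\ast}(X) \in D^{b}_{\mathrm{tails}(B)}(\mathrm{tails}(A))$ is a classical generator follows immediately from the hypothesis on $X$ together with the fact that an equivalence of triangulated categories carries classical generators to classical generators.
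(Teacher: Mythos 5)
Your proposal has a genuine gap, centered on the claim that $\overline{\mathrm{tails}}(B)$ coincides with $\mathrm{tails}(B)$. Recall from \S\ref{sjcbnhu} that
$$
\mathrm{Ob}(\overline{\mathrm{tails}}(B)) = \big\{ M \in \mathrm{Ob}(\mathrm{tails}(A)) \mid M I^i = 0 \mbox{ for some } i \in \mathbb{N}\big\},
$$
and that $D^b_{\mathrm{tails}(B)}(\mathrm{tails}(A))$ is by convention $D^b_{[\overline{\mathrm{tails}}(B)]}(\mathrm{tails}(A))$: the complexes whose cohomologies are killed by \emph{some power} of $I$. Your argument (``every $B$-module is annihilated by $I$'') establishes only the trivial inclusion $\mathrm{tails}(B) \subseteq \overline{\mathrm{tails}}(B)$, i.e., you have the containment backwards. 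An object killed by $I^2$ but not by $I$ lies in $\overline{\mathrm{tails}}(B)$ but is not a $B$-module, so $\overline{\mathrm{tails}}(B)$ is strictly larger than $\mathrm{tails}(B)$ in general. The paper's equality $\mathrm{tails}(\Lambda/\Lambda e\Lambda) \cong \overline{\mathrm{tails}}(\Lambda/\Lambda e\Lambda)$ holds only because $\Lambda e\Lambda$ is an idempotent ideal ($I^i = I$ for all $i$); this is special to that situation and fails for arbitrary $I$. Consequently the claimed identification $D^b_{\mathrm{tails}(B)}(\mathrm{tails}(A)) \simeq D^b(\mathrm{tails}(B))$ is false, and the reduction to a tautology breaks down.

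What your argument is missing is precisely the second induction in the paper's proof. After truncating to reduce to a single cohomology object $\mathrm{H}^0(M^\bullet)$ in $\overline{\mathrm{tails}}(B)$, that object is only known to lie in $\mathrm{tails}(A/I^j)$ for some $j$, and you must still bring it into the image of $\iota_\ast$. The paper does this by tensoring the short exact sequence $0 \to I/I^j \to A/I^j \to A/I \to 0$ against $\mathrm{H}^0(M^\bullet)$, producing an extension whose top quotient is a genuine $B$-module and whose bottom subobject is killed by $I^{j-1}$, and then inducting on $j$. Your truncation step on the cohomological length of the complex is correct and agrees with the paper; this nilpotency filtration is what you need to add. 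As a secondary point, the appeal to Proposition \ref{serre} is misdirected: Miyachi's theorem gives the quotient equivalence $D^b(\mathcal{A})/D^b_{[\mathcal{B}]}(\mathcal{A}) \simeq D^b(\mathcal{A}/\mathcal{B})$ and does not by itself produce a fully faithful embedding $D^b(\mathcal{B}) \hookrightarrow D^b_{[\mathcal{B}]}(\mathcal{A})$. For a classical-generation claim you do not actually need such an equivalence at all; the truncation triangles combined with the nilpotency filtration give the result directly.
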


\begin{proof}
Since the functor $\iota_{\ast}$ preserves the torsion modules, $\iota_{\ast}$  
is well-defined on $\mathrm{tails}(B)$. Moreover, it is 
an exact functor, and its derived functor $\mathrm{R}\iota_{\ast} = \iota_{\ast}$ 
takes objects in 
$D_{\mathrm{tors}(B)}^{b}(\mathrm{grmod}(B))$ to 
objects in
$D_{\mathrm{tors}{A}}^{b}
(\mathrm{grmod}(A))$. 
By Proposition \ref{serre}, $\iota_{\ast}$ induces a derived functor 
$$
D^{b}(\mathrm{tails}(B)) \rightarrow D^{b}(\mathrm{tails}(A)).  
$$
Note that since $\iota_{\ast}$ is an exact functor on $\mathrm{grmod}(B)$,
the induced functor (also denoted by $\iota_{\ast}$) on $\mathrm{tails}(B)$ is exact, too. 

Without loss of
generality, let $M^{\bullet}$ be an object in  $D_{\mathrm{tails}(B)}^{b}(\mathrm{tails}(A))$ 
such that $M^{i} = 0$ when $i < 0$, or $i > r$, for some positive number $r$.
That is, $M^{\bullet}$ is in the form
$$
\cdots \longrightarrow 0 \longrightarrow M^0 \stackrel{d_0}\longrightarrow M^1 
\longrightarrow \cdots \stackrel{d_{r-1}}
\longrightarrow M^{r} \longrightarrow 0 \longrightarrow \cdots .
$$ 
We have a distinguished triangle
$$
\mathrm{H}^{0}(M^{\bullet}) 
\longrightarrow M^{\bullet} 
\longrightarrow \tilde{M}^{\ast}[-1] 
\longrightarrow \mathrm{H}^{0}
(M^{\bullet})[1],
$$
where 
$$\widetilde{M}^{\ast} := 
\cdots \longrightarrow 0 \longrightarrow M^1 / \mathrm{Im}(d_0) \stackrel{d_1}\longrightarrow 
M^2 \longrightarrow \cdots \stackrel{d_{r-1}}
\longrightarrow M^{r} \longrightarrow 0 \longrightarrow \cdots.
$$
Since $M^{\bullet}$ is an object of $D_{\mathrm{tails}(B)}^{b}(\mathrm{tails}(A))$, 
$\mathrm{H}^{0}(M^{\bullet})$ is an object of $\mathrm{tails}(A / I^j))$ 
for some $j \in \mathbb{Z}$. 

Next, 
consider the short exact sequence:
$$
0 \longrightarrow I / I^{j} \longrightarrow  A/ I^{j}   \longrightarrow  A/ I \longrightarrow 0.
$$
Tensoring it with 
$\mathrm{H}^{0}(M^{\bullet})$ over $\mathrm{grmod}(A / I^{j})$
we get the following short exact sequence:
$$
0 \longrightarrow \overline{\mathrm{H}^{0}(M^{\bullet})}  \longrightarrow \mathrm{H}^{0}(M^{\bullet}) 
\longrightarrow \mathrm{H}^{0}(M^{\bullet}) \otimes_{A / I^j} A / I \longrightarrow 0  
$$
in $\mathrm{tails}(A)$,
where
$\overline{\mathrm{H}^{0}(M^{\bullet})}$ is a graded $A$-submodule 
of $\mathrm{H}^{0}(M^{\bullet}) \otimes_{A / I^j} I / I^j$. 
Since $\mathrm{H}^{0}(M^{\bullet}) \otimes_{A / I^j} A / I$ is an object of  
$\mathrm{tails}(A / I)$ 
via the 
functor $\iota_{\ast}$, and $ \mathrm{H}^{0}(M^{\bullet}) \otimes_{A / I^{j}} I / I^{j}$ is an object of 
$\mathrm{tails}(A / I^{j - 1})$,
we obtain that
$\overline{\mathrm{H}^{0}(M^{\bullet})}$ is also an object of $\mathrm{tails}(A / I^{j - 1}))$. 
By induction on $j$, we obtain that $\mathrm{H}^{0}(M^{\bullet})$ 
is classically generated 
by objects in the essential image of the functor $\iota_{\ast}$. 

 In the meantime, since
 $X$ is a classical generator of
 $D^{b}(\mathrm{tails}(B))$, 
 we have 
 $ \mathrm{H}^{0}(M^{\bullet}) \in 
\langle \iota_{\ast}(X)\rangle_{p}$ for some $p \in \mathbb{N}$. 
Hence,
to prove that $M^{\bullet}$ is classically generated by 
$\iota_{\ast}(X)$, it 
suffices to show that $\widetilde{M}^{\ast}$ is classically generated 
by $\iota_{\ast}(X)$.  
Note that 
the length $r$ of 
$\widetilde{M}^{\ast}$ is less 
than the length of $M^{\bullet}$. 
By induction on the length of 
the complexes, we obtain that $M^{\bullet} \in \langle \iota_{\ast}(X) 
\rangle_{p_M}$ 
for some positive number $p_M$. Thus 
$\iota_{\ast}(X)$ is a classical generator of $D_{\mathrm{tails}(B)}^{b}(\mathrm{tails}(A))$.
\end{proof}

For simplicity, for any non-negatively graded algebra $A$ and 
its graded quotient algebra $B$, we use $X$ to represent 
$\iota_{\ast}(X)$ 
as an object of 
$\mathrm{tails}(A)$. 
We now rephrase 
Theorem \ref{thm:smoothcatcpt1}.

\begin{theorem}[Theorem \ref{thm:smoothcatcpt1}]\label{thm:smoothcatcpt1equiv}
The DG functor
$$
 (-) \otimes^{\mathbb{L}}_{\Lambda} \Lambda e : \mathcal{D}^{b}(\mathrm{tails}(\Lambda)) 
\rightarrow \mathcal{D}^{b}(\mathrm{tails}(S))
$$
is a smooth categorical compactification, 
with the kernel given by the fully faithful DG functor
$$
F: \mathcal{D}^{b}_{\mathrm{tails}(\Lambda/\Lambda e \Lambda)}(\mathrm{tails}(\Lambda)) 
\rightarrow \mathcal{D}^{b}(\mathrm{tails}(\Lambda)),
$$
where $e$ is the idempotent corresponding to the summand $S$ and 
$\mathcal{D}^{b}_{\mathrm{tails}(\Lambda/\Lambda e \Lambda)}(\mathrm{tails}(\Lambda))$ is the 
DG enhancement of $D^{b}_{\mathrm{tails}(\Lambda/\Lambda e \Lambda)}(\mathrm{tails}
(\Lambda))$ induced by $D^{b}_{\mathrm{tails}(\Lambda/\Lambda e \Lambda)}(\mathrm{tails}
(\Lambda)) \subseteq D^{b}(\mathrm{tails}(\Lambda))$.
\end{theorem}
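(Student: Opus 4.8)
The plan is to verify that $(-) \otimes^{\mathbb{L}}_{\Lambda} \Lambda e$ satisfies the three conditions of Proposition-Definition \ref{Csc}, using the discussion after it: it suffices to check that $\mathcal{D}^{b}(\mathrm{tails}(\Lambda))$ is smooth and proper pre-triangulated, that $(-) \otimes^{\mathbb{L}}_{\Lambda} \Lambda e$ is (the DG realization of) a Verdier quotient, and that the homotopy category of its kernel admits a single classical generator. I would organize the argument as follows.

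First, I would establish that $\mathcal{D}^{b}(\mathrm{tails}(\Lambda))$ is smooth and proper. Since $\Lambda = G\sharp R$ is an AS regular algebra of dimension $n$ with Gorenstein parameter $n > 0$ (Proposition \ref{ASstru}), Corollary \ref{ydbvwjv} gives that $D^b(\mathrm{tails}(\Lambda))$ is classically generated by $T := \bigoplus_{i=0}^{n-1}\pi(\Lambda(i))$. By Orlov's theorem (Theorem \ref{orlov}) the singularity category $D^{\mathbb Z}_{sg}(\Lambda)$ vanishes, so in fact $D^b(\mathrm{tails}(\Lambda)) = \langle \pi(\Lambda), \dots, \pi(\Lambda(n-1))\rangle$ is generated by an exceptional-type collection; this is a well-known tilting situation, and I would cite that $\mathrm{End}$ of this tilting object (computed via the AS regularity, which forces $\mathrm{\underline{Ext}}^{>0}$ between the summands to vanish appropriately) is a finite-dimensional algebra of finite global dimension. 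Consequently $\mathcal{D}^b(\mathrm{tails}(\Lambda))$ is quasi-equivalent to the perfect complexes over this finite-dimensional algebra, hence smooth and proper. (Properness alone also follows from $\Lambda$ being Noetherian AS regular so that the $\underline{\mathrm{Ext}}$-groups in $\mathrm{tails}$ are finite-dimensional in each degree and bounded.)

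Second, I would identify the functor as a DG quotient. On the abelian level, $(-)\otimes_\Lambda \Lambda e$ induces an equivalence $\mathrm{grmod}(\Lambda)/\overline{\mathrm{grmod}}(\Lambda/\Lambda e\Lambda) \cong \mathrm{grmod}(S)$ by \cite[Proposition 5.9]{MJI}, and passing to tails and then to bounded derived categories via Propositions \ref{serre} and \ref{Ker1}(2), we get that $D^b(\mathrm{tails}(\Lambda)) / D^b_{\mathrm{tails}(\Lambda/\Lambda e\Lambda)}(\mathrm{tails}(\Lambda)) \cong D^b(\mathrm{tails}(S))$ and that the kernel is exactly $D^b_{\mathrm{tails}(\Lambda/\Lambda e\Lambda)}(\mathrm{tails}(\Lambda))$. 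By uniqueness of DG enhancements for these categories (\cite[Theorem A]{CNS}) together with Proposition \ref{nskkx}, this lifts to the statement that $\mathcal{D}^b(\mathrm{tails}(S))$ is the DG quotient $\mathcal{D}^b(\mathrm{tails}(\Lambda))/\mathcal{D}^b_{\mathrm{tails}(\Lambda/\Lambda e\Lambda)}(\mathrm{tails}(\Lambda))$ and that $(-)\otimes^{\mathbb L}_\Lambda \Lambda e$ is the quotient DG functor; in particular the induced functor on the quotient is fully faithful (indeed an equivalence), and condition (3) of \ref{Csc} holds because the quotient functor is essentially surjective.

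Third — and this is the main obstacle — I need the homotopy category of the kernel, namely $D^b_{\mathrm{tails}(\Lambda/\Lambda e\Lambda)}(\mathrm{tails}(\Lambda))$, to have a \emph{single} classical generator. This is precisely the content of Theorem \ref{thm:smoothcatcpt2}, whose proof is deferred to \S\ref{Co}; here I would invoke it. The reduction step, however, is supplied already by Lemma \ref{compgene}: if $X$ classically generates $D^b(\mathrm{tails}(\Lambda/\Lambda e\Lambda))$, then $\iota_\ast(X)$ classically generates $D^b_{\mathrm{tails}(\Lambda/\Lambda e\Lambda)}(\mathrm{tails}(\Lambda))$. So the remaining task — and the genuinely hard part, left to \S\ref{Co} — is to produce a single classical generator of $D^b(\mathrm{tails}(\Lambda/\Lambda e\Lambda))$; this requires understanding the structure of the non-reduced quotient algebra $\Lambda/\Lambda e\Lambda$, which decomposes according to the stabilizer subgroups of the coordinate subspaces of $V$, and gluing generators of the pieces $\mathrm{tails}$ of the smaller McKay-type algebras $H\sharp(R/(\text{ideal of }V^H))$. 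Granting that generator, Efimov's criterion (Proposition-Definition \ref{Csc}) is satisfied and the theorem follows; in particular $\mathrm{Ker}(F)$ being classically generated by one object means $F: \mathcal{D}^b_{\mathrm{tails}(\Lambda/\Lambda e\Lambda)}(\mathrm{tails}(\Lambda)) \to \mathcal{D}^b(\mathrm{tails}(\Lambda))$ is the fully faithful kernel embedding as claimed.
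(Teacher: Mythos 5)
Your proposal is essentially correct and follows the same overall strategy as the paper: verify the three conditions of Proposition-Definition \ref{Csc} by (i) showing $\mathcal{D}^b(\mathrm{tails}(\Lambda))$ is smooth and proper, (ii) identifying $(-)\otimes^{\mathbb{L}}_\Lambda \Lambda e$ as the DG quotient by the subcategory $\mathcal{D}^b_{\mathrm{tails}(\Lambda/\Lambda e\Lambda)}(\mathrm{tails}(\Lambda))$ via \cite[Proposition 5.9]{MJI}, Proposition \ref{serre}, Proposition \ref{Ker1}(2), uniqueness of DG enhancements \cite[Theorem A]{CNS}, and Proposition \ref{nskkx}, and (iii) invoking Theorem \ref{thm:smoothcatcpt2} for the single classical generator of the kernel.

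Two small imprecisions, neither of which affects the validity of the argument: for (i), the paper pins down the tilting object's endomorphism algebra as the Beilinson algebra $\nabla(G\sharp R)$ and cites \cite[Theorem 4.14]{MM} and \cite[Proposition 7.5.1]{MR1} for its finite-dimensionality and homological smoothness, rather than arguing abstractly from the exceptional collection; your version is equivalent in substance but should name the algebra if you want a citable source for finite global dimension. For (iii), your sketch suggests the path is to produce a generator of $D^b(\mathrm{tails}(\Lambda/\Lambda e\Lambda))$ and then push it forward with Lemma \ref{compgene}; that would indeed suffice, but the paper's actual route in \S\ref{Co} avoids the non-reduced algebra $\Lambda/\Lambda e\Lambda$ and instead produces a generator of $D^b(\mathrm{tails}(\Lambda/\sqrt{\Lambda e\Lambda}))$, applies Lemma \ref{compgene} to land in $D^b_{\mathrm{tails}(\Lambda/\sqrt{\Lambda e\Lambda})}(\mathrm{tails}(\Lambda))$, and then uses a nilpotence filtration argument (Lemma \ref{nribe}) to pass from $\sqrt{\Lambda e\Lambda}$ to $\Lambda e\Lambda$. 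Since you invoke Theorem \ref{thm:smoothcatcpt2} as a black box, this does not leave a gap in your proof of the present theorem, but it is worth knowing the paper never directly constructs a classical generator of $D^b(\mathrm{tails}(\Lambda/\Lambda e\Lambda))$ itself.
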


To prove the theorem, let us first recall the following.

\begin{definition}
The {\it Beilinson algebra} $\nabla(A)$ of an AS regular algebra $A$ with Gorenstein parameter $a > 0$
is the algebra
$$
\begin{pmatrix}
A_0 & A_1 & \cdots & A_{a-1}  \\
0 & A_0 & \cdots & A_{a-2} \\
\vdots & \vdots & \ddots & \vdots \\
0 & 0 & \cdots & A_0
\end{pmatrix}
$$
with the multiplication of matrices. 
\end{definition}

By \cite[Theorem 4.14]{MM}, there is an isomorphism
$$
\mathcal{D}^{b}(\mathrm{tails}(G \sharp R)) \cong 
\mathcal{P}\mathrm{erf}(\nabla(G \sharp R)), 
$$
and moreover, 
$\nabla(G \sharp R)$ is a finite-dimensional algebra over $k$. Thus
$\mathcal{D}^{b}(\mathrm{tails}(G \sharp R))$ 
is a proper DG category. 
Moreover, by \cite[Proposition 7.5.1]{MR1}, $\nabla(G \sharp R)$ is a 
homologically smooth algebra, which implies that 
$\mathcal{P}\mathrm{erf}(\nabla(G \sharp R))$ 
is a smooth DG category. Thus
$\mathcal{D}^{b}(\mathrm{tails}(G \sharp R))$ is also a smooth DG category. 
We therefore get the following. 

\begin{proposition}\label{gfubjnv}
The DG category $\mathcal{D}^{b}(\mathrm{tails}(G \sharp R))$ is smooth and proper.
\end{proposition}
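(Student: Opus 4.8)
The plan is to transport the two properties along the known tilting-type equivalence to a finite-dimensional algebra, namely the Beilinson algebra of $\Lambda = G\sharp R$. By Proposition \ref{ASstru}, $\Lambda$ is AS regular of dimension $n$ with Gorenstein parameter $a = n > 0$, so $\nabla(\Lambda)$ is defined. First I would note that $\nabla(\Lambda)$ is finite-dimensional over $k$: each graded piece $\Lambda_i = (kG\otimes R)_i = kG\otimes R_i$ is finite-dimensional (here $R_i$ is the degree-$i$ component of a polynomial ring), and $\nabla(\Lambda)$ is the upper-triangular matrix algebra built from the finitely many pieces $\Lambda_0,\dots,\Lambda_{a-1}$. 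Then I would invoke the Minamoto--Mori equivalence \cite[Theorem 4.14]{MM}, which gives $\mathcal{D}^{b}(\mathrm{tails}(\Lambda)) \cong \mathcal{P}\mathrm{erf}(\nabla(\Lambda))$ in $\mathrm{Hqe}$. Since both smoothness and properness are invariants of the class of a DG category in $\mathrm{Hqe}$ (properness because it only refers to cohomology of mapping complexes, smoothness because $\mathrm{Perf}$ of the enveloping category is a well-defined functor on $\mathrm{Hqe}$), it suffices to verify the two properties for $\mathcal{P}\mathrm{erf}(\nabla(\Lambda))$.

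For properness: since $\nabla(\Lambda)$ is finite-dimensional, for bounded complexes $P, Q$ of finitely generated projective $\nabla(\Lambda)$-modules each term of the Hom complex $\mathrm{Hom}_{\nabla(\Lambda)}(P, Q)$ is finite-dimensional and the complex is concentrated in finitely many degrees, so its cohomology is bounded and finite-dimensional; this is exactly the properness of $\mathcal{P}\mathrm{erf}(\nabla(\Lambda))$. For smoothness: by \cite[Proposition 7.5.1]{MR1}, $\nabla(\Lambda)$ is homologically smooth, i.e. it is perfect as a bimodule over itself, which is precisely the statement that the diagonal bimodule lies in $\mathrm{Perf}(\nabla(\Lambda)\otimes\nabla(\Lambda)^{op})$, hence $\mathcal{P}\mathrm{erf}(\nabla(\Lambda))$ is a smooth DG category. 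Combining these two observations with the $\mathrm{Hqe}$-equivalence yields the proposition.

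I do not expect a genuine obstacle here, as the substantive input is entirely contained in the two cited results \cite[Theorem 4.14]{MM} and \cite[Proposition 7.5.1]{MR1}. The only points that warrant care are the elementary verification that $\nabla(\Lambda)$ is finite-dimensional, and the remark that one must descend smoothness and properness along a quasi-equivalence (equivalently along an isomorphism in $\mathrm{Hqe}$) rather than merely along a triangle equivalence of the underlying homotopy categories; both are standard, but I would state them explicitly to keep the argument self-contained.
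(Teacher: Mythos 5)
Your proposal matches the paper's own proof: both reduce via the Minamoto--Mori equivalence \cite[Theorem 4.14]{MM} to $\mathcal{P}\mathrm{erf}(\nabla(\Lambda))$, deduce properness from finite-dimensionality of $\nabla(\Lambda)$, and deduce smoothness from \cite[Proposition 7.5.1]{MR1}. Your explicit remark on transporting the two properties along an isomorphism in $\mathrm{Hqe}$ is a small but welcome clarification; otherwise the argument is identical.
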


Now, recall that
$$
\Lambda = G \sharp R = \mathrm{End}_{S}( \bigoplus_{W \in \hat{G}} M^{G}_R(W) ). 
$$ 
For any $X \in D^{b}(\mathrm{tails}(R^G))$,  
by $e \Lambda e \cong R^G \cong S$ as algebras, 
we have
$(X \otimes_{S} e \Lambda ) \otimes^{\mathbb{L}}_{\Lambda} \Lambda e = X$.
It implies that the functor 
$$
(-) \otimes^{\mathbb{L}}_{\Lambda} \Lambda e : D^{b}(\mathrm{tails}(G \sharp R))
 \rightarrow D^{b}(\mathrm{tails}(R^G))
$$
is necessarily surjective. 
Moreover, by Proposition \ref{serre},  
this functor is a localization of triangulated categories. 
By Proposition \ref{Csc}, we get the fully faithful functor
$$
F:  D^{b}_{\mathrm{tails}(\Lambda/\Lambda e \Lambda)}(\mathrm{tails}
(\Lambda)) \cong \mathrm{Ker}( (-) \otimes^{\mathbb{L}}_{\Lambda} \Lambda e )  
\rightarrow D^{b}(\mathrm{tails}(\Lambda)). 
$$
From the uniqueness of the
DG enhancements of $D^{b}(\mathrm{tails}(G \sharp R))$ and 
$D^{b}(\mathrm{tails}(R^G))$, both $(-) \otimes^{\mathbb{L}}_{\Lambda} \Lambda e$ and $F$ 
lifts to DG functors. 
Here, the lifting of $(-) \otimes^{\mathbb{L}}_{\Lambda} \Lambda e$ and $F$ are 
DG functors which descend to
$(-) \otimes^{\mathbb{L}}_{\Lambda} \Lambda e$ and $F$ when taking 
the
homotopy categories.  
Thus, by Proposition-Definition \ref{Csc}, to prove 
Theorem \ref{thm:smoothcatcpt1equiv}, it suffices to show that
the triangulated category $\mathrm{Ker}( (-) \otimes^{\mathbb{L}}_{\Lambda} \Lambda e ) $ 
is classically  generated by a single object. 

Now by Proposition \ref{Ker1} (2), 
$$
\mathrm{Ker}( (-) \otimes^{\mathbb{L}}_{\Lambda} 
\Lambda e ) = D^{b}_{\mathrm{tails}(\Lambda/ \Lambda e \Lambda)}(\mathrm{tails}(\Lambda)),
$$
and we have the following.

\begin{theorem}[Theorem \ref{thm:smoothcatcpt2}]\label{thm:smoothcatcpt2equiv}
$D^{b}_{\mathrm{tails}(\Lambda/\Lambda e \Lambda)}(\mathrm{tails}(\Lambda))$ 
has a classical generator $X_F$; more
precisely, 
\begin{equation}
\label{eq:defofXF}
X_F = \bigoplus_{(\chi, G^i) \in \tilde{G}_0} 
\bigoplus_{j = 0}^{\mathrm{dim}(V^i) -1} G / G^i \sharp k[V^i](j)
\end{equation}
(See \S\ref{Co} for the
notations).
\end{theorem}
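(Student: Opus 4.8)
The plan is to produce an explicit classical generator for $D^{b}_{\mathrm{tails}(\Lambda/\Lambda e\Lambda)}(\mathrm{tails}(\Lambda))$ by reducing, via Lemma~\ref{compgene}, to the problem of finding a classical generator of $D^{b}(\mathrm{tails}(\Lambda/\Lambda e\Lambda))$, and then to analyze the structure of $\Lambda/\Lambda e\Lambda$ carefully enough to write such a generator in closed form. The first step is to understand $\Lambda/\Lambda e\Lambda$ as a graded algebra. Writing $\Lambda = G\sharp R = \mathrm{End}_S\big(\bigoplus_{W\in\hat G} M^G_R(W)\big)$ as a (McKay) quiver algebra with vertex set indexed by $\hat G$, the two-sided ideal $\Lambda e\Lambda$ is the ideal of all paths passing through the vertex $e$ corresponding to the summand $S = R^G$; equivalently it is the trace ideal of the projective $\Lambda$-module $e\Lambda$. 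So $\Lambda/\Lambda e\Lambda$ is supported, set-theoretically, on the non-free locus of the quotient map $V\to V/G$, which is a union of linear subspaces $V^i = (V)^{G^i}$ attached to the nontrivial stabilizer subgroups; this is exactly the combinatorial data encoded by the sets $\{K^l\}_l$, $\tilde G$ and $\tilde G_0$ introduced in \S\ref{Co}.

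Next I would decompose $\mathrm{tails}(\Lambda/\Lambda e\Lambda)$ according to this support stratification. Because $G$ is Abelian, all its irreducible representations are one-dimensional (Proposition~\ref{CanAb}), and the ring-theoretic structure of $\Lambda/\Lambda e\Lambda$ localizes nicely along each stratum: the contribution of a maximal stabilizer $K^l$ with fixed subspace $W^l=V^i$ is governed by the skew group algebra $G/G^i\sharp k[V^i]$ over the smaller polynomial ring $k[V^i]$, with $G^i$ the chosen maximal subgroup fixing $V^i$ pointwise (this is where the pairs $(\chi,G^i)$ and the equivalence relation $\sim$ enter: they parametrize the simple modules over $\Lambda/\Lambda e\Lambda$, equivalently the vertices of the relevant quiver sub-algebra). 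I would set up a filtration of $\mathrm{grmod}(\Lambda/\Lambda e\Lambda)$, or directly of the triangulated category, by the dimension of the stratum, so that the associated graded pieces are module categories over the algebras $G/G^i\sharp k[V^i]$ for $(\chi,G^i)\in\tilde G_0$. Applying Corollary~\ref{ydbvwjv} to each such skew group algebra — which is AS regular of dimension $\dim V^i$ with Gorenstein parameter $\dim V^i$ by Proposition~\ref{ASstru} — gives that $D^b(\mathrm{tails}(G/G^i\sharp k[V^i]))$ is classically generated by $\bigoplus_{j=0}^{\dim V^i-1}\pi\big(G/G^i\sharp k[V^i](j)\big)$.

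Then I would assemble these stratum-wise generators into a single object. A dévissage argument along the support filtration shows that an object classically generating each associated graded piece, taken together, classically generates $D^b(\mathrm{tails}(\Lambda/\Lambda e\Lambda))$: any module over $\Lambda/\Lambda e\Lambda$ is built, by finitely many extensions, out of modules pushed forward from the strata, and each of those lies in the thick subcategory generated by the corresponding $G/G^i\sharp k[V^i](j)$ once we know the quotient-category statement. Hence $X_F := \bigoplus_{(\chi,G^i)\in\tilde G_0}\bigoplus_{j=0}^{\dim V^i-1} G/G^i\sharp k[V^i](j)$, viewed in $\mathrm{tails}(\Lambda/\Lambda e\Lambda)$, classically generates $D^b(\mathrm{tails}(\Lambda/\Lambda e\Lambda))$; finally, by Lemma~\ref{compgene} applied to the projection $\Lambda\to\Lambda/\Lambda e\Lambda$, its image $\iota_*(X_F)$ classically generates $D^b_{\mathrm{tails}(\Lambda/\Lambda e\Lambda)}(\mathrm{tails}(\Lambda))$, which by Proposition~\ref{Ker1}(2) is $\mathrm{Ker}((-)\otimes^{\mathbb L}_\Lambda\Lambda e)$. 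Together with Proposition~\ref{gfubjnv} and the discussion preceding the theorem, this completes the verification of Proposition-Definition~\ref{Csc} and hence proves Theorems~\ref{thm:smoothcatcpt1equiv} and \ref{thm:smoothcatcpt2equiv}.

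**The main obstacle**, I expect, is the bookkeeping in the middle step: pinning down precisely which indecomposable summands $M^G_R(W)$ contribute to $\Lambda/\Lambda e\Lambda$, showing that the "reduction along a stratum" really produces the algebra $G/G^i\sharp k[V^i]$ (and not merely something Morita equivalent to a messier algebra), and verifying that the strata interact only through extensions — i.e. that the support filtration is compatible with the triangulated structure of $\mathrm{tails}(\Lambda/\Lambda e\Lambda)$ and that no higher $\mathrm{Ext}$'s between distinct strata obstruct the dévissage. The hypothesis that $G$ is Abelian is what makes this tractable, since it forces all the relevant representations to be one-dimensional and lets one pass between $R^{G^i}$-modules and $G^i$-equivariant $R$-modules cleanly; without it the strata would carry genuinely higher-dimensional representation theory and the explicit formula for $X_F$ would not survive. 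I would also need to double-check the indexing: that $\tilde G_0$ (pairs with $G^i$ a \emph{maximal} stabilizer) is exactly the right index set, rather than all of $\tilde G$, and that the grading shift range $0\le j\le\dim V^i-1$ matches the Gorenstein parameter $\dim V^i$ of each $G/G^i\sharp k[V^i]$ — this is where an off-by-one or an over/under-counting of generators would most plausibly creep in.
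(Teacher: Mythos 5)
Your outline identifies the right general shape of the argument — reduce via Lemma~\ref{compgene} to finding a classical generator of $D^b(\mathrm{tails}(\Lambda/\Lambda e\Lambda))$, exploit the linear stabilizer strata $V^i$ and the AS-regularity of $G/G^i\sharp k[V^i]$ via Corollary~\ref{ydbvwjv}, and then assemble the stratum-wise generators. But there are two genuine gaps, and the first one is fatal as written.

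First, you work directly with $\Lambda/\Lambda e\Lambda$, implicitly presupposing that the natural map $\Lambda/\Lambda e\Lambda \to \bigoplus_{(\chi,G^i)\in\tilde G_0} G/G^i\sharp k[V^i]$ is close to an isomorphism, or at least that the source is filtered with the claimed associated graded pieces. This is false: the kernel of that map contains nilpotent elements, so $\Lambda/\Lambda e\Lambda$ is not built out of the algebras $G/G^i\sharp k[V^i]$ in the naive way. The paper's proof introduces the reduced ideal $\sqrt{\Lambda e\Lambda}$ (Definition~\ref{bdfbv11267}), proves that the map becomes injective after passing to $\Lambda/\sqrt{\Lambda e\Lambda}$ (Lemma~\ref{NNned1}), and only then carries out the stratum-wise analysis; the nilpotent part is handled afterwards by a separate dévissage along the finite filtration $\sqrt{\Lambda e\Lambda}^l\subseteq\Lambda e\Lambda$ (Lemma~\ref{nribe}). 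Your argument silently collapses these two layers into one, and the key intermediate object never appears.

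Second, even at the reduced level your statement that ``the associated graded pieces are module categories over the algebras $G/G^i\sharp k[V^i]$'' is stated without justification, and it is not obviously true because the strata $V^i$ overlap. A support filtration by dimension would give subquotients supported on \emph{open} strata (a linear subspace minus its lower-dimensional intersections with the others), and these do not come with an evident identification with $\mathrm{grmod}(G/G^i\sharp k[V^i])$. What actually makes the dévissage go through in the paper is an explicit bounded Mayer--Vietoris-style bimodule resolution of $\Lambda/\sqrt{\Lambda e\Lambda}$ by the algebras $G/K^{i_1,\dots,i_m}\sharp k[V^{i_1}\cap\cdots\cap V^{i_m}]$ indexed by intersections of strata (Lemmas~\ref{Firstgj} and~\ref{qjsgkn}), followed by a careful induction on the summands of that resolution (Lemma~\ref{hidjks}). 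You flag this as ``bookkeeping,'' but the resolution of the intersection poset is the load-bearing step, not a routine verification, and without it — or some substitute, such as a rigorous recollement along the open/closed decomposition of the singular locus — the dévissage claim is unsupported.
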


The proof of the above
theorem is
postponed to the next section.

\begin{proof}[Proof of Theorem \ref{thm:smoothcatcpt1equiv}
(=Theorem \ref{thm:smoothcatcpt1})]
By \cite[Theorem A]{CNS}, $D^{b}(\mathrm{tails}(S))$ admits a 
unique DG enhancement. Then according to
Definition \ref{sfkd}, the DG quotient 
$$\mathcal{D}^{b}(\mathrm{tails}
(\Lambda)) / \mathcal{D}^{b}_{\mathrm{tails}
(\Lambda /\Lambda e \Lambda)}(\mathrm{tails}(\Lambda))$$ is 
exactly this unique DG enhancement. 
Thus by Proposition \ref{nskkx},  
$(-) \otimes^{\mathbb{L}}_{\Lambda} \Lambda e$
gives this DG quotient  
$$
\mathcal{D}^{b}(\mathrm{tails}(\Lambda)) \rightarrow \mathcal{D}^{b}(\mathrm{tails}(S)) 
$$
in $\mathrm{Hqe}$. In the meantime, 
by Proposition \ref{gfubjnv}, 
$\mathcal{D}^{b}(\mathrm{tails}(\Lambda)) 
\cong \mathcal{P}\mathrm{erf}(\nabla \Lambda)$ is smooth and proper. 
Then it is left to prove that 
$\mathcal{D}^{b}_{\mathrm{tails}(\Lambda /\Lambda e \Lambda)}
(\mathrm{tails}(\Lambda))$ is classically generated by a single object,
which follows from 
Theorem \ref{thm:smoothcatcpt2equiv}. 
The proof is now complete.
\end{proof}

\section{Classical generator of $D^{b}_{\mathrm{tails}
(\Lambda/\Lambda e \Lambda)}(\mathrm{tails}(\Lambda))$}\label{Co}

In above section, we have shown that
$(-) \otimes^{\mathbb{L}}_{\Lambda} \Lambda e$ is a localization functor.
The purpose of this section is to
show that $X_F$,
given by \eqref{eq:defofXF}, is a classical generator of 
$D^{b}_{\mathrm{tails}(\Lambda/\Lambda e \Lambda)}(\mathrm{tails}(\Lambda))$, 
or equivalently, of $\mathrm{Ker}(F)$,
which then proves Theorem 
\ref{thm:smoothcatcpt2equiv}
(and hence Theorem
\ref{thm:smoothcatcpt2}).

Let us first introduce some notations.
As in the previous sections, $G$ is a finite Abelian group
of $\mathrm{GL}(V)$.
\begin{enumerate}
\item[$-$]
Let $\{ H^j \}_j$ be the set of subgroups in $G$ such that the invariant subspace of $H^j$, denoted by $W^j$. 
Furthermore, 
let $\{ K^l \}_l \subseteq \{ H^j \}_j$ be the subset
such that $K^l$ is
maximal among all subgroups that
have the same invariant subspace $W^l$. 

\item[$-$]
Let $\{(\chi, H^j) \}$ be the
set of pairs where $\chi: G \rightarrow k^{\ast} \cong k\backslash \{0\}$ 
is a character of $G$. 
Moreover,
let $\{(\chi, G^i) \} \subseteq \{(\chi, H^j) \}$ be the subset of pairs where
$G^i \in\{ K^l \}_l$ satisfies that $\chi|_{G^i}$ is nontrivial and
there is no other $K^l$ such that
$K^l \subseteq G^i$ and $\chi|_{K^l}$ is nontrivial.
Let $V^i \subseteq V$ be the invariant subspace of $G^i$. 
\end{enumerate}

\begin{definition}
\label{def:equiv}
Introduce an equivalence relation on 
$\{(\chi, H^j) \}$
as follows:
$(\chi_1, H^{j_1}) \sim (\chi_2, H^{j_2})$ if
and only if $H^{j_1} = H^{j_2}$ in $G$ and
$\chi_1|_{H^{j_1}} = \chi_2|_{H^{j_1}}$. 
Let $\tilde{G} := \{ (\chi, H^j) \} / \sim$ and
$\tilde{G}_0 := \{ (\chi, G^i) \} / \sim$,
where the equivalence relation on
$\{(\chi, G^i)\}$ is the restriction of the
one on $\{(\chi, H^j)\}$.
\end{definition}

\subsection{Sketch of the proof}\label{subsect:strategy} 

The proof of Theorem
\ref{thm:smoothcatcpt2equiv}
consists of the following steps:

\vspace{2mm}

\noindent{\bf Step 1:} 
fixing an indecomposable idempotent $e'$ with character $\chi'$ 
and a subgroup $H \subseteq G$, there is 
a canonical 
graded algebra surjection 
$\Lambda/ \Lambda e \Lambda \twoheadrightarrow G/H \sharp k[V^{H}]$ (see Proposition \ref{Modstru}).

Consider the sum of these morphisms over
$\tilde G_0$,
we hope that 
$$\Lambda/\Lambda e\Lambda\to 
\bigoplus_{(\chi, G^i) \in \tilde{G}_0}  G/G^i \sharp k[V^i]
$$
would be an injection; if this is true,
then in our case, the modules
over the left-hand side algebra
$\Lambda/\Lambda e\Lambda$
can be expressed by the modules
over the right-hand side algebra.
But this is not true, since
the kernel of the above map
may contain nilpotent elements. This leads
to the next step.

\vspace{2mm}

\noindent{\bf Step 2:}
 we introduce the ``reduced" ideal $\sqrt{\Lambda e \Lambda} \subseteq \Lambda$ of $\Lambda e \Lambda$, 
 and then algebra surjections $\Lambda/ \sqrt{\Lambda e \Lambda} \twoheadrightarrow G/H \sharp k[V^{H}]$ 
induced by the above algebra surjections. 
Next, taking the direct sum of these
surjections 
over $\tilde G_0$, we show 
$$
\Lambda/\sqrt{\Lambda e \Lambda} \rightarrow 
\bigoplus_{(\chi, G^i) \in \tilde{G}_0}  G/G^i \sharp k[V^i]
$$
is in fact an injection (see Lemma \ref{NNned1}). 

\vspace{2mm}

\noindent{\bf Step 3:} by using the injection above, 
we construct a bounded, graded $(\Lambda/ \sqrt{\Lambda e \Lambda})^e$-module 
resolution of $\Lambda/ \sqrt{\Lambda e \Lambda}$, where the components are
graded $( \bigoplus\limits_{(G^i, V^i) \in \tilde{G}_0} G/G^i \sharp k[V^i] )^{e}$-modules 
(see Proposition \ref{qjsgkn} and thereafter); 

\vspace{2mm}

\noindent{\bf Step 4:}
by analyzing the derived tensor functor given by above resolution,  
we obtain Lemma \ref{hidjks}, which says
that,
for
any $X$ in $D^{b}(\mathrm{grmod}(\Lambda / \sqrt{\Lambda e \Lambda}) )$,
$$
X \in \Big< \bigoplus\limits_{j= l_X}^{n_X} \bigoplus\limits_{(\chi, G^i) \in \tilde{G}_0}  
G / G^i \sharp k[V^i](j) \Big>_{r},
$$
for some $r \in \mathbb{N}$ and $l_X, n_X \in \mathbb{Z}$.
Next by this lemma, Corollary \ref{ydbvwjv}
and the fact that 
$\bigoplus\limits_{(G^i, V^i) \in \tilde{G}_0} G/G^i \sharp k[V^i]$ is a direct sum of 
AS regular algebras, 
we show that $$\bigoplus\limits_{(\chi, G^i) \in \tilde{G}_0} 
\bigoplus\limits_{j = 0}^{\mathrm{dim}(V^i) -1} G / G^i \sharp k[V^i](j)$$ is a
classical generator of $D^{b}(\mathrm{tails}
(\Lambda / \sqrt{\Lambda e \Lambda}))$ (see Lemma \ref{mjdbck}). 

\vspace{2mm}

\noindent{\bf Step 5:} Finally, 
we show that
any classical generator of
$D^{b}_{\mathrm{tails}(\Lambda/ \sqrt{\Lambda e \Lambda})}(\mathrm{tails}(\Lambda))$ classically generates
$D^{b}_{\mathrm{tails}(\Lambda/\Lambda e \Lambda)}(\mathrm{tails}(\Lambda))
$ (see Lemma \ref{nribe}),
and then the theorem follows.

In the rest of this section,
after some algebraic preparation given in \S\ref{subsect:algprep},
we apply the above strategy step by step,  
culminating in the proof of the
theorem.

\subsection{Some algebraic preparations}\label{subsect:algprep}

Fix a pair $(\chi, H^j)$ in $\tilde{G}$, 
we are going to 
construct an algebra morphism $k(G / H^j) \hookrightarrow kG$. 
First, recall the following well-known facts.

\begin{proposition}\label{ReChaId}
$(1)$ The indecomposable idempotents in $kG$ are in one to one 
correspondence to the characters of $G$. 		

$(2)$ $kG$ is isomorphic to the direct sum of all indecomposable 
idempotents in $kG$ as algebras. 	
\end{proposition}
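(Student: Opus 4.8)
The plan is to reduce everything to the Artin--Wedderburn structure of the commutative semisimple algebra $kG$. First I would observe that, since $\mathrm{char}(k)=0$ does not divide $|G|$, Maschke's theorem makes $kG$ semisimple; as $G$ is abelian, $kG$ is moreover commutative, so it is a finite product of finite field extensions of $k$. By Proposition \ref{CanAb} every irreducible $kG$-module is one-dimensional, hence each of these extensions is $k$ itself, and the factors are indexed by the isomorphism classes of irreducible $kG$-modules, i.e.\ by the characters $\chi\colon G\to k^{\ast}$. Concretely this identifies $kG$ with $\prod_{\chi\in\hat{G}}k$ via $g\mapsto(\chi(g))_{\chi\in\hat{G}}$.

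For part $(1)$ I would then write down the idempotents explicitly: for a character $\chi$ put
$$
e_{\chi}:=\frac{1}{|G|}\sum_{g\in G}\chi(g^{-1})\,g\in kG .
$$
The character orthogonality relations give $e_{\chi}^{2}=e_{\chi}$, $e_{\chi}e_{\chi'}=0$ for $\chi\neq\chi'$, $\sum_{\chi\in\hat{G}}e_{\chi}=1$, and $g\cdot e_{\chi}=\chi(g)e_{\chi}$ for all $g\in G$; in particular the corner ring $e_{\chi}\,kG\,e_{\chi}=k\,e_{\chi}\cong k$ is a field, so it has no idempotents other than $0$ and $e_{\chi}$, whence each $e_{\chi}$ is indecomposable. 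Conversely, any indecomposable idempotent of $kG$ equals some $e_{\chi}$: the $e_{\chi}$ are pairwise orthogonal indecomposable idempotents summing to $1$, and in a commutative ring the decomposition of $1$ into pairwise orthogonal indecomposable idempotents is unique, so there are no others. This is the desired bijection. Part $(2)$ is then immediate, since $1=\sum_{\chi\in\hat{G}}e_{\chi}$ yields $kG=\bigoplus_{\chi\in\hat{G}}e_{\chi}\,kG$ as a direct sum of (one-dimensional) two-sided ideals, each spanned by exactly one indecomposable idempotent.

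This statement is entirely classical, so I do not expect a genuine obstacle; the only point that deserves a line of care is the converse half of $(1)$ — that there are no indecomposable idempotents beyond the $e_{\chi}$ — which one settles either by the uniqueness of the decomposition of $1$ into orthogonal indecomposable idempotents in a commutative ring, or, even more directly, by noting that under the identification $kG\cong\prod_{\chi\in\hat{G}}k$ the indecomposable idempotents are visibly the standard basis vectors.
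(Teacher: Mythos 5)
The paper states Proposition \ref{ReChaId} as a well-known fact and gives no proof of its own; immediately afterwards it only records the explicit formula $e_{\chi}=\frac{1}{|G|}\sum_{g\in G}\chi(g)\,g$. Your proposal supplies a complete and correct classical argument, and it is precisely the argument one would expect here: Maschke's theorem gives semisimplicity, commutativity plus one-dimensionality of irreducibles gives $kG\cong\prod_{\chi\in\hat G}k$, and the orthogonality relations both exhibit the $e_{\chi}$ and show they exhaust the indecomposable idempotents via uniqueness of the primitive orthogonal decomposition of $1$ in a commutative ring.

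The one point worth flagging is a sign convention. You take $e_{\chi}=\frac{1}{|G|}\sum_{g}\chi(g^{-1})\,g$, which is the standard normalization: it satisfies $g\cdot e_{\chi}=\chi(g)e_{\chi}$, so $e_{\chi}$ affords the character $\chi$. The paper instead uses $e_{\chi}=\frac{1}{|G|}\sum_{g}\chi(g)\,g$, for which $g\cdot e_{\chi}=\chi(g)^{-1}e_{\chi}$, i.e.\ the paper's $e_{\chi}$ affords $\chi^{-1}$. Both are idempotents and both give the bijection asserted in part (1), so the proposition itself is unaffected, but the paper's convention is used consistently downstream (e.g.\ in \eqref{mdibv} and in the proofs of Lemmas \ref{Subalge}--\ref{Compos}), so if you wanted your proof to mesh with the rest of the section you would need to replace $\chi(g^{-1})$ by $\chi(g)$ and relabel accordingly.
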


In fact, given a character $\chi$,
the idempotent $e_{\chi}$ 
corresponding to 
it is
$\displaystyle\frac{1}{|G|} \sum\limits_{g \in G} (\chi(g) g )$. 
Now fix a
pair $(\chi, H^j)$ in $\tilde{G}$.
Let $\{e^{j}_{i} \}_{i}$ be the set of indecomposable idempotents in 
$kG$ satisfying $\chi|_{H^j} = \chi_{i}^j|_{H^j}$, 
where $\chi^{j}_{i}$ 
is the character
corresponding to $e^{j}_i$. 
Set $e_{0}^{j} := e_{\chi}$. 
We thus 
get a semi-simple algebra $\bigoplus_{i} ke_{i}^j$, 
which is a subalgebra of $kG$. 

Now, for any character $\lambda$ 
of $G$ such that $\lambda|_{H_j} = \{1 \}$, we have 
$\lambda\chi \in \{\chi_i^j \}_i$. Let $\{\lambda_i^j \}_i$ be the set of character of $G$ where 
$\lambda_i^{j}|_{H_j} = \{1 \}$. Then there is a one to one correspondence 
between 
\begin{equation}
\label{1-1:lambdachi}
\{\lambda_i^j \}_i\Leftrightarrow
\{\chi_i^j \}_i,    
\end{equation}
which
sends $\lambda_i^j$ to $\lambda_i^j \chi$. 
For convenience, we use $\chi^j_i$ to denote
$\lambda_i^j \chi$. 

Moreover, it is direct to see that 
there is also a one to one correspondence 
$$\{\mbox{charaters of $G/H^j$}\}
\Leftrightarrow
\{\lambda_i^j\}_i.
$$
Then, we view $\lambda_i^j$ as 
a character of $G/H^j$. 
Thus by Proposition 
\ref{ReChaId}(1), for $G/H^j$  
we obtain a one to one correspondence 
$$
\{\mbox{indecomposable
idempotents of $k(G/H^j)$}\}
\Leftrightarrow
\{\chi_i^j\}_i
$$
from \eqref{1-1:lambdachi}. 
From the one to one correspondence $\{\chi_i^j \}_i\Leftrightarrow
\{e_i^j \}_i$,  
it follows the one to one 
correspondence
$$\{\mbox{indecomposable idempotents of $k(G/H^j)$}\}\Leftrightarrow\{e_i^j \}_i.$$
Finally, by Proposition \ref{ReChaId}(2) 
for $G/H^j$, we get the following.

\begin{lemma}\label{Subalge}
Fix an indecomposable idempotent $e_{\chi}$. Then there is an algebra isomorphism
$$  
\kappa^{j}_{\chi}:  k(G / H^j) \xrightarrow{\sim} \bigoplus_{i} ke_{i}^j  
$$
given by the above one to one correspondence. 
\end{lemma}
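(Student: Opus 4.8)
The plan is to make the chain of bijections assembled above completely explicit, and then to exploit the fact that both algebras in question are commutative split semisimple over $k$ --- in fact products of copies of $k$ --- so that any bijection between their complete sets of primitive idempotents automatically promotes to an algebra isomorphism.

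First I would identify the set $\{e_i^j\}_i$ precisely. By construction it consists of the primitive idempotents $e_{\chi'}$ of $kG$ for those characters $\chi'$ of $G$ with $\chi'|_{H^j} = \chi|_{H^j}$; equivalently $\chi'\chi^{-1}$ is trivial on $H^j$, i.e. $\chi'\chi^{-1}$ lies in the subgroup of $\widehat{G}$ of characters vanishing on $H^j$, which for the finite Abelian group $G$ is canonically the character group $\widehat{G/H^j}$. Hence $\{\chi_i^j\}_i = \chi\cdot\widehat{G/H^j}$ is a single coset, and $\lambda\mapsto\lambda\chi$ is a bijection $\widehat{G/H^j}\xrightarrow{\sim}\{\chi_i^j\}_i$; this is exactly the correspondence \eqref{1-1:lambdachi}. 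In particular $\dim_k k(G/H^j) = |G/H^j| = |\widehat{G/H^j}| = \#\{e_i^j\}_i = \dim_k\bigoplus_i k e_i^j$, and the trivial character $\lambda = 1$ corresponds to $\chi$ itself, i.e. to $e_0^j = e_\chi$.

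Next I would define $\kappa_\chi^j$ on idempotents and extend $k$-linearly: for $\lambda\in\widehat{G/H^j}$ write $f_\lambda\in k(G/H^j)$ for the corresponding primitive idempotent (Proposition \ref{ReChaId}(1) applied to $G/H^j$) and set $\kappa_\chi^j(f_\lambda):=e_{\lambda\chi}$. By Proposition \ref{ReChaId}(2), $\{f_\lambda\}_\lambda$ is a $k$-basis of $k(G/H^j)$ and $\{e_i^j\}_i=\{e_{\lambda\chi}\}_\lambda$ is a $k$-basis of $\bigoplus_i k e_i^j$, matched bijectively by $\lambda\mapsto\lambda\chi$, so $\kappa_\chi^j$ is a $k$-linear isomorphism. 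To see it is a unital algebra map one only records that $f_\lambda f_\mu = \delta_{\lambda\mu}f_\lambda$ and $\sum_\lambda f_\lambda = 1$ in $k(G/H^j)$, while distinct primitive idempotents of $kG$ are orthogonal, so $e_{\lambda\chi}e_{\mu\chi} = \delta_{\lambda\mu}e_{\lambda\chi}$ and $\sum_\lambda e_{\lambda\chi} = \sum_i e_i^j = 1_{\bigoplus_i k e_i^j}$; comparing these relations on the basis $\{f_\lambda\}$ shows $\kappa_\chi^j$ preserves products and the unit. This $\kappa_\chi^j$ is precisely the isomorphism induced by the one-to-one correspondence built before the statement.

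There is no real obstacle here; the one point needing a moment's care is the character-theoretic identification in the second paragraph --- that restriction $\widehat{G}\to\widehat{H^j}$ is surjective with kernel $\widehat{G/H^j}$ --- since that is what makes $\{\chi_i^j\}_i$ a full coset (so the dimensions of the two sides agree) and hence makes $\kappa_\chi^j$ bijective. Everything else is the elementary principle that a bijection between complete orthogonal systems of primitive idempotents of two commutative split semisimple $k$-algebras is an algebra isomorphism.
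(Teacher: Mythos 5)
Your proof is correct and follows essentially the same route the paper takes: it makes explicit the chain of bijections (characters of $G/H^j \leftrightarrow \{\lambda_i^j\}_i \leftrightarrow \{\chi_i^j\}_i \leftrightarrow \{e_i^j\}_i$) set up in the paragraphs preceding the lemma, and then invokes, as the paper does via Proposition~\ref{ReChaId}(2), the fact that a bijection between the complete orthogonal systems of primitive idempotents of two split semisimple commutative $k$-algebras is automatically an algebra isomorphism. You have added the helpful detail that $\{\chi_i^j\}_i$ is the full coset $\chi\cdot\widehat{G/H^j}$ and explicitly checked multiplicativity and unitality, but this is a fleshed-out version of the paper's argument rather than a different one.
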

By Lemma \ref{Subalge}, we know that there is an algebra  injection 
$$
k(G/ H^j) \xrightarrow{\kappa^{j}_{\chi}} \bigoplus_{i} ke_{i}^j 
\hookrightarrow kG 
$$
when fixing an indecomposable idempotent $e_{\chi}$. 
For simplicity, 
we also use $\kappa^{j}_{\chi}$ to denote 
the above 
injection. More precisely, 
for any character $\lambda$ of $G/H^j$, by definition, 
\begin{align}\label{mdibv}
\kappa^{j}_{\chi} \Big( \frac{1}{|G/H^j|} \sum\limits_{\bar{g} \in G/H^j} \lambda(\bar{g})
\bar{g} \Big) = \frac{1}{|G|} \sum\limits_{g \in G} \lambda \chi (g)g \in kG,
\end{align}
 where we also view $\lambda$ as a character of $G$. 
The following lemma describes  
$(G /H^j) \sharp k[W^i]$.

\begin{lemma}\label{Inject}
Fixing character $\chi$
and
its corresponding indecomposable idempotent $e_{\chi}$, there is a graded algebra injection 
$$
(G / H^j) \sharp k[W^{j}] \hookrightarrow G \sharp k[V] \cong G \sharp R.
$$		
\end{lemma}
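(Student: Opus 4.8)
The plan is to construct the injection $(G/H^j)\sharp k[W^j]\hookrightarrow G\sharp R$ explicitly, by combining the algebra injection $\kappa^j_\chi\colon k(G/H^j)\hookrightarrow kG$ of Lemma \ref{Subalge} with the inclusion $k[W^j]\hookrightarrow k[V]=R$ coming from the decomposition $V=W^j\oplus U$ of the $H^j$-representation $V$ into the invariant subspace $W^j$ and a complement $U$. Concretely, write a typical element of $(G/H^j)\sharp k[W^j]$ as $\sum_{\bar g}\bar g\, f_{\bar g}$ with $f_{\bar g}\in k[W^j]$, and define the map by sending $\bar g\, f$ to $\kappa^j_\chi(\bar g)\cdot \iota(f)$, where $\iota\colon k[W^j]\hookrightarrow R$ is the inclusion induced by the projection $V\twoheadrightarrow W^j$. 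Since $\kappa^j_\chi$ lands in the subalgebra $\bigoplus_i ke_i^j\subseteq kG$, it is cleaner to unwind the explicit formula \eqref{mdibv} and check everything inside $\bigoplus_i ke_i^j\sharp R$.

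The key steps, in order, are: (i) verify the map is well-defined, i.e. compatible with the relation used to form $G/H^j$ — this amounts to noting that for $h\in H^j$ the action of $h$ on $W^j$ is trivial by definition of $W^j$, so $h$ acts trivially on $k[W^j]$ and the skew-commutation $(\,g\,)f = (g f)\,g$ behaves correctly when passing to cosets; (ii) check it is an algebra homomorphism, which reduces to the two facts that $\kappa^j_\chi$ is an algebra homomorphism (Lemma \ref{Subalge}) and that for $\bar g\in G/H^j$ and $f\in k[W^j]$ the twist $\bar g\cdot f = (\bar g^{-1}(f))\cdot \bar g$ in $(G/H^j)\sharp k[W^j]$ is carried to $\kappa^j_\chi(\bar g)\cdot\iota(f)$ in $G\sharp R$, using that any lift $g$ of $\bar g$ acts on $W^j$ the same way modulo the $H^j$-action, which is trivial on $W^j$; (iii) check gradedness, which is immediate since $\iota$ preserves degree and the group-algebra part sits in degree $0$; and (iv) check injectivity: if $\sum_{\bar g}\kappa^j_\chi(\bar g)\iota(f_{\bar g})=0$ in $G\sharp R$, then because the images $\kappa^j_\chi(\bar g)$ for distinct cosets $\bar g$ are supported on disjoint sets of indecomposable idempotents of $kG$ (distinct cosets correspond to distinct characters of $G/H^j$, hence via \eqref{1-1:lambdachi} to disjoint blocks $\{e_i^j\}$), the terms cannot cancel, so each $\iota(f_{\bar g})=0$, whence $f_{\bar g}=0$ since $\iota$ is injective.

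I would record beforehand the one small structural observation that makes step (iv) work: the injection $k(G/H^j)\xrightarrow{\kappa^j_\chi}kG$ sends the idempotent decomposition $1=\sum_i \bar e_i$ of $k(G/H^j)$ to a \emph{sub}-family $\{e_i^j\}_i$ of the idempotent decomposition of $kG$ indexed by the characters $\chi_i^j=\lambda_i^j\chi$, and different cosets of $H^j$ in $G$ — equivalently, different $\bar g$'s — are separated by these characters; concretely, $\kappa^j_\chi(\bar g)=\frac1{|G|}\sum_{g\in G}(\text{something})\,g$ has support on the coset $gH^j$ precisely when... this is exactly the content of \eqref{mdibv} applied to the idempotents, so linear independence over $k[W^j]$ of the family $\{\kappa^j_\chi(\bar g)\}_{\bar g\in G/H^j}$ inside $G\sharp R$ follows from the $k$-linear independence of distinct group elements in $kG$ together with $R$ being a domain.

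The main obstacle I expect is not any single computation but keeping the bookkeeping straight between the three group algebras $k(G/H^j)$, $\bigoplus_i ke_i^j$, and $kG$, and between the two polynomial rings $k[W^j]$ and $R$ — in particular making sure the twisting conventions in the skew group algebra (the formula $(g_1,f_1)(g_2,f_2)=(g_1g_2,\,g_2^{-1}(f_1)f_2)$ fixed at the start of \S\ref{NCSC}) are applied consistently after restricting group actions to the $H^j$-invariant subspace $W^j$. Once one fixes that the $H^j$-action on $W^j$ is trivial — which is the definition of $W^j$ — the compatibility with passing to the quotient $G/H^j$ is automatic, and the rest is a direct check; so the real work is organizational rather than conceptual.
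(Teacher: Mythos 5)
Your overall plan matches the paper's proof: construct the map as $\kappa^{j}_{\chi}\otimes q^{j}_{\#}$ (your $\iota$ is the paper's $q^{j}_{\#}$) and verify that it is a graded algebra injection. However, your injectivity argument in step (iv) rests on a false claim. You assert that the images $\kappa^{j}_{\chi}(\bar g)$ for distinct cosets $\bar g$ are supported on disjoint sets of indecomposable idempotents of $kG$; in fact they all have \emph{full} support on $\{e^{j}_{i}\}_{i}$. By Lemma~\ref{Subalge} and formula~\eqref{mdibv}, $\kappa^{j}_{\chi}$ sends the idempotent $h_{\lambda}$ of $k(G/H^{j})$ to the idempotent $h_{\chi\lambda}\in\{e^{j}_{i}\}_{i}$, and a single coset expands as $\bar g = \sum_{\lambda}\lambda(\bar g)^{-1}h_{\lambda}$, a combination of \emph{all} idempotents; so $\kappa^{j}_{\chi}(\bar g)=\sum_{\lambda}\lambda(\bar g)^{-1}h_{\chi\lambda}$ has the same support for every $\bar g$. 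The ``disjoint blocks'' picture would be correct only if you indexed your sum by the idempotent basis $\{h_{\lambda}\}$ rather than by cosets. The repair is easy, and in fact makes injectivity simpler than what you attempt: $\kappa^{j}_{\chi}$ is an isomorphism onto $\bigoplus_{i}ke^{j}_{i}$, $q^{j}_{\#}$ is injective, and a tensor product over a field of injective linear maps is injective.

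Two further points. Step (i) on well-definedness is a non-issue for the definition you chose: $\kappa^{j}_{\chi}$ is already a $k$-linear map on all of $k(G/H^{j})$, so $\kappa^{j}_{\chi}\otimes q^{j}_{\#}$ is automatically a well-defined linear map; the well-definedness concern only arises if one tried to define the map via a set-theoretic section $\bar g\mapsto g\in G$, which is not what you do (and which would fail to be multiplicative). More substantively, in step (ii) the justification ``any lift $g$ of $\bar g$ acts on $W^{j}$ the same way'' captures only one ingredient of the algebra-homomorphism check. The subtle point the paper handles in the computations surrounding \eqref{jucbweib}--\eqref{jucbweibdd} is that $\kappa^{j}_{\chi}$ shifts characters by $\chi$ (that is, $h_{\lambda}\mapsto h_{\chi\lambda}$, not $\bar g\mapsto g$), and one must verify that this shift is compatible with the cross-term relation $(1,f)(h_{\lambda},1)=(h_{\lambda\chi_{f}^{-1}},f)$ for $f\in k[W^{j}]$ a monomial. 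That compatibility uses precisely that $\chi_{f}$ restricts trivially to $H^{j}$, so $\chi\lambda\chi_{f}^{-1}$ again lies in $\{\chi^{j}_{i}\}_{i}$ and the absorption $(\sum_{i}e^{j}_{i})h_{\chi\lambda\chi_{f}^{-1}}=h_{\chi\lambda\chi_{f}^{-1}}$ holds. You gesture at the underlying fact (triviality of the $H^{j}$-action on $W^{j}$) but do not tie it to the character shift by $\chi$, so the core of the algebra-homomorphism verification is still left to be carried out; the paper's explicit computation is the content that needs to appear.
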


\begin{proof}
By Proposition \ref{CanAb}, we know that $G$ can be viewed as a
group consisting of diagonal matrices with respect to its 
representation $V$. This diagonalization induces a basis $\{E_r \}_r$ of $V$. 
Since $H^j \subseteq G$, 
$H^j$ also can be viewed as 
group consisting of diagonal matrices with respect to its 
subrepresentation $W^j$ of $V$. Then there is a subset 
$\{E^j_r\}_r \subseteq \{E_r \}_r$ which is a base of $W^j$.  
Hence we get a splitting
$$V \cong W^j \oplus W'^j,$$
where $W'^j = 
\mathrm{Span}_{k} \{E_r \}_r 
\big\backslash \{E^j_r\}_r$. 
Then there is a natural projection 
$V \twoheadrightarrow W^j$, which is 
denoted by $q^j$. 
It induces an algebra injection 
$k[W^j] \hookrightarrow k[V] = R$, 
which is denoted by $q^j_{\#}$. 

Putting 
$\kappa^{j}_{\chi}$ and $q^j_{\#}$ together, 
we obtain an injection of graded vector space 
$$
\kappa^{j}_{\chi} \otimes q^j_{\#}: (G / H^j) \sharp k[W^{j}] \hookrightarrow G \sharp R. 
$$
Thus, to prove this lemma, it is left to prove that $\kappa^{j}_{\chi} 
\otimes q^j_{\#}$ is a graded algebra homomorphism. 

For $k(G / H^j)$, its indecomposable idempotents form 
a base
as a vector space. Thus, to prove this lemma, it suffices to show that 
$$
(\kappa^{j}_{\chi} \otimes q^j_{\#}(h_{\lambda_1}, f_1) )
 (\kappa^{j}_{\chi} \otimes q^j_{\#}(h_{\lambda_2}, f_2) ) = 
\kappa^{j}_{\chi} \otimes q^j_{\#}((h_{\lambda_1}, f_1)(h_{\lambda_2}, f_2) )
$$
where $f_1, f_2 \in k[W^j]$ are both monomials, 
and $h_{\lambda_1}, h_{\lambda_2} \in k[G/H^j]$ are indecomposable 
idempotents associated 
to $\lambda_1$ and $\lambda_2$ respectively. 

Now, let $R = k[V] = k[x_{1}, \cdots, x_n]$, 
where $x_r \in R$ corresponds to $E_r$ such that $x_r(E_r) = 1$ 
and 
$x_{r}(E_i) = 0$ when $i \neq r$. 
Now, $x_r$ is associated to a character, denoted by 
$\chi_{x_r}$, of $G$ such that $\chi_{x_r}(g) = 
g_{x_r} \in k^\ast$ for any $g \in G$, where $g_{x_r}$ is given by $g(x_r) = g_{x_r} x_r$.
More generally, 
for any monomial $f \in R$, we 
define a character $\chi_f$ of $G$ by the same way and let 
$\chi_{f}(g) = g_{f}$ for any $g \in G$. 
Hence we get that 
\begin{align}
(h_{\lambda_1}, f_1)(h_{\lambda_2}, f_2) & = (h_{\lambda_1}, f_1)
(\frac{1}{|G/H^j|} \sum\limits_{\bar{g} \in G/H^j} \lambda_2(\bar{g})\bar{g} , f_2 ) \nonumber \\ 
& = \frac{1}{|G/H^j|} \sum\limits_{\bar{g} \in G/H^j} 
(h_{\lambda_1} \lambda_2(\bar{g}) \bar{g}, \bar{g}^{-1}(f_1) f_2 ) \nonumber \\
& = \frac{1}{|G/H^j|} \sum\limits_{\bar{g} \in G/H^j} (h_{\lambda_1} 
\lambda_2(\bar{g}) \bar{g}, \chi^{-1}_{f_1}(\bar{g}) f_1 f_2 ) \nonumber \\
&= 
(h_{\lambda_1}, 1) \, \frac{1}{|G/H^j|} \sum\limits_{\bar{g} \in G/H^j} 
( \chi^{-1}_{f_1}\lambda_2(\bar{g}) \bar{g} ), f_1 f_2   \nonumber \\
& = (h_{\lambda_1} h_{\chi^{-1}_{f_1}\lambda_2}, \,  f_1 f_2). \label{ASD9864}
\end{align}
In the
above equalities, note that since $\chi_{f_1}(H^j) = \{1 \}$ by $f_1 \in k[W^j]$, $\chi_{f_1}$ 
can be viewed as a character of $G/H^j$.
Thus, by (\ref{ASD9864}) and
the definition of $\kappa^{j}_{\chi}$ 
(see (\ref{mdibv})), we have 
\begin{align}\label{jucbweib}
\kappa^{j}_{\chi} \otimes q^j_{\#} ( (h_{\lambda_1}, f_1)(h_{\lambda_2}, f_2) ) 
& = \kappa^{j}_{\chi} \otimes q^j_{\#}
(h_{\lambda_1} h_{\chi^{-1}_{f_1}\lambda_2}, \,  f_1 f_2)\nonumber\\
& = ( \kappa^{j}_{\chi}(h_{ \lambda_1} 
h_{\chi^{-1}_{f_1}\lambda_2}), q^j_{\#}(f_1 f_2) ) \nonumber \\ 
& = ( h_{ \chi \lambda_1} h_{\chi \chi^{-1}_{f_1}\lambda_2}, q^j_{\#}(f_1 f_2) ). 
\end{align}

Meanwhile, by the definition of $\kappa^{j}_{\chi}$ we also get that 
$\kappa^{j}_{\chi} \otimes q^j_{\#}(h_{\lambda_1},  f_1 ) = 
(h_{ \chi \lambda_1}, q^j_{\#}(f_1))$ and 
$\kappa^{j}_{\chi} \otimes q^j_{\#}(h_{\lambda_2},  f_2 ) = 
(h_{ \chi \lambda_2}, q^j_{\#}(f_2))$. 
By the same computation as
(\ref{ASD9864}), we have 
\begin{align}\label{jucbweibdd}
(h_{ \chi \lambda_1}, q^j_{\#}(f_1)) (h_{ \chi \lambda_2}, q^j_{\#}(f_2)) 
= (h_{ \chi \lambda_1} h_{\chi \chi^{-1}_{f_1}\lambda_2} , q^j_{\#}(f_1) q^j_{\#}(f_2)). 
\end{align}
Here, we identify $\chi_{f_1}$ with $\chi_{q^j_{\#}(f_1)}$ since $q^j_{\#}$ is an injection. 
Since $q^j_{\#}(f_1) q^j_{\#}(f_2) = q^j_{\#}(f_1 f_2)$, 
by 
(\ref{jucbweib}) and (\ref{jucbweibdd}), we get that 
$$
(\kappa^{j}_{\chi} \otimes q^j_{\#}(h_{\lambda_1}, f_1) ) (\kappa^{j}_{\chi} 
\otimes q^j_{\#}(h_{\lambda_2}, f_2)) = 
\kappa^{j}_{\chi} \otimes q^j_{\#}((h_{\lambda_1}, f_1)(h_{\lambda_2}, f_2) ).
$$
This proves the lemma. 
\end{proof}

Denote by $\bar{\kappa}^{j}_{\chi}$  
the injection in the above lemma. 
It means that the 
$\kappa^{j}_{\chi}$ can 
be extended 
from $k(G / H^j)$ to
$(G / H^j) \sharp k[W^{j}]$.
In the meantime, we have the following. 

\begin{lemma}\label{Compos}
With the above settings, there is a graded algebra surjection
$$
	\bar{\varsigma}^{j}_{\chi}: G \sharp R  \twoheadrightarrow   (G / H^j) \sharp k[W^{j}]
$$
such that $\bar\varsigma^{j}_{\chi} \circ \bar\kappa^{j}_{\chi} 
= \mathrm{Id}$ in $(G / H^j) \sharp k[W^{j}]$. 
\end{lemma}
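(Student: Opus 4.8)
The plan is to realize $\bar\varsigma^{j}_{\chi}$ as a composite of two \emph{manifestly algebraic} surjections: one that kills the ``$W'^{j}$-directions'' in the polynomial part, and one that collapses $kG$ onto $k(G/H^{j})$ by multiplication by a suitable idempotent, and then to read off the retraction identity formally.

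First recall from the proof of Lemma \ref{Inject} the splitting $V\cong W^{j}\oplus W'^{j}$, with projection $q^{j}:V\twoheadrightarrow W^{j}$ and inclusion $\iota^{j}:W^{j}\hookrightarrow V$; both are $G$-equivariant, since $W^{j}$ and $W'^{j}$ are subrepresentations of $V$ (the diagonalization of the abelian group $G$ respects the splitting). Dually one gets graded algebra maps $q^{j}_{\#}=(q^{j})^{\ast}:k[W^{j}]\hookrightarrow R$ and $\varsigma^{j}:=(\iota^{j})^{\ast}:R\twoheadrightarrow k[W^{j}]$, which are $G$-equivariant for the induced $G/H^{j}$-action on $k[W^{j}]$, and satisfy $\varsigma^{j}\circ q^{j}_{\#}=\mathrm{Id}_{k[W^{j}]}$. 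By functoriality of the skew-group construction in $G$-equivariant algebra maps, $\varsigma^{j}$ and $q^{j}_{\#}$ induce graded algebra maps
$$
\mathrm{id}_{G}\sharp\varsigma^{j}:G\sharp R\twoheadrightarrow G\sharp k[W^{j}],\qquad \mathrm{id}_{G}\sharp q^{j}_{\#}:G\sharp k[W^{j}]\hookrightarrow G\sharp R,
$$
and the injection $\bar\kappa^{j}_{\chi}$ of Lemma \ref{Inject} factors as $\bar\kappa^{j}_{\chi}=(\mathrm{id}_{G}\sharp q^{j}_{\#})\circ(\kappa^{j}_{\chi}\sharp\mathrm{id}_{k[W^{j}]})$, where $\kappa^{j}_{\chi}\sharp\mathrm{id}_{k[W^{j}]}:(G/H^{j})\sharp k[W^{j}]\hookrightarrow G\sharp k[W^{j}]$ is again an algebra injection (one cancels the injective algebra map $\mathrm{id}_{G}\sharp q^{j}_{\#}$).

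Next, let $\epsilon:=\sum_{i}e^{j}_{i}\in kG$ be the sum of the indecomposable idempotents attached to the characters $\chi^{j}_{i}$ with $\chi^{j}_{i}|_{H^{j}}=\chi|_{H^{j}}$, so that $\epsilon$ is the unit of the subalgebra $\mathrm{Im}(\kappa^{j}_{\chi}\sharp\mathrm{id}_{k[W^{j}]})\subseteq G\sharp k[W^{j}]$, which is spanned by the elements $(e^{j}_{i},f)$ with $f\in k[W^{j}]$. The key observation is that $\epsilon$, viewed inside $G\sharp k[W^{j}]$, is a \emph{central} idempotent: it commutes with $kG$ because $kG$ is commutative, and it commutes with every monomial $f\in k[W^{j}]$ because moving the idempotents $e^{j}_{i}$ past $f$ only permutes them among themselves --- the index set $\{\mu:\mu|_{H^{j}}=\chi|_{H^{j}}\}$ is stable under multiplication by the character $\chi_{f}$, and $\chi_{f}|_{H^{j}}$ is trivial exactly because $f$ involves only $W^{j}$-variables (this is where it is used that $W^{j}$ is genuinely the fixed subspace of $H^{j}$). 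Hence multiplication by $\epsilon$ is a graded algebra surjection $G\sharp k[W^{j}]\twoheadrightarrow\epsilon\bigl(G\sharp k[W^{j}]\bigr)=\mathrm{Im}(\kappa^{j}_{\chi}\sharp\mathrm{id}_{k[W^{j}]})$, and composing it with the inverse of the isomorphism $\kappa^{j}_{\chi}\sharp\mathrm{id}_{k[W^{j}]}$ onto that image produces a graded algebra surjection
$$
\varsigma^{j}_{\chi}\sharp\mathrm{id}_{k[W^{j}]}:G\sharp k[W^{j}]\twoheadrightarrow(G/H^{j})\sharp k[W^{j}],
$$
which is a retraction of $\kappa^{j}_{\chi}\sharp\mathrm{id}_{k[W^{j}]}$, since multiplication by $\epsilon$ is the identity on the subalgebra it is the unit of.

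Finally I set $\bar\varsigma^{j}_{\chi}:=(\varsigma^{j}_{\chi}\sharp\mathrm{id}_{k[W^{j}]})\circ(\mathrm{id}_{G}\sharp\varsigma^{j}):G\sharp R\twoheadrightarrow(G/H^{j})\sharp k[W^{j}]$, a composite of graded algebra surjections, and compute, using the factorization of $\bar\kappa^{j}_{\chi}$ and the identity $\varsigma^{j}\circ q^{j}_{\#}=\mathrm{Id}$,
$$
\bar\varsigma^{j}_{\chi}\circ\bar\kappa^{j}_{\chi}=(\varsigma^{j}_{\chi}\sharp\mathrm{id})\circ(\mathrm{id}_{G}\sharp\varsigma^{j})\circ(\mathrm{id}_{G}\sharp q^{j}_{\#})\circ(\kappa^{j}_{\chi}\sharp\mathrm{id})=(\varsigma^{j}_{\chi}\sharp\mathrm{id})\circ(\kappa^{j}_{\chi}\sharp\mathrm{id})=\mathrm{Id},
$$
which is exactly the assertion. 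The one nonformal ingredient is the centrality of $\epsilon$ in $G\sharp k[W^{j}]$; everything else --- functoriality of $-\sharp-$ in $G$-equivariant maps, the two factorizations, and the final cancellation --- is bookkeeping, so I expect the centrality check to be the only place requiring real care.
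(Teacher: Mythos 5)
Your proof is correct, and it takes a genuinely different route from the paper's. The paper defines $\bar\varsigma^{j}_{\chi}=\varsigma^{j}_{\chi}\otimes\iota^{j}_{\#}$ directly as a map of graded vector spaces and then verifies multiplicativity by a case analysis on whether the characters $\chi_{1},\chi_{2}$ entering a product $(h_{\chi_{1}},f_{1})(h_{\chi_{2}},f_{2})$ lie in $\{\chi^{j}_{i}\}_{i}$, tracking carefully what happens when one factor is annihilated. You instead factor the target map through the intermediate skew group algebra $G\sharp k[W^{j}]$: the first step $\mathrm{id}_{G}\sharp\varsigma^{j}$ is automatically an algebra map because $\varsigma^{j}:R\to k[W^{j}]$ is a $G$-equivariant algebra surjection and $-\sharp-$ is functorial in equivariant algebra maps; the second step $\varsigma^{j}_{\chi}\sharp\mathrm{id}$ is realized as multiplication by the idempotent $\epsilon=\sum_{i}e^{j}_{i}$ followed by the inverse of $\kappa^{j}_{\chi}\sharp\mathrm{id}$, and the entire burden of the casework is concentrated in the single structural fact that $\epsilon$ is \emph{central} in $G\sharp k[W^{j}]$. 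That centrality check --- which reduces to the observation that the set of characters restricting to $\chi|_{H^{j}}$ on $H^{j}$ is stable under multiplication by $\chi_{f}^{-1}$ whenever $f\in k[W^{j}]$, because $\chi_{f}|_{H^{j}}$ is then trivial --- is exactly where the same arithmetic the paper does in Case 1(3) and equations (\ref{hjsbjk})--(\ref{hjsbjk1}) gets absorbed, but stated once and cleanly. The retraction identity $\bar\varsigma^{j}_{\chi}\circ\bar\kappa^{j}_{\chi}=\mathrm{Id}$ then drops out formally from $\varsigma^{j}\circ q^{j}_{\#}=\mathrm{Id}$ and the fact that multiplication by $\epsilon$ is the identity on $\epsilon(G\sharp k[W^{j}])=\mathrm{Im}(\kappa^{j}_{\chi}\sharp\mathrm{id})$. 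Two small points you rely on but should make explicit if you polish this: (i) $W^{j}$ is a $G$-subrepresentation of $V$, not merely an $H^{j}$-subrepresentation, which holds because $G$ is abelian and acts diagonally in a basis adapted to $W^{j}\oplus W'^{j}$; and (ii) the intermediate $G\sharp k[W^{j}]$ is a skew group algebra for a non-faithful $G$-action, which is harmless here even though the paper's standing convention declares actions faithful --- it is only an auxiliary object, not one of the algebras under study.
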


\begin{proof}
We first show the existence
of the surjection as graded vector 
spaces.
In fact, firstly by Lemma \ref{Subalge}, there is an algebra homomorphism 
$$
(\kappa^{j}_{\chi})^{-1}: \bigoplus_{i} ke_{i}^j  \xrightarrow{\sim} k(G / H^j).
$$
Secondly, 
by Proposition \ref{ReChaId}(2), 
there is
a canonical algebra projection 
$$
kG \twoheadrightarrow \bigoplus_{i} ke_{i}^j,
$$
which we denote by $\Delta^{j}_{\chi}$. Here, 
$\Delta^{j}_{\chi}(e_{i}^j) = e_{i}^j$ and $\Delta^{j}_{\chi}(h) = 0$ 
for any 
indecomposable idempotent 
$h \notin \{ e_{i}^j\}_i$. 
Let $$\varsigma^{j}_{\chi} :=  
(\kappa^{j}_{\chi})^{-1} \circ 
\Delta^{j}_{\chi}: k 
G\twoheadrightarrow
k(G/H^j).$$ 
Note that $(\kappa^{j}_{\chi})^{-1}
(h_{\chi'}) = h_{\chi' \chi^{-1}}$ 
for any 
$h_{\chi'} \in \{ e_i^j \}_i$.
Thirdly, 
we have a natural graded algebra  
projection 
$$
\iota^j_{\#}:
R = k[V] \twoheadrightarrow k[W^j],$$
which is induced by injection $\iota^j: W^j \hookrightarrow V$. 
Thus, we get a surjection of graded vector spaces 
$$
\varsigma^{j}_{\chi} \otimes \iota^j_{\#}: G \sharp R \cong G \sharp k[V]  
\twoheadrightarrow   (G / H^j) \sharp k[W^{j}],
$$
which we denote by $\bar{\varsigma}^{j}_{\chi}$.  

Next, we show that $\bar{\varsigma}^{j}_{\chi}$ is an algebra homomorphism. 
To this end,
let $(h_{\chi_1}, f_1), (h_{\chi_2}, f_2) \in G \sharp R$. 
Without loss of generality, assume 
that $f_1, f_2$ are both monomials. 
To prove that $\bar{\varsigma}^{j}_{\chi}$ is an algebra homomorphism, it is enough to show that 
\begin{align}\label{dncjw}
	(\bar{\varsigma}^{j}_{\chi}(h_{\chi_1}, f_1) 
    )(\bar{\varsigma}^{j}_{\chi}(h_{\chi_2}, f_2)) 
    = \bar{\varsigma}^{j}_{\chi} ((h_{\chi_1}, f_1) (h_{\chi_2}, f_2)).
\end{align}
We prove it by considering
the following two cases.

\vspace{2mm}
\noindent{\bf Case 1:} 
at least one of $\chi_1$ and $\chi_2$ is not in 
$\{\chi_i^j\}_i$. In this case, 
at least one of 
$\Delta^{j}_{\chi}(\chi_1), \Delta^{j}_{\chi}(\chi_2)$ is zero. It implies that 
$\bar{\varsigma}^{j}_{\chi}(h_{\chi_1}, f_1)\bar{\varsigma}^{j}_{\chi}(h_{\chi_2}, f_2) = 0  
$ by definitions of $\bar{\varsigma}^{j}_{\chi}$ and 
$\varsigma^{j}_{\chi} = (\kappa^{j}_{\chi})^{-1} \circ \Delta^{j}_{\chi}$. 
On the other hand, we claim that
$\bar{\varsigma}^{j}_{\chi} ((h_{\chi_1}, f_1) (h_{\chi_2}, f_2)) 
= \bar{\varsigma}^{j}_{\chi} (h_{\chi_1} h_{\chi^{-1}_{f_1} \chi_2}, f_1 f_2) =0$,
due to the following reason:
\begin{enumerate}
\item[(1)] if 
$\chi_1 \notin \{\chi_i^j\}_i$, then 
$\Delta^{j}_{\chi}(h_{\chi_1}) = 0$, which implies that 
$$\Delta^{j}_{\chi}(h_{\chi_1} h_{\chi^{-1}_{f_1} \chi_2}) = \Delta^{j}_{\chi}(h_{\chi_1}) 
\Delta^{j}_{\chi}(h_{\chi^{-1}_{f_1} \chi_2}) = 0;$$
	
\item[(2)] if $\chi_1 \neq \chi^{-1}_{f_1} \chi_2$, then 
$h_{\chi_1} \neq h_{\chi^{-1}_{f_1} \chi_2}$, which implies that  
$h_{\chi_1} h_{\chi^{-1}_{f_1} \chi_2} = 0$; 

\item[(3)] if $\chi_1 \in \{\chi_i^j\}_i$, $\chi_1 = \chi^{-1}_{f_1} \chi_2$ but $\chi_2 
\notin \{\chi_i^j\}_i$, then $\chi_{f_1}|_{H^j}$ is nontrivial, which implies that the action of 
$H^j$ on $f_1$ is nontrivial. Hence, we know that $f_1 \notin k[W^j]$. 
It follows that $\iota^j_{\#}(f_1) = 0$. Then $\iota^j_{\#}(f_1 f_2) = 0$. 
\end{enumerate}
We thus have proved
(\ref{dncjw}) if at least 
one of $\chi_1, \chi_2$ is not in $\{\chi_i^j\}_i$.

\vspace{2mm}
\noindent{\bf Case 2:}
both
$\chi_1,\chi_2 \in \{x_i^j\}_i$.
Let us first look at the right-hand
side of \eqref{dncjw}.
By the similar computation 
to (\ref{ASD9864}), 
we have 
$$
(h_{\chi_1}, f_1) (h_{\chi_2}, f_2) = (h_{\chi_1} h_{\chi^{-1}_{f_1} \chi_2}, f_1 f_2). 
$$
Since $\chi_1 \in \{x_i^j\}_i$, we have $\Delta^{j}_{\chi}(h_{\chi_1}) = h_{\chi_1}$. 
Then the right-hand side
\begin{align}\label{gjkidb8520}
\bar{\varsigma}^{j}_{\chi}(h_{\chi_1} h_{\chi^{-1}_{f_1} \chi_2}, f_1 f_2) 
= ( \varsigma^{j}_{\chi}(h_{\chi_1} h_{\chi^{-1}_{f_1} \chi_2}), \iota^j_{\#}(f_1 f_2) ) = 
( h_{\chi_1 \chi^{-1}} \varsigma^{j}_{\chi}(h_{\chi^{-1}_{f_1} \chi_2} ),\, 
\iota^j_{\#}(f_1 f_2) ).
\end{align}
We next look at the left-hand
side of \eqref{dncjw}. We first have 
\begin{align}
\bar{\varsigma}^{j}_{\chi}(h_{\chi_1}, f_1)\bar{\varsigma}^{j}_{\chi}(h_{\chi_2}, f_2) 
& = ( \varsigma^{j}_{\chi}(h_{\chi_1}), \iota^j_{\#}(f_1) ) 
( \varsigma^{j}_{\chi}(h_{\chi_2}), \iota^j_{\#}(f_2) ) \nonumber \\ 
& = (h_{\chi_1 \chi^{-1}}, \iota^j_{\#}(f_1) ) ( h_{\chi_2 \chi^{-1}}, 
\iota^j_{\#}(f_2) ) \nonumber \\
& = ( h_{\chi_1 \chi^{-1}} h_{\chi_2 \chi^{-1}\chi_{\iota^j_{\#}(f_1)}^{-1}}, 
\iota^j_{\#}(f_1) \iota^j_{\#}(f_2) ) \nonumber \\
& = (h_{\chi_1 \chi^{-1}} h_{\chi_2 \chi^{-1}\chi_{\iota^j_{\#}(f_1)}^{-1}}, \iota^j_{\#}(f_1 f_2) ).  
\label{BJGC61037}
\end{align}
If
$f_1 \in \mathrm{Ker}
(\iota^j_{\#})$, then by 
(\ref{gjkidb8520}) and 
(\ref{BJGC61037}) we have
\begin{equation}\label{eq91}
 \bar{\varsigma}^{j}_{\chi}(h_{\chi_1}, f_1)\bar{\varsigma}^{j}_{\chi}(h_{\chi_2}, f_2) 
 =0
 = \bar{\varsigma}^{j}_{\chi} ((h_{\chi_1}, f_1) (h_{\chi_2}, f_2)).
\end{equation}
Otherwise if $f_1 \notin \mathrm{Ker}(\iota^j_{\#})$, 
by the definitions of $\iota^j_{\#}$ and $q^{j}_{\#}$, we know that 
$f_1 \in k[W^j]$. It implies that 
$f_1 = \iota^j_{\#}(f_1)$. Hence, 
$\chi_{f_1} = \chi_{\iota^j_{\#}(f_1)}$. Then 
\begin{align}\label{hjsbjk}
h_{\chi_2 \chi^{-1}\chi_{\iota^j_{\#}(f_1)}^{-1}} =  h_{\chi_2 \chi^{-1} \chi^{-1}_{f_1}}.
\end{align}
Moreover, since $f_1 \in k[W^j]$, 
$\chi_{f_1}(H^j) = \{1 \}$. It suggest that 
$\chi_{2}\chi_{f_1} \in \{ \chi_i^j \}_i$. 
Thus, we have 
\begin{align}\label{hjsbjk1}
\varsigma^{j}_{\chi}(h_{\chi^{-1}_{f_1} \chi_2} ) 
=  (\kappa^{j}_{\chi})^{-1}\circ \Delta^{j}_{\chi}(h_{\chi^{-1}_{f_1} \chi_2} ) 
= (\kappa^{j}_{\chi})^{-1}(h_{\chi^{-1}_{f_1} \chi_2} ) = h_{\chi^{-1}_{f_1} \chi_2 \chi^{-1}}.
\end{align}
Comparing (\ref{gjkidb8520}) and (\ref{BJGC61037}), we again get that  
\begin{equation}\label{eq92}
 \bar{\varsigma}^{j}_{\chi}(h_{\chi_1}, f_1)\bar{\varsigma}^{j}_{\chi}(h_{\chi_2}, f_2) 
 = \bar{\varsigma}^{j}_{\chi} ((h_{\chi_1}, f_1) (h_{\chi_2}, f_2)) 
\end{equation}
by (\ref{hjsbjk}) and (\ref{hjsbjk1}). 
Combining \eqref{eq91}
and \eqref{eq92} we get
\eqref{dncjw} in this case.

Finally, we show that 
$\bar\varsigma^{j}_{\chi} \circ \bar\kappa^{j}_{\chi} 
= \mathrm{Id}$.
In fact,
by the definitions of above homomorphisms, 
it is direct to see that 
\begin{align*}
\bar\varsigma^{j}_{\chi} \circ \bar\kappa^{j}_{\chi} & 
=  ( \varsigma^{j}_{\chi} \otimes \iota^j_{\#}) \circ (\kappa^{j}_{\chi} \otimes q^{j}_{\#}) \\
& = ((\varsigma^{j}_{\chi} \circ \kappa^{j}_{\chi}) \otimes (\iota^j_{\#} \circ  q^{j}_{\#}) ) \\ 
& = \mathrm{Id}  
\end{align*}
on $(G / H^j) \sharp k[W^{j}]$. 
We thus have completed the proof. 
\end{proof}

\subsection{The algebra $\Lambda/ \Lambda e\Lambda$}

From now on, we prove
Theorem \ref{thm:smoothcatcpt2}.
As sketched in \S\ref{subsect:strategy},
our first step is to give
a surjective map from
$\Lambda/\Lambda e\Lambda$.
In fact, we have more.

\begin{proposition}\label{Modstru}
Fix a pair $(\chi, H^j)$ such that 
$\chi|_{H^j}$ is nontrivial. Then there is a graded algebra surjection 	
$$
\eta^{j}_{\chi}: \Lambda / \Lambda e \Lambda \twoheadrightarrow (G / H^j) \sharp k[W^{j}]
$$		
and a graded algebra injection 
$$
\lambda^{j}_{\chi}: (G / H^j) \sharp k[W^{j}] \hookrightarrow \Lambda / \Lambda e \Lambda
$$	
such that $\eta^{j}_{\chi} \circ \lambda^{j}_{\chi}  = \mathrm{Id}$. 
\end{proposition}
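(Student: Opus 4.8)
The plan is to obtain both maps by descending, along the canonical projection $p\colon\Lambda\to\Lambda/\Lambda e\Lambda$, the graded algebra homomorphisms produced in Lemmas~\ref{Inject} and~\ref{Compos}. Recall (see \S\ref{NCSC}) that under the identification $\Lambda\cong G\sharp R$ the idempotent $e$ is the primitive idempotent $e_{\mathrm{triv}}=\tfrac1{|G|}\sum_{g\in G}g\in kG\subseteq G\sharp R$ attached to the trivial character of $G$, since it is the projection onto the summand $R^G=S$; hence $\Lambda e\Lambda=\Lambda e_{\mathrm{triv}}\Lambda$. In particular $e_{\mathrm{triv}}$ is homogeneous of degree $0$, so $\Lambda e\Lambda$ is a graded two-sided ideal, $\Lambda/\Lambda e\Lambda$ is a graded algebra, and $p$ is a graded homomorphism.

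First I would construct $\eta^j_\chi$. Start from the graded algebra surjection $\bar\varsigma^j_\chi\colon G\sharp R\twoheadrightarrow(G/H^j)\sharp k[W^j]$ of Lemma~\ref{Compos}. Since $\bar\varsigma^j_\chi$ is an algebra homomorphism and $\Lambda e\Lambda$ is generated as a two-sided ideal by $e_{\mathrm{triv}}$, to get $\bar\varsigma^j_\chi(\Lambda e\Lambda)=0$ it suffices to check $\bar\varsigma^j_\chi(e_{\mathrm{triv}})=0$. Regarding $e_{\mathrm{triv}}$ as the element $e_{\mathrm{triv}}\otimes 1$ of $kG\otimes R=G\sharp R$, and writing $\bar\varsigma^j_\chi=\varsigma^j_\chi\otimes\iota^j_\#$ with $\varsigma^j_\chi=(\kappa^j_\chi)^{-1}\circ\Delta^j_\chi$, one has $\bar\varsigma^j_\chi(e_{\mathrm{triv}})=\varsigma^j_\chi(e_{\mathrm{triv}})\otimes 1$. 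The character of $e_{\mathrm{triv}}$ is trivial, hence restricts trivially to $H^j$; since $\chi|_{H^j}$ is \emph{nontrivial} by hypothesis, $e_{\mathrm{triv}}$ is not one of the idempotents $\{e^j_i\}_i$ whose characters agree with $\chi$ on $H^j$, so $\Delta^j_\chi(e_{\mathrm{triv}})=0$ and therefore $\bar\varsigma^j_\chi(e_{\mathrm{triv}})=0$. Consequently $\bar\varsigma^j_\chi$ factors through $p$, yielding a graded algebra surjection $\eta^j_\chi\colon\Lambda/\Lambda e\Lambda\twoheadrightarrow(G/H^j)\sharp k[W^j]$ with $\eta^j_\chi\circ p=\bar\varsigma^j_\chi$.

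Next I would set $\lambda^j_\chi:=p\circ\bar\kappa^j_\chi$, where $\bar\kappa^j_\chi\colon(G/H^j)\sharp k[W^j]\hookrightarrow G\sharp R$ is the graded algebra injection of Lemma~\ref{Inject}; this is plainly a graded algebra homomorphism. Using the identity $\bar\varsigma^j_\chi\circ\bar\kappa^j_\chi=\mathrm{Id}$ from Lemma~\ref{Compos}, one gets
$$
\eta^j_\chi\circ\lambda^j_\chi=\eta^j_\chi\circ p\circ\bar\kappa^j_\chi=\bar\varsigma^j_\chi\circ\bar\kappa^j_\chi=\mathrm{Id}
$$
on $(G/H^j)\sharp k[W^j]$. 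Hence $\lambda^j_\chi$ is a split monomorphism, in particular injective, and the proposition follows.

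The only step carrying genuine content is the vanishing $\bar\varsigma^j_\chi(e_{\mathrm{triv}})=0$, which is precisely where the hypothesis that $\chi|_{H^j}$ be nontrivial enters; once Lemmas~\ref{Inject} and~\ref{Compos} are available, the rest is formal bookkeeping (including the verification that every map in sight respects the grading, which is automatic since $e_{\mathrm{triv}}$ sits in degree $0$ and $\bar\kappa^j_\chi,\bar\varsigma^j_\chi$ are graded).
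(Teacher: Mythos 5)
Your proof is correct and follows essentially the same route as the paper: you define $\lambda^j_\chi$ as the composite of $\bar\kappa^j_\chi$ with the projection $p$, show that $\bar\varsigma^j_\chi$ kills the idempotent $e=e_{\mathrm{triv}}$ (using precisely that $\chi|_{H^j}$ is nontrivial so the trivial character is not among the $\chi^j_i$), hence that it descends to $\eta^j_\chi$, and then deduce $\eta^j_\chi\circ\lambda^j_\chi=\mathrm{Id}$ from Lemma~\ref{Compos}. This matches the paper's argument step for step.
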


\begin{proof}
We only need to construct
$\eta^{j}_{\chi}$ and 
$\lambda^{j}_{\chi}$ and show
that their composition is
the identity.

First, let 
$\lambda^{j}_{\chi}$ be the following composition: 	
$$
(G / H^j) \sharp k[W^{j}] \xrightarrow{\bar\kappa^{j}_{\chi}} 
\Lambda \twoheadrightarrow \Lambda / \Lambda e \Lambda, 
$$
where 
$\bar\kappa^{j}_{\chi}$
is given by Lemma \ref{Inject},
and
$\Lambda \twoheadrightarrow \Lambda / \Lambda e \Lambda$ is the natural algebra projection. 

Next, we introduce $\eta_\chi^j$.
Let $\chi_0$ be the trivial character of $G$. Since $\chi$ is not trivial when restricting 
to $H^j$, $\chi_0 \notin \{ \chi_i^j \}_i$. 
It implies that $\Delta^{j}_{\chi}(e) = 0$.
Since
$$
\bar\varsigma^{j}_{\chi}(e) = \bar\varsigma^{j}_{\chi}(h_{\chi_0} \otimes 1) 
= ( (\kappa^{j}_{\chi})^{-1} \circ \Delta^{j}_{\chi}(h_{\chi_0})  \otimes \iota^j_{\#}(1) ) = 0,  
$$
where $e$ is viewed as an idempotent of 
$\Lambda$ (note $h_{\chi_0} = e$), 
it follows that $\bar\varsigma^{j}_{\chi}(\Lambda e \Lambda) = \{ 0\}$. 
Then
by Lemma 
\ref{Compos},
$\bar\varsigma^{j}_{\chi}$ induces a graded algebra surjection 
$$
\Lambda / \Lambda e \Lambda \twoheadrightarrow (G / H^j) \sharp k[W^{j}],
$$
which is our
$\eta^{j}_{\chi}$. 

Finally, 
since $\bar\varsigma^{j}_{\chi} \circ \bar\kappa^{j}_{\chi}
= \mathrm{Id} $
on $(G / H^j) \sharp k[W^{j}]$, we
have $\eta^{j}_{\chi} \circ 
\lambda^{j}_{\chi}  = \mathrm{Id}$ 
on $(G / H^j) \sharp k[W^{j}]$ by the
construction of the above graded algebra
homomorphisms. 
\end{proof}

From the constructions of $\eta^{j}_{\chi}$ and $\lambda^{j}_{\chi}$, we have the following.
 
\begin{corollary}\label{DiffIde2}
With the above settings, let $e'$ and $e''$ be two indecomposable idempotents of $\Lambda$. Suppose that 
$(\chi', H^j) \sim (\chi'', H^j)$, where $\chi'$ and $\chi''$ correspond to $e'$ and $e''$ respectively.
Then 
$\eta^{j}_{\chi'} = \eta^{j}_{\chi''}$ and $\lambda^{j}_{\chi'} = \lambda^{j}_{\chi''}$.
\end{corollary}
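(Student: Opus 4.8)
The plan is to trace through the explicit formulas for $\eta^{j}_{\chi}$ and $\lambda^{j}_{\chi}$ constructed in the proof of Proposition \ref{Modstru}, and observe that every ingredient depends on $\chi$ only through the restriction $\chi|_{H^j}$. Recall that $\lambda^{j}_{\chi}$ is the composite of $\bar\kappa^{j}_{\chi} = \kappa^{j}_{\chi}\otimes q^j_{\#}$ with the canonical projection $\Lambda\twoheadrightarrow\Lambda/\Lambda e\Lambda$, while $\eta^{j}_{\chi}$ is induced by $\bar\varsigma^{j}_{\chi} = \varsigma^{j}_{\chi}\otimes\iota^j_{\#}$ with $\varsigma^{j}_{\chi} = (\kappa^{j}_{\chi})^{-1}\circ\Delta^{j}_{\chi}$. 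The factors $q^j_{\#}$, $\iota^j_{\#}$ are defined purely in terms of the splitting $V\cong W^j\oplus W'^j$ and carry no dependence on any character, so they play no role in the comparison. Hence everything reduces to showing that $\kappa^{j}_{\chi'} = \kappa^{j}_{\chi''}$ (equivalently $\Delta^{j}_{\chi'} = \Delta^{j}_{\chi''}$) whenever $\chi'|_{H^j} = \chi''|_{H^j}$.

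First I would note that the hypothesis $(\chi',H^j)\sim(\chi'',H^j)$ means precisely, by Definition \ref{def:equiv}, that $\chi'|_{H^j} = \chi''|_{H^j}$ as characters of $H^j$. By definition, the set $\{e^j_i\}_i$ of indecomposable idempotents of $kG$ entering the construction is $\{e_{\chi_i^j} : \chi^j_i|_{H^j} = \chi|_{H^j}\}$, so this set is literally the same for $\chi'$ and for $\chi''$: call it $\{e^j_i\}_i$, with associated semisimple subalgebra $\bigoplus_i ke^j_i\subseteq kG$. The projection $\Delta^{j}_{\chi}\colon kG\twoheadrightarrow\bigoplus_i ke^j_i$ sends each $e^j_i$ to itself and every other indecomposable idempotent to $0$; since the target subalgebra and this rule are the same for $\chi'$ and $\chi''$, we get $\Delta^{j}_{\chi'} = \Delta^{j}_{\chi''}$.

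Next I would handle $\kappa^{j}_{\chi}$ itself. By Lemma \ref{Subalge} and the discussion preceding it, $\kappa^{j}_{\chi}$ is the algebra isomorphism $k(G/H^j)\xrightarrow{\sim}\bigoplus_i ke^j_i$ determined by the one-to-one correspondence sending the idempotent of $k(G/H^j)$ attached to a character $\lambda$ of $G/H^j$ to the idempotent $e^j_i$ of $kG$ attached to $\lambda\chi$; concretely this is formula \eqref{mdibv}. The only way $\chi$ intervenes is via the twist $\lambda\mapsto\lambda\chi$ restricted to the relevant set of characters; but for $\lambda$ a character of $G/H^j$ one has $\lambda|_{H^j} = 1$, so $(\lambda\chi')|_{H^j} = \chi'|_{H^j} = \chi''|_{H^j} = (\lambda\chi'')|_{H^j}$, and since distinct characters of $G$ agreeing on $H^j$ can still differ, one must check that $\lambda\chi'$ and $\lambda\chi''$ determine the \emph{same} idempotent $e^j_i$ — which they do, because $\lambda\chi'$ and $\lambda\chi''$ differ by the character $(\chi')(\chi'')^{-1}$ which is trivial on $H^j$, and on the index set $\{e^j_i\}_i$ the correspondence \eqref{1-1:lambdachi} with $\{\lambda^j_i\}_i$ shows each $e^j_i$ is pinned down by a character of $G/H^j$ together with the fixed value $\chi|_{H^j}$. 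Therefore $\kappa^{j}_{\chi'} = \kappa^{j}_{\chi''}$ and hence also $(\kappa^{j}_{\chi'})^{-1} = (\kappa^{j}_{\chi''})^{-1}$.

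Finally I would assemble the pieces: $\bar\kappa^{j}_{\chi'} = \kappa^{j}_{\chi'}\otimes q^j_{\#} = \kappa^{j}_{\chi''}\otimes q^j_{\#} = \bar\kappa^{j}_{\chi''}$, so composing with the fixed projection $\Lambda\twoheadrightarrow\Lambda/\Lambda e\Lambda$ gives $\lambda^{j}_{\chi'} = \lambda^{j}_{\chi''}$; and $\bar\varsigma^{j}_{\chi'} = (\kappa^{j}_{\chi'})^{-1}\circ\Delta^{j}_{\chi'}\otimes\iota^j_{\#} = (\kappa^{j}_{\chi''})^{-1}\circ\Delta^{j}_{\chi''}\otimes\iota^j_{\#} = \bar\varsigma^{j}_{\chi''}$, so the induced maps on $\Lambda/\Lambda e\Lambda$ coincide, i.e. $\eta^{j}_{\chi'} = \eta^{j}_{\chi''}$. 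The statement that $e'$ and $e''$ "correspond to" $\chi'$ and $\chi''$ is a red herring for the proof: $e'$ and $e''$ enter only through the chosen idempotent $e$ of $\Lambda$ (the trivial one), which is the same in both constructions, while the actual dependence of $\eta$ and $\lambda$ is on the pair $(\chi,H^j)$, and by construction only on its $\sim$-class. The only mild subtlety — the "main obstacle", though it is not a serious one — is the bookkeeping in the previous paragraph: one must be careful that the bijection between idempotents of $k(G/H^j)$ and the set $\{e^j_i\}_i$ genuinely factors through the $\sim$-class, i.e. that no choice hidden in \eqref{1-1:lambdachi} secretly remembers more of $\chi$ than its restriction to $H^j$; unwinding the construction preceding Lemma \ref{Subalge} confirms it does not.
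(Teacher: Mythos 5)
Your proof has a genuine gap at its central step: the claim that $\kappa^{j}_{\chi'} = \kappa^{j}_{\chi''}$ is false whenever $\chi' \neq \chi''$, and this is exactly the ``mild subtlety'' you dismiss at the end. To see this, recall from formula \eqref{mdibv} that $\kappa^{j}_{\chi}$ sends the idempotent $h_\lambda \in k(G/H^j)$ attached to a character $\lambda$ of $G/H^j$ to the idempotent $e_{\lambda\chi} \in kG$ attached to the character $\lambda\chi$ of $G$. In particular, taking $\lambda$ trivial, $\kappa^{j}_{\chi}(h_0) = e_\chi$. So if $e' \neq e''$, i.e.\ $\chi' \neq \chi''$, then $\kappa^{j}_{\chi'}(h_0) = e' \neq e'' = \kappa^{j}_{\chi''}(h_0)$, and the two maps are distinct. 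More generally, $\lambda\chi'$ and $\lambda\chi''$ are genuinely different characters of $G$ and hence index genuinely different primitive idempotents of $kG$. The fact that $\lambda\chi'$ and $\lambda\chi''$ agree on $H^j$ and therefore both lie in the set $\{\chi^j_i\}_i$ does \emph{not} make them the same element of that set; your argument conflates ``lying in the same fiber'' with ``being equal,'' which is exactly the bookkeeping error you warn against in your last sentence and then incorrectly declare absent. Consequently $\varsigma^{j}_{\chi'} \neq \varsigma^{j}_{\chi''}$, since both have $\Delta^{j}_{\chi'}=\Delta^{j}_{\chi''}$ but are then post-composed with different inverses $(\kappa^{j}_{\chi'})^{-1} \neq (\kappa^{j}_{\chi''})^{-1}$; and so $\bar\kappa^{j}_{\chi'}\neq\bar\kappa^{j}_{\chi''}$ and $\bar\varsigma^{j}_{\chi'}\neq\bar\varsigma^{j}_{\chi''}$, hence $\lambda^{j}_{\chi'}\neq\lambda^{j}_{\chi''}$ and $\eta^{j}_{\chi'}\neq\eta^{j}_{\chi''}$ as maps.

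The paper gives no proof of this corollary (it asserts it follows ``from the constructions''), but as written the statement does not hold literally: the two maps $\eta^{j}_{\chi'}$ and $\eta^{j}_{\chi''}$ differ by post-composition with the automorphism of $(G/H^j)\sharp k[W^j]$ given by twisting $k(G/H^j)$ by the character $\mu := \chi'(\chi'')^{-1}$ (which is trivial on $H^j$, hence descends to $G/H^j$), tensored with the identity on $k[W^j]$. What is actually invariant under the choice of representative, and what the downstream construction of $\varpi_0$ and Lemma \ref{NNned1} genuinely need, is the \emph{kernel} of $\eta^{j}_{\chi}$, or equivalently the composite $\bar\kappa^{j}_{\chi}\circ\bar\varsigma^{j}_{\chi} = \Delta^{j}_{\chi}\otimes(q^j_{\#}\circ\iota^j_{\#})$, which visibly depends only on the set $\{e_i^j\}_i$ and hence only on the $\sim$-class. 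A correct version of your argument would track this weaker statement rather than asserting the literal equality of $\kappa^{j}_{\chi'}$ and $\kappa^{j}_{\chi''}$.
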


From Proposition \ref{Modstru}, 
for any pair $(\chi, G^i)$, we get
an algebra surjection 
$\eta^{i}_{\chi}$ by replacing $H^j$ with 
$G^i$. Then by Corollary
\ref{DiffIde2}
we have the following well-defined 
graded algebra homomorphism 
$$
\varpi_0 : =  \bigoplus_{(\chi, G^i) \in \tilde{G}_0} \eta^{i}_{\chi}: 
\Lambda / \Lambda e \Lambda \rightarrow \bigoplus_{(\chi, G^i) \in \tilde{G}_0} \Lambda_i,
$$ 
where $\Lambda_i := (G/G^i) \sharp k[V^i]$. 
As we stated before, we 
wanted to prove $\varpi_0$
is an injection of algebras, but
this is not the case; however, on
the other hand, $\mathrm{ker}\varpi_0$
only consists of nilpotent elements.
We introduce the following
algebra $\Lambda/\sqrt{\Lambda e\Lambda}$, which
removes these nilpotent elements.
It plays an important
role in the proof of Theorem \ref{thm:smoothcatcpt2} as follows:
on the one hand, the algebra $\Lambda / \sqrt{\Lambda e\Lambda}$ can be decomposed 
into AS regular algebras with respect to pairs in $\tilde{G}_0$ (see \eqref{jxkdbn}), 
and then it admits a graded projective $(\Lambda / \sqrt{\Lambda e\Lambda})^e$-module 
resolution (see \ref{gnkbvj}); on the another hand, any classical generator of  
$D^{b}_{\mathrm{tail}(\Lambda/ \Lambda e \Lambda)}(\mathrm{tail}(\Lambda))$ 
gives a classical generator of  
$D^{b}_{\mathrm{tail}(\Lambda/ 
\sqrt{ \Lambda e \Lambda})}(\mathrm{tail}(\Lambda))$ (see Lemma \ref{nribe} below).

\subsection{The algebra $\Lambda/ \sqrt{\Lambda e\Lambda}$}

Consider the following two-sided ideal of $\Lambda$. 

\begin{definition}\label{bdfbv11267}
Let
$$
\sqrt{\Lambda e\Lambda} := \mathrm{Span}_{k}\Big\{e' \otimes f 
\in \Lambda \big\vert \exists m \in \mathbb{N}
\mbox{ such that }
e' \otimes f^m \in \Lambda e\Lambda \Big\}. 
$$
\end{definition}

Consider $\bar{\varsigma}_{\chi}^{i} ( \sqrt{\Lambda e\Lambda} )$ 
for any pair $(\chi, G^i) \in \tilde{G}_0$. 
By the definitions of $\sqrt{\Lambda e\Lambda}$ and $\bar{\varsigma}_{\chi}^{i}$, we know that 
for any $e' \otimes f \in \sqrt{\Lambda e\Lambda}$, 
$\bar{\varsigma}_{\chi}^{i}(e' \otimes f^m ) = 0$ for some $m \in \mathbb{N}$ 
(see the proof of Proposition \ref{Modstru}),
and
then either $\varsigma_{\chi}^{i}(e') = 0$ or 
$ p_{\#}^{i}(f^{m}) = 0$. Since $k[V^i]$ is reduced, 
$(p_{\#}^{i}(f))^m = p_{\#}^{i}(f^{m}) = 0$ implies that $ p_{\#}^{i}(f) = 0$ and thus 
$\bar{\varsigma}_{\chi}^{i}(e'' \otimes f ) = 0$. 
Thus for any pair $(\chi, G^i) \in \tilde{G}_0$, 
from $\eta_{\chi}^{i}$
we also obtain a graded algebra surjection  
$$
\bar{\eta}_{\chi}^{i}: \Lambda / \sqrt{\Lambda e\Lambda} \twoheadrightarrow \Lambda_i,
$$
and hence the following natural graded algebra homomorphism 
\begin{align}\label{jxkdbn}
\bar{\varpi}_0 : =  \bigoplus_{(\chi, G^i) \in \tilde{G}_0} \bar{\eta}^{i}_{\chi}: 
\Lambda / \sqrt{\Lambda e \Lambda} \rightarrow \bigoplus_{(\chi, G^i) \in \tilde{G}_0} \Lambda_i.
\end{align}
which may also be viewed as
being induced from $\varpi_0$.

\begin{lemma}\label{NNned1}
$\bar{\varpi}_0$ is injective. 
\end{lemma}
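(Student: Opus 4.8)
The plan is to show that any element of $\Lambda/\sqrt{\Lambda e\Lambda}$ killed by every $\bar\eta^i_\chi$ must already lie in $\sqrt{\Lambda e\Lambda}$, i.e.\ that $\bigcap_{(\chi,G^i)\in\tilde G_0}\ker(\bar\eta^i_\chi)=0$. Since the $\bar\eta^i_\chi$ are induced from the $\eta^i_\chi=\bar\varsigma^i_\chi$ (mod $\Lambda e\Lambda$), this is equivalent to proving that every homogeneous element $e'\otimes f\in\Lambda=G\sharp R$ with $\bar\varsigma^i_\chi(e'\otimes f)=0$ for all $(\chi,G^i)\in\tilde G_0$ lies in $\sqrt{\Lambda e\Lambda}$. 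First I would reduce to the case of a single ``matrix entry'': write $\Lambda=\bigoplus_{e_\chi,e_{\chi'}}e_\chi\Lambda e_{\chi'}$ over pairs of indecomposable idempotents of $kG$, and note that each $\bar\varsigma^i_\chi$ is built from the idempotent-projection $\Delta^i_\chi$ on $kG$ together with the restriction $p^i_\#\colon R\twoheadrightarrow k[V^i]$; so the vanishing condition decouples over these blocks. Fixing a block $e_{\chi_1}\Lambda e_{\chi_2}\cong R$ (as graded vector space) with a monomial basis, the statement becomes: if a monomial $f\in R$ lies in $\ker(p^i_\#)$ for every $G^i$ that is ``relevant'' to the character difference $\chi_1\chi_2^{-1}$ (the ones for which $\Delta^i_\chi$ does not annihilate the relevant idempotents), then $e_{\chi_1}\otimes f\in\sqrt{\Lambda e\Lambda}$.

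The geometric heart of the argument is the following dictionary. The ideal $\Lambda e\Lambda$, and hence $\sqrt{\Lambda e\Lambda}$, is supported on the singular locus of $S=R^G$, which decomposes into the strata fixed by the various subgroups $K^l$; and $p^i_\#(f)=0$ exactly says $f$ vanishes on the subspace $V^i=(V)^{G^i}$. So I would prove: a monomial $f$ (in a fixed block) has $e_{\chi_1}\otimes f^m\in\Lambda e\Lambda$ for some $m$ \emph{if and only if} $f$ vanishes identically on the union $\bigcup V^i$ of the relevant fixed subspaces, i.e.\ $p^i_\#(f)=0$ for all relevant $i$. The ``only if'' direction is essentially the computation already carried out just before the lemma (using that each $k[V^i]$ is reduced). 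For the ``if'' direction — the substantive one — I would use the defining property $e'(G\sharp R)e'\cong R^G$ together with the concrete description of the $M^G_R(W)$'s: an element $e_{\chi_1}\otimes f$ fails to be a morphism factoring through $S$ precisely when the corresponding function, restricted to the relevant fixed loci, is nonzero; and the maximality clause in the definition of $\{K^l\}_l$ (and of the $G^i$ appearing in $\tilde G_0$) guarantees that these fixed subspaces are exactly the ones that matter, so that vanishing on all of them forces some power of $f$ into $\Lambda e\Lambda$. Here one uses that $G$ is Abelian and diagonalizable (Proposition \ref{CanAb}), so each monomial is a character-eigenvector and one can track the $H^j$-actions monomial by monomial.

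Concretely the main steps, in order, are: (i) reduce the injectivity of $\bar\varpi_0$ to the block-diagonal statement about monomials $e_{\chi_1}\otimes f$ in each $e_{\chi_1}\Lambda e_{\chi_2}\cong R$; (ii) identify, for such a block, which pairs $(\chi,G^i)\in\tilde G_0$ give a \emph{nonzero} $\bar\varsigma^i_\chi$ on that block — via the condition $\chi_1\chi_2^{-1}|_{G^i}\neq 1$ read off from $\Delta^i_\chi$, i.e.\ exactly the pairs in $\tilde G_0$ ``compatible'' with the block; (iii) translate $\bar\varsigma^i_\chi(e_{\chi_1}\otimes f)=0$ into $p^i_\#(f)=0$, i.e.\ $f|_{V^i}=0$; (iv) prove that vanishing of $f$ on all relevant $V^i$ implies $e_{\chi_1}\otimes f^m\in\Lambda e\Lambda$ for some $m$, using $e(G\sharp R)e\cong S$, the maximality of the $K^l$, and Abelianness; (v) conclude $e_{\chi_1}\otimes f\in\sqrt{\Lambda e\Lambda}$, hence $\ker\bar\varpi_0=0$. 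I expect step (iv) to be the main obstacle: it is the one place where one must genuinely use the structure of the non-commutative crepant resolution (that $\Lambda e\Lambda$ is the ``conductor-type'' ideal cutting out the non-smooth locus) rather than formal manipulations, and some care is needed to see that taking a high enough power $m$ lands back in $\Lambda e\Lambda$ — this is where the finiteness of $G$ and the explicit monomial description of the modules $M^G_R(W)$ do the work.
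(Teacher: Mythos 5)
Your setup mirrors the paper's first step closely: you reduce injectivity of $\bar\varpi_0$ to a block-by-block statement by tensoring with $\Lambda u$ over indecomposable idempotents $u$, identify the pairs $(\chi,G^i)\in\tilde G_0$ on which $\bar\eta^i_\chi\otimes_\Lambda\Lambda u$ is nonzero via the compatibility of characters, and translate the vanishing into $\iota^i_\#(f)=0$, i.e.\ $f\in I^i$. Up to this point the proposal and the paper coincide. You then correctly isolate the crux as your step (iv): a monomial $f$ with $f\in\bigcap I^i$ over the relevant $G^i$ must satisfy $(1\otimes f^m)u\in\Lambda e\Lambda u$ for some $m$. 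However, you stop there: you \emph{identify} (iv) as ``the main obstacle'' but supply only a geometric heuristic (``$\Lambda e\Lambda$ is a conductor-type ideal cutting out the non-smooth locus'') rather than an argument. That heuristic does not by itself produce the power $m$, and it is exactly here that the real content of the lemma lives.

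What the paper actually does for (iv) is a character-theoretic induction, and it is not a small gap to fill. Writing $f=\bigodot f_i$ as a product of variables $f_i\in I^i$, for each $f_i$ one considers the cyclic subgroup $\langle\chi_{f_i}\rangle\subseteq(G^i)^\vee$ and uses the orbit count $\#\langle\chi_{f_i}\rangle\cdot\#G_{f_i}=\#G^i$ to identify $\langle\chi_{f_i}\rangle=(G^i/G_{f_i})^\vee$. The maximality clause in the definition of $\tilde G_0$ (no intermediate $K^l\subseteq G^i$ on which $\chi_u$ is nontrivial) forces $\chi_u|_{G_{f_i}}=1$, so $\chi_u\in(G^i/G_{f_i})^\vee$; hence some power $\chi_{f_i}^m$ equals $\chi_u$ on $G^i$, which produces $(\chi_u\chi_{f_i}^{m'},G^i)\sim(\chi_0,G^i)$. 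One then computes the idempotent $e^i$ absorbing $1\otimes f_i^m$ explicitly (equation~(\ref{jvxw56}) in the paper) and shows it lies in the image of the splitting $\bar\kappa^i_{\chi_0}$, so that the whole problem descends to the smaller skew group algebra $(G/G^i)\sharp k[V^i]$ with fewer equivalence classes in its $\tilde G_0$. Iterating this strictly decreases the number of classes, and the base case ($\tilde G_0=\varnothing$) forces $G$ trivial, where the claim is vacuous. So the argument is not a single ``conductor'' observation but a multi-level descent along subgroups. In particular, vanishing of $f$ on the union of the relevant $V^i$ is a priori a purely set-theoretic condition, whereas what must be produced is a \emph{matching of characters} so that $1\otimes f^m$ factors through the trivial idempotent $e$; the bridge between the two is precisely the orbit-stabilizer computation, and your proposal gives no substitute for it. I would treat (iv) as unresolved in your write-up.
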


\begin{proof}
This proof is a bit involved and
consists of several steps.

\vspace{2mm}

\noindent{\bf Step 1. We
reduce the
proof of the lemma
to the proof of the following (\ref{hxowc453}).}

In fact, by the definition 
(\ref{jxkdbn}), notice that to 
prove the lemma,
it suffices to show that 
$$
\mathrm{Ker}(\bar{\varpi}_0) \cong \bigcap\limits_{(\chi, G^i) \in \tilde{G}_0}
 \mathrm{Ker}(\bar{\eta}^{i}_{\chi})	
$$	
is trivial on $\Lambda / \sqrt{\Lambda e \Lambda}$.

Since $\eta^{i}_{\chi}$ is also a graded $\Lambda$-module morphism,
$\mathrm{Ker}(\eta^{i}_{\chi})$ is a graded $\Lambda$-module. Thus, it suffices to show that for any 
indecomposable idempotent $u$ of $\Lambda$, 
\begin{align}\label{qsxbd83}	
\Big(\bigcap\limits_{(\chi, G^i) \in \tilde{G}_0} \mathrm{Ker}(\eta^{i}_{\chi}) \Big)\otimes_{\Lambda} \Lambda u 
& \cong  \bigcap\limits_{(\chi, G^i) \in \tilde{G}_0} ( \mathrm{Ker}(\eta^{i}_{\chi}) 
\otimes_{\Lambda} \Lambda u ) \nonumber \\
 & \cong \bigcap\limits_{(\chi, G^i) \in \tilde{G}_0} \mathrm{Ker} (\eta^{i}_{\chi} 
 \otimes_{\Lambda} \Lambda u ) 
\end{align}
is trivial as graded $R$-modules. 

Next, $\sqrt{\Lambda e \Lambda} \otimes_{\Lambda} \Lambda u \cong (\sqrt{\Lambda e \Lambda} )u
\subseteq \Lambda u \cong R$ is a reduced ideal, denoted by $I_u$, of $R$.  
Let $\chi_u$ be the character of $G$ corresponding to $u$. 
Then we have graded algebra homomorphism $$\eta^{i}_{\chi} \otimes_{\Lambda} \Lambda u : 
R/I_u \cong \Lambda u \big/ (\sqrt{\Lambda e \Lambda} )u \twoheadrightarrow  \Lambda_i \otimes_{\Lambda} \Lambda u \cong k[V^i].$$
It is nontrivial if and only 
if $(\chi_u, G^i) \sim (\chi, G^i)$ in $\tilde{G}_0$. 
Now assume
that $(\chi_u, G^i) \sim (\chi, G^i)$. 
Let $\iota_{\#}^i : R \twoheadrightarrow k[V^i]$ be the canonical projection and $I^i$ be its kernel. 
Here, we note that
$\iota_{\#}^i$ induces $\eta^{i}_{\chi} 
\otimes_{\Lambda} \Lambda u : R/ I_u \twoheadrightarrow R/I^i$. 

Now, by the construction of $\tilde{G}_0$, to prove (\ref{qsxbd83}) is trivial, it is enough to show that 
\begin{align}\label{2736sdd}
\bigcap\limits_{(\chi_0, G^i) \not\sim (\chi_u, G^i)} I^i \subseteq I_u.
\end{align}
To this end, assume 
$f \in \bigcap\limits_{(\chi_0, G^i) 
\not\sim (\chi_u, G^i)} I^i $ is 
a monomial.
We need to show $f \in I_u$. 
In fact,
by the definition of 
$\sqrt{\Lambda e \Lambda}$ 
(see Definition \ref{bdfbv11267}), 
we know that 
\begin{align}\label{ahjd 710}
\sqrt{\Lambda e \Lambda} \otimes_{\Lambda} \Lambda u \cong 
\mathrm{Span}_{k}\Big\{ e' \otimes f \in 
\Lambda u \big\vert \exists m \in 
\mathbb{N}\mbox{ such that }
e' \otimes f^m \in \Lambda e \Lambda  
\Big\}. 
\end{align}
Thus to prove $f \in I_u$, it is enough to prove that 
\begin{align}\label{hxowc453}
(1 \otimes f^l)u \in \Lambda e \Lambda u,
\end{align}
for some $l \in \mathbb{N}$.

\vspace{2mm}
\noindent{\bf Step 2. We reduce the proof of
(\ref{hxowc453}) to the proof of the following (\ref{ncivfbg}).}

By the definition of $\iota_{\#}^i$, we know that 
$I^i$ is generated by elements in $k[V'^{i}]^{+}$, where $V'^{i}$ is the
kernel of the canonical projection $q^i: V \rightarrow V^i$ 
(see the definition in the proof of Lemma \ref{Inject}) and 
$k[V'^{i}]^{+}$ is given by the natural splitting 
$k[V'^{i}] \cong k[V'^{i}]^{+} \oplus k$. 
Without loss of generality, by the assumption of $f$, we let 
$$
f = \bigodot\limits_{(\chi_0, G^i) \not\sim (\chi_u, G^i)} f_i,
$$
where $\bigodot$ means 
the product of $R$, and $f_i \in I^i$ is a variable. 

To this end, consider the subgroup
$\langle\chi_{f_i}\rangle$, which 
is generated by $\chi_{f_i}$, in 
the dual group
$(G^i)^{\vee}$ of $G^i$. The following equality is straightforward: 
\begin{align}\label{BUXGECG3}
\#(\langle\chi_{f_i}\rangle) \#(G_{f_i}) = \#(G^i),
\end{align}
where $\#(-)$ is
the number of elements in group and
$G_{f_i} := \{g \in G^i \mid g(f_i) = f_i \}$. 
It follows that 
\begin{align}\label{BUXGECG322}
\#(\langle\chi_{f_i}\rangle)  = \#(G^i / G_{f_i}),
\end{align}
and hence that $\langle\chi_{f_i}\rangle 
=  (G^i / G_{f_i})^{\vee}$ in $(G^i)^{\vee}$.  

Now, from the definition of the
pair $(\chi_u, G^i) \in \tilde{G}_0$, we know
that there is no other $K^l$ such that
$K^l \subseteq G^i$ and $\chi_u \mid_{K^l}$ 
is nontrivial.
Meanwhile, we have $G_{f_i} \subsetneq G^i$ and $G_{f_i} \in \{K^l\}_l$ by the fact that $f_i$ is a variable.
Then we have that $\chi_u \mid_{G_{f_i}}$ is trivial. 
Hence, $\chi_u \in (G^i/G_{f_i})^{\vee}$.
We get that $\chi_{0} \chi_{f_i}^{m} = \chi_{f_i}^{m} 
= \chi_u$ in $(G^i/G_{f_i})^{\vee}$ for some $m \in \mathbb{N}$ by $\langle\chi_{f_i}\rangle 
=  (G^i / G_{f_i})^{\vee}$. 
Then we obtain that
$\chi_{0} \chi_{f_i}^{m} = \chi_u$ in $(G^i)^\vee$ by $(G^i/G_{f_i})^{\vee} \subseteq (G^i)^\vee$, and hence
that 
\begin{align}\label{nxje987}
(\chi_u \chi_{f_i}^{m'}, G^i) \sim (\chi_0, G^i)
\end{align}
in $\tilde{G}$ 
for some $m' \in \mathbb{N}$ such that $\chi_{f_i}^{-m} = \chi_{f_i}^{m'}$.
Let $e^i$ be the indecomposable idempotent such that
$ (1 \otimes f_i^{m}) u = e^i (1 \otimes f_i^{m})u \in \Lambda u$. 
By 
\begin{align}\label{jvxw56} 
(1 \otimes f_i^{m})u &= (1 \otimes f_i^{m}) 
(\frac{1}{|G|} \sum\limits_{g \in G} \chi_u(g) g \otimes 1)	 \nonumber \\
&= \frac{1}{|G|} \sum\limits_{g \in G} 
( \chi_u(g) g  \otimes g^{-1}(f_i^{m})) \nonumber \\
& =\displaystyle
\frac{1}{|G|} \sum\limits_{g \in G} 
( \chi_{f_i}^{-m}(g) \chi_u(g) g  \otimes f_i^{m}) \nonumber \\ 
& =(\frac{1}{|G|} \sum\limits_{g \in G} 
\chi^{m'}_{f_i}\chi_u(g)g  \otimes 1 )  (1 \otimes f_i^{m}),
\end{align}
we get that $e^i = \displaystyle
\frac{1}{|G|} 
\sum\limits_{g \in G} 
\chi^{m'}_{f_i}\chi_u(g)g  \otimes 1$. 
Next, by 
(\ref{nxje987}), 
we have that 
$$
\varsigma^{i}_{\chi_0}(e^i)  = \varsigma^{i}_{\chi_0} ( \frac{1}{|G|} 
\sum\limits_{g \in G} \chi^{m'}_{f_i}\chi_u(g)g  \otimes 1 ) \neq 0,
$$ 
where $\varsigma^i_{\chi_0}$ corresponds 
to the pair $(\chi_0, G^i)$. 
By the fact that $\varsigma^{i}_{\chi_0}
\circ \kappa^{i}_{\chi_0} = \mathrm{Id}$, and the definitions of 
$\varsigma^{i}_{\chi_0}$ and $\kappa^{i}_{\chi_0}$(see the proof of Lemma \ref{Compos}), 
it follows that $e^i$ is in the image of $\kappa^i_{\chi_0}$, where 
$\kappa^i_{\chi_0}$ corresponds to the pair $(\chi_0, G^i)$.  
Thus, $e^i$ is in the image of $\bar\kappa^i_{\chi_0}$.

In what follows, we replace $G$, $V$ 
and $u$ by $G/G^i$, $V^i$ and $e^i$ 
respectively. Let $f'$ be a factor of $f$ 
satisfying 
that it is the product of 
all variables like $f_r \in \{ f_i \}
$ 
such that $G^i \subsetneq G^r $, 
where $G^r$ corresponds to $f_r$. 
We can write $f'$ as
$$
\bigodot\limits_{f_r \in k[V^i]} f_r. 
$$
Then we have 
$$ f' \in \bigcap\limits_{(\chi_0, G^r/G^i) \not\sim (\chi_{e^i}, G^r/G^i)}  I^r,
$$ 
where $\chi_{e^i}$ is the character corresponding to the
idempotent $e^i$. Now, we also replace $f$ by $f'$. 
Note that since the action of $G^i$ on each $f_r$ is trivial, 
the action of $G^i$ on $f'$ is also trivial.
It implies that for any idempotent $u', u''$ in $\mathrm{Im}(\bar{\kappa}^i_{\chi_0})$, 
$u'(1 \otimes f')u''$ is also in $\mathrm{Im}(\bar{\kappa}^i_{\chi_0})$.

Since $(e^i \otimes f_i^{m}) \in \Lambda u $, to prove that 
$(1 \otimes f^l)u \in \Lambda e \Lambda u$
(see \ref{hxowc453}) for some 
$l \in \mathbb{N}$, 
it suffices to show that
$(1 \otimes f'^{l'})e^i \in \Lambda e \Lambda e^i$, for some
$l' \in \mathbb{N}$. 
Since $e, e^i, (1 \otimes f'^{l'})e^i \in 
\mathrm{Im}(\bar{\kappa}^i_{\chi_0})$, it is left to show that 
\begin{align}\label{ncivfbg}
	(1 \otimes f'^{l'})e^i \in \Lambda_i e \Lambda_i e^{i},
\end{align}
 where 
we identity $\Lambda_i$ with the image 
of injection $\bar{\kappa}^i_{\chi_0}$. 
In other words,
the proof of the lemma for $\Lambda$ to 
is reduced to the one for $\Lambda_i$. 

\vspace{2mm}
\noindent{\bf Step 3. We prove
(\ref{ncivfbg}) by reducing it to the case that $\tilde{G}_0$ is empty.}

We repeatedly apply the above procedure. 
Note that the number of equivalence classes
of $\{\chi, G^r\}_r$ in $\tilde{G}_0$ 
such that $G^i \subsetneq G^r$
is less than the number of equivalence
classes of $\{\chi, G^i\}_i$ in 
$\tilde{G}_0$, and thus
by reduction, the proof of this lemma for $\Lambda_i$ 
is reduced to the case 
where $\Lambda$ satisfies that
the set $\tilde{G}_0$ of equivalence classes of $\{\chi, G^i\}_i$ is empty. 

In this case, we have $\chi(K^l) = 1$ for any character $\chi$ and $K^l$. Since $G \in \{K^l\}_l$, $G$ is trivial. Hence 
$(1 \otimes f'^{l'})e^i \in \Lambda_i e \Lambda_i e^{i}$(see \ref{ncivfbg}) holds immediately. 
The proof is now complete. 
%
%
%
\end{proof}

This lemma will be used
in the next subsection.

\subsection{The resolution of $\Lambda/ \sqrt{\Lambda e\Lambda}$}

Consider the graded
$(\Lambda/\sqrt{\Lambda e \Lambda})^e$-module $\mathrm{Im}(\bar{\varpi}_0)$ 
and the following graded $(\bigoplus\limits_{(\chi, G^i)} \Lambda_{i})$-module homomorphisms. 
Let $(\lambda, G^{i_1})$ and $(\lambda, G^{i_2})$ be two pairs in $\tilde{G}_0$.
Let $V^{i_1, i_2} = V^{i_1} \cap V^{i_2}$ where 
$V^{i_1}, V^{i_2}$ are the invariant subspaces of $G^{i_1}$ and $G^{i_2}$ 
respectively. Moreover, let $K^{i_1, i_2} \subseteq G$ be the subgroup such 
that $K^{i_1, i_2} \subseteq \{ K^l \}_l$ and $V^{i_1, i_2}$ 
is its invariant subspace. 
There are two natural graded 
algebra projections 
$$\theta^{i_1} : k[V^{i_1}] 
\twoheadrightarrow k[V^{i_1, i_2}]\quad\mbox{and}\quad
\theta^{i_2} : k[V^{i_2}] \twoheadrightarrow k[V^{i_1, 
i_2}].$$
Similarly to the construction 
of $\bar\varsigma^{j}_{\chi}$, we have graded 
algebra surjections 
$$\bar\theta^{i_1}_{\lambda}: G^{i_1} \sharp k[V^{i_1}] 
\twoheadrightarrow G / K^{i_1, i_2} \sharp k[V^{i_1, i_2}]
\quad\mbox{and}\quad
\bar\theta^{i_2}_{\lambda}:  G^{i_2} \sharp k[V^{i_2}] 
\twoheadrightarrow G / K^{i_1, i_2} \sharp k[V^{i_1, 
i_2}].$$
They induce a graded 
$(\Lambda / \sqrt{\Lambda e \Lambda})^e$-module
homomorphism 
$$
\bar\theta^{i_1, i_2}_\lambda :  G^{i_1} \sharp k[V^{i_1}] 
\oplus  G^{i_2} \sharp k[V^{i_2}] 
\xrightarrow{\bar\theta^{i_1}_{\lambda} 
\oplus (-\bar\theta^{i_2}_{\lambda})} 
G / K^{i_1, i_2} \sharp k[V^{i_1, i_2}],
$$
where the graded
$(\Lambda / \sqrt{\Lambda e \Lambda})^e$-module structures
of $$(G /G^{i_1} \sharp k[V^{i_1}]) 
\oplus(G /G^{i_2} \sharp k[V^{i_2}])\quad
\mbox{and}
\quad
G /K^{i_1, i_2} \sharp k[V^{i_1, i_2}]$$ are given by 
$\bar{\eta}^{i_1}_{\lambda} \oplus 
\bar{\eta}^{i_2}_{\lambda}$ and $ \bar{\eta}^{i_1, 
i_2}_{\lambda}$, 
which are associated to
the
subgroups $G^{i_1}$, $G^{i_2}$ and $K^{i_1, i_2}$ respectively. 

Let $(\lambda_1, G^{i_1}), (\lambda_2, G^{i_2}) \in \tilde{G}_0$ be 
two pairs such that there exists a character 
$\lambda$ such that $(\lambda, G^{i_1}) \sim (\lambda_1, G^{i_1})$
and $(\lambda, G^{i_2}) \sim (\lambda_2, G^{i_2})$. From the above construction, 
we get 
a graded $(\Lambda / \sqrt{\Lambda e \Lambda})^e$-module
homomorphism 
$$
G /G^{i_1} \sharp k[V^{i_1}] \oplus G /G^{i_2} \sharp k[V^{i_2}] 
\xrightarrow{\bar\theta^{i_1}_{\lambda} \oplus 
(-\bar\theta^{i_2}_{\lambda})} G /K^{i_1, i_2} \sharp k[V^{i_1, i_2}],
$$ 
which we denote by $\bar\theta^{i_1, i_2}_{\lambda}$. 
Consider the following homomorphism of direct sum of 
graded $(\Lambda / \sqrt{\Lambda e \Lambda})^e$-modules 
$$
\bar{\varpi}_1 := \bigoplus\limits_{(\lambda, G^{i_1}), (\lambda, G^{i_2}) \in \tilde{G}_0} 
\bar\theta^{i_1, i_2}_{\lambda}:  \bigoplus_{(\chi, G^i) \in \tilde{G}_0} \Lambda_i 
\rightarrow \bigoplus\limits_{(\lambda, G^{i_1}), (\lambda, G^{i_2}) \in \tilde{G}_0} G 
/ K^{i_1, i_2} \sharp k[V^{i_1, i_2}]. 
$$
Here, note that we only choose one from $\{\bar\theta^{i_1, i_2}_{\lambda}, 
\bar\theta^{i_2, i_1}_{\lambda}\}$ for the construction of $\bar{\varpi}_1$. 
We have the following.

\begin{lemma}\label{Firstgj}
$\mathrm{Im}(\bar{\varpi}_0) = \mathrm{Ker}(\bar{\varpi}_1)$.
\end{lemma}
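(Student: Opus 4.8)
The statement is a ``gluing/patching'' exactness: $\bar{\varpi}_0$ identifies $\Lambda/\sqrt{\Lambda e\Lambda}$ with the tuples $(m_i)_i\in\bigoplus_{(\chi,G^i)\in\tilde{G}_0}\Lambda_i$ that agree on the pairwise ``overlaps'' $G/K^{i_1,i_2}\sharp k[V^{i_1,i_2}]$, and $\bar{\varpi}_1$ records exactly the discrepancies on those overlaps. The plan is to prove the two inclusions separately.

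\textbf{The inclusion $\mathrm{Im}(\bar{\varpi}_0)\subseteq\mathrm{Ker}(\bar{\varpi}_1)$.} This is the identity $\bar{\varpi}_1\circ\bar{\varpi}_0=0$, which I would verify directly from the explicit homomorphisms. On the summand indexed by a pair $(\lambda,G^{i_1}),(\lambda,G^{i_2})\in\tilde{G}_0$ the composite sends $a\in\Lambda/\sqrt{\Lambda e\Lambda}$ to $\bar{\theta}^{i_1}_{\lambda}(\bar{\eta}^{i_1}_{\lambda}(a))-\bar{\theta}^{i_2}_{\lambda}(\bar{\eta}^{i_2}_{\lambda}(a))$, so it suffices to check that both composites $\bar{\theta}^{i_1}_{\lambda}\circ\bar{\eta}^{i_1}_{\lambda}$ and $\bar{\theta}^{i_2}_{\lambda}\circ\bar{\eta}^{i_2}_{\lambda}$ coincide with the single graded algebra surjection $\bar{\eta}^{i_1,i_2}_{\lambda}\colon\Lambda/\sqrt{\Lambda e\Lambda}\twoheadrightarrow G/K^{i_1,i_2}\sharp k[V^{i_1,i_2}]$ attached to $K^{i_1,i_2}$. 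By Proposition \ref{CanAb}, $V,V^{i_1},V^{i_2}$ are coordinate subspaces with $V^{i_1}\cap V^{i_2}=V^{i_1,i_2}$ and $G^{i_1},G^{i_2}\subseteq K^{i_1,i_2}$; on the polynomial factor each composite is the projection $R\twoheadrightarrow k[V^{i_1}]\twoheadrightarrow k[V^{i_1,i_2}]$, which equals the direct projection $R\twoheadrightarrow k[V^{i_1,i_2}]$, and on the group/idempotent factor each is the successive quotient $G\twoheadrightarrow G/G^{i_1}\twoheadrightarrow G/K^{i_1,i_2}$ together with the matching $\kappa$-reindexing, equal to the direct one. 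This is routine from the formulas \eqref{mdibv}, \eqref{ASD9864} exactly as in the proof of Lemma \ref{Compos}; hence $\bar{\varpi}_1\circ\bar{\varpi}_0=0$.

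\textbf{The inclusion $\mathrm{Ker}(\bar{\varpi}_1)\subseteq\mathrm{Im}(\bar{\varpi}_0)$.} Both $\bar{\varpi}_0$ and $\bar{\varpi}_1$ are in particular morphisms of left $\Lambda$-modules, and since $\Lambda\cong\mathrm{End}_S(R)$ is basic with indecomposable idempotents in bijection with the characters of $G$ (see \S\ref{fgas}), each functor $(-)\otimes_{\Lambda}\Lambda u$ ($u$ an indecomposable idempotent) is exact and $\bigoplus_u\Lambda u=\Lambda$, so it is enough to prove $\mathrm{Im}(\bar{\varpi}_0\otimes_{\Lambda}\Lambda u)=\mathrm{Ker}(\bar{\varpi}_1\otimes_{\Lambda}\Lambda u)$ for each $u$, with character $\chi_u$. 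Put $\mathcal{I}_u:=\{\,G^i:(\chi_u,G^i)\in\tilde{G}_0\,\}$. Using the identifications from the proof of Lemma \ref{NNned1}: $(\Lambda/\sqrt{\Lambda e\Lambda})u\cong R/I_u$ with $R\twoheadrightarrow R/I_u$; each $\Lambda_iu\cong k[V^i]=R/I^i$, where $I^i=\ker(\iota^i_{\#})$ is a monomial ideal generated by a subset of the variables (because $G$ is diagonalizable); $\bar{\eta}^{i}_{\chi}\otimes\Lambda u$ is the natural projection $R/I_u\to R/I^i$ when $G^i\in\mathcal{I}_u$ and is zero otherwise; and $\bar{\varpi}_0\otimes\Lambda u$ is the resulting diagonal map. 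Taking $\lambda=\chi_u$ in the construction of $\bar{\varpi}_1$ shows that for every pair $\{i_1,i_2\}$ with $G^{i_1},G^{i_2}\in\mathcal{I}_u$ the corresponding overlap summand of $\bar{\varpi}_1\otimes\Lambda u$ is $R/(I^{i_1}+I^{i_2})=k[V^{i_1}\cap V^{i_2}]$ and the map is the \v{C}ech difference $(f_{i_1},f_{i_2})\mapsto f_{i_1}-f_{i_2}$. Hence $\mathrm{Ker}(\bar{\varpi}_1\otimes\Lambda u)$ is contained in the set of tuples $(f_i)_{G^i\in\mathcal{I}_u}$ that are pairwise compatible modulo the ideals $I^{i_1}+I^{i_2}$. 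Since each $R/I^i$ is the polynomial ring on a subset of the variables, such a compatible family glues to a single $f\in R$ defined monomial by monomial (the coefficient of a monomial involving the variable set $T$ equals its coefficient in any $f_i$ whose polynomial ring $R/I^i$ contains all variables of $T$, and is $0$ if no such $i$ exists), and $f\mapsto(f_i)$; as $R\twoheadrightarrow R/I_u$, the tuple lies in $\mathrm{Im}(\bar{\varpi}_0\otimes\Lambda u)$. This gives the reverse inclusion and hence the lemma.

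\textbf{The main obstacle.} The real work is the bookkeeping in the second step: one must check, for each idempotent $u$, that restricting $\bar{\varpi}_1$ to the $u$-component really yields the (truncated) \v{C}ech complex of the coordinate arrangement $\{V^i:G^i\in\mathcal{I}_u\}$ with \emph{all} its pairwise overlaps present and with each overlap map genuinely a difference of restriction maps — a missing overlap would enlarge the middle kernel and break exactness. This is exactly where the maximality/minimality conditions defining $\{K^l\}_l$ and $\tilde{G}_0$, and the explicit computation of $I_u$ in the proof of Lemma \ref{NNned1}, are used; the forward inclusion, while only a routine check, also requires handling the precise form of all the surjections $\bar{\eta},\bar{\theta}$ with care. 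Once these reductions are carried out, the exactness is the elementary monomial gluing above, so no deeper homological input is needed.
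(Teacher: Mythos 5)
Your proof follows essentially the same route as the paper's: show $\bar{\varpi}_1\circ\bar{\varpi}_0 = 0$ by identifying both composites $\bar{\theta}^{i_t}_{\lambda}\circ\bar{\eta}^{i_t}_{\lambda}$ with $\bar{\eta}^{i_1,i_2}_{\lambda}$, then reduce the reverse inclusion to the idempotent components $R/I_u\to\bigoplus R/I^i\to\bigoplus R/(I^{i_1}+I^{i_2})$ and observe this is the \v{C}ech-type gluing sequence for a coordinate-subspace arrangement. The only difference is cosmetic: where the paper asserts exactness by invoking the covering $\bigsqcup\mathrm{Spec}(R/I^i)\to\mathrm{Spec}(R/I_r)$ and its pairwise overlaps, you spell out the gluing monomial by monomial, which is a correct and slightly more self-contained way to justify the same step.
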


\begin{proof}
First, we show that $\varpi_1 \circ \varpi_0 = 0$, which implies that $ \mathrm{Im}
(\varpi_0) \subseteq \mathrm{Ker}(\varpi_1)$. 
By the definitions of $\varpi_0$, $\varpi_1$ and $\bar\theta^{i_1, i_2}_{\lambda}$, to prove 
$\varpi_1 \circ \varpi_0 = 0$, it suffices to prove that 
\begin{align}\label{cinbf4563} 
\bar\theta^{i_1, i_2}_{\lambda} \circ 
( \eta^{i_1}_{\lambda} \oplus \eta^{i_2}_{\lambda} ) = 
(  \bar\theta^{i_1}_{\lambda} \oplus (-\bar\theta^{i_2}_{\lambda}) ) \circ 
( \eta^{i_1}_{\lambda} \oplus \eta^{i_2}_{\lambda} ) = 0
\end{align}
for any $(\lambda, G^{i_1}), 
(\lambda, G^{i_2}) \in \tilde{G}_0$.
This is equivalent to showing that
$\bar\theta^{i_1}_{\lambda} 
\circ \eta^{i_1}_{\lambda} = 
\bar\theta^{i_2}_{\lambda} \circ 
\eta^{i_2}_{\lambda}$. 
In fact, by
the definitions of $\bar\theta^{i_1}_{\lambda}$ and
$\eta^{i_1}_{\lambda}$, it is straightforward
to see that 
$$
\bar\theta^{i_1}_{\lambda} \circ \eta^{i_1}_{\lambda} = 
\eta^{i_1, i_2}_{\lambda}. 
$$ 
Similarly, we have $\bar\theta^{i_2}_{\lambda} \circ \eta^{i_2}_{\lambda} 
= \eta^{i_1, i_2}_{\lambda}$. 
Then it follows that
$\bar\theta^{i_1}_{\lambda} \circ \eta^{i_1}_{\lambda} 
= \bar\theta^{i_2}_{\lambda} \circ \eta^{i_2}_{\lambda}$, and
hence, (\ref{cinbf4563}) holds. 

Now, to prove this lemma, it is left to show that
$\mathrm{Ker}(\varpi_1) \subseteq \mathrm{Im}
(\varpi_0)$. 
To this end, let $\{e_r\}_r$ be the set of indecomposable
idempotents of $\Lambda$. Then $\sum_r e_r$ is the unit of $\Lambda$,
and we have
$$\Lambda / \sqrt{\Lambda e \Lambda} 
\cong \sum_r e_r \Lambda 
\Big/ (\sum_r e_r \sqrt{\Lambda e \Lambda})  
\cong \bigoplus\limits_r (e_r 
\Lambda \big/ e_r \sqrt{\Lambda e \Lambda})$$ 
as graded $\Lambda/\sqrt{\Lambda e \Lambda}$-modules. 
Viewing
$\bar{\varpi}_0$ and 
$\bar{\varpi}_1$ as graded 
$(\Lambda/\sqrt{\Lambda e \Lambda}) 
\otimes \Lambda^{op}$-module morphisms, 
by applying the functor 
$e_r \Lambda \otimes_{\Lambda}(-)$, 
we get the following graded 
$\Lambda /\sqrt{\Lambda e \Lambda}$-module
homomorphisms: 
$$
\varpi_0^r : =  e_r \Lambda \otimes_{\Lambda} \varpi_0: 
R / I_r \rightarrow \bigoplus_{(\chi, G^i) \in \tilde{G}_0} e_r 
\Lambda \otimes_{\Lambda} (G/G^i \sharp k[V^i]) \cong 
\bigoplus_{\substack{(\chi_r, G^i) \in \tilde{G}_0 
}} k[V^i],
$$
and 
\begin{align*}
	\varpi_1^r : =  e_r \Lambda \otimes_{\Lambda} \varpi_1: &  
\bigoplus_{(\chi, G^i) \in \tilde{G}_0} e_r \Lambda \otimes_{\Lambda} 
(G/G^i \sharp k[V^i]) \cong \bigoplus_{\substack{(\chi_r, G^i) 
\in \tilde{G}_0 
}} k[V^i]  \\
& \longrightarrow \bigoplus\limits_{(\lambda, G^{i_1}), (\lambda, G^{i_2}) 
\in \tilde{G}_0} e_r \Lambda \otimes_{\Lambda} (G /K^{i_1, i_2} \sharp k[V^{i_1, i_2}]) \\
&\quad\quad \cong \bigoplus\limits_{\substack{(\chi_r, G^{i_1}), (\chi_r, G^{i_2}) \in \tilde{G}_0  
}} k[V^{i_1, i_2}], 
\end{align*}
where $\chi_r$ is the character corresponding to
idempotent $e_r$ and 
$I_r := e_r \Lambda \otimes_{\Lambda} 
\sqrt{\Lambda e \Lambda} \subseteq e_r \Lambda \cong R$ 
is the 
ideal of $R$. In the above,
recall that $k[V^{i}] \cong R/I^i$ and
$k[V^{i_1, i_2}] 
\cong k[V^{i_1} \cap V^{1_2}] 
\cong R/(I^{i_1}, I^{i_2})$. 

To prove 
$\mathrm{Ker}(\varpi_1)\subseteq
\mathrm{Im}(\varpi_0)$,
it is enough to show that 
$\mathrm{Ker}(\varpi^r_1) \subseteq \mathrm{Im}(\varpi^r_0)$ 
for any $e_r$.
By Lemma \ref{NNned1}, it is obvious that
$\varpi_0^r$ gives a covering
$$\bigsqcup\limits_{\substack{(\chi_r, G^i) \in \tilde{G}_0 
}} \mathrm{Spec}(R/I^i)$$ of $\mathrm{Spec}(R/I_r)$. 
Here, note that for any pair
$$\{ \mathrm{Spec}(R/I^{i_1}), 
\mathrm{Spec}(R/I^{i_2})\} \subseteq \Big\{ \mathrm{Spec}(R/I^i)  
\Big\}_{\substack{(\chi_r, G^i) \in \tilde{G}_0 
}},$$
the intersection of them is exactly 
$\mathrm{Spec}(R/(I^{i_1}, I^{i_2}))$ in $\mathrm{Spec}(R/I_r)$. 
Thus, we obtain the following exact sequence
\begin{align}\label{gnkbvj}
0 \longrightarrow  R/I_r \xrightarrow{\varpi^r_0}  \bigoplus_{\substack{(\chi_r, G^i) 
\in \tilde{G}_0 
}} R/ I^{i} \xrightarrow{\varpi^r_1} 
\bigoplus\limits_{\substack{(\chi_r, G^{i_1}), (\chi_r, G^{i_2}) \in \tilde{G}_0  
}} R/(I^{i_1}, I^{i_2})
\end{align}
of graded 
$\Lambda / \sqrt{\Lambda e \Lambda}$-modules. 
It follows that $\mathrm{Ker}(\varpi^r_1) 
\subseteq \mathrm{Im}(\varpi^r_0)$,
and the lemma is proved.
\end{proof}

From now on, we use $W_G^0$ and $W_G^1$ to denote 
the graded $(\Lambda /\sqrt{\Lambda e \Lambda})^e$-modules 
\begin{equation}
\label{defofWG0}
\bigoplus\limits_{(\chi, G^i) \in \tilde{G}_0} \Lambda_i\quad\mbox{and}\quad
\bigoplus\limits_{(\lambda, G^{i_1}), (\lambda, G^{i_2}) \in \tilde{G}_0}  
K^{i_1, i_2} \sharp k[V^{i_1, i_2}]
\end{equation}
respectively. 
By taking the direct sum of
the
exact sequences (\ref{gnkbvj}) for any $e_r$, we have the 
exact sequence
of graded $(\Lambda /\sqrt{\Lambda e \Lambda})^e$-modules: 
\begin{align}\label{ibyvkl}
0 \longrightarrow \Lambda/ \sqrt{ \Lambda e 
\Lambda }\xrightarrow{\bar{\varpi}_{0}} 
W_G^0 \xrightarrow{\bar{\varpi}_{1}} W_G^1.
\end{align}
Furthermore, for any $m \in \mathbb{N^+}$, 
let $W_G^m$
be the
graded $(\Lambda /\sqrt{\Lambda e \Lambda})^e$-module:
\begin{equation}\label{WGM}
 \bigoplus\limits_{\substack{\{(\lambda, G^{i_l})\}_{1 \leq l \leq m} 
 \subseteq \tilde{G}_0}} (G / K^{i_1, \cdots, i_m} )\sharp k[V^{i_1} \cap \cdots \cap V^{i_m}],
\end{equation}
where $K^{i_1, \cdots, i_m} \subseteq G$ 
is the maximal
subgroup whose invariant space is
$V^{i_1} \cap \cdots \cap V^{i_m}$.
Then there is 
a graded $(\Lambda /\sqrt{\Lambda e \Lambda})^e$-module homomorphism 
$$
\varpi_{m}: W_G^m  \rightarrow W_G^{m+1},
$$
which is defined as follows. 

By the definition
\eqref{WGM}
of $W_G^m$, to define 
$\varpi_{m}$, it is enough to define
its restrictions on the summands 
\begin{equation}
\label{map:pim}
(G / K^{i_1, \cdots, i_m} 
)\sharp k[V^{i_1} \cap 
\cdots \cap V^{i_m}] \twoheadrightarrow  
(G / K^{i_1, \cdots, i_{m+1}} )\sharp 
k[V^{i_1} \cap \cdots \cap V^{i_m} \cap 
V^{i_{m+1}}],
\end{equation}
for all pairs 
$\{(\lambda, G^{i_l})\}_{1 \leq l \leq m+1} \subseteq \tilde{G}_0$. 
For this purpose, we first know that
$V^{i_1} \cap \cdots \cap V^{i_m} 
\cap V^{i_{m+1}} \subseteq V^{i_1} \cap \cdots  \cap V^{i_m}$ as affine subspaces of $V$, and
$K^{i_1, \cdots, i_{m}} \subseteq K^{i_1, \cdots, i_{m+1}}$
as subgroups of $G$. 
Then there is an injection $V^{i_1} \cap \cdots \cap V^{i_m} 
\cap V^{i_{m+1}} \hookrightarrow V^{i_1} \cap \cdots  \cap V^{i_m}$ of affine spaces and surjection of groups
$G / K^{i_1, \cdots, i_m} \twoheadrightarrow G / K^{i_1, \cdots, i_{m+1}}$. 
Analogous to the definition of $\bar\varsigma^j_{\chi}$ in Lemma \ref{Compos}, 
the map \eqref{map:pim}
is then given by the tensor product
of
the natural algebra surjection 
$$k( G / K^{i_1, \cdots, i_m}) \twoheadrightarrow k(G / K^{i_1, \cdots, i_{m+1}})$$ 
of group algebras
and the canonical graded algebra projection 
$$k[V^{i_1} \cap \cdots \cap V^{i_m}] \twoheadrightarrow k[V^{i_1} \cap \cdots \cap V^{i_m} 
\cap V^{i_{m+1}}].$$ 
Here, we replace $G$ and $H$ in 
Lemma \ref{Compos} with 
$G/ K^{i_1, \cdots, i_m}$ 
and $K^{i_1, 
\cdots, i_{m+1}} / K^{i_1, \cdots, i_m}$ respectively.

Using the same approach as 
in the proof of Lemma \ref{Firstgj}, we have the following.  

\begin{lemma}\label{qjsgkn}
Let $W_G^i$, $i=0, 1,\cdots, t$
be the
graded $(\Lambda/\sqrt{ \Lambda e \Lambda})^e$-modules \eqref{WGM}
as in the proof
of Lemma \ref{Firstgj}. 
Then the following  $(\Lambda/ \sqrt{\Lambda e \Lambda})^e$-module sequence 
$$
0 \longrightarrow \Lambda/ \sqrt{\Lambda e \Lambda}\xrightarrow{\varpi_{0}} W_G^0 
\xrightarrow{\varpi_{1}} W_G^1 \xrightarrow{\varpi_{2}}   \cdots   \xrightarrow{\varpi_{t}}     
W_G^t \longrightarrow 0
$$
is exact. 
\end{lemma}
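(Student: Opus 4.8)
The plan is to follow the same route as in the proof of Lemma \ref{Firstgj}: after applying $e_r\Lambda\otimes_{\Lambda}(-)$ the sequence turns into the augmented \v{C}ech (Mayer--Vietoris) complex of a covering of $\mathrm{Spec}(R/I_r)$ by coordinate subspaces, and the exactness of the latter is then checked one monomial at a time, where it becomes the cochain complex of a simplex.

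First I would apply, for each indecomposable idempotent $e_r$ of $\Lambda$ with associated character $\chi_r$, the functor $e_r\Lambda\otimes_{\Lambda}(-)$ to the whole sequence. Since $\Lambda/\sqrt{\Lambda e\Lambda}\cong\bigoplus_r e_r\Lambda/e_r\sqrt{\Lambda e\Lambda}$ as graded left modules and every $\varpi_m$ is a map of graded $(\Lambda/\sqrt{\Lambda e\Lambda})\otimes\Lambda^{op}$-modules, the original sequence is exact if and only if each of these is. Exactly as in the proof of Lemma \ref{Firstgj} one computes
$$e_r\Lambda\otimes_{\Lambda}\big((G/K^{i_1,\cdots,i_m})\sharp k[V^{i_1}\cap\cdots\cap V^{i_m}]\big)\;\cong\;R/(I^{i_1}+\cdots+I^{i_m})$$
whenever all the pairs $(\chi_r,G^{i_l})$ lie in $\tilde G_0$, and $0$ otherwise; here $I^i\subseteq R\cong e_r\Lambda$ is the ideal with $R/I^i\cong k[V^i]$. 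Writing $I_r:=e_r\Lambda\otimes_{\Lambda}\sqrt{\Lambda e\Lambda}$, the sequence $e_r\Lambda\otimes_{\Lambda}(-)$ becomes
$$0\longrightarrow R/I_r\xrightarrow{\varpi_0^r}\bigoplus_{(\chi_r,G^i)}R/I^i\xrightarrow{\varpi_1^r}\bigoplus_{(\chi_r,G^{i_1}),(\chi_r,G^{i_2})}R/(I^{i_1}+I^{i_2})\xrightarrow{\varpi_2^r}\cdots,$$
whose differentials are, by the description of \eqref{map:pim} in terms of canonical projections, the usual alternating sums of the projections $R/(I^{i_1}+\cdots+I^{i_m})\twoheadrightarrow R/(I^{i_1}+\cdots+I^{i_{m+1}})$. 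By Lemma \ref{NNned1} one has $I_r=\bigcap_{(\chi_r,G^i)\in\tilde G_0}I^i$, the reduced ideal of the union of the coordinate subspaces $V^i$; thus the displayed complex is precisely the augmented \v{C}ech complex of the closed covering $\mathrm{Spec}(R/I_r)=\bigcup_i\mathrm{Spec}(R/I^i)$.

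The heart of the argument is then the exactness of this complex, which I would prove monomial by monomial. Each term $R/(I^{i_1}+\cdots+I^{i_m})\cong k[V^{i_1}\cap\cdots\cap V^{i_m}]$ has a monomial $k$-basis, and for a monomial $f$ one has $f\neq0$ in $R/(I^{i_1}+\cdots+I^{i_m})$ precisely when $i_1,\dots,i_m\in\Sigma_f:=\{i\mid f\in k[V^i]\}$, while $f\neq0$ in $R/I_r$ precisely when $\Sigma_f\neq\emptyset$. Hence the $f$-graded component of the displayed complex is, up to a degree shift, the augmented simplicial cochain complex with coefficients in $k$ of the full simplex on the vertex set $\Sigma_f$; since that simplex is contractible, this complex is acyclic whenever $\Sigma_f\neq\emptyset$, i.e. for every nonzero monomial $f$ of $R/I_r$ (this includes surjectivity at the last nonzero term, since the top cochain group of a simplex is hit). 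Summing over all monomials $f$ gives the exactness of the \v{C}ech complex, hence of $0\to\Lambda/\sqrt{\Lambda e\Lambda}\to W_G^0\to\cdots\to W_G^t\to 0$.

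The step I expect to be the main obstacle is the bookkeeping needed to make the first two paragraphs precise: identifying $e_r\Lambda\otimes_{\Lambda}W_G^m$ with the asserted direct sum of coordinate rings, checking that $V^{i_1}\cap\cdots\cap V^{i_m}$ is again a coordinate subspace and that $K^{i_1,\cdots,i_m}\in\{K^l\}_l$, and verifying that the maps \eqref{map:pim} reduce after tensoring to the canonical projections with the correct \v{C}ech signs. Once this translation is in place, the combinatorial input --- acyclicity of the cochain complex of a simplex --- is elementary, which is why the lemma can be proved ``using the same approach as in the proof of Lemma \ref{Firstgj}''.
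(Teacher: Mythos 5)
Your proof is correct and follows the same route the paper intends: the paper's own argument simply says ``by the same method as in the proof of Lemma \ref{Firstgj}'', which is exactly the reduction you carry out, namely applying $e_r\Lambda\otimes_\Lambda(-)$ to split the statement into one per indecomposable idempotent, recognizing the result as the augmented \v{C}ech complex of the closed covering $\mathrm{Spec}(R/I_r)=\bigcup_i\mathrm{Spec}(R/I^i)$ by coordinate subspaces, and concluding exactness. Where you go beyond the paper is in the last step: the paper's proof of Lemma \ref{Firstgj} passes directly from ``covering with correct pairwise intersections'' to ``the sequence is exact'' and Lemma \ref{qjsgkn}'s proof inherits this by reference, whereas you actually verify the acyclicity of the full \v{C}ech complex (including the higher terms that Lemma \ref{Firstgj} does not need) by decomposing it monomial by monomial into augmented simplicial cochain complexes of full simplices on the vertex sets $\Sigma_f$. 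This is the standard Stanley--Reisner style argument for coordinate subspace arrangements, and it is precisely the piece of combinatorics the paper's terse reference leaves to the reader; your identification $I_r=\bigcap_{(\chi_r,G^i)\in\tilde G_0}I^i$ (injectivity from Lemma \ref{NNned1}, the reverse inclusion from well-definedness of the $\bar\eta^i_\chi$) is also the correct way to pin down the zeroth term. In short: same strategy, with the implicit exactness claim made explicit.
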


\begin{proof}
By Lemma \ref{NNned1}
$\varpi_0$ is injective;
for the rest $\varpi_i$, where
$i=1,2,\cdots, t$,
$$
\mathrm{Im}(\bar{\varpi}_i) 
= \mathrm{Ker}(\bar{\varpi}_{i+1})
$$
holds by 
the same method as in the proof of Lemma \ref{Firstgj}. 
\end{proof}

Note that if we view 
$W_G^{0}$ is a graded algebra, 
then
$W_G^{m}$ is a graded 
$W_G^{0}$-module for 
all $m\in\{1, 2,\cdots, t\}$.

Now, by Lemma \ref{qjsgkn}, we know that 
$\Lambda/\sqrt{\Lambda e \Lambda}$ is quasi-isomorphic to the above
complex $W_G^{\bullet}$ in $D^{b}(\mathrm{grmod}(\Lambda / \sqrt{\Lambda e \Lambda})^e )$. 
Then, for any object $X$ of 
$D^{b}(\mathrm{grmod}(\Lambda / \sqrt{\Lambda e \Lambda}) )$, 
we have 
\begin{equation}
\label{equivofX}
X \cong X \otimes_{\Lambda / \Lambda e \Lambda}^{\mathbb{L}} \Lambda / 
\Lambda e \Lambda \cong X \otimes_{\Lambda / \Lambda e \Lambda}^{\mathbb{L}} 
W_G^{\bullet} \cong \tilde{X} \otimes_{\Lambda / \Lambda e \Lambda} W_G^{\bullet} 
\end{equation}
in $D^{b}(\mathrm{grmod}(\Lambda / \sqrt{ \Lambda e \Lambda}) )$, 
where $\tilde{X}$ is a finitely generated graded  projective 
$\Lambda / \sqrt{\Lambda e \Lambda}$-module resolution of $X$.

\subsection{Classical
generator of
$D^b(\mathrm{tails}(
\Lambda/\sqrt{\Lambda e\Lambda}
))$}
Retain the notations from the previous subsection.
We show 
the following key lemma.  

\begin{lemma}\label{hidjks}
For any $X$ in $D^{b}(\mathrm{grmod}(\Lambda / \sqrt{\Lambda e \Lambda}) )$,
$$
X \in \big< \bigoplus\limits_{j= l_X}^{n_X} W_G^{0}(j) \big>_{r},
$$
for some $r \in \mathbb{N}$ and some
$l_X, n_X \in \mathbb{Z}$. 	
\end{lemma}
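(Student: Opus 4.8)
The plan is to run $X$ through the finite $(\Lambda/\sqrt{\Lambda e\Lambda})^e$-module resolution $W_G^\bullet$ of Lemma \ref{qjsgkn} and then, piece by piece, resolve each term over the algebra $W_G^0$, which is a finite direct sum of AS regular algebras and therefore of finite global dimension. First I would invoke \eqref{equivofX}: in $D^{b}(\mathrm{grmod}(\Lambda/\sqrt{\Lambda e\Lambda}))$ we have $X\cong \tilde X\otimes_{\Lambda/\sqrt{\Lambda e\Lambda}}W_G^\bullet$, where $\tilde X$ is a finitely generated graded projective resolution of $X$ (bounded above, with flat terms). Since $W_G^\bullet$ has finite length $t$ and $\tilde X$ has flat terms, the total complex $C^\bullet$ of this tensor product is bounded above, computes $X$, and hence has cohomology in a finite window; brutally truncating $C^\bullet$ in cohomological degrees $\ge N$ for $N$ sufficiently small produces a genuinely bounded complex still quasi-isomorphic to $X$, the discarded subcomplex being acyclic. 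The terms of this bounded complex are finite direct sums of shifts of the modules $\tilde X^p\otimes_{\Lambda/\sqrt{\Lambda e\Lambda}}W_G^m$ with $0\le m\le t$, so by the stupid filtration of a finite complex together with the usual calculus of $\langle-\rangle_{r}$, $X$ lies in $\langle \{\tilde X^p\otimes_{\Lambda/\sqrt{\Lambda e\Lambda}}W_G^m\}(j)\rangle_{r_0}$ for some $r_0$ and finitely many shifts $j$. Finally, since $\tilde X^p$ is a finitely generated graded projective $\Lambda/\sqrt{\Lambda e\Lambda}$-module, hence a direct summand of a finite sum of shifts of $\Lambda/\sqrt{\Lambda e\Lambda}$, the module $\tilde X^p\otimes_{\Lambda/\sqrt{\Lambda e\Lambda}}W_G^m$ is a direct summand of a finite sum of shifts of $W_G^m$. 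Thus it suffices to show that each $W_G^m$, regarded as an object of $\mathrm{grmod}(\Lambda/\sqrt{\Lambda e\Lambda})$, lies in $\langle\bigoplus_{j=l}^{n}W_G^0(j)\rangle_{r}$ for suitable finite $l,n,r$.

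For this I would use that $W_G^0=\bigoplus_{(\chi,G^i)\in\tilde G_0}\Lambda_i$ with $\Lambda_i=(G/G^i)\sharp k[V^i]$, which by Proposition \ref{ASstru} is AS regular of dimension $\dim(V^i)$, hence of global dimension $\dim(V^i)$; and that $W_G^m$ is finitely generated as a graded $W_G^0$-module, since each of its summands $(G/K^{i_1,\dots,i_m})\sharp k[V^{i_1}\cap\cdots\cap V^{i_m}]$ is a cyclic quotient of the summand $\Lambda_{i_1}$ of $W_G^0$ via the surjections $G/G^{i_1}\twoheadrightarrow G/K^{i_1,\dots,i_m}$ and $k[V^{i_1}]\twoheadrightarrow k[V^{i_1}\cap\cdots\cap V^{i_m}]$. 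Decomposing $W_G^m=\bigoplus_i M_i$ along the orthogonal idempotents of $W_G^0$, with $M_i$ a finitely generated graded $\Lambda_i$-module, I take a minimal finite graded projective resolution $0\to Q_i^{d_i}\to\cdots\to Q_i^{0}\to M_i\to 0$ over $\Lambda_i$ with $d_i\le\dim(V^i)$; by Noetherianity of $\Lambda_i$ the generators of the $Q_i^{p}$ occupy a fixed finite range of degrees, so each $Q_i^{p}$ is a direct summand of a finite sum of shifts $W_G^0(j)$, hence lies in $\langle\bigoplus_j W_G^0(j)\rangle_{1}$. This resolution is in particular an exact sequence of $\Lambda/\sqrt{\Lambda e\Lambda}$-modules, so $M_i$ is represented in $D^{b}(\mathrm{grmod}(\Lambda/\sqrt{\Lambda e\Lambda}))$ by the finite complex $[Q_i^{d_i}\to\cdots\to Q_i^{0}]$ of length $d_i+1$; hence $M_i\in\langle\bigoplus_{j=l}^{n}W_G^0(j)\rangle_{d_i+1}$, and taking the direct sum over $i$ gives $W_G^m\in\langle\bigoplus_{j=l}^{n}W_G^0(j)\rangle_{\max_i d_i+1}$.

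Combining the two reductions with the composition rule for $\langle-\rangle_{r}$ then yields $X\in\langle\bigoplus_{j=l_X}^{n_X}W_G^0(j)\rangle_{r}$, where $r$ is the product of the two generation levels and $[l_X,n_X]$ is the union of the finitely many degree windows produced above. The only steps that need genuine care are the passage from the possibly half-unbounded resolution $\tilde X$ to an honestly bounded complex — handled by the brutal truncation, which works precisely because $W_G^\bullet$ has finite length — and the uniform finiteness of the degree shifts $j$, which rests on the Noetherianity of $\Lambda$ and of the $\Lambda_i$. Everything else is formal: finite global dimension of the skew group algebras $\Lambda_i$ (Proposition \ref{ASstru}), exactness of forgetting the $W_G^0$-module structure, and the triangle calculus of $\langle-\rangle_{r}$.
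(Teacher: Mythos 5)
Your proposal identifies the right objects and the right closing step (resolve the $W_G^m$ over the AS regular summands of $W_G^0$, then control degree shifts by Noetherianity), but there is a genuine gap in the truncation step. You claim that brutally truncating the bounded-above total complex $C^\bullet=\tilde X\otimes_{\Lambda/\sqrt{\Lambda e\Lambda}}W_G^\bullet$ at a sufficiently small degree $N$ yields a complex quasi-isomorphic to $X$ because ``the discarded subcomplex is acyclic.'' This is false in general. The discarded piece $\sigma_{<N}C$ has cohomology in its top degree $N-1$ equal to $C^{N-1}/\mathrm{Im}(\delta^{N-2})$; exactness of $C^\bullet$ in negative degrees gives $\mathrm{Im}(\delta^{N-2})=\mathrm{Ker}(\delta^{N-1})$, which is a proper submodule of $C^{N-1}$ whenever $\delta^{N-1}$ is nonzero. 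Since $\Lambda/\sqrt{\Lambda e\Lambda}$ need not have finite global dimension (compare Example \ref{example2}, where the quotient quiver has a $2$-cycle), $\tilde X$ is genuinely unbounded below, and no brutal truncation of $C^\bullet$ is quasi-isomorphic to $X$.

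What is needed is the \emph{canonical} truncation: replace $C^\bullet$ by $0\to M^0/\mathrm{Im}(\delta^{-1})\to M^1\to\cdots\to M^s\to 0$, which \emph{is} quasi-isomorphic. This is what the paper does (display \eqref{kdnkd}). The price is that the leftmost term $M^0/\mathrm{Im}(\delta^{-1})$ is \emph{not} a direct sum of shifts of modules $\tilde X^p\otimes W_G^m$, so your subsequent reduction --- ``it suffices to show each $W_G^m$ lies in $\langle\bigoplus_j W_G^0(j)\rangle_r$'' --- handles $M^1,\dots,M^s$ but misses this cokernel entirely. That cokernel is in fact the delicate part and is what most of the paper's proof is devoted to: it is a finitely generated graded $W_G^0$-module (since $\delta^{-1}$ is a $W_G^0$-linear map between finitely generated $W_G^0$-modules), hence perfect over the homologically smooth graded algebra $W_G^0$ and thus lies in $\langle\bigoplus_j W_G^0(j)\rangle_{r'}$; the paper argues this by projecting onto direct summands $G/K^l\sharp k[W^l]$ of $M^0$ and inducting on the rank. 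Your second half --- resolving each $W_G^m$ over the AS regular $\Lambda_i$ via finite minimal projective resolutions with bounded degree windows --- is correct and in fact more explicit than the paper's appeal to homological smoothness of $W_G^0$; it just does not by itself reach $M^0/\mathrm{Im}(\delta^{-1})$. To repair the argument, replace the brutal truncation with the canonical truncation, and add the observation that $M^0/\mathrm{Im}(\delta^{-1})$ is a finitely generated graded $W_G^0$-module, to which the same perfectness argument applies.
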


\begin{proof}By
the above isomorphism
\eqref{equivofX},
to prove this lemma, it suffices to show that 
$$
\tilde{X} \otimes_{\Lambda / \sqrt{\Lambda e \Lambda}} W_G^{\bullet} 
\in \big< \bigoplus\limits_{j= l_X}^{n_X} W_G^{\bullet}(j) \big>_{r}. 
$$
To this end, without loss of generality, we 
assume that $X$ is a graded 
$\Lambda /\sqrt{\Lambda e \Lambda}$-module and $\tilde{X}$ is a graded, finitely 
generated free $\Lambda / \sqrt{\Lambda e \Lambda}$-module resolution of $X$. 

First notice
that $\tilde{X} \otimes_{\Lambda /\sqrt{\Lambda e \Lambda}} 
W_G^{\bullet}$ is a bounded above cochain complex. Write it in the form: 
\begin{align}\label{ajsvfd}
\cdots \xrightarrow{\delta^{-1}} M^0 \xrightarrow{\delta^0} M^1 
\xrightarrow{\delta^1} \cdots \xrightarrow{\delta^{s-1}} M^s \longrightarrow 0, 
\end{align}
where for each $i$,
 $M^i$ is a finitely generated graded $W_G^{0}$-module since 
 $\tilde{X}$ is a complex of free $\Lambda / \sqrt{\Lambda e \Lambda}$-modules. 
Note that $W_G^{0}$ is a graded algebra and for any $i$, 
the graded $\Lambda /\sqrt{\Lambda e \Lambda}$-module structure of 
$M^i$ is given by the algebra homomorphism 
$\varpi_0$ and its graded $W_G^{0}$-module structure. 

Since $ X \cong \tilde{X} \otimes_{\Lambda / \sqrt{\Lambda e \Lambda}} 
W_G^{\bullet}$ in $D^{b}(\mathrm{grmod}(\Lambda / \sqrt{\Lambda e \Lambda}) )$, 
we have $H^0( \tilde{X} \otimes_{\Lambda / \sqrt{\Lambda e \Lambda}} 
W_G^{\bullet} ) \cong X$ and $ H^i( \tilde{X} \otimes_{\Lambda / 
\sqrt{\Lambda e \Lambda}} W_G^{\bullet} ) \cong 0$ for $i \neq 0$ 
 as graded 
 $\Lambda /\sqrt{\Lambda e \Lambda}$-modules.  
It follows that the following complex
\begin{align}\label{kdnkd}
0 \longrightarrow M^0 / \mathrm{Im}(\delta^{-1})  \xrightarrow{\tilde{\delta}^0} M^1 
\xrightarrow{\delta^1} \cdots \xrightarrow{\delta^{s-1}} M^s \longrightarrow 0
\end{align}
is isomorphic to \eqref{ajsvfd} in $D^{b}(\mathrm{grmod}(\Lambda / \sqrt{\Lambda e \Lambda}) )$, 
where $\tilde{\delta}^0$ is induced by $\delta^0$ naturally. 

Next, it is direct to see that
in \eqref{kdnkd},
$W_G^{0}$ 
is a homologically 
smooth algebra. Hence we get that 
\begin{align*}
M^i \in \big< \bigoplus\limits_{j= l_i}^{n_i} W_G^{0}(j) \big>_{r_i}
\end{align*}
in $D^{b}(\mathrm{grmod}(\Lambda / \sqrt{\Lambda e \Lambda}) )$ for some
$r_i \in \mathbb{N}$ and $l_i, n_i \in \mathbb{Z}$.
Thus to prove this lemma, it is enough to prove that 
\begin{align}\label{dbdoifnb}
M^0 / \mathrm{Im}(\delta^{-1}) \in \big< \bigoplus\limits_{j= l'}^{n'} W_G^{0}(j) \big>_{r'}
\end{align}
in $D^{b}(\mathrm{grmod}(\Lambda / \sqrt{\Lambda e \Lambda}) )$ for some
$r' \in \mathbb{N}$ and $l', n' \in \mathbb{Z}$. 

In fact, from the construction of complex $\tilde{X} \otimes_{\Lambda / 
\sqrt{\Lambda e \Lambda}} W_G^{\bullet}$ (see (\ref{ajsvfd})), we know that 
\begin{align*}
M^0 \in \mathrm{add}(\bigoplus_{i = 0}^{s} 
\bigoplus\limits_{j= \bar{l}_i}^{\bar{n}_i} W_G^{i}(j)  )
\end{align*}
as graded $W_G^{\bullet}$-modules, for some $\bar{l}_i, \bar{n}_i \in \mathbb{Z}$. 
Now, suppose that $G/K^l \sharp k[W^l] \subseteq M^0$ as a direct summand. 
Then there is a natural projection of $W_G^0$-modules. 
\begin{align*}
\pi_{K^l}:  M^0 \twoheadrightarrow  G/K^l \sharp k[W^l].
\end{align*}
It induces another graded $W_G^0$-modules projection 
\begin{align}\label{jsverte}
M^0 / \mathrm{Im}(\delta^{-1}) \twoheadrightarrow  (G/K^l \sharp k[W^l] 
) / \pi_{K^l}(\mathrm{Im}(\delta^{-1})). 
\end{align}
Since $W_G^{0}$ is a homologically smooth algebra, 
$(G/K^l \sharp k[W^l] ) / \pi_{K^l}(\mathrm{Im}(\delta^{-1})) 
\in \mathrm{Perf}(W_G^{0})$. It implies that 
\begin{align*}
(G/K^l \sharp k[W^l] ) / \pi_{K^l}
(\mathrm{Im}(\delta^{-1})) \in 
\big< \bigoplus\limits_{j= \bar{l}}^{\bar{n}} W_G^{0}(j) \big>_{\bar{r}}
\end{align*}
in $D^{b}(\mathrm{grmod}(\Lambda / \sqrt{\Lambda e \Lambda}) )$,
for some $\bar{r} \in \mathbb{N}$ and $\bar{l}, \bar{n} \in \mathbb{Z}$. 
Note that the kernel of the morphism (\ref{jsverte}) is exactly 
$\mathrm{Ker}(\pi_{K^l}) / 
( \mathrm{Ker}(\pi_{K^l}) 
\cap \mathrm{Im}(\delta^{-1}) )$. 
Thus to prove \eqref{dbdoifnb}, 
it is enough to prove that 
\begin{align}\label{dbdoifnbddd}
\mathrm{Ker}(\pi_{K^l}) / 
(\mathrm{Ker}(\pi_{K^l}) 
\cap \mathrm{Im}(\delta^{-1})) \in 
\big< \bigoplus\limits_{j= \tilde{l}}^{n'} 
W_G^{0}(j) \big>_{r'}
\end{align}
in $D^{b}(\mathrm{grmod}(\Lambda / \sqrt{\Lambda e \Lambda}) )$ 
for some $r' \in \mathbb{N}$ and $\tilde{l}, n' \in \mathbb{Z}$. 

Summarizing
the above argument, we have
reduced the proof of
\eqref{dbdoifnb}
to the proof of
\eqref{dbdoifnbddd}.
On the other hand, 
by the definition of $\pi_{K^l}$, 
we also have
\begin{align}\label{sjbdfggh11}
\mathrm{Ker}(\pi_{K^l}) \in \mathrm{add}
(\bigoplus_{i = 0}^{s} 
\bigoplus\limits_{j= l_i}^{\bar{n}_i} W_G^{i}(j)).
\end{align} 
Thus combining
\eqref{dbdoifnbddd} and \eqref{sjbdfggh11}
and by the same approach as above, we may
continue to
reduce the proof (\ref{dbdoifnbddd}) 
to a similar but rank smaller
graded $\Lambda / \sqrt{\Lambda e \Lambda}$-module. 
Repeat this process,
and since $\mathrm{Ker}(\pi_{K^l})$ is a finitely 
generated graded
$\Lambda / \sqrt{\Lambda e \Lambda}$-module, the proof
is ultimately reduced to
$G/K^{l'} \sharp 
k[W^{l'}]$ for some pair $(K^{l'}, W^{l'})$. 
But as in the above argument, we
already know that 
\begin{align*}
(G/K^{l'} \sharp k[W^{l'}] ) / (G/K^{l'} \sharp k[W^{l'}]  
\cap \mathrm{Im}(\delta^{-1})) \in 
\big< \bigoplus\limits_{j= \tilde{l}}^{n'} W_G^{0}(j) \big>_{r'}
\end{align*}
in $D^{b}(\mathrm{grmod}(\Lambda / \Lambda e \Lambda) )$ for some 
$r' \in \mathbb{N}$ and $\tilde{l}, n' \in \mathbb{Z}$. 
Thus the lemma is proved.
\end{proof}

By Proposition \ref{serre}, we know that 
$$
D^{b}(\mathrm{grmod}(\Lambda / \sqrt{\Lambda e \Lambda}) 
) / D^{b}_{(\mathrm{tors}(\Lambda / \sqrt{\Lambda e \Lambda}) )}
(\mathrm{grmod}(\Lambda / \sqrt{\Lambda e \Lambda}) )    
\cong D^{b}(\mathrm{tails}(\Lambda / \sqrt{\Lambda e \Lambda}) ). 
$$ 
Then by Lemma \ref{hidjks},
for any $X$ in $D^{b}(\mathrm{tails}(\Lambda / \sqrt{\Lambda e \Lambda}) )$,
$$
X \in \big< \bigoplus\limits_{j= l_X}^{n_X} W_G^{0}(j) \big>_{r},
$$
for some $r \in \mathbb{N}$ and $l_X, n_X \in \mathbb{Z}$ (note that here
$l_X, n_X$ may depend on $X$).

\begin{lemma}\label{mjdbck}
$D^{b}( \mathrm{tails}(\Lambda / \sqrt{\Lambda e \Lambda}) )$ is classically generated
by $$\bigoplus_{(\chi, G^i) \in \tilde{G}_0} 
\bigoplus_{j = 0}^{\mathrm{dim}(V^i) -1} G / G^i \sharp k[V^i](j).$$
\end{lemma}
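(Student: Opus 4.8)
The plan is to use the version of Lemma~\ref{hidjks} recorded just above for $D^{b}(\mathrm{tails}(\Lambda/\sqrt{\Lambda e \Lambda}))$ to reduce the statement to the individual summands $\Lambda_i$, and then to apply Corollary~\ref{ydbvwjv} to the AS regular algebra $\Lambda_i$. Concretely, by Lemma~\ref{hidjks} together with Proposition~\ref{serre} (as explained right after the proof of Lemma~\ref{hidjks}), every object $X$ of $D^{b}(\mathrm{tails}(\Lambda/\sqrt{\Lambda e \Lambda}))$ lies in $\langle\bigoplus_{j=l_X}^{n_X}W_G^{0}(j)\rangle_{r}$ for some $r\in\mathbb{N}$ and $l_X,n_X\in\mathbb{Z}$. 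Since each $\langle-\rangle_{s}$ is closed under finite direct sums and composes under the operation $\ast$ up to a finite increase of the index, it suffices to show that for every $j\in\mathbb{Z}$ the object $W_G^{0}(j)=\bigoplus_{(\chi,G^i)\in\tilde G_0}\Lambda_i(j)$, viewed in $D^{b}(\mathrm{tails}(\Lambda/\sqrt{\Lambda e \Lambda}))$ via the injection $\bar\varpi_0$, is classically generated by $X_F=\bigoplus_{(\chi,G^i)\in\tilde G_0}\bigoplus_{j'=0}^{\mathrm{dim}(V^i)-1}\Lambda_i(j')$; and since $W_G^{0}(j)$ is a finite direct sum, it is enough to handle each summand $\Lambda_i(j)$ individually.

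Next I would observe that for each $(\chi,G^i)\in\tilde G_0$ the algebra $\Lambda_i=(G/G^i)\sharp k[V^i]$ is, by Proposition~\ref{ASstru}, an AS regular algebra of dimension $\mathrm{dim}(V^i)$ with Gorenstein parameter $\mathrm{dim}(V^i)$, and that $\mathrm{dim}(V^i)>0$: indeed $V^i$ is the nonzero invariant subspace of $G^i$, and $G^i$ is nontrivial since $\chi|_{G^i}$ is nontrivial for $(\chi,G^i)\in\tilde G_0$. Hence Corollary~\ref{ydbvwjv} applies to $\Lambda_i$ and shows that $D^{b}(\mathrm{tails}(\Lambda_i))$ is classically generated by $\bigoplus_{j'=0}^{\mathrm{dim}(V^i)-1}\pi(\Lambda_i(j'))$; in particular $\pi(\Lambda_i(j))\in\langle\bigoplus_{j'=0}^{\mathrm{dim}(V^i)-1}\pi(\Lambda_i(j'))\rangle_{r_{i,j}}$ for every $j\in\mathbb{Z}$ and some $r_{i,j}\in\mathbb{N}$.

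To transport this to $\Lambda/\sqrt{\Lambda e \Lambda}$, I would use the graded algebra surjection $\bar\eta^{i}_{\chi}\colon\Lambda/\sqrt{\Lambda e \Lambda}\twoheadrightarrow\Lambda_i$ from the proof of Lemma~\ref{NNned1}: restriction along it gives an exact functor $\mathrm{grmod}(\Lambda_i)\to\mathrm{grmod}(\Lambda/\sqrt{\Lambda e \Lambda})$ that carries torsion modules to torsion modules (because $\bar\eta^{i}_{\chi}$ is a graded surjection, so each $(\Lambda/\sqrt{\Lambda e \Lambda})_{\geq m}$ acts through its image $(\Lambda_i)_{\geq m}$), and, exactly as in Lemma~\ref{compgene}, this descends to an exact functor $\psi_i\colon D^{b}(\mathrm{tails}(\Lambda_i))\to D^{b}(\mathrm{tails}(\Lambda/\sqrt{\Lambda e \Lambda}))$. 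An exact functor sends $\langle\mathcal E\rangle_{r}$ into $\langle\psi_i(\mathcal E)\rangle_{r}$, so applying $\psi_i$ to the containment above gives $\psi_i(\pi(\Lambda_i(j)))\in\langle\psi_i(\bigoplus_{j'=0}^{\mathrm{dim}(V^i)-1}\pi(\Lambda_i(j')))\rangle_{r_{i,j}}$. Since $\bar\varpi_0=\bigoplus_i\bar\eta^{i}_{\chi}$ is precisely the $(\Lambda/\sqrt{\Lambda e \Lambda})^e$-module structure used to view $W_G^{0}$ in $D^{b}(\mathrm{tails}(\Lambda/\sqrt{\Lambda e \Lambda}))$, the object $\psi_i(\pi(\Lambda_i(j)))$ is the $i$-th direct summand of $W_G^{0}(j)$, while $\psi_i(\bigoplus_{j'=0}^{\mathrm{dim}(V^i)-1}\pi(\Lambda_i(j')))$ is a direct summand of $X_F$; hence $\Lambda_i(j)$, and then $W_G^{0}(j)$, and then (by the first paragraph) every object of $D^{b}(\mathrm{tails}(\Lambda/\sqrt{\Lambda e \Lambda}))$, is classically generated by $X_F$, which is the assertion.

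The step I expect to be the main obstacle is the bookkeeping in the last paragraph: checking that restriction along $\bar\eta^{i}_{\chi}$ really is compatible with both Serre quotients — that it preserves torsion, so it passes to $\mathrm{tails}$ — and that after passing to bounded derived categories of tails it identifies $\psi_i(\pi(\Lambda_i(j)))$ with the corresponding direct summand of $W_G^{0}(j)$. This is in the spirit of Lemma~\ref{compgene}, but it is the precise point at which Corollary~\ref{ydbvwjv} is transferred from $\mathrm{tails}(\Lambda_i)$ to $\mathrm{tails}(\Lambda/\sqrt{\Lambda e \Lambda})$, so it must be carried out with care about the gradings.
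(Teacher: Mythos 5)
Your proof follows the same route as the paper's: reduce via Lemma~\ref{hidjks} (together with Proposition~\ref{serre}) to showing each twist $W_G^0(j)$ is classically generated by $X_F$, then use the decomposition $W_G^0=\bigoplus_{(\chi,G^i)}\Lambda_i$ with each $\Lambda_i$ AS regular, apply Corollary~\ref{ydbvwjv} to each $\Lambda_i$, and transport back along the graded surjections $\bar\eta^i_\chi$. The only difference is cosmetic: you spell out the transport step (exactness of restriction along $\bar\eta^i_\chi$, preservation of torsion, compatibility with $\langle-\rangle_r$) that the paper leaves implicit in its closing ``It follows that''.
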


\begin{proof}
By the above argument, to prove this lemma, it suffices to show that for any 
$l \in \mathbb{Z}$, $$W_G^{0}(l) \in \big< \bigoplus_{(\chi, G^i) \in \tilde{G}_0} 
\bigoplus_{j = 0}^{\mathrm{dim}(V^i) -1} G / G^i \sharp k[V^i](j) \big>_{r_l}$$ for some $r_l \in \mathbb{N}$.  

Recall from \eqref{defofWG0}
that the graded algebra
$W_{G}^{0} =
\bigoplus\limits_{(\chi, G^i) 
\in \tilde{G}_0} \Lambda_i$,
where
$\Lambda_i = G/G^i \sharp k[V^i]$. 
Note that $G/G^i \sharp k[V^i]$ is an AS regular algebra by Proposition \ref{ASstru}.
Next, by Corollary \ref{ydbvwjv}, we have 
for any $l \in \mathbb{Z}$, $$G/G^i \sharp k[V^i] (l) 
\in \big< \bigoplus\limits_{j= 0}^{\mathrm{dim}(V^i)} G/G^i \sharp k[V^i] (j) \big>_{s_l}$$ 
for some $s_l \in \mathbb{N}$.  
It follows that $$W_G^{0}(l) \in \big< \bigoplus_{(\chi, G^i) 
\in \tilde{G}_0} \bigoplus_{j = 0}^{\mathrm{dim}(V^i) -1} G / G^i 
\sharp k[V^i](j) \big>_{r_l}$$ for some $r_l \in \mathbb{N}$.  
\end{proof}

As a corollary, we get
the following.

\begin{corollary}
\label{cor:generator}
The following object
 $$\bigoplus_{(\chi, G^i) \in \tilde{G}_0} 
 \bigoplus_{j = 0}^{\mathrm{dim}(V^i) -1} G / G^i \sharp k[V^i](j)$$
classically generates
$D^{b}_{\mathrm{tails}
(\Lambda / \sqrt{\Lambda e \Lambda})}
( \mathrm{tails}(\Lambda))$.
\end{corollary}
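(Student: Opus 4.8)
The plan is to deduce Corollary~\ref{cor:generator} from Lemma~\ref{mjdbck} together with the general principle, already used implicitly in the proof of Lemma~\ref{compgene}, that classical generators transfer along the pushforward $\iota_\ast$ for a graded quotient $A\to A/I$. Concretely, set $A=\Lambda$ and $I=\sqrt{\Lambda e\Lambda}$, so that $B:=\Lambda/\sqrt{\Lambda e\Lambda}$ is a graded quotient of $\Lambda$, and write $\iota\colon\Lambda\twoheadrightarrow B$ for the projection. First I would record that, exactly as in the beginning of the proof of Lemma~\ref{compgene}, $\iota_\ast$ preserves torsion modules and is exact, hence descends to an exact functor $\iota_\ast\colon\mathrm{tails}(B)\to\mathrm{tails}(\Lambda)$ whose derived functor sends $D^b(\mathrm{tails}(B))$ into $D^b_{\mathrm{tails}(B)}(\mathrm{tails}(\Lambda))=D^b_{\mathrm{tails}(\Lambda/\sqrt{\Lambda e\Lambda})}(\mathrm{tails}(\Lambda))$. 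Then I would invoke Lemma~\ref{compgene} verbatim with $X=\bigoplus_{(\chi,G^i)\in\tilde G_0}\bigoplus_{j=0}^{\dim(V^i)-1}G/G^i\sharp k[V^i](j)$, which is a classical generator of $D^b(\mathrm{tails}(B))$ by Lemma~\ref{mjdbck}; Lemma~\ref{compgene} then says precisely that $\iota_\ast(X)$ classically generates $D^b_{\mathrm{tails}(\Lambda/\sqrt{\Lambda e\Lambda})}(\mathrm{tails}(\Lambda))$, and by the convention adopted right after the proof of Lemma~\ref{compgene} we suppress $\iota_\ast$ from the notation. This yields the corollary.

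The only subtlety I anticipate is making sure the hypotheses of Lemma~\ref{compgene} genuinely apply with $I=\sqrt{\Lambda e\Lambda}$ rather than $I=\Lambda e\Lambda$: Lemma~\ref{compgene} is stated for an arbitrary graded quotient algebra $B=A/I$, so there is nothing to check beyond the fact that $\sqrt{\Lambda e\Lambda}$ is a two-sided graded ideal of $\Lambda$, which is Definition~\ref{bdfbv11267} and the discussion following it. One should also note that in Lemma~\ref{compgene} the ideal was assumed idempotent-like only through the running example $I^i=I$; re-reading that proof, the argument there uses the short exact sequences $0\to I/I^j\to A/I^j\to A/I\to 0$ and an induction on $j$, which is valid for any graded ideal $I$, so the lemma as stated covers our situation. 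Thus no modification of Lemma~\ref{compgene} is needed.

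If one prefers a self-contained argument not citing Lemma~\ref{compgene} as a black box, an alternative is to run the induction directly: for $M^\bullet\in D^b_{\mathrm{tails}(\Lambda/\sqrt{\Lambda e\Lambda})}(\mathrm{tails}(\Lambda))$, its cohomologies lie in $\overline{\mathrm{tails}}(\Lambda/(\sqrt{\Lambda e\Lambda})^j)$ for some $j$; filtering by powers of the ideal as in Lemma~\ref{compgene} reduces to the case of a module annihilated by $\sqrt{\Lambda e\Lambda}$, i.e.\ a $B$-module, where Lemma~\ref{mjdbck} applies; then one dévisses the complex by the standard stupid truncation triangle $\mathrm H^0(M^\bullet)\to M^\bullet\to\widetilde M^\ast[-1]\to$ and induces on the length of the complex. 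I expect the \emph{main obstacle}, such as it is, to be purely bookkeeping: keeping track of the two distinct ambient categories $\mathrm{tails}(B)$ and $\mathrm{tails}(\Lambda)$ and of the fact that $X$ and $\iota_\ast(X)$ are being identified notationally. There is no deep new input here — all the substance was already spent in Lemmas~\ref{NNned1}, \ref{qjsgkn}, \ref{hidjks} and \ref{mjdbck} — so the corollary is essentially a one-line consequence of Lemma~\ref{compgene} applied to $\iota\colon\Lambda\twoheadrightarrow\Lambda/\sqrt{\Lambda e\Lambda}$.
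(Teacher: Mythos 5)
Your proposal is exactly the paper's argument: the paper's proof of this corollary is the one-liner ``Combine Lemmas \ref{compgene} and \ref{mjdbck},'' and you apply Lemma \ref{compgene} with $A=\Lambda$, $I=\sqrt{\Lambda e\Lambda}$ to the generator furnished by Lemma \ref{mjdbck}. Your remarks checking that Lemma \ref{compgene} applies to an arbitrary graded ideal (not just an idempotent one) and on the suppression of $\iota_\ast$ from the notation are accurate and harmless elaborations.
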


\begin{proof}
Combine 
Lemmas \ref{compgene} and \ref{mjdbck}.
\end{proof}

\subsection{From
$D^b_{
\mathrm{tails}(\Lambda/
\sqrt{\Lambda e\Lambda})
}(\mathrm{tails}(\Lambda))$
to $D^b_{\mathrm{tails}
(\Lambda/\Lambda e\Lambda)}
(\mathrm{tails}(\Lambda))$}

Next, we prove the following lemma.

\begin{lemma}\label{nribe}	
For any object $Y$ of $D^{b}_{\mathrm{tails}(\Lambda/\Lambda e \Lambda)}(\mathrm{tails}(\Lambda))$, 
there is an object, say $X$, of $D^{b}_{\mathrm{tails}(\Lambda/ 
\sqrt{ \Lambda e \Lambda})}(\mathrm{tails}(\Lambda))$ such that 
$$
Y \in \langle X  \rangle_{m}
$$
in $D^{b}(\mathrm{tails}(\Lambda))$, for some $m \in \mathbb{Z}$. 
\end{lemma}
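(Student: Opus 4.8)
The plan is to take $X$ to be the object
$X' := \bigoplus_{(\chi,G^i)\in\tilde{G}_0}\bigoplus_{j=0}^{\dim(V^i)-1} G/G^i\sharp k[V^i](j)$,
which by Corollary \ref{cor:generator} is a classical generator of
$D^{b}_{\mathrm{tails}(\Lambda/\sqrt{\Lambda e\Lambda})}(\mathrm{tails}(\Lambda))$, and to prove that $X'$ in fact classically generates the a priori larger category
$D^{b}_{\mathrm{tails}(\Lambda/\Lambda e\Lambda)}(\mathrm{tails}(\Lambda))$ as well; note that the inclusion
$D^{b}_{\mathrm{tails}(\Lambda/\sqrt{\Lambda e\Lambda})}(\mathrm{tails}(\Lambda))\subseteq D^{b}_{\mathrm{tails}(\Lambda/\Lambda e\Lambda)}(\mathrm{tails}(\Lambda))$
holds because $\Lambda e\Lambda\subseteq\sqrt{\Lambda e\Lambda}$. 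The extra input needed is that the two-sided ideal $\bar{N}:=\sqrt{\Lambda e\Lambda}/\Lambda e\Lambda$ of $\Lambda/\Lambda e\Lambda$ is nilpotent: by Definition \ref{bdfbv11267} every homogeneous generator of $\sqrt{\Lambda e\Lambda}$ has a power lying in $\Lambda e\Lambda$, so $\bar{N}$ is a nil ideal, and since $\Lambda$, hence $\Lambda/\Lambda e\Lambda$, is Noetherian, $\bar{N}^{t}=0$ for some $t\in\mathbb{N}$.

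First I would reduce to the case of a single module. Given $Y\in D^{b}_{\mathrm{tails}(\Lambda/\Lambda e\Lambda)}(\mathrm{tails}(\Lambda))$, the canonical truncation triangles express $Y$ through finitely many distinguished triangles from its shifted cohomology objects $H^{i}(Y)[-i]$; each $H^{i}(Y)$ lies in $\overline{\mathrm{tails}}(\Lambda/\Lambda e\Lambda)\cong\mathrm{tails}(\Lambda/\Lambda e\Lambda)$ (as recorded before Proposition \ref{Ker1}), hence equals $\pi(M_i)$ for a finitely generated graded $\Lambda/\Lambda e\Lambda$-module $M_i$. Since $\langle-\rangle_{m}$ is closed under shifts, $Y\in\langle\{\pi(M_i)\}_i\rangle_{b}$ in $D^{b}(\mathrm{tails}(\Lambda))$, where $b$ is the cohomological amplitude of $Y$; so it suffices to show that $\pi(M)\in\langle X'\rangle_{m}$ for some $m$ whenever $M$ is a finitely generated graded $\Lambda/\Lambda e\Lambda$-module.

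For such an $M$, consider the finite filtration $M\supseteq M\bar{N}\supseteq M\bar{N}^{2}\supseteq\cdots\supseteq M\bar{N}^{t}=0$. Each subquotient $M\bar{N}^{j}/M\bar{N}^{j+1}$ is annihilated by $\bar{N}$, hence is a finitely generated graded $\Lambda/\sqrt{\Lambda e\Lambda}$-module, so $\pi(M\bar{N}^{j}/M\bar{N}^{j+1})$ is an object of $D^{b}_{\mathrm{tails}(\Lambda/\sqrt{\Lambda e\Lambda})}(\mathrm{tails}(\Lambda))$. Applying the exact functor $\pi$ to $0\to M\bar{N}^{j+1}\to M\bar{N}^{j}\to M\bar{N}^{j}/M\bar{N}^{j+1}\to 0$ gives distinguished triangles in $D^{b}(\mathrm{tails}(\Lambda))$, and a downward induction on $j$ yields $\pi(M)\in\langle\mathcal{F}\rangle_{t}$ with $\mathcal{F}=\{\pi(M\bar{N}^{j}/M\bar{N}^{j+1}):0\le j\le t-1\}$. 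By Corollary \ref{cor:generator}, each element of the finite set $\mathcal{F}$ lies in $\langle X'\rangle_{m_j}$ for some $m_j$, and applying the standard inclusions $\langle X'\rangle_{a}\ast\langle X'\rangle_{b}\subseteq\langle X'\rangle_{a+b}$ finitely many times we conclude $\pi(M)\in\langle X'\rangle_{m_M}$ for some $m_M\in\mathbb{N}$, whence $Y\in\langle X'\rangle_{m}$ for some $m$. Setting $X:=X'$ proves the lemma.

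The step I expect to be the crux is the nilpotency of $\bar{N}$ with a uniform exponent, i.e. that $\sqrt{\Lambda e\Lambda}$ is genuinely a two-sided ideal whose image in $\Lambda/\Lambda e\Lambda$ is nilpotent rather than merely nil; once that is in hand, the rest is routine dévissage in triangulated categories combined with the exactness of $\pi$. One should also keep in mind throughout that $\langle-\rangle_{m}$ is stable under shifts, so the degrees in which the cohomology objects $H^{i}(Y)$ sit are irrelevant to the argument.
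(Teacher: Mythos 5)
Your proof follows essentially the same strategy as the paper's: filter a module $Y$ by powers of the ideal $\bar N = \sqrt{\Lambda e\Lambda}/\Lambda e\Lambda$, observe this filtration terminates because $\bar N$ is nilpotent, and dévissage down through subquotients that are $\Lambda/\sqrt{\Lambda e\Lambda}$-modules. The one place you phrase things differently is the nilpotence of $\bar N$: the paper asserts directly (without proof, as "straightforward to verify") that $\sqrt{\Lambda e\Lambda}^l \subseteq \Lambda e\Lambda$ for some $l$, whereas you argue that $\bar N$ is nil and then invoke Levitzki's theorem; be aware that "each spanning element of $\sqrt{\Lambda e\Lambda}$ is nilpotent modulo $\Lambda e\Lambda$" does not automatically make $\bar N$ a nil ideal in a noncommutative ring, so your argument has the same small gap at this point as the paper's. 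Your refinement of taking $X$ to be the \emph{fixed} object $X'$ of Corollary \ref{cor:generator}, rather than an object depending on $Y$, is a pleasant strengthening and in fact is exactly what is used in the subsequent deduction of Theorem \ref{thm:smoothcatcpt2equiv}.
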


\begin{proof}
Without loss of generality, we 
assume that 
$Y$ is a graded $\Lambda/\Lambda e \Lambda$-module. 
Then to prove this lemma, it
suffices to show that 
$$
Y \in \langle X  \rangle_{m}
$$	
in $D^{b}(\mathrm{tails}(\Lambda / \Lambda e \Lambda))$, for some object $X$ of 
$D^{b}_{\mathrm{tails}(\Lambda/ \sqrt{ \Lambda e \Lambda})}(\mathrm{tails}(\Lambda))$ 
and some $m \in \mathbb{Z}$.   	

First, consider the following
exact sequence of
graded $\Lambda/ \Lambda e \Lambda$-modules 
$$
0 \longrightarrow   \sqrt{ \Lambda e \Lambda} / \Lambda e \Lambda 
\longrightarrow \Lambda/ \Lambda e \Lambda \longrightarrow \Lambda/ \sqrt{ \Lambda e \Lambda} \longrightarrow 0. 
$$
By applying 
$(-)\otimes
_{\Lambda/ \Lambda e \Lambda}
Y$ to it we get another 
exact sequence of graded 
$\Lambda/ \Lambda e \Lambda$-modules
$$
0 \longrightarrow Y_1 \longrightarrow Y \longrightarrow Y 
\otimes_{\Lambda/ \Lambda e \Lambda} \Lambda/ \sqrt{ \Lambda e \Lambda}  \longrightarrow 0,
$$
where
$Y_1$ is a graded submodule of $Y \otimes_{\Lambda/ \Lambda e \Lambda}
\sqrt{ \Lambda e \Lambda} / \Lambda e \Lambda$
and 
$ Y \otimes_{\Lambda/ \Lambda e \Lambda} \Lambda/ \sqrt{ \Lambda e \Lambda}$ is a graded
$\Lambda/ \sqrt{ \Lambda e \Lambda}$-module.  
Hence, to prove this lemma, it 
suffices to show that 
$$
Y_1
\in \langle X  \rangle_{m}
$$	
in $D^{b}(\mathrm{tails}(\Lambda / \Lambda e \Lambda))$ for some object $X$ of 
$D^{b}_{\mathrm{tails}(\Lambda/ \sqrt{ \Lambda e \Lambda})}(\mathrm{tails}(\Lambda))$ 
and some $m \in \mathbb{Z}$.   	

Since $\Lambda e \Lambda$ a is finitely generated graded $\Lambda$ module, 
so is $\sqrt{\Lambda e \Lambda}$.  
Then
by the definition of 
$\sqrt{\Lambda e \Lambda}$ 
(see Definition \ref{bdfbv11267}),
it is straightforward to verify that 
$$\sqrt{\Lambda e \Lambda}^l \subseteq \Lambda e \Lambda,$$ 
for some $l \in \mathbb{N}$,
which implies that 
$$
\sqrt{ \Lambda e \Lambda} / \Lambda e \Lambda \in 
\mathrm{Ob}(\mathrm{mod}( \Lambda / \sqrt{ \Lambda e \Lambda}^{l-1} ) ). 
$$
It also suggests that 
$$
Y_1 \in 
\mathrm{Ob}(\mathrm{mod}( \Lambda / \sqrt{ \Lambda e \Lambda}^{l-1} )). 
$$

Next, we replace $\Lambda / \Lambda e \Lambda$
by $\Lambda / \sqrt{ \Lambda e \Lambda}^{l-1}$, and consider the following
exact sequence of
graded $\Lambda / \sqrt{ \Lambda e \Lambda}^{l-1}$-modules
$$
0 \longrightarrow \sqrt{ \Lambda e \Lambda} / \sqrt{ \Lambda e \Lambda}^{l-1}
\longrightarrow \Lambda  / \sqrt{ \Lambda e \Lambda}^{l-1} 
\longrightarrow \Lambda / \sqrt{ \Lambda e \Lambda} \longrightarrow 0. 
$$
By applying
$(-)\otimes_{\Lambda  
/ \sqrt{ \Lambda e \Lambda}^{l-1}}Y_1$,
we get another exact sequence
of graded $\Lambda
/ \sqrt{ \Lambda e \Lambda}^{l-1}$-modules 
$$
0 \longrightarrow Y_2 \longrightarrow Y_1 \longrightarrow Y_1 \otimes_{\Lambda
/ \sqrt{ \Lambda e \Lambda}^{l-1}} \Lambda/ \sqrt{ \Lambda e \Lambda}  \longrightarrow 0.
$$
By the same argument as for $Y_1$ above, 
we get that 
$$
Y_2 \in 
\mathrm{Ob}(\mathrm{grmod}( \Lambda
/ \sqrt{ \Lambda e \Lambda}^{l-2} )). 
$$
Repeating this process,
we obtain objects $\{Y_i\}_{1 \leq i \leq l-1}$ such that 
$$
Y_i \in \mathrm{Ob}(
\mathrm{grmod}( \Lambda / \sqrt{ \Lambda e \Lambda}^{l-i}))  
$$
and exact sequences
$$
0 \longrightarrow Y_{i} \longrightarrow Y_{i-1} \longrightarrow Y_{i-1} 
\otimes_{\Lambda / \sqrt{ \Lambda e \Lambda}^{l-i}} \Lambda/ \sqrt{ \Lambda e \Lambda}  \longrightarrow 0.
$$
for all $1 \leq i \leq l-1$. 

Since $Y_{l-1} \in 
\mathrm{Ob}( \mathrm{grmod}( \Lambda / \sqrt{ \Lambda e \Lambda} ))$ and 
$$Y_{i-1} \otimes_{\Lambda / \sqrt{ \Lambda e \Lambda}^{l-i}} \Lambda/ \sqrt{ \Lambda e \Lambda} \in 
\mathrm{Ob}(\mathrm{grmod}( \Lambda / \sqrt{ \Lambda e \Lambda} ))$$ for all $1 \leq i \leq l-1$, 
we get that 
$$
Y_1
\in \langle X  \rangle_{m}
$$	
in $D^{b}(\mathrm{tails}(\Lambda / \Lambda e \Lambda))$, for some object $X$ of 
$D^{b}_{\mathrm{tails}(\Lambda/ \sqrt{ \Lambda e \Lambda})}(\mathrm{tails}(\Lambda))$ 
and some $m \in \mathbb{Z}$.   		
\end{proof}

\begin{proof}
[Finishing the proof of 
Theorem \ref{thm:smoothcatcpt2equiv} (=Theorem \ref{thm:smoothcatcpt2}) 
]
According to Corollary \ref{cor:generator},
$$\bigoplus_{(\chi, G^i) \in \tilde{G}_0} 
\bigoplus_{j = 0}^{\mathrm{dim}(V^i) -1} G / G^i \sharp k[V^i](j)$$
classically generates $D^{b}_{\mathrm{tails}
(\Lambda / \sqrt{\Lambda e \Lambda})}( \mathrm{tails}(\Lambda))$,
which, by Lemma \ref{nribe},
then classically 
generates
$D^{b}_{\mathrm{tails}
(\Lambda /\Lambda e \Lambda)}( \mathrm{tails}
(\Lambda))$.
The theorem follows. 
\end{proof}

\section{Smooth categorical compactification of $\mathcal{D}_{sg}^{\mathbb{Z}}(S)$}\label{Fina}

In this section, we prove Theorem \ref{thm:catcptofsing}.

Let us first recall that by Theorem \ref{orlov}, there is a localization functor
$$
\Xi: D^{b}(\mathrm{tails}(S)) \rightarrow D^{\mathbb{Z}}_{sg}(S)
$$
with
$$
\mathrm{Ker}(\Xi) = \big< S, \cdots, S(n-1) \big>.
$$
Hence, there is also a localization functor
$$
\Theta =\Xi \circ ((-) \otimes^{\mathbb{L}}_{\Lambda} \Lambda e ): D^{b}(\mathrm{tails}
(\Lambda)) \rightarrow D^{\mathbb{Z}}_{sg}(S).
$$

\begin{theorem}[Theorem \ref{thm:catcptofsing}]\label{thm:catcptofsingequiv}
Let
$$
\tilde{\Theta}: \mathcal{D}^{b}(\mathrm{tails}(\Lambda)) 
\rightarrow \mathcal{D}_{sg}^{\mathbb{Z}}(S)
$$
be the DG lifting of $\Theta$. Then $\tilde\Theta$
is a smooth categorical compactification of $\mathcal{D}_{sg}^{\mathbb{Z}}(S)$. \end{theorem}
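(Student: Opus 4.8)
The plan is to verify the three conditions of Proposition-Definition \ref{Csc} for $\tilde{\Theta}$, exploiting that $\Theta$ is the composite of the two localizations $(-)\otimes^{\mathbb{L}}_{\Lambda}\Lambda e$ and $\Xi$, both of which are already understood. Condition (1) is immediate: $\mathcal{D}^{b}(\mathrm{tails}(\Lambda))$ is smooth and proper by Proposition \ref{gfubjnv}. Condition (3) is also easy: $(-)\otimes^{\mathbb{L}}_{\Lambda}\Lambda e$ is a Verdier quotient, hence essentially surjective, and $\Xi$ is essentially surjective, so $\Theta$, and hence $\tilde{\Theta}$, is essentially surjective. For condition (2), on homotopy categories $\Theta$ is a composite of two Verdier localizations, hence itself a Verdier localization: writing $\mathcal{K}_1 := \mathrm{Ker}((-)\otimes^{\mathbb{L}}_{\Lambda}\Lambda e) = D^{b}_{\mathrm{tails}(\Lambda/\Lambda e \Lambda)}(\mathrm{tails}(\Lambda))$ and $\mathcal{K}_2 := \mathrm{Ker}(\Xi) = \langle \pi(S),\dots,\pi(S(n-1))\rangle$, the kernel $\mathcal{K} := \mathrm{Ker}(\Theta)$ is the preimage of $\mathcal{K}_2$ under $(-)\otimes^{\mathbb{L}}_{\Lambda}\Lambda e$; it contains $\mathcal{K}_1$ and satisfies $\mathcal{K}/\mathcal{K}_1 \cong \mathcal{K}_2$. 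Passing to DG enhancements, $\tilde{\Theta}$ is the composite of the DG quotient of Theorem \ref{thm:smoothcatcpt1} with the DG quotient $\tilde{\Xi}$ of Example \ref{jsvdic}(3), so, DG quotients being composable (Proposition \ref{nskkx}), $\tilde{\Theta}$ exhibits $\mathcal{D}_{sg}^{\mathbb{Z}}(S)$ as $\mathcal{D}^{b}(\mathrm{tails}(\Lambda))/\mathrm{Ker}(\tilde{\Theta})$; in particular the induced functor $\mathcal{D}^{b}(\mathrm{tails}(\Lambda))/\mathrm{Ker}(\tilde{\Theta})\to\mathcal{D}_{sg}^{\mathbb{Z}}(S)$ is a quasi-equivalence, hence fully faithful.

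The only remaining substantive point is that $\mathcal{K} = \mathrm{Ker}(\Theta)$ is classically generated by a single object. Since $e\Lambda e\cong S$ as graded algebras, $\pi(e\Lambda(i))\otimes^{\mathbb{L}}_{\Lambda}\Lambda e\cong\pi(S(i))$, so $\pi(e\Lambda(i))\in\mathcal{K}$ for $0\le i\le n-1$; thus $\widetilde{G} := \bigoplus_{i=0}^{n-1}\pi(e\Lambda(i))$ is a lift into $\mathcal{K}$ of the classical generator $G = \bigoplus_{i=0}^{n-1}\pi(S(i))$ of $\mathcal{K}_2$ (Corollary \ref{ydbvwjv}), while $X_F$ classically generates $\mathcal{K}_1$ (Theorem \ref{thm:smoothcatcpt2equiv}). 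I would then show that $X_F\oplus\widetilde{G}$ classically generates $\mathcal{K}$, by proving and applying (to $\mathcal{B}=\mathcal{K}$, $\mathcal{A}=\mathcal{K}_1$, $a = X_F$, $t = \widetilde{G}$) the general statement: \emph{if $q:\mathcal{B}\to\mathcal{C}$ is a Verdier quotient with kernel $\mathcal{A} = \langle a\rangle_{\infty}$ and $\mathcal{C} = \langle q(t)\rangle_{\infty}$ for some $t\in\mathcal{B}$, then $\mathcal{B} = \langle a\oplus t\rangle_{\infty}$.}

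The proof of that statement is the technical heart. Given $Y\in\mathcal{B}$ with $q(Y)\in\langle q(t)\rangle_{m}$, I would lift the $m$-step construction of $q(Y)$ out of $q(t)$ to a construction inside $\mathcal{B}$ starting from $t$, using that $\langle a\oplus t\rangle_{\infty}$ is closed under shifts, direct sums, cones and direct summands: shifts, sums and summand-passages lift with no effort, and at each cone step a morphism $q(\widetilde{A})\to q(\widetilde{B})$ between objects $\widetilde{A},\widetilde{B}\in\langle a\oplus t\rangle_{\infty}$ produced so far is represented by a roof $\widetilde{A}\leftarrow\widetilde{A}'\to\widetilde{B}$ with $\mathrm{cone}(\widetilde{A}'\to\widetilde{A})\in\mathcal{A}=\langle a\rangle_{\infty}$, so that $\widetilde{A}'\in\langle a\oplus t\rangle_{\infty}$ and the cone of $\widetilde{A}'\to\widetilde{B}$, whose image under $q$ is the cone being built, again lies in $\langle a\oplus t\rangle_{\infty}$. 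This produces $\widetilde{Y}\in\langle a\oplus t\rangle_{\infty}$ with $q(\widetilde{Y})\cong q(Y)$ in $\mathcal{C}$; as objects of $\mathcal{C}$ are those of $\mathcal{B}$, such an isomorphism is a roof $Y\leftarrow Z\to\widetilde{Y}$ with both cones in $\mathcal{A}$, whence $Z\in\langle a\oplus t\rangle_{\infty}$ and then $Y\in\langle a\oplus t\rangle_{\infty}$. Combined with the first paragraph, this establishes all of Proposition-Definition \ref{Csc}.

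I expect this last lemma to be the main obstacle: propagating an iterated-cone description through the Verdier quotient $q$ while keeping every intermediate object inside $\langle a\oplus t\rangle_{\infty}$. What makes it work is that the kernel $\mathcal{K}_1$ is itself classically generated (by $X_F$), so the correction terms forced by representing morphisms as roofs stay within $\langle a\oplus t\rangle_{\infty}$, at the cost of a non-uniform but finite number of cones --- which is all that classical generation requires. Everything else (compatibility of the DG enhancements under composition, and conditions (1) and (3)) is formal, given Theorems \ref{thm:smoothcatcpt1} and \ref{thm:smoothcatcpt2equiv}, Orlov's Theorem \ref{orlov}, and the stability of (DG) localizations under composition.
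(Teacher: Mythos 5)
Your proposal is correct, and the route you take to the one non-formal point --- that $\mathrm{Ker}(\Theta)$ is classically generated by a single object --- is genuinely different from the paper's. You reduce everything to the abstract lemma: if $q\colon\mathcal{B}\to\mathcal{C}$ is a Verdier quotient with $\mathrm{Ker}(q)=\langle a\rangle_\infty$ and $\mathcal{C}=\langle q(t)\rangle_\infty$, then $\mathcal{B}=\langle a\oplus t\rangle_\infty$; you then apply it to the restriction $(-)\otimes^{\mathbb{L}}_\Lambda\Lambda e\colon\mathrm{Ker}(\Theta)\to\mathrm{Ker}(\Xi)$ with $a=X_F$ and $t=\bigoplus_{i=0}^{n-1}\pi(e\Lambda(i))$, landing on the same generator $X_F\oplus\bigoplus_{i=0}^{n-1}\pi(e\Lambda(i))$ that the paper exhibits. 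The paper instead proves the needed generation directly and concretely: for $Z\in\mathrm{Ker}(\Theta)$ it builds strict chain-complex models $\widetilde Z^\bullet$ and $Y^\bullet$ of $Z\otimes_\Lambda\Lambda e$ over $\mathrm{grmod}(S)$ (this is where the Hom-isomorphism of \cite[Theorem 4.14]{MM} is used), applies the bimodule functor $(-)\otimes_S e\Lambda$ at the chain level, and uses the multiplication map $Z\otimes_\Lambda\Lambda e\otimes_S e\Lambda\to Z$ to produce, in one shot, a roof from an object of $\mathrm{add}(\bigoplus e\Lambda(i))$ to $Z$ with all cones in $D^b_{\mathrm{tails}(\Lambda/\Lambda e\Lambda)}(\mathrm{tails}(\Lambda))$. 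So the paper exploits the specific adjunction/bimodule structure of this particular localization; your argument is functor-free and would apply to any Verdier quotient with classically generated kernel and quotient.

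Two small caveats on your version. First, the phrase ``shifts, sums and summand-passages lift with no effort'' undersells the summand step: if $q(Y)$ is a direct summand of $q(\widetilde W)$ with $\widetilde W\in\langle a\oplus t\rangle_\infty$, you should write $q(\widetilde W)\cong q(Y)\oplus q(Y')=q(Y\oplus Y')$, represent that isomorphism by a roof in $\mathcal{B}$ whose cones lie in $\mathcal{A}\subseteq\langle a\oplus t\rangle_\infty$ to conclude $Y\oplus Y'\in\langle a\oplus t\rangle_\infty$, and only then invoke closure of $\langle a\oplus t\rangle_\infty$ under direct summands of $\mathcal{B}$ to get $Y$; it does work, but it is exactly the same roof-lifting mechanism as the cone step, not ``no effort.'' Second, you cite Proposition~\ref{nskkx} for composability of DG quotients; that proposition asserts existence and uniqueness, not composability (the paper elides the same point), and a short sentence noting that the composite of two Verdier localizations is again a Verdier localization with kernel the preimage, together with uniqueness of the DG quotient in $\mathrm{Hqe}$, would make this airtight.
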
 
 
\begin{proof}

It is clear that the functor $\tilde{\Theta}$ is a DG quotient since 
both $\tilde\Xi$ and $(-) \otimes^{\mathbb{L}}_{\Lambda} \Lambda e$ 
are (see Definition \ref{sfkd} and Example \ref{jsvdic}(3)). 
Note that we here have identified 
$D^{\mathbb{Z}}_{sg}(A)$ with $ \Phi(D^{\mathbb{Z}}_{sg}(A))$ 
as triangulated categories (see Theorem \ref{orlov}).
 
In the meantime, by Proposition \ref{gfubjnv}, 
$\mathcal{D}^{b}(\mathrm{tails}(\Lambda)) \cong 
\mathcal{P}\mathrm{erf}(\nabla \Lambda)$ 
is smooth and proper. 
Hence, to prove the theorem
it suffices to show that $\mathrm{Ker}(\Theta)$ admits a classical generator.

Consider the following functor
$$
(-) \otimes^{\mathbb{L}}_{\Lambda} \Lambda 
e: \mathrm{Ker}(\Theta) \rightarrow \mathrm{Ker}(\Xi) 
= \langle S, \cdots, S(n-1)\rangle.
$$
Since
$$
D_{\mathrm{tails}(\Lambda / \Lambda e \Lambda )}^{b}
(\mathrm{tails}(\Lambda)) \subseteq  \mathrm{Ker}(\Theta), 
$$
the kernel of the above functor $(-) \otimes^{\mathbb{L}}_{\Lambda} \Lambda e$ 
in $\mathrm{Ker}(\Theta)$ is also
$$
 D_{\mathrm{tails}(\Lambda / \Lambda e \Lambda )}^{b}(\mathrm{tails}(\Lambda)),
$$
and the above functor is a localization functor.

Meanwhile, 
$\mathrm{add}\{ e \Lambda (i)\}_{i \in \mathbb{Z}}$
are contained in $\mathrm{Ob}(\mathrm{Ker}(\Theta))$.
Now, let $Z$ be an arbitrary object of $\mathrm{Ker}(\Theta)$. Then we have
$$Z \otimes^{\mathbb{L}}_{\Lambda} \Lambda e 
= Z \otimes_{\Lambda} \Lambda e \in \langle S, \cdots, S(n-1)\rangle.$$
Thus by the isomorphism
$$
\mathrm{Hom}_{D^{b}(\mathrm{tails}(S))}(S(i), S(j)[q] ) 
\cong \mathrm{Hom}_{D^{b}(\mathrm{grmod}(S))}(S(i), S(j)[q])
$$
for any $ 0 \leq i, j \leq n-1$ and $q \in \mathbb{Z}$ (see \cite[Theorem 4.14]{MM}),
there are two cochain complexes $\widetilde{Z}^{\bullet}$ and $Y^{\bullet}$ of 
$\mathrm{grmod}(S)$ and two cochain complex morphisms
$$
\varphi_{Z}: \widetilde{Z}^{\bullet} \rightarrow Z \otimes_{\Lambda} \Lambda e
\quad\mbox{and}\quad
\phi_{Z}: \widetilde{Z}^{\bullet} \rightarrow Y^{\bullet},
$$
such that $Y^j \in \mathrm{add}( \bigoplus\limits^{n-1}_{i = 0} S(i) )$ 
for any $j \in \mathbb{Z}$, 
and $\mathrm{Cone}(\phi_Z)$ and $\mathrm{Cone}(\varphi_Z)$ are both trivial in 
$D^{b}(\mathrm{tails}(S))$.
After tensoring with $ (-) \otimes_{S} e \Lambda $ on
the cochain complex level, 
we have the following diagram of cochain complex morphisms:
\begin{align}\label{jcdd}
\xymatrix{
&   \widetilde{Z}^{\bullet} \otimes_{S} e \Lambda      \ar[dl]_{\phi_{Z}  
\otimes_{S} e \Lambda }    \ar[dr]^{\varphi_{Z} \otimes_{S} e \Lambda } \\
Y^{\bullet} \otimes_{S} e \Lambda    & &     Z \otimes_{\Lambda} \Lambda e \otimes_{S} e \Lambda . }
\end{align}
Going back to derived categories, since 
$$( Z \otimes_{\Lambda} \Lambda
e \otimes_{S} e \Lambda ) \otimes^{\mathbb{L}}_{\Lambda} \Lambda e =  Z \otimes_{\Lambda} \Lambda
e \in \mathrm{Ob}(\mathrm{Ker}(\Xi)),
$$
we have that
$Z \otimes_{\Lambda} \Lambda e \otimes_{S} e 
\Lambda\in\mathrm{Ob}(\mathrm{Ker}(\Theta))$. 
Moreover, there is a natural morphism
\begin{align}\label{jcdd11}
Z \otimes_{\Lambda} \Lambda e \otimes_{S} e \Lambda \rightarrow Z,
\end{align}
which is 
induced by the multiplication map 
$\Lambda e \otimes_{S} e \Lambda \rightarrow \Lambda$ in $D^{b}
(\mathrm{grmod}(\Lambda^e))$. 
Denote the above morphism by $i_Z$. We have that
$$
\mathrm{Cone}(i_Z) \otimes^{\mathbb{L}}_{\Lambda} \Lambda e 
\cong \mathrm{Cone}(Z \xrightarrow{\mathrm{Id}} Z)  = 0
$$ 
in $D^{b}(\mathrm{tails}(S))$, and hence 
$$
\mathrm{Cone}(i_Z) \in \mathrm{Ob}(D_{\mathrm{tails}
(\Lambda / \Lambda e \Lambda )}^{b}(\mathrm{tails}(\Lambda))).
$$

In the meantime, we know that
$$
\mathrm{Cone}(\varphi_{Z} \otimes_{S} e\Lambda ) 
\otimes^{\mathbb{L}}_{\Lambda} \Lambda e \cong \mathrm{Cone}(\varphi_{Z}) 
\quad\mbox{and}\quad
\mathrm{Cone}(\phi_{Z} \otimes_{S} e\Lambda )
 \otimes^{\mathbb{L}}_{\Lambda} \Lambda e \cong \mathrm{Cone}(\phi_{Z})
 $$
are both trivial in $D^{b}(\mathrm{tails}(S))$. It follows that 
$\mathrm{Cone}(\varphi_{Z} \otimes_{S} e\Lambda )$ and 
$\mathrm{Cone}(\phi_{Z} \otimes_{S} e\Lambda )$
are both objects of $D_{\mathrm{tails}(\Lambda / \Lambda e \Lambda )}^{b}(\mathrm{tails}(\Lambda))$.

Finally, if we view $Y$ as an object in $D^{b}(\mathrm{grmod}(\Lambda))$,
then
$$
Y^j \in \mathrm{add}(\bigoplus^{n-1}_{i =0} S(i) )
$$
for any $j$. Thus we have
$Y^j \otimes_{S} e \Lambda \in \mathrm{add}(\bigoplus\limits^{n-1}_{i = 0} e \Lambda(i) 
)$ in $D^{b}(\mathrm{grmod}(\Lambda))$. 
It implies that 
$Y^j \otimes_{S} e \Lambda \in \mathrm{add}(\bigoplus\limits^{n-1}_{i = 0} e \Lambda(i) 
)$ in $D^{b}(\mathrm{tails}(\Lambda))$.

Summarizing the above results, we obtain that $\mathrm{Cone}(i_{Z})$,
$\mathrm{Cone}(\varphi_{Z} \otimes_{S} e\Lambda )$ and 
$\mathrm{Cone}(\phi_{Z} \otimes_{S} e\Lambda )$
are all objects of $D_{\mathrm{tails}(\Lambda / \Lambda e \Lambda )}^{b}
(\mathrm{tails}(\Lambda))$, and
$Y \otimes_{S} e \Lambda \in \mathrm{add}(\bigoplus\limits^{n-1}_{i = 0} 
e \Lambda(i) )$ 
in $D^{b}(\mathrm{tails}(\Lambda))$.
In the proof of Theorem \ref{thm:smoothcatcpt1}, we know that
$D_{\mathrm{tails}(\Lambda / \Lambda e \Lambda )}^{b}(\mathrm{tails}(\Lambda))$
admits a classical generator, which we denote by $\mathcal{P}$. 
Thus, by the diagram (\ref{jcdd})  and the morphism $i_Z$ (see
\ref{jcdd11}), we
have $Z \in \big< (\bigoplus\limits^{n-1}_{i = 0} e \Lambda(i)) \oplus \mathcal{P} \big>_{r_Z}$
for certain $r_{Z} \in \mathbb{Z}$,
and therefore, $(\bigoplus\limits^{n-1}_{i = 0} e \Lambda(i)) 
\oplus \mathcal{P}$ is a classical generator
of $\mathrm{Ker}(\Theta)$.
\end{proof}

\section{Homotopical finite presentation of two categories}\label{sect:HFP}

In this section, we give an application of our main theorems.
We first recall the notion of homotopically finitely presented algebras,
which
was introduced by To\"en and Vaqui\'e in \cite{TV},
in their study of the moduli stack of DG categories. 

\begin{definition}[Homotopical finite presentation; see \cite{TV}]
A DG algebra $A$ is called {\it homotopically finitely presented} if
in the homotopy category of DG algebras, $A$ is a retract of a free
graded algebra $k\langle x_1, x_2, \cdots, x_m\rangle$ with differential satisfying that
$$
d(x_i) \in k\langle x_1, \cdots, x_{i-1}\rangle, \quad\mbox{for all} \,\, i.
$$
A small DG category $\mathcal{C}$ is homotopically finitely presented if it is Morita
equivalent to a DG algebra $A$ which is homotopically finitely presented.
\end{definition}

In \cite{Efi1} Efimov showed the following.

\begin{proposition}[{\cite[Proposition 2.8]{Efi1}}]\label{fhp}
$(1)$ If a DG category $\mathcal{A}$ admits a smooth categorical compactification, then
$\mathcal{A}$ is  homotopically finitely presented.

$(2)$ In particular, if
a DG category $\mathcal{A}$ admits a smooth categorical compactification, then
$\mathcal{A}$ is smooth.
\end{proposition}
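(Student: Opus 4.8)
This is Efimov's \cite[Proposition 2.8]{Efi1}, and the plan is to reconstruct that argument inside the framework of To\"en and Vaqui\'e \cite{TV}. Fix a smooth categorical compactification $F: \mathcal{C} \to \mathcal{A}$; by Definition \ref{def:smoothcatcomp} the DG category $\mathcal{C}$ is smooth and proper, the functor $F^{\ast}: \mathrm{perf}(\mathcal{C}) \to \mathrm{Perf}(\mathcal{A})$ is a localization up to direct summands, and its kernel $\mathcal{K}$ is classically generated by a single object $K$. The notions ``homotopically finitely presented'' (for DG categories), ``of finite type'' in the sense of \cite{TV}, and ``smooth'' are all Morita invariant, so the whole argument may be run at the level of DG algebras and their perfect modules.

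The first input is the theorem of To\"en and Vaqui\'e \cite{TV} that a smooth and proper DG category is of finite type. Thus there is a homotopically finitely presented DG algebra $B$ with $\mathrm{perf}(\mathcal{C})$ Morita equivalent to $\mathrm{perf}(B)$; replacing $B$ by the derived endomorphism algebra of a generator together with $K$, one may moreover assume that $K$ is cut out by an idempotent $e \in Z^{0}(B)$, while $B$ is still a retract, in the homotopy category of DG algebras, of a finitely generated semifree DG algebra with lower-triangular differential.

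The core step is to show that this finiteness survives the localization. Since $\mathcal{K} = \langle K \rangle_{\infty}$ with $K$ perfect over $B$, the Verdier quotient $\mathrm{perf}(B) \to \mathrm{perf}(B)/\mathcal{K}$ is modelled by the Drinfeld DG quotient $B \to B'$, where $B'$ adds to $B$ a single degree $-1$ generator $h$ with $dh = e$, that is, a contracting homotopy for $K$. Because $K$ is a finite cell over $B$ and $B$ has a finitely generated semifree presentation with lower-triangular differential, so does $B'$, which is therefore homotopically finitely presented. By Proposition-Definition \ref{Csc}, $\mathrm{Perf}(\mathcal{A})$ is the idempotent completion of $\mathrm{perf}(B')$, hence Morita equivalent to $\mathrm{perf}(B')$, hence homotopically finitely presented; and $\mathcal{A}$ is Morita equivalent to $\mathrm{Perf}(\mathcal{A})$. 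This proves $(1)$. For $(2)$ I would invoke the converse in the To\"en--Vaqui\'e dictionary \cite{TV}, that a DG category of finite type is smooth, applied to $\mathcal{A}$; alternatively, $\mathrm{perf}(\mathcal{C}) = \mathrm{perf}(B)$ is smooth since $\mathcal{C}$ is smooth and proper, and smoothness is preserved by the finite Drinfeld quotient $B \to B'$ and by retracts, so $\mathcal{A}$ is smooth.

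The delicate point is the core step: one must check that $F^{\ast}$ is genuinely realized by a Drinfeld quotient by the single perfect object $K$, and then that the presentation of $B'$ really does stay finitely generated with lower-triangular differential. This is precisely where the ``single object'' hypothesis on $\mathrm{Ker}(F)$ in the definition of a smooth categorical compactification is used, and where the bookkeeping forced by the ``up to direct summands'' clause (passing to idempotent completions) has to be handled with care.
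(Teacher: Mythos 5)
The paper does not prove this proposition; it is quoted verbatim from Efimov \cite[Proposition 2.8]{Efi1} and used as a black box, so there is no proof in the source to compare against. Your outline is a reasonable reconstruction of Efimov's strategy: smooth plus proper implies finite type by To\"en--Vaqui\'e, and the remaining content is to show that a Drinfeld DG quotient by a thick subcategory classically generated by a single perfect object preserves homotopical finiteness, after which part (2) follows from the fact that finite type implies smooth. That is indeed how the argument goes.

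That said, your proposal is a sketch rather than a proof, and the gaps are exactly in the ``core step'' you flag. Two points in particular: (i) in the Drinfeld model for the quotient by the single object $K$ corresponding to the idempotent $e$, the adjoined contracting homotopy must be an endomorphism of $K$, i.e.\ one should impose $h = ehe$ as well as $dh = e$; freely adjoining an unconstrained $h$ with $dh=e$ is not the DG quotient. (ii) The sentence ``because $K$ is a finite cell over $B$ and $B$ has a finitely generated semifree presentation with lower-triangular differential, so does $B'$'' is the entire technical content of Efimov's Proposition 2.8, and you assert it rather than prove it; similarly, arranging simultaneously that $e \in Z^0(B)$ is a strict idempotent and that $B$ retains a finitely generated semifree model with lower-triangular differential requires a replacement argument that is not immediate. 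You correctly identify where the single-generator hypothesis and the idempotent-completion bookkeeping enter, but those are precisely the places where the proposal stops short of a complete argument.
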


Thus as corollary to Efimov's result, we have the following.

\begin{corollary}\label{cor:main1and2}
Let $S=R^G$ as before. Then
the DG categories $\mathcal{D}^{b}(\mathrm{tails}(S))$ and 
$\mathcal{D}_{sg}^{\mathbb{Z}}(S)$ are both homotopically finitely presented and hence are smooth.
\end{corollary}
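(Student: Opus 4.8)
The final statement to prove is Corollary~\ref{cor:main1and2}, which asserts that $\mathcal{D}^{b}(\mathrm{tails}(S))$ and $\mathcal{D}_{sg}^{\mathbb{Z}}(S)$ are both homotopically finitely presented and hence smooth. The plan is to deduce this directly from the two smooth categorical compactification theorems established earlier together with Efimov's general principle recorded in Proposition~\ref{fhp}. The argument is short: it is purely an assembly of already-proven inputs, so the task is to check that the hypotheses of Proposition~\ref{fhp} are met in each of the two cases.

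First I would invoke Theorem~\ref{thm:smoothcatcpt1equiv} (=~Theorem~\ref{thm:smoothcatcpt1}), which says that the DG functor $(-)\otimes^{\mathbb{L}}_{\Lambda}\Lambda e:\mathcal{D}^{b}(\mathrm{tails}(\Lambda))\to\mathcal{D}^{b}(\mathrm{tails}(S))$ is a smooth categorical compactification. By Proposition~\ref{fhp}(1), the existence of such a compactification forces $\mathcal{D}^{b}(\mathrm{tails}(S))$ to be homotopically finitely presented, and by Proposition~\ref{fhp}(2) it is therefore smooth. Next I would invoke Theorem~\ref{thm:catcptofsingequiv} (=~Theorem~\ref{thm:catcptofsing}), which provides the smooth categorical compactification $\tilde\Theta:\mathcal{D}^{b}(\mathrm{tails}(\Lambda))\to\mathcal{D}_{sg}^{\mathbb{Z}}(S)$; applying Proposition~\ref{fhp}(1) and (2) once more yields that $\mathcal{D}_{sg}^{\mathbb{Z}}(S)$ is homotopically finitely presented and smooth. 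This completes the proof.

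Strictly speaking, one should remark that Efimov's Proposition~\ref{fhp} is stated for DG categories admitting a smooth categorical compactification in the sense of Definition~\ref{def:smoothcatcomp}, and that the compactifications produced in Theorems~\ref{thm:smoothcatcpt1} and \ref{thm:catcptofsing} are exactly of this form: the source DG categories $\mathcal{D}^{b}(\mathrm{tails}(\Lambda))\cong\mathcal{P}\mathrm{erf}(\nabla\Lambda)$ are smooth and proper by Proposition~\ref{gfubjnv}, the relevant functors are DG quotients (hence localizations up to direct summands), and the kernels $\mathcal{D}^{b}_{\mathrm{tails}(\Lambda/\Lambda e\Lambda)}(\mathrm{tails}(\Lambda))$ and $\mathrm{Ker}(\Theta)$ are each classically generated by a single object (by Theorems~\ref{thm:smoothcatcpt2equiv} and the generator $(\bigoplus_{i=0}^{n-1}e\Lambda(i))\oplus\mathcal{P}$ constructed in the proof of Theorem~\ref{thm:catcptofsingequiv}, respectively). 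Once this identification is made explicit, Proposition~\ref{fhp} applies verbatim.

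There is essentially no obstacle here: the corollary is a formal consequence, and all the genuine work — constructing the compactifications, and in particular exhibiting the single classical generators of the kernels, which was the content of Theorem~\ref{thm:smoothcatcpt2} and the bulk of \S\ref{Co} and \S\ref{Fina} — has already been carried out. The only point requiring a moment's care is to make sure the objects named ``smooth categorical compactification'' in Theorems~\ref{thm:smoothcatcpt1} and \ref{thm:catcptofsing} literally satisfy Efimov's Definition~\ref{def:smoothcatcomp}, so that Proposition~\ref{fhp} is applicable without modification; this was already verified in the course of proving those theorems via Proposition-Definition~\ref{Csc}.
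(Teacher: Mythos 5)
Your proof is correct and is essentially the same as the paper's: both deduce the corollary by applying Proposition~\ref{fhp} to the smooth categorical compactifications constructed in Theorems~\ref{thm:smoothcatcpt1} and \ref{thm:catcptofsing}. The extra paragraph verifying that Efimov's Definition~\ref{def:smoothcatcomp} is literally satisfied is a reasonable bit of diligence but adds nothing beyond what the paper's own proof already relies upon.
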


\begin{proof}
By Theorems \ref{thm:smoothcatcpt1} and \ref{thm:catcptofsing}, both 
$D^{b}(\mathrm{tails}(S))$ and 
$D_{sg}^{\mathbb{Z}}(S)$ admit a smooth categorical compactification, and then
by Proposition 
\ref{fhp}, both of them
are homotopically finitely presented. 
\end{proof}

\section{Examples}\label{EA}

In this section, we study three examples, which may be viewed as the applications
of the results in the previous sections.

\begin{example}
Let $V = k^{3}$, and let $G \subseteq \mathrm{SL}(3, k)$ be the finite group generated by elements
$f_1 = \mathrm{diag}(1, -1, -1)$ and $f_{2} = \mathrm{ding}(-1, 1, -1)$, which is an Abelian group.

Consider the graded singularity category of
$S: =R^{G}$, where $R := k[x_1, x_2, x_3]$.
It is straightforward to see that
$$
S\cong k[x^{2}_{1}, x^{2}_{2}, x^{2}_{3}, x_{1}x_{2}x_{3}] \cong  k[x, y, z, w]/(xyz-w^2)
$$
where $x = x^{2}_1$, $y = x^{2}_2$, $z = x^{2}_3$ and $w = x_1 x_2 x_3$. 

We next construct the $S$-algebra $\Lambda$.
From above argument, we know that $G$ has three irreducible 
1-dimensional representations $V^1, V^2$ and $V^3$
with generators $E_1, E_2$ and $E_3$, such that
\begin{align*}
f_{1} \ast E_1 &=  E_1, \quad f_{2} \ast E_1 = -E_1,\\
f_{1} \ast E_2 &=  -E_2, \quad f_{2} \ast E_2 =  E_2,\\
f_{1} \ast  E_3 &= -E_3, \quad f_{2} \ast E_3 = - E_3,
\end{align*}
where the symbol $\ast$ represents the $G$-action.
Thus, $\Lambda$ is given by
$$\mathrm{End}_{S}( (V^1 \otimes R)^{G} \oplus 
(V^2 \otimes R)^{G} \oplus (V^3 \otimes R)^{G} ) \cong
G \sharp R. $$
It can be described by following quiver:
\begin{displaymath}
\xymatrix{
\bullet_{3} \ar@[red]@[red]@/^0.4cm/[rrr]_{x_{3}} \ar@[blue]@/^1.1cm/[rrrrrr]^{x_{2}}
\ar@/^0.20cm/[drdrdr]^{x_{1}}
&&& \bullet_{0} \ar@/^0.4cm/[rrr]_{x_{1}}
\ar@[red]@/^0.3cm/[lll]_{x_{3}}
\ar@[blue]@/^0.3cm/[ddd]^{x_{2}}
&&& \bullet_{1} \ar@/^0.3cm/[lll]_{x_{1}}
\ar@[blue]@/^1cm/[llllll]^{x_{2}}
\ar@[red]@/^0.50cm/[dldldl]^{x_{3}} \\ \\ \\ 
&&& \bullet_{2} \ar@[blue]@/^0.3cm/[uuu]^{x_{2}}
\ar@/^0.50cm/[ululul]^{x_{1}}
\ar@[red]@/^0.20cm/[ururur]^{x_{3}}
}
\end{displaymath}
In the above diagram, the vertex $0$ corresponds to Cohen-Macaulay module $R^G$ and
idempotent $e$; the vertex $1$ corresponds to Cohen-Macaulay module $(V^1 \otimes R)^G$ and
idempotent $e_1$; the vertex $2$ corresponds to Cohen-Macaulay module $(V^2 \otimes R)^G$ and
idempotent $e_2$ ; the vertex $1$ corresponds to Cohen-Macaulay module $(V^3 \otimes R)^G$ and
idempotent $e_3$.

The quiver of 
$\Lambda / \Lambda e \Lambda$,
denoted by $\bar Q$,
is the following: 
\begin{displaymath}
\xymatrix{
\bullet_{3} \ar@[blue]@/^0.5cm/[rrrrrr]^{x_{2}}
\ar@/^0.20cm/[drdrdr]^{x_{1}}
&&& 
&&& \bullet_{1} 
\ar@[blue]@/^0.5cm/[llllll]^{x_{2}}
\ar@[red]@/^0.50cm/[dldldl]^{x_{3}} \\ \\ \\ 
&&& \bullet_{2} 
\ar@/^0.50cm/[ululul]^{x_{1}}
\ar@[red]@/^0.20cm/[ururur]^{x_{3}}
}
\end{displaymath}
We thus have
the quiver $\tilde{Q}$ of 
$\Lambda / \sqrt{\Lambda e \Lambda}$ is same with $\bar{Q}$ since 
$\Lambda e \Lambda \cong \sqrt{\Lambda e \Lambda}$ as  ideals of $\Lambda$. 
We see that 
$$
\tilde{G}_0 = \big\{ \{\chi_1,  G^2\}, \{\chi_2,  G^3\}, \{\chi_3,  G^1\}  \big\},
$$
where the character $\chi_i$ is corresponding to idempotent $e_i$, $G^1= \langle f_1\rangle$, $G^2 =\langle f_2\rangle$, and $G^3 =\langle f_1 f_2\rangle$.

We next give the (classical) generators
of $D^{b}(\mathrm{tails}(S))$ and $D_{sg}^{\mathbb{Z}}(S)$. 
Notice that
there is a categorical smooth compactification 
of $D^{b}(\mathrm{tails}(S))$ such that
$$
(-) \otimes_{\Lambda}^{\mathbb{L}} 
\Lambda e : D^{b}(\mathrm{tails}(\Lambda)) 
\rightarrow D^{b}(\mathrm{tails}(S))
$$
and a categorical smooth compactification of $D_{sg}^{\mathbb{Z}}(S)$ such that
$$
\Theta: D^{b}(\mathrm{tails}(\Lambda)) \rightarrow D_{sg}^{\mathbb{Z}}(S).
$$
By Theorem \ref{thm:smoothcatcpt2}, the generator $X$ of 
$\mathrm{Ker}( (-) \otimes_{\Lambda}^{\mathbb{L}} \Lambda e )$ is 
\begin{align*}
X  = (\bigoplus\limits_{0\leq i \leq 2} G/G^1 \sharp k[V^{1}](i)) 
 \oplus (\bigoplus\limits_{0\leq i \leq 2}G/G^2 \sharp k[V^{2}](i)) \oplus 
 (\bigoplus\limits_{0\leq i \leq 2}G/G^3 \sharp k[V^{3}](i)).
\end{align*}  
Moreover, 
the generator $Y$ of $\mathrm{Ker}(\Theta)$ is 
$$
Y = X \oplus (\bigoplus_{0 \leq i \leq 2}  e \Lambda (i)).
$$
\end{example}

\begin{example}\label{example2}
Let $V = k^{3}$,
where $k$ is also assumed to be algebraically
closed. Let
$G \subseteq \mathrm{SL}(3, k)$ be the finite group generated by element 
$g = \mathrm{diag}(\sigma, \sigma, \sigma^{2})$, where $\sigma$ is a 4-th prime root of the unit. 

Consider the graded singularity category of
$S: =R^{G}$, where $R := k[x_1, x_2, x_3]$ with canonical grading such that 
$\mathrm{deg}(x_i) = 1$ for any $i$.
Observe that
\begin{align*}
S & = k[x^{4}_{1}, x^{4}_{2}, x^{2}_{3}, x_{1}x^{3}_{2}, 
x^{2}_{1}x^{2}_{2}, x^{3}_{1}x_{2}, x^{2}_1 x_{3}, x^{2}_2 x_{3}, x_{1}x_{2}x_{3}] .
\end{align*}
Here,$G$ only has 
one nontrivial proper subgroup $H := \big<\mathrm{diag}(-1, -1, 1) \big>$ with nontrivial invariant subspace $V^H$. 


The algebra
$\Lambda$ is constructed as follows. 
Let $W$ be the one-dimensional irreducible representation of $G$ 
such that $g \ast w = \sigma w $ for any $w \in W$. Then 
$\Lambda = \mathrm{End}_{S}(\bigoplus\limits^{3}_{i = 0} 
(W^{\otimes i} \otimes R)^G )$. 
Let $e_i$ be the idempotent of $\Lambda$ corresponding to the 
summand $(W^{\otimes i} \otimes R)^G$,
which gives the vertex $i$ in the following diagram. We also let $e := e_0$. 
Then $\Lambda$ is isomorphic to the quiver algebra $kQ /I$, where $Q$ is
the following quiver:
\begin{displaymath}
\xymatrix{
\bullet_{0} \ar@/^0.4cm/[rrrrrrr]^{x_{1}} 
\ar@[blue]@/_0.3cm/[rrrrrrr]_{\color{blue}{{x_{2}}}} 
\ar@[red]@/^0.4cm/[dddd]^{\color{red}{{x_{3}}}}
 &&&&&&& \bullet_{1} 
\ar@[red]@/^0.4cm/[dddd]^{\color{red}{{x_{3}}}} 
\ar@[blue]@/^0.6cm/[llllllldddd]^{\color{blue}{{x_{2}}}}  
\ar@/_0.6cm/[llllllldddd]_{x_{1}} \\ \\ \\ \\
\bullet_{2} \ar@[red]@/^0.4cm/[uuuu]^{\color{red}{{x_{3}}}} 
\ar@/^0.3cm/[rrrrrrr]^{x_{1}} 
\ar@[blue]@/_0.4cm/[rrrrrrr]_{\color{blue}{{x_{2}}}}  
&&&&&&& \bullet_{3}  
\ar@[blue]@/^0.6cm/[llllllluuuu]^{\color{blue}{{x_{2}}}} 
\ar@/_0.6cm/[llllllluuuu]_{x_{1}} 
\ar@[red]@/^0.4cm/[uuuu]^{\color{red}{{x_{3}}}} }
\end{displaymath}
The arrows of the above quiver correspond to the elements in $R$, 
and the ideal $I$ is given by the relation $\sim_{\Lambda}$ in $Q_1$ such that
$r_1 \sim_{\Lambda} r_2 $ if and only if they 
have the same sources and targets, and  
they are given by the same element in $R$.

Now, the quiver $\bar{Q}$ of $\Lambda / \Lambda e \Lambda$ is the following: 
\begin{displaymath}
\xymatrix{
&&&&&& \bullet_{1} 
\ar@[red]@/^0.4cm/[ddd]^{\color{red}{{x_{3}}}} 
\ar@[blue]@/^0.5cm/[llllllddd]^{\color{blue}{{x_{2}}}}  
\ar@/_0.6cm/[llllllddd]_{x_{1}} \\ \\ \\ 
\bullet_{2} \ar@/^0.2cm/[rrrrrr]^{x_{1}} 
\ar@[blue]@/_0.4cm/[rrrrrr]_{\color{blue}{{x_{2}}}}  &&&&&&
\bullet_{3} \ar@[red]@/^0.4cm/[uuu]^{\color{red}{{x_{3}}}}  }  
\end{displaymath}
Here, $\Lambda / \Lambda e \Lambda =  k\bar{Q}/ \bar{I}$, 
where $\bar{I} := (I + (e) ) \cap k\bar{Q}$ and $(e)$ is the ideal of $kQ$ generated by $e$. 

Let $\tilde{Q}$ to be 
the following quiver:  
\begin{displaymath}
\xymatrix{
&&& \bullet_{1} 
\ar@[red]@/^0.4cm/[ddd]^{\color{red}{{x_{3}}}} \\ \\ \\ 
\bullet_{2}  &&& \bullet_{3} \ar@[red]@/^0.4cm/[uuu]^{\color{red}{{x_{3}}}}  }  
\end{displaymath}
and $\tilde{I} := ( I + (e) ) \cap k\tilde{Q}$. It is clear that
$$
\Lambda / \sqrt{\Lambda e \Lambda} = k \tilde{Q} / \tilde{I} 
= G/H \sharp k[V^H] \oplus e_{2} \Lambda_{0} e_{2}. 
$$ 
Moreover, we see that 
$$
\tilde{G}_0 = \big\{ (\chi_1, H ), (\chi_2, G)
\big\},
$$
where the character $\chi_i$ is corresponding to idempotent $e_i$.

By Theorem \ref{thm:smoothcatcpt2}, the generator $X$ of $\mathrm{Ker}( (-) 
\otimes_{\Lambda}^{\mathbb{L}} \Lambda e )$ is 
$$
X = \bigoplus_{0 \leq i \leq 2} G/H \sharp k[V^H](i). 
$$
Moreover, the classical generator $Y$ of $\mathrm{Ker}(\Theta)$ is
$$
Y = X \oplus (\bigoplus_{0 \leq i \leq 2}  e \Lambda (i)).
$$
\end{example}

\begin{example}
Let $V=k^3$ as in the previous example.
Now let $G\subseteq \mathrm{SL}(3,k)$
be the finite group generated by
$g=\mathrm{diag}(\sigma,\sigma^2,\sigma^3)$,
where $\sigma$ is a 6-th prime root of the unit.

With the same notations and calculations 
as in
the previous two exmaples, 
the quiver $Q$ associated to $\Lambda$ is
\begin{displaymath}
\xymatrix{
\bullet_{0} \ar[rrrrrrr]^{x_{1}} 
\ar@[blue]@/^0.4cm/[ddd]
\ar@[red]@/^0.4cm/[rrrrrrrddd]^{\color{red}{{x_{3}}}}
 &&&&&&& \bullet_{1} 
 \ar@[red]@/^0.6cm/[llllllldddddd]
 \ar@[blue]@/_0.4cm/[ddd] 
\ar@[black][lllllllddd]
\\ \\ \\
\bullet_{2} \ar@[blue]@/^0.4cm/[ddd]
\ar@[red]@/^0.4cm/[rrrrrrrddd]
\ar@[black][rrrrrrr]  
&&&&&&& \bullet_{3} 
\ar@[red]@/^0.4cm/[llllllluuu]
\ar@[blue]@/_0.4cm/[ddd]
\ar@[black][lllllllddd]
\\ \\ \\
\bullet_{4}
 \ar@[red]@/^0.6cm/[rrrrrrruuuuuu]
\ar@[blue]@/^0.6cm/[uuuuuu]^{\color{blue}{{x_{2}}}} 
\ar[rrrrrrr]^{x_{1}}   
&&&&&&& \bullet_{5}  
\ar@[red]@/^0.4cm/[llllllluuu]^{\color{red}{{x_{3}}}}
\ar@[black][llllllluuuuuu]
\ar@[blue]@/_0.6cm/[uuuuuu]_{\color{blue}{{x_{2}}}} 
}
\end{displaymath}
Thus the quiver $\bar Q$
of $\Lambda/\Lambda e\Lambda$ is
\begin{displaymath}
\xymatrix{
 &&&&&&& \bullet_{1} 
 \ar@[red]@/^0.6cm/[llllllldddddd]
 \ar@[blue]@/_0.4cm/[ddd] 
\ar@[black][lllllllddd]
\\ \\ \\
\bullet_{2} \ar@[blue]@/^0.4cm/[ddd]
\ar@[red]@/^0.4cm/[rrrrrrrddd]
\ar@[black][rrrrrrr]  
&&&&&&& \bullet_{3} 
\ar@[blue]@/_0.4cm/[ddd]
\ar@[black][lllllllddd]
\\ \\ \\
\bullet_{4}
 \ar@[red]@/^0.6cm/[rrrrrrruuuuuu] 
\ar[rrrrrrr]^{x_{1}}   
&&&&&&& \bullet_{5}  
\ar@[red]@/^0.4cm/[llllllluuu]^{\color{red}{{x_{3}}}}
\ar@[blue]@/_0.6cm/[uuuuuu]_{\color{blue}{{x_{2}}}} 
}
\end{displaymath}
and $\tilde Q$ is
\begin{displaymath}
\xymatrix{
 &&&&&& \bullet_{1} 
 \ar@[red]@/^0.6cm/[lllllldddd]
 \ar@[blue]@/_0.4cm/[dd] 
\\ \\ 
\bullet_{2}
\ar@[red]@/^0.4cm/[rrrrrrdd]
&&&&&& \bullet_{3} 
\ar@[blue]@/_0.4cm/[dd]
\\ \\ 
\bullet_{4}
 \ar@[red]@/^0.6cm/[rrrrrruuuu]    
&&&&&& \bullet_{5}  
\ar@[red]@/^0.4cm/[lllllluu]^{\color{red}{{x_{3}}}}
\ar@[blue]@/_0.6cm/[uuuu]_{\color{blue}{{x_{2}}}} 
}
\end{displaymath}
We obtain that
$$\tilde G_0= \big\{(\chi_1, G^1),(\chi_2, G^1), (\chi_3, G^2)\big\},$$ 
where $G^1$ is the subgroup generated by $\mathrm{diag}(\sigma^2, \sigma^4, 1)$, and $G^2$ is the subgroup generated by $\mathrm{diag}(-1, 1, -1)$.

By Theorem \ref{thm:smoothcatcpt2}, the generator $X$ of $\mathrm{Ker}( (-) 
\otimes_{\Lambda}^{\mathbb{L}} \Lambda e )$ is 
$$
X = ( \bigoplus_{0 \leq i \leq 2} 
G/G^1 \sharp k[V^{G^1}](i) ) \oplus
( \bigoplus_{0 \leq i \leq 2} G/G^1 \sharp k[V^{G^1}](i) ) \oplus  
( \bigoplus_{0 \leq i \leq 2} G/G^2 \sharp k[V^{G^2}](i) ),
$$
where the graded $\Lambda/ \sqrt{\Lambda e \Lambda}$-module structure 
on $X$ is given by the graded algebra homomorphism 
$( \bigoplus_{0 \leq i \leq 2} 
\bar{\eta}_{\chi_1}^1(i) ) 
\oplus ( \bigoplus_{0 \leq i \leq 
2}\bar{\eta}_{\chi_2}^1(i) ) 
\oplus  ( \bigoplus_{0 \leq i \leq 
2} \bar{\eta}_{\chi_3}^2(i) )$. 
Moreover, the classical generator $Y$ of $\mathrm{Ker}(\Theta)$ is
$$
Y = X \oplus 
(\bigoplus_{0 \leq i \leq 2}  e \Lambda (i)).
$$
\end{example}

\end{document}